\newcommand{\Hom}{\operatorname{Hom}}
\newcommand{\coker}{\operatorname{coker}}
\newcommand{\Sym}{\operatorname{Sym}}
\newcommand{\Surj}{\operatorname{Surj}}
\newcommand{\Out}{\operatorname{Out}}
\newcommand{\Aut}{\operatorname{Aut}}
\newcommand{\Img}{\operatorname{Im}}
\newcommand{\ra}{\rightarrow}
\newcommand{\Z}{\mathbb Z}
\newcommand{\Q}{\mathbb Q}
\newcommand{\E}{\mathbb E}
\newcommand{\F}{\mathbb F}
\newcommand{\Path}{\operatorname{Path}}
\newcommand{\im}{\operatorname{im}}
\newcommand{\Prob}{\operatorname{Prob}}
\newcommand{\QZ}{\mathbb Q/\mathbb Z}
\newcommand{\bK}{\mathbf K}
\newcommand{\bG}{\mathbf G}
\newcommand{\bH}{\mathbf H}
\newcommand{\tensor}{\otimes}
\newcommand{\Sp}{\operatorname{Sp}}
\newcommand{\SL}{\operatorname{SL}}
\newcommand{\ASp}{\operatorname{ASp}}
\newcommand{\Prof}{\operatorname{Prof}}
\newcommand{\C}{\mathcal C}
\newcommand{\pl}{t}
\def\Mdcorated/{{oriented}}
\numberwithin{equation}{section}
\newtheorem{prop}[equation]{Proposition}
\newtheorem{theorem}[equation]{Theorem}
\newtheorem{cor}[equation]{Corollary}
\newtheorem{lemma}[equation]{Lemma}
\newtheorem{definition}[equation]{Definition}
\theoremstyle{remark}
\newtheorem{remark}[equation]{Remark}
\title{Finite quotients of $3$-manifold groups}
\author{Will Sawin}
\address{Department of Mathematics\\
Princeton University \\
Fine Hall, Washington Road \\
Princeton, NJ 08540 USA}  
\email{wsawin@math.princeton.edu}
\author{Melanie Matchett Wood}
\address{Department of Mathematics\\
Harvard University\\
Science Center Room 325\\
1 Oxford Street\\
Cambridge, MA 02138 USA}  
\email{mmwood@math.harvard.edu}
\begin{document}

\begin{abstract}
For $G$ and $H_1,\dots, H_n$ finite groups, does there exist a $3$-manifold
group with $G$ as a quotient but no $H_i$ as a quotient?  
We answer all such questions in terms of the group cohomology of  finite groups.
We prove non-existence with topological results generalizing the theory of semicharacteristics.  
To prove existence of
3-manifolds with certain finite quotients but not others, we use a
probabilistic method, by first proving a formula for the distribution
of the (profinite completion of) the fundamental group of a random
3-manifold in the Dunfield-Thurston model of random Heegaard splittings
as the genus goes to infinity.   We believe this is the first construction of a new distribution of random groups from its moments.
\end{abstract}

\maketitle

\section{Introduction}

In this paper, we  
address the question of what finite quotients \emph{in what combinations} the fundamental group of a $3$-manifold can have and not have.
It is well-known that for any finite group $G$, there exists a closed $3$-manifold $M$ with $G$ as a quotient of $\pi_1(M)$.
However, we can ask more detailed questions about the possible finite quotients of $3$-manifold groups.  
If $G$ and $H_1,\dots,H_m$ are finite groups, does there exist a closed $3$-manifold $M$ with $G$ as a quotient but no $H_i$ as a quotient?
In this paper, we give an answer to all such questions in terms in the cohomology of  finite groups.

First, we prove a topological theorem that provides certain obstructions to the existence of $3$-manifold groups with certain quotients but not others.
Then, we prove that whenever these obstructions vanish, not only do such $3$-manifolds exist, but that a positive proportion of $3$-manifolds (under a natural distribution on Heegaard splittings) have quotients and non-quotients as desired.  We do this by determining completely the  asymptotic distribution of profinite completions $\widehat{\pi_1(M)}$ of random $3$-manifold groups (as the genus of the Heegaard splitting goes to infinity).

For example, if $\pi_1(M)$ admits $(\F_3)^2 \rtimes \SL_2(\mathbb F_3) $ as a quotient, with $\SL_2(\mathbb F_3)$ acting on $(\F_3)^2$ by the standard representation, then it also admits $(\F_3)^4 \rtimes \SL_2(\mathbb F_3) $ as a quotient, with $\SL_2(\mathbb F_3)$ acting on $(\F_3)^4$ by the sum of two copies of the standard representation (\cref{negative-non-projective}). 
For an example of existence, we note that there is a group of order $120$, the generalized quaternion group $Q(8,3,5)$, such that, for each natural number $n$, there exists  a 3-manifold $M$ such that $\pi_1(M)$ admits $ Q(8,3,5)$ as a quotient and all finite groups of order $\leq n$ that are quotients of $\pi_1(M)$ are also quotients of $Q(8,3,5)$ (\cref{fake-3-manifolds}).
This is despite the fact that, by the geometrization theorem, $Q(8,3,5)$ itself is not the fundamental group of any 3-manifold.

We now define some notation to state our main result providing the obstructions discussed above.
For $V$ a symplectic vector space over a finite field $\kappa$ of characteristic $2$, we denote by $\Sp_\kappa(V)$ the group of $\kappa$-linear 
 automorphisms of $V$ preserving the symplectic form. 
Following Gurevich and Hadani \cite{Gurevich2012}, we let $1 \to \mathbb Z/4 \to \mathcal H \to V \to 1$ be the central extension of $V$ by $\mathbb Z/4$ with extension class corresponding to the trace of the symplectic form on $V$ (see Section~\ref{S:Notation}), and let the affine symplectic group $\ASp_{\kappa} (V)$ be the group of automorphisms of $\mathcal H$ acting trivially on $\mathbb Z/4$ and on $V$ by a $\kappa$-linear map, which lies in an exact sequence $1 \to \Hom(V,\F_2) \to \ASp(V)_\kappa \to \Sp_\kappa(V) \to 1$. We will see below there is a class $c_V \in H^3( \ASp_{\kappa}(V), \F_2)$ such that the following theorem holds. 
\begin{theorem}\label{pi1-properties} Let $M$ be a closed, oriented $3$-manifold. Let $V$ be an irreducible representation of $\pi_1(M)$ over a finite  field $\kappa$. Then 

\begin{enumerate}

\item We have $\dim H^1(\pi_1(M) , V) = \dim H^1(\pi_1(M), V^\vee)$.

\item  For each nonzero $\alpha \in H^2( \pi_1(M), V) $ there exists $\beta$ in $H^1( \pi_1(M), V^\vee)$ where $\int_M (\alpha \cup \beta)\neq 0$.

\item  If $V$ is a symplectic representation and $\kappa$ has odd characteristic then $\dim_{\kappa} H^1(\pi_1(M) , V)$ is even.

\item If $V$ is a symplectic representation, $\kappa$ has even characteristic, and
if the map $\pi_1(M) \to \Sp_\kappa(V)$ lifts to $\ASp_\kappa(V)$, then $\dim_{\kappa} H^1(\pi_1(M) , V)$ is congruent mod $2$ to  $\int_M c_V$. 
\end{enumerate}
\end{theorem}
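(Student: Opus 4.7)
The plan is to reduce to Poincaré duality on $M$ itself. The classifying map $M\to B\pi_1(M)$ gives, via the Leray-Serre spectral sequence of the universal cover fibration $\widetilde M\to M\to B\pi_1(M)$ combined with $H^1(\widetilde M,\kappa)=0$, a canonical isomorphism $H^1(\pi_1(M),V)\cong H^1(M,V)$ and a canonical injection $H^2(\pi_1(M),V)\hookrightarrow H^2(M,V)$. I will work on the manifold side throughout.

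For (1), if $V=\kappa$ is trivial the claim is immediate; otherwise $V$ is nontrivial irreducible, so $V^{\pi_1(M)}=(V^\vee)^{\pi_1(M)}=0$, hence $H^0(M,V)=H^0(M,V^\vee)=0$, and by Poincaré duality $H^3(M,V)=H^3(M,V^\vee)=0$. Vanishing of the Euler characteristic on a closed odd-dimensional manifold then forces $\dim H^1(M,V^\vee)=\dim H^2(M,V^\vee)$, and Poincaré duality $H^2(M,V^\vee)\cong H^1(M,V)^*$ completes the chain. For (2), the image of $0\ne\alpha$ in $H^2(M,V)$ is nonzero by the injection above, and the non-degenerate Poincaré pairing $H^2(M,V)\otimes H^1(M,V^\vee)\to H^3(M,\kappa)\xrightarrow{\int_M}\kappa$ produces the required $\beta\in H^1(\pi_1(M),V^\vee)$.

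For (3), with $V$ irreducible nontrivial symplectic in odd characteristic, $H^0(M,V)=0$ and the statement is equivalent to the vanishing of the Kervaire semi-characteristic $\chi_{1/2}(M,V)=\dim H^0(M,V)+\dim H^1(M,V)\pmod 2$. The approach I would take is to construct a compact oriented $4$-manifold $W$ with $\partial W=M$ such that the representation $\pi_1(M)\to\Sp_\kappa(V)$ extends over $\pi_1(W)$; this requires controlling the bordism class in $\Omega_3^{SO}(B\Sp_\kappa(V))$, for instance by first replacing $(M,\rho)$ by a cobordant bounding pair (using that $\chi_{1/2}$ is a cobordism invariant and $\Omega_3^{SO}(B\Sp_\kappa(V))$ is finite for $\kappa$ finite). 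On such a $W$, the cup-product pairing $H^2(W,V)\otimes H^2(W,V)\to H^4(W,\kappa)\to\kappa$ is alternating (cup is symmetric on $H^2\otimes H^2$ and $\omega$ is alternating), forcing $\dim H^2(W,V)$ to be even; chasing through the long exact sequence of the pair $(W,M)$ with Lefschetz duality then expresses $\chi_{1/2}(M,V)\pmod 2$ as a sum of even quantities.

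For (4), in characteristic $2$ the symmetric/alternating distinction collapses and the argument above fails; the replacement is a quadratic refinement coming from the Heisenberg extension. A lift $\widetilde\rho:\pi_1(M)\to\ASp_\kappa(V)$ produces, through explicit cocycle manipulations with $\mathcal H$, an $\F_2$-valued quadratic form $q$ on $H^1(\pi_1(M),V)$ whose associated bilinear form is the $\omega$-twisted cup-product pairing; the parity of $\dim_\kappa H^1(\pi_1(M),V)$ is then the Arf invariant of $q$. The class $c_V\in H^3(\ASp_\kappa(V),\F_2)$ is built (following Gurevich-Hadani) as the universal cocycle representative of this Arf invariant, so $\int_M\widetilde\rho^*c_V$ computes it. The hard step throughout is this last identification: verifying at the cocycle level that the pulled-back class $c_V$ correctly measures the Arf invariant of $q$, which I expect requires tying $c_V$ to a Wu-type characteristic class on the self-dual complex $C^*(M,V)$ in characteristic $2$.
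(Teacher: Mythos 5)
Your treatment of (1) and (2) is correct and is essentially the paper's: pass from $\pi_1(M)$ to $M$ via the spectral sequence (isomorphism on $H^1$, injection on $H^2$), then use Poincar\'e duality and the vanishing Euler characteristic. The gaps are in (3) and (4). For (3), your plan requires a compact oriented $4$-manifold $W$ with $\partial W=M$ over $B\Sp_\kappa(V)$, but such a $W$ exists only if the class of $(M,\rho)$ in $\Omega_3^{SO}(B\Sp_\kappa(V))\cong H_3(B\Sp_\kappa(V),\Z)$ vanishes, and this group is typically nonzero. ``Replacing $(M,\rho)$ by a cobordant bounding pair'' is vacuous --- anything cobordant to a bounding pair already bounds --- and finiteness of the bordism group only tells you some multiple of the class dies, a multiple that can be even (e.g.\ $H_3(\SL_2(\F_3),\Z)\cong\Z/24$), which is useless for a mod~$2$ conclusion. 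Even granting cobordism invariance of the semicharacteristic (itself a nontrivial surgery argument, the paper's Lemma~\ref{parity-is-bordism}), you would still have to show that the resulting homomorphism $H_3(B\Sp_\kappa(V),\Z)\to\Z/2$ vanishes, which is essentially statement (3) itself; that evaluation is the real content, and the paper obtains it by a different route: a Heegaard splitting compatible with the surjection, the identification of the parity of $\dim H^1(M,V)$ with the sign of $\det(\sigma,H^1(\Sigma_g,V))$ for the gluing mapping class acting orthogonally, the quaternionic-structure argument in characteristic zero, and descent to odd characteristic through the decomposition and Cartan matrices (whose determinant is an odd prime power). Your intersection-form step is also off as written, since $H^4(W;\kappa)=0$ for $W$ with boundary and the form must be taken on the image of relative in absolute cohomology, but that is secondary to the missing null-bordism.

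For (4), the statement cannot be proved by ``identifying the parity with an Arf invariant'': the parity of $\dim_\kappa H^1(M,V)$ is not the Arf invariant of a quadratic form on $H^1(M,V)$ (there is no natural nondegenerate self-pairing of $H^1(M,V)$ into the top degree of a $3$-manifold), and Gurevich--Hadani supply only the Heisenberg extension, not $c_V$. In the paper the entire content of (4) is the construction of $c_V$: one proves that $\dim H^0(M,V)+\dim H^1(M,V)\bmod 2$ is an oriented bordism invariant of manifolds mapped to $B\ASp_\kappa(V)$ --- the surgery step is exactly where the $\ASp$-lift enters, via a $\Z/4$-valued quadratic refinement on $H^1(S^1\times S^1,V)$ built from the Heisenberg cocycle together with the three-maximal-isotropic-subspaces parity lemma --- and then uses $\Omega_3^{SO}(X)\cong H_3(X,\Z)$ in degree $3$ to define $c_V$ as the corresponding $2$-torsion class in $H^3(\ASp_\kappa(V),\QZ)$. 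Your sketch explicitly defers this verification (``the hard step throughout''), so (4) is not established by the proposal.
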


Properties (1) and (2) are immediate consequences of Poincar\'e duality and the vanishing of Euler characteristics for the cohomology of $M$, and the spectral sequence relating the cohomologies of $M$ and $\pi_1(M)$. Property (3) may be less familiar -- it can be proved using a Heegaard splitting of $M$ and some algebraic arguments. Property (4) is even stranger, and its proof uses cobordism.  We will prove below in Theorem~\ref{closure-characterization} a converse of a strengthening of Theorem~\ref{pi1-properties} showing  that the properties in Theorem~\ref{pi1-properties} exactly describe the closure of the set of the profinite completions of $3$-manifold groups in the set of all profinite groups.  This requires proving existence of certain $3$-manifolds, which we do using a probabilistic method.

Dunfield and Thurston
\cite{DunfieldThurston}
 introduced the idea of considering a random $3$-manifold constructed from a random Heegaard splitting.  Briefly, the Heegaard splitting is given by a random element in the mapping class group of genus $g$ by taking a uniform random word of length $L$ in a set of generators (including the identity),  and then letting $L\ra\infty $ and then $g\ra\infty$. 
Dunfield and Thurston   asked \cite[\S6.7]{DunfieldThurston}  if the distribution of the number of surjections from the random $3$-manifold group to a fixed finite group $Q$ has a limit as $g\ra\infty$.  When $Q$ is a simple group, Dunfield and Thurston proved these statistics have a limiting distribution as $g\ra\infty$, but they were not able to show this for general $Q$.
In \cref{C:dis-num-surj},  we are able to answer their question, and 
 prove that these statistics have a limiting distribution for a general $Q$.  
 We do so  as a consequence of a far more general result showing that the random profinite group $\widehat{\pi_1(M)}$ itself has a limiting distribution, which we describe explicitly, as $g\ra \infty$.  

To describe this  distribution, we need to give a topology on the set of relevant profinite groups.
Let $\mathcal{C}$ be a set of finite groups.  We say a group is level-$\mathcal C$ if is contained in the smallest set of finite groups containing $\C$ and closed under fiber products and quotients. Then for a group $H$, we define $H^{\mathcal{C}}$ 
to be inverse limit of all level-$\mathcal C$ quotients of $H$.  We call $H^{\mathcal{C}}$ the \emph{level-$\mathcal{C}$ 
completion of $H$.} 
For example, if $\mathcal{C}=\{\Z/p\Z\}$, then $\pi_1(M)^{\mathcal{C}}=H_1(M,\Z/p\Z)$.

Let $\operatorname{Prof}$ be the set of isomorphism classes of profinite groups which have finitely many open subgroups of index $n$ for each natural number $n$,
with a topology generated by the basic opens $U_{\mathcal C, G}= \{X \in \Prof \mid X^{\mathcal C} \cong G\}$ for finite sets $\mathcal{C}$ and finite groups $G$.
We then describe the limiting distribution of $\widehat{\pi_1(M)}$ by giving the limiting probabilities that $\pi_1(M)^{\mathcal{C}}\cong G$ for each finite $\mathcal{C}$ and $G$.
\begin{theorem}\label{intro-prob} Let $M$ be the random $3$-manifold described above.  As $L\ra\infty$ and then $g\ra\infty$, the distributions of $\widehat{\pi_1(M)}$
weakly converge to a probability distribution $\mu$ on $\operatorname{Prof}$ such that 
\begin{equation}\label{mu-U-formula} \mu(U_{\mathcal C,G} ) 
=  \frac{  \abs{H_2 (G, \mathbb Z) } \abs{G} }{ \abs{H_1(G,\mathbb Z)} \abs{H_3(G, \mathbb Z)} \abs{ \Aut(G) }}  \sum_{ \tau \colon H^3(G, \QZ) \to \QZ} \prod_{i=1}^n w_{V_i}(\tau) \prod_{i=1}^m w_{N_i}(\tau),\end{equation}
 where the $V_i$, $N_i$ are those kernels of surjections from level-$\mathcal{C}$ groups $H$ to $G$ that are also minimal normal subgroups of $H$
 (so, e.g., the $V_i$ are certain irreducible representations of $G$ over certain $\F_p$),
and the $w_{V_i}(\tau) , w_{N_i}(\tau) $ are  constants defined in terms of the action of $G$ on $V_i$, $N_i$ in
Tables 1 and 2 of
 \cref{ss-lc-notation}, with $W_i$ is the set of all the 
level-$\mathcal{C}$ extensions of $G$ by $V_i$. \end{theorem}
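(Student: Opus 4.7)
The plan is the method of moments: compute $\mathbb{E}[\#\operatorname{Sur}(\pi_1(M),Q)]$ for every finite group $Q$ in the $L\to\infty$, $g\to\infty$ limit, convert into the cylinder probabilities $\Pr[\pi_1(M)^{\mathcal{C}}\cong G]$, and match them with the claimed formula. Since the topology on $\operatorname{Prof}$ is generated by the sets $U_{\mathcal{C},G}$, weak convergence amounts exactly to convergence of each such probability.

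First I would analyze the Dunfield--Thurston construction. Writing $K, K'$ for the normal subgroups of $\pi_1(\Sigma_g)$ killed by the two handlebodies, $\pi_1(M)$ is the quotient of $\pi_1(\Sigma_g)$ by the normal closure of $K\cup\phi(K')$, where $\phi$ is a random word of length $L$ in the mapping class group. Hence surjections $\pi_1(M)\twoheadrightarrow Q$ are in bijection with pairs of surjections from the handlebody groups that agree after pullback by $\phi$. Sending $L\to\infty$, the random $\phi$ equidistributes on each mapping class group orbit in $\operatorname{Hom}(\pi_1(\Sigma_g),Q)$, so the expected number of surjections becomes an explicit sum over orbits weighted by stabilizers and orbit sizes. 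Sending $g\to\infty$ then invokes a stable classification of mapping class group orbits on $\operatorname{Sur}(\pi_1(\Sigma_g),Q)$: in the stable range the orbits are labeled by the image of $[\Sigma_g]$ in $H_2(Q,\mathbb{Z})$, with orbit sizes controlled by $|H_2(Q,\mathbb{Z})|$, $|H_1(Q,\mathbb{Z})|$, $|Q|$, and $|\operatorname{Aut}(Q)|$ --- precisely the factors appearing in the prefactor of~\eqref{mu-U-formula}. More generally, the refined moment twisted by a character $\tau\colon H^3(Q,\mathbb{Q}/\mathbb{Z})\to\mathbb{Q}/\mathbb{Z}$ --- the natural invariant extracted by pairing the 3-manifold fundamental class against pullbacks from $BQ$ --- should factor through the composition series of $Q$ into a product indexed by the irreducible $V_i$ and the minimal normal $N_i$ appearing in level-$\mathcal{C}$ quotients, with local factors that are exactly the $w_{V_i}(\tau), w_{N_i}(\tau)$ of Tables~1 and~2.

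Next I would convert the moment data into cylinder probabilities. For fixed finite $\mathcal{C}$, the set of possible level-$\mathcal{C}$ completions is countable, and the cylinder probabilities are recoverable from the $\#\operatorname{Sur}$-moments by M\"obius inversion on the poset of level-$\mathcal{C}$ quotients of $G$. One must show the resulting alternating sums converge absolutely and assemble to an honest probability measure on $\operatorname{Prof}$; this step is what the authors describe as the first construction of a distribution of random groups from its moments, and it is delicate because, unlike Cohen--Lenstra-type settings, the moments here can grow fast enough that the classical moment problem is not automatic. The clean way through is to exhibit the candidate measure~\eqref{mu-U-formula} directly, verify its moments match the computed limits, and then argue uniqueness level by level from the finiteness of level-$\mathcal{C}$ groups with a given upper bound on order.

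The hardest part will be the refined moment computation --- in particular, isolating the sum over $\tau\in\operatorname{Hom}(H^3(G,\mathbb{Q}/\mathbb{Z}),\mathbb{Q}/\mathbb{Z})$ and verifying the local factorization into the weights of Tables~1 and~2. The even-characteristic symplectic case, where \cref{pi1-properties}(4) introduces the class $c_V\in H^3(\operatorname{ASp}_\kappa(V),\mathbb{F}_2)$, is the most subtle: the corresponding $w_{V_i}$ should encode exactly this obstruction, and identifying it in the moment computation will require a cobordism-theoretic argument paralleling the proof of \cref{pi1-properties}(4), together with a careful spectral-sequence analysis of how $\tau$ sees each layer of the composition series of $Q$.
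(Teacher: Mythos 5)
Your overall architecture (decorated moments from the Dunfield--Thurston model, then inversion from moments to cylinder probabilities) is the right starting point, and your reading of the refined moments as pairing the fundamental class against $BQ$ is essentially the paper's oriented-moment computation (the relevant invariant lives in the bordism group of $BQ$, i.e.\ $H_3(Q,\mathbb Z)$, which is where the extra $\abs{H_3(G,\mathbb Z)}$ in the prefactor comes from). But there is a genuine gap at the step you describe as ``exhibit the candidate measure directly, verify its moments match, and then argue uniqueness level by level from the finiteness of level-$\mathcal C$ groups with a given upper bound on order.'' Uniqueness in the moment problem genuinely fails at this growth rate: for a fixed finite $\mathcal C$ there are infinitely many level-$\mathcal C$ groups (e.g.\ all $V^n\rtimes G$), the moments $\abs{H}\abs{H_2(H,\mathbb Z)}/(\abs{H_1}\abs{H_3})$ grow too fast for moment-determinacy among all distributions on level-$\mathcal C$ groups, and the alternating sums in the M\"obius-type inversion do not converge absolutely in the symplectic cases --- so neither direction of your matching argument goes through as stated. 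The paper's way around this is not a general uniqueness argument but a use of the topological input: Theorem~\ref{pi1-properties}(3)--(4) shows $\widehat{\pi_1(M)}$ only surjects onto ``attainable'' extensions (those satisfying the mod-$2$ constraints on $\dim H^1(\cdot,V_i)$ for A-symplectic $V_i$), which lets the inclusion-exclusion (Lemma~\ref{L:ie}) skip the forbidden extensions and insert the convergence-improving weights $\beta_P$ of $1/q_j$; only after this modification does the sum defining $T_{\bH}$ and the key evaluation in Proposition~\ref{P:evalsum} converge. Your proposal never uses Theorem~\ref{pi1-properties}(3)--(4) in the probabilistic argument (you invoke (4) only to explain the shape of a weight), so the inversion you propose would involve divergent series exactly in the cases where the weights of Table~2 are most interesting.

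Two further steps you gesture at but do not supply would also need real arguments: (i) the exchange of the $g\to\infty$ limit with the infinite sums over groups --- the paper proves a separate convergence theorem (Proposition~\ref{P:robustness-exchange}, via the radical/semisimple decomposition of surjections) showing the limit of the moments equals the moments of any weak limit and that there is no escape of mass; and (ii) the passage from the cylinder values to an actual countably additive measure on $\operatorname{Prof}$ (Carath\'eodory extension plus the total-mass identity), which is where weak convergence is finally established. The spectral-sequence evaluation you correctly flag as the hardest part is indeed needed in either direction, but without the attainability restriction and the $\beta_P$ normalization it computes a divergent sum rather than the product of the $w_{V_i},w_{N_i}$.
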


For example,  let $S$ be a finite set of primes, and  write $\pi_1^S(M)$ for the pro-$S$ completion of $\pi_1(M)$ (i.e. $\pi_1(M)^\C$ for $\C$ the set of all finite groups whose order is a product of powers of primes in $S$).  \cref{intro-prob} implies
\begin{equation}\label{E:Sgroups}
\lim_{g\ra\infty} \lim_{L\ra\infty} \Prob( \pi_1^S(M)  \textrm{ is trivial})=\prod_{p\in S}  \prod_{j=1}^\infty (1+p^{-j})^{-1} \prod_{N}
 e^{- \frac{ |H_2(N,\Z)|}{|\Out(N)|}},
\end{equation}
where the second product is over non-abelian simple groups $N$ whose order is a product of powers of primes in $S$.
One can check using the classification that there are only finitely many simple $S$-groups, and let $\mathcal{C}$ be the set of these groups.
Then $\pi_1^S(M)=1$ if and only if $\pi_1(M)^{\mathcal{C}}=1$, which allows one to deduce \eqref{E:Sgroups} from 
\cref{intro-prob}.

We can see
from the definitions of the $w_{V_i}$ and $w_{N_i}$ 
 that $\mu(U_{\mathcal C,G} )$ is positive if and only if it contains a profinite group satisfying the four conditions of Theorem~\ref{pi1-properties}.
Thus, from this explicit limiting distribution we can determine the closure of $ \{ \widehat{ \pi_1(M)} \}$ in $\operatorname{Prof}$.
This follows a suggestion of Dunfield and Thurston \cite[\S1.6]{DunfieldThurston} to use their model of random $3$-manifolds for existence results of $3$-manifolds with certain properties.    

\begin{theorem}\label{closure-characterization} A group $G \in \operatorname{Prof}$ lies in the closure of the set \[ \{ \widehat{ \pi_1(M)} \mid M \textrm{ a closed, oriented 3-manifold}\}\] if and only if there exists $\tau\colon H^3( G, \mathbb Q/\mathbb Z) \to \mathbb Q/\mathbb Z$ such that, for each irreducible continuous representation $V$ of $G$ over a finite field $\kappa$, 

\begin{enumerate}

\item We have $\dim H^1(G , V) = \dim H^1(G, V^\vee)$.

\item  For each nonzero $\alpha \in H^2( G, V) $ there exists $\beta$ in $H^1( G, V^\vee)$ where $\tau (\alpha \cup \beta)\neq 0$.

\item  If $V$ is a symplectic representation and $\kappa$ has odd characteristic then $\dim_{\kappa} H^1(G , V)$ is even.

\item If $V$ is a symplectic representation, $\kappa$ has even characteristic, and the map $G \to \Sp_\kappa(V)$ lifts to $\ASp_\kappa(V)$, then $\dim_{\kappa} H^1(G , V)$ is congruent mod $2$ to  $2\tau( c_V)$.

\end{enumerate}

\end{theorem}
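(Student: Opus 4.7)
The plan is to prove both directions by combining \cref{pi1-properties} for necessity with \cref{intro-prob} for sufficiency. The substantive content is concentrated in a positivity analysis of the weights $w_{V_i}(\tau)$ and $w_{N_j}(\tau)$ appearing inside the formula \eqref{mu-U-formula}.

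For the necessity direction, suppose $G$ lies in the closure and fix an irreducible continuous representation $V$ of $G$ over a finite field $\kappa$, so that $V$ factors through some finite quotient $G/K$. I would choose a finite set $\mathcal C$ of finite groups large enough that the projection $G \to G^{\mathcal C}$ carries all of the cohomological data needed to state conditions (1)--(4), namely $H^i(G,V)$, $H^i(G,V^{\vee})$, and $H^i(G,\kappa)$ for $i \leq 3$, the cup product pairing $H^2(G,V) \times H^1(G,V^{\vee}) \to H^3(G,\kappa)$, the class $c_V$, and the composition with $\tau$. Concretely, $\mathcal C$ must include enough central and abelian extensions by $V$, $V^{\vee}$, $\kappa$, and $\mathbb Z/p^N$ that the inflation-restriction sequence identifies these invariants of $G^{\mathcal C}$ with those of $G$. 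By hypothesis, there is a closed oriented $3$-manifold $M$ with $\pi_1(M)^{\mathcal C} \cong G^{\mathcal C}$; pulling back $V$ to $\pi_1(M)$ and applying \cref{pi1-properties} with the tautological $\tau = \int_M$ yields the four conditions at the level of $G^{\mathcal C}$, and hence for $G$ by our choice of $\mathcal C$.

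For the sufficiency direction, assume $G$ satisfies (1)--(4) with witness $\tau$. I would show $\mu(U_{\mathcal C, G^{\mathcal C}}) > 0$ for every finite $\mathcal C$; then \cref{intro-prob} produces random $3$-manifolds $M$ with $\pi_1(M)^{\mathcal C} \cong G^{\mathcal C}$ having positive asymptotic probability, so in particular such $M$ exist. Letting $\mathcal C$ range over an exhausting sequence yields a sequence of manifolds whose profinite completions converge to $G$ in $\operatorname{Prof}$. To establish $\mu(U_{\mathcal C, G^{\mathcal C}}) > 0$, I would unpack the summands of \eqref{mu-U-formula}: the weight $w_{N_j}(\tau)$ is always positive, while $w_{V_i}(\tau)$ is designed so that its vanishing is governed by exactly the conditions (1), (3), and (4) applied to the irreducible representation $V_i$; condition (2) ensures that, after summing over $\tau$'s with matching duality behavior, the total does not cancel. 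Since conditions (1)--(4) for $G$ descend immediately to $G^{\mathcal C}$ (the relevant irreducible representations of $G^{\mathcal C}$ are themselves irreducible representations of $G$), the chosen $\tau$ contributes a strictly positive term.

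The main obstacle is the precise verification that the positivity of $w_{V_i}(\tau)$ is equivalent to the conjunction of conditions (1), (3), and (4) at $V_i$, and that condition (2) controls non-cancellation in the sum over $\tau$. This requires carefully unfolding the entries of Tables~1 and~2 in \cref{ss-lc-notation} and matching them against the classical symplectic obstruction appearing in (3) and the affine symplectic obstruction $c_V$ appearing in (4). The even-characteristic symplectic case is the delicate one, because it involves the central $\mathbb Z/4$-extension $\mathcal H$, the lifting criterion from $\Sp_{\kappa}(V)$ to $\ASp_{\kappa}(V)$, and a nontrivial cohomological computation. Once this equivalence between weight positivity and the four conditions is in hand, the remaining topological bookkeeping in $\operatorname{Prof}$--the choice of $\mathcal C$ in the necessity direction and the exhaustion argument in the sufficiency direction--is routine.
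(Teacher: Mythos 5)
Your overall route—necessity from \cref{pi1-properties}, sufficiency from positivity of the limiting measure—is indeed the paper's route, but the steps you declare immediate are exactly where the paper does its real work, and as written they contain genuine gaps. In the sufficiency direction, conditions (1)--(4) for the profinite group $G$ do \emph{not} descend immediately to $G^{\mathcal C}$: the quantities entering the weight $w_{V_i}(\tau)$ are $H^1(G^{\mathcal C},V_i)$, $H^1(G^{\mathcal C},V_i^\vee)$ and the subspace $W_i^\tau$ of level-$\mathcal C$ extension classes annihilated by the cup-product pairing, and positivity of $w_{V_i}(\tau)$ is governed by the ``spatial'' conditions of \cref{def-spatial} (which involve the correction term $\dim W_i^\tau$), not by conditions (1), (3), (4) applied naively to $G^{\mathcal C}$. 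Passing from the hypotheses on $G$ to these conditions on $G^{\mathcal C}$ is precisely \cref{L:12implya}: it uses the fact that $G \to G^{\mathcal C}$ lifts to no level-$\mathcal C$ extension (via \cref{lem-even_ei}, giving $H^1(G,V_i)=H^1(G^{\mathcal C},V_i)$ and injectivity of $W_i \to H^2(G,V_i)$) and, crucially, condition (2) for $G$ to obtain $\dim H^1(G^{\mathcal C},V_i) \geq \dim H^1(G^{\mathcal C},V_i^\vee) + \dim W_i^\tau$. Relatedly, your claim that condition (2) prevents ``cancellation after summing over $\tau$'' misreads the formula: one fixes a single $\tau$ and all weights are nonnegative; condition (2) is what forces $W_i^\tau=0$ (equivalently the displayed inequality), which is what makes the individual weights strictly positive.

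In the necessity direction there are two further gaps. First, there is in general no single finite $\mathcal C$ for which inflation identifies $H^2(G,V)$, $H^3(G,\QZ)$, the cup products, and $c_V$ with the corresponding data of $G^{\mathcal C}$: these groups are direct limits over finite quotients and need not be captured at any one finite level. The paper instead verifies condition (2) one class $\alpha \in H^2(G,V)$ at a time, choosing $\mathcal C$ to contain $V \rtimes A$, $V^\vee \rtimes A$, and the specific finite extension realizing $\alpha$, and extracting the needed vanishing of $W^\tau$ from the spatial condition applied to the dual pair $V, V^\vee$. Second, and more seriously, your construction produces for each $V$ (and each $\mathcal C$) an orientation coming from some manifold, but the theorem demands one $\tau \colon H^3(G,\QZ)\to\QZ$ valid for \emph{all} $V$ simultaneously; different choices of $\mathcal C$ yield different manifolds with a priori incompatible orientations. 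The paper resolves this by a compactness argument: the orientations on $G^{\mathcal C}$ induced by manifolds with $\pi_1(M)^{\mathcal C}\cong G^{\mathcal C}$ form an inverse system of non-empty finite sets, whose inverse limit is non-empty. Without that step (and without an \cref{L:12implya}-type descent from $\pi_1(M)$ to its quotient $G^{\mathcal C}$, which again uses condition (2) of \cref{pi1-properties} rather than mere pullback of $V$), the necessity direction does not close.
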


In particular, we can apply \cref{closure-characterization} to classify all finite groups in the closure of the set of $3$-manifold groups in $\operatorname{Prof}$.
In \cref{obs-class}, we find such groups are either fundamental groups of spherical $3$-manifolds or $Q(8a,b,c) \times \Z/d$, where 
$Q(8a,b,c)$ are certain generalized quaternion groups.

We note that the topology on Prof is the most natural topology from a number of perspectives. For example, it is the topology generated by the open sets $\{X \mid G \textrm{ is a quotient of } X \}$ for finite groups $G$, along with their (open) complements.   In particular, the set of $X$ with $G$ but none of $H_1,\dots,H_n$
as a quotient is open in this topology,  and thus by describing the closure of the set of $3$-manifold groups  in Theorem~\ref{closure-characterization},  we answer the question of whether there is
a $3$-manifold group with $G$ but none of $H_1,\dots,H_n$
as a quotient.

Another natural question is: if $G$ is a finite group and $E_1,\dots, E_m$ are extensions of $G$, does there exist a $3$-manifold $M$ with a surjection $\pi_1(M)\to G$ that doesn't lift to any $E_i$? (The question from the last paragraph is a special case of this one by letting the  $E_i$ be all  subgroups of the $G \times H_i$ whose projections onto both factors are surjective.)  Theorem~\ref{closure-characterization} answers this question in the same sense as above,  but for some sets of extensions there are particularly nice direct answers as well (see \cref{main-existence}).  For example,  if $V$ is an  absolutely irreducible representation of $G$ over a finite field $\kappa$ of odd characteristic, there is an oriented, closed $3$-manifold $M$ where $\pi(M)$ has a surjection to $G$ that does not lift to $V\rtimes G$ if and only if 
$\dim H^1(G , V) \geq \dim H^1(G, V^\vee)$ and condition (3) of Theorem~\ref{closure-characterization} is satisfied.

Our results also can be applied to answer other questions raised by Dunfield and Thurston.  For example, we show in \cref{virtually-fibered} that, for each natural number $n$, the proportion of $3$-manifolds arising from random Heegaard splittings which have a covering of degree $n$ with positive first Betti number goes to $0$ as the genus of the Heegaard splitting goes to $\infty$,  addressing the question set out at the start of \cite[Section 9]{DunfieldThurston}. 
This shows that, in Agol's Virtual First Betti number Theorem \cite{Agol2013}, it is crucial that the finite cover have arbitrarily large degree.

In Section~\ref{SS:torsionlinking}, we show that our results can explain
 the discrepancies noted by Dunfield and Thurston \cite[\S8]{DunfieldThurston} between the homology of a random $3$-manifold and the abelianization of a random group given by generators and random relations (see also \cite[Chapter 7]{Kowalski2008} for further discussion of this contrast), by consideration of the torsion linking pairing. 
 We show that the homology of a random $3$-manifold, along with its torsion linking pairing, has the distribution of the most natural 
distribution on abelian groups with symmetric pairings (arising, e.g., in \cite{Clancy2015a,Clancy2015,Wood2017,Meszaros2020}). 
Let $G$ be a finite abelian $p$-group and let $\ell \colon G \times G \to \mathbb Q/\mathbb Z$ be a symmetric nondegenerate pairing. For a random $3$-manifold $\pi_1(M)$, we show (in \cref{abelian-p-group-prob}) the probability that the $p$-power torsion part of $H_1(M) = \pi_1(M)^{ab}$ is isomorphic to $G$, by an isomorphism sending the torsion linking pairing to $\ell$, is equal to $\frac{1}{ \abs{G} \abs{\Aut(G,\ell)}}$ times a constant $\prod_{j=1}^{\infty} \frac{1}{ 1+ p^{-j}}$ .

\subsection{Previous work and new approaches}

Dunfield and Thurston's introduction of the model of random Heegaard splittings \cite{DunfieldThurston} is a central motivation for our work.  They proved several results on this model, including those mentioned above, and that
the limiting probability of such a manifold having positive first Betti number was 0 \cite[Corollary 8.5]{DunfieldThurston}.
Moreover, the proof of their Theorem 6.21 on the average of $\#\Surj(\pi_1(M),Q)$ (what we would call the ``moments'' of the random group $\pi_1(M)$) is a key input into our Theorem~\ref{intro-prob}.  In this context, the task to prove Theorem~\ref{intro-prob} is to show that (certain refinements of) these averages actually determine entirely the distribution of random groups.  

There is a significant history of work on this ``moment problem'' for random abelian groups.
Heath-Brown \cite{Heath-Brown1994} and Fouvry and Kl\"uners \cite{Fouvry2006} proved and applied moment problem results for random $\F_2$-vector spaces to find the distribution
of Selmer groups of quadratic twists of the congruent number curve, and four ranks of class groups of quadratic fields, respectively.  
See also \cite{Ellenberg2016, Lipnowski2020,Wang2021} for other number theoretic applications of the moment problem for more general random abelian groups.
The second author \cite{Wood2017} proved and applied moment problem results for random finite abelian groups to find the distribution of Jacobians of random graphs.

In the setting of non-abelian groups, Boston and the second author \cite{Boston2017} proved and applied a moment problem result for random pro-$p$-groups, to determine the pro-$p$ completion of fundamental groups of random quadratic function fields as $q$ and the genus go to infinity.  
The first author \cite{Sawin2020} proved a moment problem result for random profinite groups with an action of a fixed finite group.
All of these prior results prove the \emph{uniqueness} aspect of the moment problem, i.e. that two distributions with the same moments are the same (under various hypotheses).  They are applied 
in the setting where one knows some distribution and its moments and wishes to show another distribution agrees because it has the same moments.  In our setting, there was no known conjectural distribution  for the profinite completions of random $3$-manifold groups, and so we have the more challenging task of constructing the distribution from the moments, the \emph{existence} aspect of the moment problem.  
One of the main achievements of this paper is the development of a method that explicitly constructs a distribution on random groups from its moments.  We expect this method can be generalized and will be of use in many other contexts (e.g. see below on our forthcoming work in number theory).
 To our knowledge, this paper is the first that constructs a distribution on groups from given moments.

One of the challenges in this paper is that the moments of $\widehat{\pi_1(M)}$ are in fact too large  not only to apply the results in \cite{Sawin2020} to find that they determine a unique distribution, but in fact they are too large to even  determine a unique distribution in theory.
To overcome this challenge requires two new efforts.  First, we provide a method that proves a nearly optimal results for the non-abelian group moment problem, i.e. it is known that multiple distributions can give the same moments just beyond our growth bound.  Second, we confront cases in which the moments are of a size 
where uniqueness in the moment problem fails.  In these cases, 
we leverage the information from Theorem~\ref{pi1-properties}, which shows that the groups $\widehat{\pi_1(M)}$ have certain properties, and in particular parity properties, that mean that we only seek a distribution on a smaller class of profinite groups.  On this class we are able to prove that the moments determine a unique distribution using the nearly optimal result mentioned above.

  Another major challenge is that the construction of the distribution from the moments involves many infinite alternating sums of group cohomology of general finite groups, and one needs to organize and simplify these sums sufficiently to be able to, e.g. prove analytic bounds on their growth and detect if they are $0$ or not.  

Our proof of Theorem~\ref{pi1-properties} (3) relies on understanding the sign of the action of elements in the mapping class group on a homology group $H^1(\Sigma_g,V)$.  This action is studied  by Grunewald,  Larsen,  Lubotzky, and Malestein \cite{Grunewald2015}, who use it to find quotients of $\pi_1(\Sigma_g)$ that are finite index subgroups of a wide range of arithmetic groups.  However our interest is in whether the action is through a certain index 2 subgroup, information which is lost if one only considers quotients up to finite index.

The parity properties \cref{pi1-properties}(3-4), in the special case where $V$ is an \emph{projective} representation of a quotient $G$ of $\pi_1(M)$, were previously obtained in the topology literature \cite{Lee,Davis1989}, using the language of \emph{semicharacteristics}. The connection to this prior work is explained in \cref{ss-semicharacteristics}. 

Many others have considered the model of random Heegaard splittings introduced by Dunfield and Thurston and proven asymptotic properties of these random $3$-manifolds.
Kowalski has given quantitative results on the first homology groups of random Heegaard splittings \cite[Proposition 7.19]{Kowalski2008}.
Maher \cite{Maher2010} found the distribution of the distance between the disk sets of random Heegaard splittings, and thereby deduced that a random Heegaard splitting 
 is hyperbolic with asymptotic probability 1.
Dunfield and Wong \cite{Dunfield2011} found the distribution of certain topological quantum field theories on a random  Heegaard splitting.
 Rivin \cite{Rivin} determined a large number of properties of these random Heegaard splittings, including asymptotics for the size of the first homology group, their Kneser-Matve’ev complexity, volume, Cheeger constant, and the injectivity radius. 
 Lubotzky, Maher, and Wu \cite{Lubotzky2016} found the growth of splitting distance and distribution of Casson
invariants of random Heegaard splittings, and gave an improved convergence bound for Maher's earlier result of asymptotic hyperbolicity.  
Hamenstaedt and Viaggi \cite{Hamenstaedt2021} have found information on the spectrum of the Laplacian of random Heegaard splittings, including an upper bound on the smallest eigenvalue.  
 Viaggi \cite{Viaggi2021} has found the asymptotic volume of random Heegaard splittings. 
Feller, Sisto, and Viaggi \cite{Feller2020} gave a constructive proof of hyperbolicity for random Heegaard splittings, and use it to find the diameter
growth rate and systole decay, as well as show asymptotically the $3$-manifolds are not arithmetic or in a fixed commensurability class.  
With the exception of \cite{DunfieldThurston,Dunfield2011}, most of this previous work has focused on Heegaard splittings of a fixed genus $g$, so the work is to understand the limit as the random walk on the mapping class groups grows in length.  In contrast, in our work, the main interest and difficultly is the limit as the genus goes to infinity.  
We remark that other models of random $3$-manifolds have also been studied; see \cite[Section 7.4]{Aschenbrenner2015} for an overview of this broad area of work.

In subsequent work of the authors, the methods of this paper have been extended from the study of profinite groups to pro-objects in a wide variety of categories \cite{Sawin2022},  and applied to give conjectures on the distributions of class groups of $G$-extensions of a fixed number field \cite{Sawin2023}, that in particular take into account the roots of unity in the number field.  (As noted by Malle \cite{Malle2008}, 
the original conjectures of Cohen, Lenstra, and Martinet \cite{Cohen1984,Cohen1990} need to be modified under the presence of roots of unity.) 
In a forthcoming paper by the authors, the same methods will be used to give results on the asymptotic distribution of $\pi'_1(C)$, where $C$ is a random 
$G$-cover of $\mathbb{P}^1_{\F_q}$, and $q\ra\infty$ and then the genus of the cover goes to infinity.   
Here $\pi'_1$ denotes the maximal quotient of $\pi_1$ of order relatively prime to $|G|$ and $q$.  This will lead to conjectures on the distributions of non-abelian generalizations of class groups.

\subsection{Outline of the paper}

In Section~\ref{s-3-properties},  we prove Theorem~\ref{pi1-properties} using, largely, methods of algebraic topology.  In Section~\ref{S:cv}, we prove several properties of the class $c_V$ that 
appears in Theorem~\ref{pi1-properties}.

In Section 3, we  review the definition by Dunfield and Thurston of a random model of 3-manifolds, and slightly strengthen a result of Dunfield and Thurston by calculating the expected number of surjections from the fundamental group of a random $3$-manifold to a fixed finite group $Q$ 
(the \emph{$Q$-moments} of the random group)
that send the fundamental class of that manifold to a fixed class in the group homology of $Q$.

In Section 4, we state a general probabilistic theorem (\cref{localized-counting}) that will imply \cref{intro-prob}, and then give a proof of the theorem relying on results which will be proven in the following few sections.  We also give a heuristic description of the approach of our proof to determine a distribution from its moments, and discuss the obstacles that arise.

In Section 5, we prove an inclusion-exclusion formula which expresses 
the number of surjections from a 3-manifold group to a fixed finite group $G$ that satisfy certain conditions (regarding not extending to other surjections)
 as a linear combination of the number of surjections from a 3-manifold groups to other finite groups $H$, without conditions on the surjections. 
Comparing the expectations of both sides of this formula is a crucial step in our construction of a distribution
(whose probabilities are essentially the expectation of the number of surjections to $G$ that don't extend to larger relevant groups) 
 from its moments (which are the expectation of the number of all surjections to $H$).

In Section 6, we show that the limiting $H$-moments of a sequence of random groups, when all such limits exist,  are equal to the $H$-moments of the limiting distribution. This convergence theorem for the moments of a random group is an essential step in our determination of a limiting distribution from the limits of moments.  Additionally, it shows that there is no escape of mass and that the limiting distribution we find is indeed a probability distribution.
We expect our convergence theorem will be useful in many other applications involving random groups. 

 In Section 7, we evaluate a linear combination of the limits of the moments of a random 3-manifold group by algebraic methods, in particular detailed analysis of the Lyndon-Hochschild-Serre spectral sequence.   This is where we find the particular formulas appearing in \cref{intro-prob}.
 This also completes all the ingredients for the proof of \cref{localized-counting}.

In Section~\ref{s-existence}, we  give  general criteria for the existence of $3$-manifold groups with certain finite quotients but not others
and prove \cref{closure-characterization}.  We also deduce from Theorems~\ref{pi1-properties} and \ref{localized-counting} several example results about the existence and non-existence of 3-manifolds with fundamental groups with specific prescribed conditions on their finite quotients.
Finally, we classify finite groups in the closure of the set of $3$-manifold groups in $\operatorname{Prof}$.

In Section 9, we prove the probabilistic results
that follow from \cref{localized-counting}, including \cref{intro-prob}.  
We also give formulas for the distribution of the first homology of a random 3-manifold, along with the torsion linking pairing, and 
the distribution of the maximal $p$-group or nilpotent class $s$ quotient of a random $3$-manifold group.
We show that for each finite $G$, the limiting probability of  a random $3$-manifold group having a $G$-cover with positive first Betti number is $0$.

In Section 10, we discuss questions for further research.

\subsection{Notation}\label{S:Notation}

{\bf Topology:}
We always assume manifolds to be connected. 
All our $3$-manifolds will be  oriented. 
For a $3$-manifold $M$ and a field $\kappa$, we denote the map $H^3 ( \pi_1(M), \kappa) \to H^3(M,\kappa) \to \kappa$ obtained by pullback and integrating against the fundamental class  by $\int_M$.  Moreover,  in the context of a specified map $\pi_1(M)\ra G$,  we also denote the composite map $H^3 (G, \kappa)\ra H^3 ( \pi_1(M), \kappa)\ra \kappa$ by $\int_M$.

Let $H_g$ be a genus $g$ handlebody and $\Sigma_g$ its boundary.  If $\sigma$ is an element of the mapping class group of $\Sigma_g$, it describes a closed, oriented $3$-manifold $M_\sigma$ given by gluing two copies $H_g^1,H_g^2$ of $H_g$ along their boundaries using the map $\sigma$, identifying $x$ on $H_g^1$ with $\sigma^{-1}(x)$ on $H_g^2$.  We take the orientation on $M$ to be the one that restricts to the orientation on $H_g^1$.

{\bf Vector spaces and fields:}
For a vector space $V$ over a field $\kappa$, 
we write $\wedge^2_\kappa V$ for the quotient of $V\tensor_\kappa V$ by the $\kappa$-subfield generated by $v\tensor v$ for each $v\in V$.  When the subscript is omitted, it means $\kappa=\F_p$ for some prime $p$.  
We write $\F_q$ for the finite field with $q$ elements.

{\bf Representations:}
In this paper, when we say $V$ is a representation of $G$, we always mean that $V$ is finite dimensional.

If $V$ is a representation  of a group $G$ over a field $\kappa$, the dual representation is $V^\vee:=\Hom_\kappa(V,\kappa).$
Note if $\kappa$ is a finite field of characteristic $p$, the trace map gives an isomorphism $\Hom_\kappa(V,\kappa)\ra \Hom_{\F_p}(V,\F_p)=
\Hom(V,\QZ)$.
We say $V$ is self-dual if we have an isomorphism of $G$-representations $V\cong V^\vee.$  We occasionally view such a $V$ as a vector space over a subfield $\kappa'$ of $\kappa$ as well, and so when there may be any confusion, we write \emph{$\kappa$-dual} and \emph{$\kappa$-self-dual}.  We say that $V$ is \emph{symplectic} or \emph{$\kappa$-symplectic} if there exists a $G$-invariant alternating, nondegenerate,  $\kappa$-bilinear form on $V$.

If $V$ is a $\F_p$-self-dual irreducible representation of a group $G$ over $\F_p$, and we let $\kappa:=\Hom_G(V,V)$, then $V$ is either $\kappa$-symplectic,
\emph{symmetric} if $V$ is $\kappa$-self-dual but not $\kappa$-symplectic,
or \emph{unitary} if $V$ is not $\kappa$-self-dual.

If $V$ is a representation of $G$ over $\kappa$ 
with a $G$-invariant symmetric,  nondegenerate,  $\kappa$-bilinear form, 
 then we say $V$ is \emph{symmetrically self-dual}.  In odd characteristic,
if $V$ is irreducible and $\kappa=\operatorname{End}_G(V),$ then symmetrically self-dual equivalent to symmetric.

{\bf Groups and homomorphisms:}
For any  abelian group $V$, we write $V^\vee:=\Hom(V,\QZ)$.  

By a homomorphism of profinite groups, we always mean a continuous homomorphism.

We write $\Surj(A,B)$ for the set of surjective morphisms from $A$ to $B$ (in whatever category we are considering $A$ and $B$).

{\bf Affine symplectic group:} 
Let $p$ be a prime and $V$  a vector space  over $\F_p.$
The map $\Phi: \Hom(V\tensor_{\F_p} V , \F_p) \ra \Hom(V\tensor_{\F_p}V , \F_p)$ sending $f\mapsto f-f^t$ (where $f^t(a\tensor b)=f(b\tensor a)$) 
gives a surjection onto the group of alternating (i.e. $f(v,v)=0$ for all $v\in V$) bilinear forms on $V$, and the kernel is the subgroup of symmetric forms.
There is also a map $h:\Hom(V\tensor_{\F_p} V , \F_p) \ra H^2(V,\Z/p^2\Z),$ using the bilinear form as a cochain and the map $\F_p \stackrel{\times p}{\ra} \Z/p^2\Z$.  
One can check that all symmetric forms are in $\ker(h)$ (even when $p=2$), and so $h\Phi^{-1}$ gives a map from alternating bilinear forms on $V$
to $H^2(V,\Z/p^2\Z).$  (Note this is not the same as using the alternating form as a cochain.  Rather, one writes an alternating form $\omega$ as $f-f^t$, and then uses $f$ as a cochain.)  This is the association that is used in the construction of the affine symplectic group (and works as written even if $V$ is also a vector space over a larger finite field).

{\bf Levels:}
For a set $\C$ of groups, we have defined the set of level-$\C$ groups to be what is also known as the \emph{formation} of groups generated by $\C$, i.e. the smallest set of isomorphism classes of groups that contains $\C$ and is closed under taking quotients and fiber products.  (Note fiber products are the same as subdirect products.)  What we call the the level-$\C$ completion, $G^{\C}$, is also known as the pro-$\hat{\C}$ completion, where $\hat{\C}$ is the set of level-$\mathcal{C}$ groups.

Note that for $G$ finitely generated,  $G^{\C}$ is finite 
 (\cite[Corollary 15.72]{Neumann1967}).
We will show that for profinite $G$ with finitely many open subgroups of each index,  $G^{\C}$ is finite (\cref{maximal-quotient-finite}). 
When $G^\C$ is finite,  it is a quotient of $G$ \cite[Lemma 3.2.1]{Ribes2010}, and it is the maximal  quotient of $G$ that is level-$\mathcal{C}$.  

Our definition of level-$\C$ groups is slightly more general than the definition used in \cite{Liu2019,Liu2020}. Previously, one said the level-$\C$ groups are the smallest set of groups containing $\C$ and closed under subgroups, quotients, and products (the variety of groups generated by $\mathcal{C}$). It's possible to check that the level-$\C$ groups in the old sense are the level-$\mathcal D$ groups in the new sense, where $\mathcal D$ consists of all subgroups of groups in $\C$.

Recalling that we define a basic open subset in $\Prof$ to be the set of all $X$ such that the maximal level-$\C$ quotient of $X$ is $G$ for some $\C$ and $G$, we see that every basic open subset with the old definition is a basic open subset in the new definition, but not vice versa (though they induce the same topology). Thus, \cref{intro-prob} giving a formula for the measure of each basic open subset is more powerful with the new definition, motivating the change of notation.

\subsection*{Acknowledgements} 
We would like to thank Ian Agol, Jordan Ellenberg, Ian Hambleton, Emmanuel Kowalski, Yuan Liu, Alan Reid, and Akshay Venkatesh for helpful conversations and comments on the manuscript.
The first author was supported by a Clay Research Fellowship and NSF grant DMS-2101491. The second author was partially supported by a Packard Fellowship for Science and Engineering,  NSF grants DMS-1652116 and DMS-2140043,  the Radcliffe Institute for Advanced Study at Harvard University, and a MacArthur Fellowship.

\section{Properties of the fundamental group of closed $3$-manifolds}\label{s-3-properties}

The goal of this section is to prove Theorem~\ref{pi1-properties}, i.e. to verify certain properties of the group cohomology of representations of 3-manifold groups over finite fields. We start with an easy lemma relating the cohomology of $M$ to that of $\pi_1(M)$. 

\begin{lemma}\label{pi1-to-M} Let $M$ be a manifold, and $V$ a representation of $\pi_1(M)$ over a field $\kappa$. The natural map $H^1 (\pi_1(M), V) \to H^1( M, V)$ is an isomorphism and the natural map $ H^2 ( \pi_1(M), V) \to H^2(M,V)$ is an injection. \end{lemma}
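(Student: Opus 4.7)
The plan is to exploit the standard comparison between the cohomology of $M$ and of its classifying space $B\pi_1(M) = K(\pi_1(M),1)$, using the fact that the local system $V$ is pulled back from $B\pi_1(M)$. There are two essentially equivalent routes; I would take the spectral-sequence route because it makes the vanishing needed for the injectivity in degree $2$ most transparent.

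First I would set up the Cartan--Leray (or Lyndon--Hochschild--Serre) spectral sequence associated to the universal covering $\widetilde{M} \to M$, regarded as a principal $\pi_1(M)$-bundle. Its $E_2$-page is
\[ E_2^{p,q} = H^p\bigl(\pi_1(M), H^q(\widetilde{M}, V)\bigr) \Longrightarrow H^{p+q}(M, V), \]
with the abutment giving the cohomology of $M$ with coefficients in the local system determined by $V$. Because $V$ is pulled back to $\widetilde{M}$ from $M$ and $\widetilde{M}$ is simply connected, the local system restricts to the constant one on $\widetilde{M}$, so $H^0(\widetilde{M},V) = V$ as a $\pi_1(M)$-module, while $H^1(\widetilde{M}, V) = H^1(\widetilde{M}, \kappa) \otimes_\kappa V = 0$ by Hurewicz (since $\pi_1(\widetilde{M}) = 0$ implies $H_1(\widetilde{M}, \mathbb{Z}) = 0$).

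Next, from these vanishings I would extract the standard five-term exact sequence of low-degree terms,
\[ 0 \to H^1(\pi_1(M), V) \to H^1(M, V) \to H^1(\widetilde{M}, V)^{\pi_1(M)} \to H^2(\pi_1(M), V) \to H^2(M, V). \]
Since the middle term $H^1(\widetilde{M},V)^{\pi_1(M)}$ vanishes by the computation above, the first three terms give that $H^1(\pi_1(M),V) \to H^1(M,V)$ is an isomorphism, and the last three give that $H^2(\pi_1(M),V) \to H^2(M,V)$ is injective, as desired.

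There is no genuine obstacle here; the only step requiring a comment is the identification of the edge maps of the spectral sequence with the natural maps induced by the classifying map $M \to B\pi_1(M)$, which is standard. If one prefers a more elementary argument, an alternative is to build $B\pi_1(M)$ from $M$ by attaching cells of dimension $\geq 3$ to kill the higher homotopy groups; then the relative cellular cochain complex $C^*(B\pi_1(M), M; V)$ vanishes in degrees $\leq 2$, and the long exact sequence of the pair yields the same conclusions.
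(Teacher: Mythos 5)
Your proposal is correct and is essentially the paper's own argument: the paper also runs the spectral sequence $H^p(\pi_1(M), H^q(\tilde M, V)) \Rightarrow H^{p+q}(M,V)$ for the universal cover and reads off both claims from the five-term exact sequence, using $H^0(\tilde M, V)=V$ and $H^1(\tilde M,V)=0$. No substantive differences.
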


\begin{proof} Let $\tilde{M}$ be the universal cover of $M$. Then we have a Cartan-Leray spectral sequence whose second page is $H^p ( \pi_1(M), H^q ( \tilde{M}, V))$ converging to $H^{p+q} (M, V)$.  The five-term exact sequence
\[ \hspace{-.1in} 0 \to H^1 ( \pi_1(M), H^0 ( \tilde{M}, V)) \to H^1(M, V) \to H^0 ( \pi_1(M), H^1 ( \tilde{M}, V)) \to H^2 ( \pi_1(M), H^0  ( \tilde{M}, V)) \to H^2(M, V) \]
reduces to an exact sequence
\[ 0 \to H^1( \pi_1(M),V) \to H^1(M, V) \to 0 \to H^2( \pi_1(M), V) \to H^2(M, V)\]
because $H^0 (\tilde{M}, V) = V$ and $H^1(\tilde{M}, V)=0$. This gives both claims. \end{proof}

\cref{pi1-to-M}, together with Poincar\'e duality for $M$, gives \cref{pi1-properties}(2).  We can deduce \cref{pi1-properties}(1) by combining the following lemma with \cref{pi1-to-M}.

\begin{lemma}\label{M-duality} Let $M$ be a closed, oriented $3$-manifold and let $V$ be an irreducible representation of $\pi_1(M)$ over a field $\kappa$. We have \[ \dim H^1(M, V) = \dim H^1(M, V^\vee).\]

\end{lemma}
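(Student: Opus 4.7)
The plan is to combine Poincar\'e duality with local coefficients with the vanishing of the Euler characteristic of $M$. First, I would apply Poincar\'e duality for the closed oriented 3-manifold $M$ with coefficients in the local system $V$, which yields $H^i(M, V) \cong H^{3-i}(M, V^\vee)^\vee$ as $\kappa$-vector spaces; in particular $\dim H^2(M, V) = \dim H^1(M, V^\vee)$ and $\dim H^3(M, V) = \dim H^0(M, V^\vee)$. Separately, since $\chi(M) = 0$ for any closed 3-manifold, and since the Euler characteristic of $M$ with coefficients in a local system of rank $d$ equals $d \cdot \chi(M)$ (computed cell-by-cell from a triangulation), we have $\sum_{i=0}^{3} (-1)^i \dim H^i(M, V) = 0$.

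Substituting the Poincar\'e duality identifications into the Euler characteristic identity yields
\[ \dim H^0(M, V) - \dim H^1(M, V) + \dim H^1(M, V^\vee) - \dim H^0(M, V^\vee) = 0, \]
so it suffices to show $\dim H^0(M, V) = \dim H^0(M, V^\vee)$. Here I would use irreducibility of $V$: the invariant subspace $H^0(M, V) = V^{\pi_1(M)}$ is a subrepresentation of the irreducible $V$ and so is either $0$ or $V$. The latter case forces $\pi_1(M)$ to act trivially on $V$, which combined with $\kappa$-irreducibility of $V$ forces $V \cong \kappa$ with the trivial action. The dual $V^\vee$ is also irreducible (any subrepresentation $W \subseteq V^\vee$ has annihilator $W^\perp \subseteq V$ equal to $0$ or $V$), and $V$ is trivial one-dimensional if and only if $V^\vee$ is, so the two invariant subspaces have the same dimension in every case.

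The only point requiring care is the precise formulation of Poincar\'e duality with local coefficients; I would derive it at the chain level, identifying $\Hom_{\kappa[\pi_1(M)]}(C_\bullet(\widetilde{M}), V)$ (whose cohomology is $H^*(M,V)$) via capping with the fundamental class with the $\kappa$-dual of the chain complex $C_\bullet(\widetilde{M}) \otimes_{\kappa[\pi_1(M)]} V^\vee$ (whose homology is $H_*(M,V^\vee)$), which gives $\dim H^i(M, V) = \dim H^{3-i}(M, V^\vee)$. Beyond this standard identification, no substantive obstacle is anticipated.
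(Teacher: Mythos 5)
Your proposal is correct and follows essentially the same route as the paper: Poincar\'e duality with local coefficients combined with the vanishing of $\chi(M,V)=(\dim V)\chi(M)=0$, with irreducibility used to control the degree-$0$/degree-$3$ terms. The paper merely arranges the same ingredients slightly differently (showing $\dim H^0(M,V)=\dim H^3(M,V)$ directly, deducing $\dim H^1=\dim H^2$, then dualizing once), so there is no substantive difference.
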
 

\begin{proof} Because $V$ is irreducible, we have \[ \dim H^0( M, V) = \dim H^3(M,V) = \begin{cases} 1 & \textrm{if } V \cong \kappa \\ 0 & \textrm{otherwise} \end{cases}.\] Thus $\dim H^2(M, V) - \dim H^1(M,V)=\chi(M, V) =(\dim V)\chi(M) =0.$
Hence by Poincar\'e duality $\dim H^1(M, V) = \dim H^2(M, V)= \dim H^1(M, V^\vee) .$ \end{proof}

We now work towards \cref{pi1-properties}(3). We first prove the relatively straightforward characteristic zero analogue.

\begin{lemma}\label{characteristic-zero-parity} Let $M$ be a closed manifold.
 Let $V$ be a symplectic representation of $\pi_1(M)$ over a field $\kappa$ of characteristic $0$, for which the action of $\pi_1(M)$ factors through a finite group.
Then $\dim_{\kappa} H^1(M, V)$ is even. 
\end{lemma}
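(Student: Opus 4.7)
The plan is first to reduce to the case $\kappa = \mathbb{C}$: choosing an embedding $\kappa \hookrightarrow \mathbb{C}$ and extending scalars preserves cohomology dimensions and extends the symplectic form, so we may assume $\kappa = \mathbb{C}$.

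Let $G$ be the finite quotient of $\pi_1(M)$ through which the action on $V$ factors, and let $\tilde M \to M$ be the corresponding $G$-Galois cover, still a compact manifold. Since $\pi_1(\tilde M)$ acts trivially on $V$ and $|G|$ is invertible in $\mathbb{C}$, the Lyndon--Hochschild--Serre spectral sequence degenerates at $E_2$ by Maschke's theorem, yielding
$$H^1(M, V) \cong (H^1(\tilde M, \mathbb{C}) \otimes_{\mathbb{C}} V)^G.$$
Write $W := H^1(\tilde M, \mathbb{C})$. The crucial extra structure is that $W = H^1(\tilde M, \mathbb{R}) \otimes_{\mathbb{R}} \mathbb{C}$ carries a $G$-equivariant $\mathbb{C}$-antilinear involution $c : W \to W$ with $c^2 = 1$; equivalently, $\overline{W} \cong W$ as $G$-representations.

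Next I would decompose $V = \bigoplus_i V_i^{\oplus n_i}$ into $G$-isotypic components and analyze each according to the Frobenius--Schur type of $V_i$. \emph{Type 2} ($V_i \not\cong V_i^*$): the symplectic form pairs the $V_i$-isotypic component with the $V_i^*$-isotypic component, forcing matched multiplicities $n_i = n_{i^*}$; combining $\overline{V_i} \cong V_i^*$ with $\overline{W} \cong W$ gives $\dim \Hom_G(V_i, W) = \dim \Hom_G(V_i^*, W)$, so the combined contribution of this pair to $\dim (W \otimes V)^G$ is $2 n_i \dim \Hom_G(V_i, W)$, which is even. \emph{Type 1} (orthogonal, $V_i \cong V_i^*$ symmetrically): the invariant form on $V_i$ is symmetric, so the restriction of the symplectic form to $V_i^{\oplus n_i}$ has the shape $\sigma_i \otimes \mu$ with $\mu$ a nondegenerate alternating form on $\mathbb{C}^{n_i}$, forcing $n_i$ even and hence the contribution $n_i \dim \Hom_G(V_i, W)$ is even. \emph{Type 3} (quaternionic): there is a $G$-equivariant $\mathbb{C}$-antilinear $J : V_i \to V_i$ with $J^2 = -1$, and then $c \otimes J$ on $W \otimes V_i$ is $G$-equivariant, $\mathbb{C}$-antilinear, and squares to $-1$, descending to a quaternionic structure on $(W \otimes V_i)^G$ so that this space has even $\mathbb{C}$-dimension.

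Summing over $i$, $\dim_{\mathbb{C}} H^1(M, V) = \dim_{\mathbb{C}} (W \otimes V)^G$ is even. I expect the main obstacle to be the type 3 case, which genuinely requires combining the real structure on $W$ with the quaternionic structure on $V_i$ to manufacture a quaternionic structure on the $G$-invariants; the type 1 and type 2 cases follow comparatively directly from dimension-counting and the symmetry supplied by complex conjugation on $W$.
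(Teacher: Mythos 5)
Your proof is correct, but it takes a genuinely different route from the paper's. The paper never passes to the covering space: it shows that $V$ itself is a quaternionic representation of the finite quotient group --- symplectic irreducible constituents are quaternionic, orthogonal ones must occur with even multiplicity, and a non-self-dual constituent $W$ is paired with $W^\vee$, on which the preserved Hermitian form $H$ furnishes the quaternionic structure $(v,f)\mapsto(-H^{-1}(f),H(v))$ --- and then $H^1(M,V)$ inherits an antilinear endomorphism squaring to $-1$, hence even dimension, with no case analysis at the level of cohomology. You instead pass to the $G$-cover $\tilde M$, use Maschke's theorem to identify $H^1(M,V)\cong\bigl(H^1(\tilde M,\mathbb C)\otimes V\bigr)^G$, and exploit the real structure on $H^1(\tilde M,\mathbb C)$ coming from $H^1(\tilde M,\mathbb R)$; the quaternionic trick then enters only in the type-3 case (via $c\otimes J$), while the complex-type case is handled by the conjugation symmetry $\overline{W}\cong W$ of the cover's cohomology rather than by the Hermitian-form construction, and the orthogonal case by the same even-multiplicity count as in the paper. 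Both arguments are sound; the paper's is more compact, works at the level of $V$ alone, and (as remarked there) applies verbatim to group cohomology of any finitely generated group, whereas your covering-space step uses compactness of $\tilde M$ for finite-dimensionality --- though it too would adapt to group cohomology via the Lyndon--Hochschild--Serre spectral sequence with $\Hom(N,\mathbb R)$ in place of $H^1(\tilde M,\mathbb R)$.

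One small repair to your first step: an arbitrary field $\kappa$ of characteristic $0$ need not embed into $\mathbb C$ (its cardinality may exceed the continuum). As in the paper, first replace $\kappa$ by the subfield generated over $\mathbb Q$ by the matrix entries of the finite group's action (a nondegenerate invariant alternating form still exists over this infinite subfield, since nondegeneracy is a nonempty Zariski-open condition on the space of invariant alternating forms, and dimensions of cohomology are unchanged under extension of scalars), and then embed that finitely generated field into $\mathbb C$.
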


Our proof of \cref{characteristic-zero-parity}  works for group cohomology of a representation $V$ of a finitely-generated group, instead of twisted cohomology of a manifold. Which statement to use is only a matter of preference.

The fact that symplectic representations of finite groups over the complex numbers are quaternionic, crucial in the below proof, was earlier used, in a similar context but with completely different language and notation, by \cite{Davis1989} to control a semicharacteristic invariant. We discuss the relationship between semicharacteristics and our work in \cref{ss-semicharacteristics}.

\begin{proof} We may replace $\kappa$ with the field generated over $\mathbb Q$ by the matrix entries of generators of $\pi_1(M)$ acting on a basis of $V$, and then, embedding this field in $\mathbb C$, we may assume $\kappa = \mathbb C$.
Let $n =\dim V$. Because the action on $V$ factors through a finite group, some Hermitian form is preserved, and because $V$ has a nondegenerate alternating bilinear form, it is standard that if $V$ is irreducible then it has quaternionic structure.  It follows that $H^1(M, V)$ has a quaternionic structure. If $H^1(M,V)$ has dimension $k$ over the quaternions, it has dimension $2k$ over the complex numbers - in particular, this is always even.

If $V$ is not irreducible, then since $V$ has a nondegenerate invariant bilinear form, each irreducible representation $W$ must appear in $V$ the same number of times as $W^\vee$,  and if $W\cong W^\vee$ then either $W$ has an an invariant nondegenerate alternating bilinear form (and thus is quaternionic) or $W$ appears an even number of times in $V$. If $H: W \cong W^\vee$ is the anti-linear morphism given by the preserved Hermitian form on $W$, then $(v,f)\mapsto (-H^{-1}(f),H(v))$ gives a quaternionic structure on $W\times W^\vee$.
Thus $V$ is quaternionic in any case, and the lemma follows.  
\end{proof}

We now reinterpret \cref{characteristic-zero-parity} as a result about the mapping class group, using Heegaard splittings. We first show a suitable mapping class  exists, and then relate the parity to a certain determinant of the mapping class group element acting on a symmetrically self-dual representation. Because the determinant, which is always $\pm1$, is preserved by reduction mod $p$ for odd $p$, we will use this to deduce the odd characteristic case, i.e. Theorem~\ref{pi1-properties} (3).

\begin{lemma}\label{all-handlebodies-are-the-same} 
Let $Q$ be a finite group.  For sufficiently large $g$ the following holds. 
 Let $H_g$ be a handlebody with boundary $\Sigma_g$. Let $*$ be a base point of $\Sigma_g$.
 For two surjections $f_1,f_2\colon \pi_1( H_{g})\to Q$ there is a mapping class of $(\Sigma_g,*)$ that extends to a mapping class of $(H_g,*)$ and sends $f_1$ to $f_2$.\end{lemma}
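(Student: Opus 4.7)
The plan is to reduce the statement to a purely group-theoretic question about generating tuples of $Q$ under the action of $\Aut(F_g)$, and then invoke a classical Nielsen-equivalence result. After fixing an identification $\pi_1(H_g, *) \cong F_g$, the free group of rank $g$, a surjection $f \colon \pi_1(H_g) \twoheadrightarrow Q$ corresponds to, and is determined by, the generating $g$-tuple of $Q$ obtained by evaluating $f$ on a free basis. Two such surjections $f_1, f_2$ differ by composition with an automorphism of $F_g$ if and only if the corresponding generating tuples lie in the same $\Aut(F_g)$-orbit; so the problem is to realize a sufficient collection of elements of $\Aut(F_g)$ by mapping classes of $(H_g,*)$ and to show that the action is transitive on $\Surj(F_g,Q)$ for large $g$.

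The first step is to show that the natural homomorphism from the based handlebody mapping class group $\mathrm{MCG}(H_g, *)$ to $\Aut(F_g)$ is surjective. The unbased version, namely surjectivity of $\mathrm{MCG}(H_g) \twoheadrightarrow \Out(F_g)$, is classical (going back to Nielsen, and made precise by Laudenbach, Suzuki, and McCullough using disk-twist and disk-slide generators). The refinement with the basepoint fixed then follows because point-pushing maps along loops in $\Sigma_g$ based at $*$ that bound disks in $H_g$ realize all inner automorphisms of $F_g$, so the ambiguity between $\Aut(F_g)$ and $\Out(F_g)$ is fully accounted for by mapping classes of $(H_g,*)$.

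The second step is the purely algebraic statement that, for $g$ sufficiently large depending only on $Q$, the group $\Aut(F_g)$ acts transitively on $\Surj(F_g, Q)$; equivalently, any two generating $g$-tuples of $Q$ are Nielsen-equivalent once $g$ is large enough. For $Q$ solvable this is a theorem of Gaschütz (and Dunwoody), valid already for $g \geq d(Q)$. For a general finite $Q$ it is a well-known stabilization result about the product-replacement graph; see, e.g., the survey by Pak. For our purposes any threshold $g_0(Q)$ suffices, so we do not need sharp bounds.

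The main obstacle is the careful bookkeeping around basepoints in the surjection $\mathrm{MCG}(H_g,*) \twoheadrightarrow \Aut(F_g)$, since standard references state the result without basepoint; but this is easily repaired using point-pushing, as indicated above. The Nielsen-equivalence input is the only nontrivial external fact used, and it enters only qualitatively, so the argument is short once the two reductions are in place.
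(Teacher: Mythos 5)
Your overall route is essentially the paper's: the paper proves this lemma by citing Dunfield--Thurston's Proposition 6.25 (transitivity of $\Out(F_g)$ on conjugacy classes of surjections $F_g \to Q$ for $g$ large, i.e.\ exactly the Nielsen-equivalence stabilization you invoke) together with the surjectivity of the handlebody mapping class group onto $\Out(F_g)$, and then passes to the pointed statement and to $\Aut(F_g)$ just as you do. So the two ingredients you isolate are the right ones, and your alternative references for them are acceptable substitutes.

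There is, however, one step that fails as written: you claim that point-pushing maps along based loops in $\Sigma_g$ \emph{that bound disks in $H_g$} realize all inner automorphisms of $F_g$. A loop bounding a disk in $H_g$ is nullhomotopic in $H_g$, so while the corresponding push acts on $\pi_1(\Sigma_g,*)$ by conjugation by that loop, it acts on $\pi_1(H_g,*)=F_g$ by conjugation by the trivial element; such pushes realize only the trivial inner automorphism, so they cannot bridge the gap between $\Out(F_g)$ and $\Aut(F_g)$. The restriction is also unnecessary: the point-push along an arbitrary based loop $\gamma$ in $\Sigma_g$ is the time-one map of an isotopy of $\Sigma_g$ dragging $*$ around $\gamma$, and any isotopy of the boundary extends over the handlebody (collar/isotopy extension), so this push extends to a homeomorphism of $(H_g,*)$ inducing conjugation by the image of $[\gamma]$ on $\pi_1(H_g,*)$. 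Since $\pi_1(\Sigma_g,*)\to\pi_1(H_g,*)$ is surjective, every inner automorphism of $F_g$ is realized, which repairs the step; alternatively, one can realize each Nielsen generator of $\Aut(F_g)$ directly by a handle slide or twist supported away from $*$, avoiding the $\Aut$-versus-$\Out$ bookkeeping altogether. A smaller inaccuracy: Dunwoody's solvable-case Nielsen-equivalence result needs $g > d(Q)$ rather than $g \geq d(Q)$ (at minimal rank there can be several Nielsen classes), but this is immaterial since you only use a qualitative threshold $g_0(Q)$.
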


\begin{proof}This was proven by Dunfield and Thurston in \cite[Proposition 6.25]{DunfieldThurston}, which shows that the outer automorphism group of $\pi_1( H_g)= F_g$, the free group on $g$ generators, acts transitively on surjections  $\pi_1( H_g)\to Q$ up to conjugacy, and the discussion on the same page, which explains that the mapping class group of $H_g$ surjects onto this outer automorphism group. It follows that the pointed mapping class group surjects onto the usual automorphism group of $\pi_1( H_g)$, and
this automorphism group acts transitively on surjections. \end{proof}

\begin{lemma}\label{mapping-class-exists} Let $M$ be a closed $3$-manifold with a fixed base point $*$. Let $Q$ be a finite group, and let $\pi_1(M) \to Q$ be a surjection. Let $M = H_{g}^1 \cup H_{g}^2$ be a Heegaard splitting of $M$ into genus $g$ handlebodies  $H_g^1, H_g^2$ whose intersection is equal to their boundary, a genus $g$ surface $\Sigma_g$ containing $*$.

For $g$ sufficiently large with respect to $Q$, there exists an element $\sigma$ in the pointed mapping class group of $(\Sigma_g,*)$ that preserves the induced surjection $\pi_1(\Sigma_g) \to \pi_1(M) \to Q$ and 
such that $M$ is homeomorphic to $M_\sigma$, the Heegaard splitting associated to $\sigma$, via a homemorphism that is the identity on $\Sigma_g$
(which we have inside $M$ and $M_\sigma$ each by virtue of writing them as a Heegaard splitting).
\end{lemma}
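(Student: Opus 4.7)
The plan is to represent $M$ as $M_{\sigma_0}$ for some $\sigma_0 \in \mathrm{MCG}(\Sigma_g,*)$ and then modify $\sigma_0$ by a pointed handlebody mapping class, supplied by \cref{all-handlebodies-are-the-same}, so that the two handlebody-restricted surjections to $Q$ become equal; once they agree, van Kampen's theorem will automatically force $\sigma_*$ to preserve the induced surjection on $\pi_1(\Sigma_g)$. Concretely, I choose basepoint-preserving, orientation-preserving identifications $\iota_k\colon H_g \xrightarrow{\sim} H_g^k$ for $k=1,2$; these determine $M\cong M_{\sigma_0}$ with $\sigma_0 := (\partial \iota_2)^{-1} \circ \partial\iota_1$, identifying $\Sigma_g$ with $\partial H_g$ via $\partial\iota_1$. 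By van Kampen the surjection $\rho\colon \pi_1(M,*)\to Q$ corresponds to a pair of pointed surjections $\tilde\rho_1,\tilde\rho_2\colon \pi_1(H_g,*)\to Q$ satisfying $\tilde\rho_1 \circ j = \tilde\rho_2 \circ j \circ \sigma_{0*}$, where $j\colon \pi_1(\partial H_g,*) \to \pi_1(H_g,*)$ is the surjection induced by inclusion; the induced surjection on the Heegaard surface is $\bar\rho := \tilde\rho_1 \circ j$.

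For $g$ sufficiently large in terms of $|Q|$, \cref{all-handlebodies-are-the-same} produces a pointed mapping class $\psi \in \mathrm{MCG}(H_g,*)$ with $\tilde\rho_1 \circ \psi_* = \tilde\rho_2$. Let $\bar\psi\in\mathrm{MCG}(\partial H_g,*)$ be its restriction to the boundary, and set $\sigma := \bar\psi \circ \sigma_0$. Left-multiplying the gluing $\sigma_0$ by $\bar\psi$ corresponds to applying $\psi$ to one of the two handlebodies of the splitting, so the underlying $3$-manifold is unchanged: $M_\sigma \cong M_{\sigma_0} \cong M$. To verify preservation, I compute
\[\bar\rho \circ \sigma_* \;=\; \tilde\rho_1 \circ j \circ \bar\psi_* \circ \sigma_{0*} \;=\; \tilde\rho_1 \circ \psi_* \circ j \circ \sigma_{0*} \;=\; \tilde\rho_2 \circ j \circ \sigma_{0*} \;=\; \tilde\rho_1 \circ j \;=\; \bar\rho,\]
where the second equality uses the commutativity $j \circ \bar\psi_* = \psi_* \circ j$ coming from $\psi|_{\partial H_g} = \bar\psi$, the third uses the defining property of $\psi$, and the fourth is the van Kampen relation for $\sigma_0$. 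This establishes both conclusions of the lemma simultaneously.

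The substantive content lives entirely in \cref{all-handlebodies-are-the-same}; the remainder is careful bookkeeping of how the pair $(\tilde\rho_1,\tilde\rho_2)$ transforms when one modifies the gluing by left or right multiplication with a handlebody mapping class. The main subtlety I anticipate is maintaining the basepoint and orientation conventions, in particular ensuring that the pointed version of \cref{all-handlebodies-are-the-same} yields a $\psi$ whose boundary restriction $\bar\psi$ is a \emph{pointed} class in $\mathrm{MCG}(\Sigma_g,*)$, so that $\sigma=\bar\psi\circ\sigma_0$ indeed lies in the pointed mapping class group as required.
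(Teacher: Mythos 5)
Your proof is correct and takes essentially the same route as the paper: both start from a gluing map realizing the given Heegaard splitting and correct it by composing with the boundary restriction of a handlebody mapping class supplied by \cref{all-handlebodies-are-the-same}, which matches the two handlebody surjections without changing the homeomorphism type of the glued manifold. Your version just makes the van Kampen bookkeeping for the pair $(\tilde\rho_1,\tilde\rho_2)$ explicit, where the paper phrases the same correction as sending $f\sigma'$ back to $f$ through $\pi_1(H_g^2)$.
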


\begin{proof} 

Let $\sigma'$ be a  homeomorphism of $(\Sigma_g,*)$ giving the Heegaard splitting.
This element $\sigma'$ may, however, send a surjection $f: \pi_1(\Sigma_g)\to Q$ ({that factors through $\pi_1(M)$}) to a different surjection $f(\sigma')^{-1} :\pi_1(\Sigma_g)\to Q$.  Both these surjections  factor through $\pi_1( H_{g}^2)$, so by \cref{all-handlebodies-are-the-same} there is a mapping class of $\Sigma_g$ that extends to a homeomorphism of $H_g^2$ and sends one to the other, and by composing $\sigma'$ with this mapping class, we obtain the desired $\sigma$.
\end{proof}

Conversely, given a finite group $Q$, a surface $\Sigma_g$ of genus $g$ with a base point $*$,  a surjection $f\colon \pi_1 (\Sigma_g) \to Q$, a handlebody $H_g$ with boundary $\Sigma_g$ such that $f$ factors through $\pi_1(H_g)$, and a mapping class $\sigma$ of $(\Sigma_g,*)$ preserving $f$, $H_g \cup_{\Sigma_g} \sigma(H_g)$ is a $3$-manifold with a surjection from its fundamental group to $Q$.

Given a surjection $f: \pi_1(\Sigma_g) \to Q$ and a representation $V$ of $Q$, mapping classes $\sigma$ of $(\Sigma_g, *)$ that preserve $f$ act on $H^i( \Sigma_g, V)$ for all $i$. 
Now we show the parity of $\dim_\kappa H^1(M, V) $ is determined by the sign of the determinant of a mapping class group element on $H^1(\Sigma_g, V)$.

\begin{lemma}\label{parity-is-sign} Let $M$ be a closed $3$-manifold, expressed as $H^1_g \cup_{\Sigma_g} H^2_g$ for $H^i_g$  copies of a handlebody $H_g$ of genus $g$ and $\sigma$ a mapping class of the boundary $\Sigma_g$ of $H_g$, fixing a base point $*$ in $\Sigma_g$.

 Let $f\colon \pi_1( \Sigma_g) \to Q$ be a surjection which factors through $\pi_1(H^1_g)$ and is preserved by $\sigma$, so that $f$ descends to a surjection $\pi_1(M) \to Q$.

Let $V$ be a symplectic representation
 of $Q$ over a field $\kappa$, of characteristic not equal to $2$. 

Then $\dim_\kappa H^1(M, V) + \dim_\kappa H^0(M, V) $ 
is even if  and only if $\det ( \sigma, H^1(\Sigma_g, V))$ is $+1$ and odd if  and only if $\det ( \sigma, H^1(\Sigma_g, V))$ is $-1$. 
\end{lemma}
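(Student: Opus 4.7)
The plan is to compute the cohomology of $M$ from the Heegaard splitting via Mayer--Vietoris and then reduce the parity statement to a classical fact about the orthogonal group acting on its Lagrangian Grassmannian. Thickening the handlebodies $H_g^1, H_g^2 \subset M$ to an open cover whose intersection deformation retracts onto $\Sigma_g$, the Mayer--Vietoris sequence for the local system $V$, combined with the vanishing $H^{\geq 2}(H_g^i, V) = 0$ (each handlebody is homotopy equivalent to a graph), yields $H^0(M,V) = V^Q$ and an exact sequence
\[ 0 \to H^1(M, V) \to L_1 \oplus L_2 \to W \to H^2(M, V) \to 0, \]
where $W := H^1(\Sigma_g, V)$ and $L_i$ is the image of the restriction $H^1(H_g^i, V) \to W$ (which is injective because $H^1(H_g, \Sigma_g; V) \cong H_2(H_g, V) = 0$ by Poincar\'e--Lefschetz duality). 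In particular $\dim H^1(M,V) = \dim(L_1 \cap L_2)$, and by construction $L_2 = \sigma^* L_1$.

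Next, I equip $W$ with the bilinear form obtained by composing cup product with the symplectic pairing on $V$ and integration against the fundamental class of $\Sigma_g$. Since cup product on $H^1$ of a surface is graded-antisymmetric and the symplectic form on $V$ is antisymmetric, the two signs cancel and the resulting form on $W$ is \emph{symmetric}; nondegeneracy follows from Poincar\'e duality for $\Sigma_g$ together with the $Q$-isomorphism $V \cong V^\vee$ provided by the symplectic form. The subspace $L_1$ is isotropic because the cup product of any two elements of $H^1(H_g, V)$ already vanishes in $H^2(H_g, V \otimes V) = 0$ on the handlebody, and it is half-dimensional by comparing the dimension formulas for $L_1$ and $L_1^\perp$ via Poincar\'e--Lefschetz duality using $V \cong V^\vee$. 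Thus $L_1$ and $L_2 = \sigma^* L_1$ are Lagrangian subspaces of the orthogonal space $W$.

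The key input is then a classical fact about orthogonal groups: for the action of $O(W)$ on the Grassmannian of Lagrangians of a (necessarily split) nondegenerate symmetric bilinear form in characteristic $\neq 2$, two Lagrangians $L, L'$ lie in the same $SO(W)$-orbit if and only if $\dim L - \dim(L \cap L')$ is even; equivalently, for every $\tau \in O(W)$,
\[ \det(\tau, W) = (-1)^{\dim L - \dim(L \cap \tau L)}. \]
Applied with $L = L_1$ and $\tau = \sigma$, this gives $\det(\sigma, W) = (-1)^{\dim L_1 + \dim H^1(M,V)}$.

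To finish, compute $\dim L_1 = \dim H^1(H_g, V) = (g-1)\dim V + \dim V^Q$ from the Euler characteristic of $F_g = \pi_1(H_g)$ acting on $V$. Because $V$ is $\kappa$-symplectic with $\chr(\kappa) \neq 2$, $\dim V$ is even, so $(g-1)\dim V$ is even and $\dim L_1 \equiv \dim V^Q = \dim H^0(M, V) \pmod 2$. Combining yields $\det(\sigma, W) = (-1)^{\dim H^0(M,V) + \dim H^1(M,V)}$, as claimed. The main obstacle I expect is establishing the orthogonal-group fact in the precise form stated above; I would prove it by reducing to a Witt basis, verifying the identity for a single reflection against an anisotropic vector (which has determinant $-1$ and, for a suitably chosen Lagrangian, changes $\dim(L \cap \tau L)$ by exactly one), and then extending to all of $O(W)$ by the Cartan--Dieudonn\'e theorem.
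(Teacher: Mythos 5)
Your proposal is correct and takes essentially the same route as the paper's proof: the symmetric nondegenerate form on $H^1(\Sigma_g,V)$ built from cup product, the symplectic pairing and integration, the Mayer--Vietoris identification of $H^1(M,V)$ with the intersection of the Lagrangian $H^1(H_g,V)$ with its $\sigma$-translate, and the classical fact that $\det(\sigma)$ detects the parity of that intersection (which the paper cites from Conrad's notes rather than reproving via Cartan--Dieudonn\'e). The only cosmetic difference is that you conclude via $\dim L_1 \equiv \dim H^0(M,V) \pmod 2$, whereas the paper uses $\dim H^1(\Sigma_g,V) \equiv 2\dim H^0(M,V) \pmod 4$; these are equivalent.
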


\begin{proof} We have a non-degenerate bilinear form $H^1(\Sigma_g, V) \times H^1 (\Sigma_g, V)  \to \kappa$ by taking the cup product, using the symplectic structure of $V$, and integrating. Since the cup product on $H^1$ is alternating and $V$ is symplectic, this is a symmetric bilinear form, which also gives a quadratic form by evaluation on the diagonal. 
By Poincar\'e duality, it is non degenerate. Because $\sigma$ preserves this symmetric bilinear form, it must have determinant $\pm 1$. 

We can check that $H^1(H^i_g,V)$ is a subspace of $H^1(\Sigma_g, V)$ for $i=1,2$ using the long exact sequence of a pair.
Then $H^1( H^i_g, V)\subset H^1(\Sigma_g, V)$ is an isotropic subspace for this quadratic form because the cup product of two elements of $H^1(H^i_g, V)$ lies in $H^2(H^i_g,V^{\tensor 2})$, which vanishes. We have \[\dim H^1( H^i_g, V) =(g-1) \dim V + \dim H^0(H^i_g,V)= (g-1) \dim V+ \dim V^Q \] and \[\dim H^1(\Sigma_g, V) = (2g-2) \dim V + 2 \dim H^0(\Sigma_g, V) = (2g-2) \dim V+ 2 \dim V^Q\] by Euler characteristic computations, so $H^1( H^i_g, V)$ is a maximal isotropic subspace. The Mayer-Vietoris sequence gives a long exact sequence
\[  H^0 ( M, V) \to H^0(H^1_g, V)  \oplus H^0 (H^2_g, V) \to H^0 (\Sigma_g, V) \] \[\to H^1(M, V) \to H^1 ( H^1_g, V) \oplus H^1(H^2_g,V) \to H^1 (\Sigma_g, V).\]

The maps $H^0(M, V) \to H^0(H^i_g,V) \to H^0(\Sigma_g, V)$ are induced by the natural maps $V^{ \pi_1(\Sigma_g) } \to V^{ \pi_1(H^i_g)} \to V^{\pi_1(M)}$, which are isomorphisms because the maps $\pi_1(\Sigma_g) \to \pi_1(H^i_g) \to \pi_1(M)$ are surjective. Hence the initial part $ H^0 ( M, V) \to H^0(H^1_g, V)  \oplus H^0 (H^2_g, V) \to H^0 (\Sigma_g, V) $ is itself short exact.

Furthermore, we have $H^1 (H^2_g,V) =\sigma ( H^1(H^1_g,V))$ as a subspace of $H^1(\Sigma_g, V)$. So we have an exact sequence 
\[ 0 \to H^1(M, V) \to H^1 ( H^1_g, V) \oplus \sigma ( H^1(H^1_g,V)) \to H^1 (\Sigma_g, V).\] 
In other words, $H^1(M,V)$ is the intersection of the maximal isotropic subspace $H^1(H^1_g, V)$ with its image under $\sigma$. 
The result then follows from the fact that  $\dim H^1(\Sigma_g, V) = (2g-2) \dim V + 2\dim H^0(M,V) $ is congruent to $2 \dim H^0(M,V)$ modulo  $4$ 
and the general observation that for an even-dimensional quadratic space $W$, an orthogonal automorphism $\sigma$, and a maximal isotropic subspace $S$, we have $\dim (S \cap \sigma(S) ) \equiv \frac{ \dim (W)}{2} \mod 2$ if $\det ( \sigma, W)=1$ and $\dim (S \cap \sigma(S) ) \equiv 1+ \frac{ \dim (W)}{2} \mod 2$ if $\det ( \sigma, W)=-1$ \cite[Example T.3.5 (see also Corollary T.3.4 and Theorem L.3.1)]{ConradNotesGroups}.
\end{proof}

\begin{lemma}\label{characteristic-zero-sign}  Let $\Sigma_g$ be a Riemann surface of genus $g$. Let $Q$ be a finite group. Let $V$ be a symplectic representation of $Q$ over $\mathbb C$.
Let $f\colon \pi_1(\Sigma_g)\to Q$ be a surjection. Suppose that $f$ factors through $\pi_1(H_g)$ for some handlebody $H_g$ with boundary $\Sigma_g$.
 Let $\sigma$ be a mapping class of $\Sigma_g$, together with its base point $*$, that preserves $f$.
  Then \[ \det ( \sigma, H^1( \Sigma_g, V))=1 .\] 
  \end{lemma}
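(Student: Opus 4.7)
My plan is to deduce this from Lemmas \ref{characteristic-zero-parity} and \ref{parity-is-sign} together with an elementary observation about invariants of symplectic representations of finite groups in characteristic $0$. Given the hypotheses, the mapping class $\sigma$ preserves $f$ and $f$ factors through $\pi_1(H_g)$, so the construction $M_\sigma := H_g \cup_\sigma H_g$ yields a closed oriented $3$-manifold together with a descent of $f$ to a surjection $\pi_1(M_\sigma) \to Q$, making $V$ into a symplectic representation of $\pi_1(M_\sigma)$ whose action factors through the finite group $Q$.

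First I would apply Lemma \ref{characteristic-zero-parity} to $M_\sigma$ and $V$ (with $\kappa = \mathbb{C}$) to conclude that $\dim_{\mathbb{C}} H^1(M_\sigma, V)$ is even. Next, since $\operatorname{char} \mathbb{C} = 0 \neq 2$, Lemma \ref{parity-is-sign} applies and tells us that $\det(\sigma, H^1(\Sigma_g, V)) = +1$ precisely when $\dim_{\mathbb{C}} H^1(M_\sigma, V) + \dim_{\mathbb{C}} H^0(M_\sigma, V)$ is even. Since the first summand is already even, the whole statement reduces to checking that $\dim_{\mathbb{C}} H^0(M_\sigma, V) = \dim_{\mathbb{C}} V^Q$ is even.

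The final step is to observe that the $Q$-invariant symplectic form $\omega$ on $V$ restricts to a nondegenerate form on $V^Q$, so that $V^Q$ is itself symplectic and thus even-dimensional. Indeed, for $w \in V^Q$, the functional $\phi_w \colon v \mapsto \omega(w,v)$ is $Q$-invariant (as $\omega(w, gv) = \omega(g^{-1}w, v) = \omega(w,v)$), so by semisimplicity of $V$ in characteristic $0$ it vanishes on the sum of all nontrivial isotypic components; hence if $\omega(w, V^Q) = 0$ then $\omega(w, V) = 0$ and so $w = 0$ by nondegeneracy of $\omega$. This gives nondegeneracy of $\omega|_{V^Q}$, so $\dim V^Q$ is even and the proof is complete.

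I do not expect a genuine obstacle: once one recognizes that Lemma \ref{parity-is-sign} requires parity information about $\dim H^0$ as well as $\dim H^1$, the symplectic-invariants argument is the only additional input needed, and the sign $\det(\sigma, H^1(\Sigma_g, V))$ is by construction equal to $+1$. The one subtlety worth flagging is that we cannot avoid the $\dim V^Q$ term, so the proof genuinely uses semisimplicity of $V$, which is why the argument is cleanest in characteristic zero.
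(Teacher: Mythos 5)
Your proof is correct and follows essentially the same route as the paper, whose entire argument is to apply \cref{characteristic-zero-parity,parity-is-sign} to the manifold $M_\sigma = H_g \cup_\sigma (H_g)$. The only difference is that you make explicit the needed evenness of $\dim_{\mathbb C} H^0(M_\sigma,V)=\dim V^Q$ (via nondegeneracy of the symplectic form restricted to $V^Q$), a detail the paper's one-line proof leaves implicit, and your argument for it is correct.
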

 
 \begin{proof}
Since $V$ is symplectic and over $\mathbb{C}$, we have that $\dim H^0(M,V)=\dim V^Q$ is even.
  The lemma follows from applying \cref{characteristic-zero-parity,parity-is-sign} to the manifold $M = H_g \cup \sigma(H_g)$.
 \end{proof} 
 
 We can now extend \cref{characteristic-zero-sign} from characteristic zero to odd characteristic:
 
 \begin{lemma}\label{odd-characteristic-sign}  Let $\Sigma_g$ be a Riemann surface of genus $g$. Let $Q$ be a finite group. Let $V$ be a 
 self-dual  representation of $Q$ over a finite field $\kappa$.
Let $f\colon \pi_1(\Sigma_g)\to Q$ be a surjection. Suppose that $f$ factors through $\pi_1(H_g)$ for some handlebody $H_g$ with boundary $\Sigma_g$.
 Let $\sigma$ be a mapping class of $\Sigma_g$, together with its base point $*$, that preserves $f$.
  Then \[ \det ( \sigma, H^1( \Sigma_g, V))=1 .\] 
  \end{lemma}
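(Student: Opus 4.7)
The plan is to reduce the claim to \cref{characteristic-zero-sign} by lifting $V$ to characteristic zero, first exploiting the structure imposed by self-duality over an odd characteristic field.

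Self-duality of $V$ provides a nondegenerate $Q$-invariant bilinear form $B$ on $V$, and composing the cup product with $B$ yields a nondegenerate bilinear form $b$ on $H^1(\Sigma_g, V)$ preserved by $\sigma$ (using Poincar\'e duality on $\Sigma_g$ for nondegeneracy). Because the cup product on $H^1$ of an oriented surface is alternating, $b$ is alternating when $B$ is symmetric and symmetric when $B$ is alternating. In the first case $\sigma$ acts as a symplectic transformation and $\det(\sigma) = 1$ automatically, so we may assume $V$ admits no $Q$-invariant nondegenerate symmetric form. Poincar\'e duality on $\Sigma_g$ also identifies $H^1(\Sigma_g, W^{\vee})$ with $H^1(\Sigma_g, W)^{\vee}$ as $\sigma$-modules, so $\det(\sigma)$ on a hyperbolic pair $W \oplus W^{\vee}$ is automatically $1$. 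Combining these two observations, the problem reduces to the case in which $V$ is a direct sum of copies of a single irreducible symplectic representation, and multiplicativity of $\det$ over direct sums then further reduces us to the case that $V$ itself is irreducible symplectic.

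For such $V$, I would lift $V$ to a symplectic representation $\tilde V$ of $Q$ on a free module over the ring of Witt vectors $W(\kappa')$ of a sufficiently large finite extension $\kappa'$ of $\kappa$ (which does not affect $\det(\sigma) \in \{\pm 1\}$). Embedding $W(\kappa')[1/p]$ in $\mathbb{C}$ and applying \cref{characteristic-zero-sign} to the generic fiber then gives $\det(\sigma, H^1(\Sigma_g, \tilde V \otimes \mathbb{C})) = 1$. A universal coefficients argument transfers this equality to $\det(\sigma, H^1(\Sigma_g, V \otimes \kappa')) = 1$ in the residue characteristic, after which a field-extension argument yields the claim over $\kappa$.

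The hard part is producing the symplectic lift $\tilde V$: the obstruction to lifting the homomorphism $Q \to \mathrm{Sp}(V)$ from $\kappa$ to $W(\kappa')$ lives in $H^2(Q, \mathrm{sp}(V))$ and need not vanish when $p$ divides $|Q|$. One bypass is to enlarge $V$ by symplectic hyperbolic summands (each contributing $1$ to the determinant by the first paragraph) until the enlarged representation does lift; another is to use Brauer-theoretic lifting of the Brauer character of $V$ to a virtual ordinary character, tracking the Frobenius--Schur indicator, which is additive across the decomposition matrix and invariant under reduction modulo an odd prime. Either route turns the existence of a suitable symplectic characteristic-zero avatar of $V$ into the only technical difficulty in the argument.
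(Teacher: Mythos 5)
Your overall strategy---reduce to irreducible self-dual constituents, pass to characteristic zero, and invoke \cref{characteristic-zero-sign}---is the same strategy the paper uses, but the step you yourself flag as ``the only technical difficulty'' is exactly where the real content lies, and neither of your two bypasses closes it. Route (a), enlarging $V$ by hyperbolic summands until an honest symplectic lift to $W(\kappa')$ exists, is unsubstantiated: there is no general reason a non-liftable modular representation becomes liftable as a \emph{module} after adding summands, and you give no mechanism that would produce such a lift. Route (b), lifting the Brauer character of $V$ to a virtual ordinary character, is the right idea, but as stated it misses the two properties that make the argument work. Since the determinant is only known to be $\pm 1$, you need the virtual lift to reduce to an \emph{odd} multiple of $[V]$, and you simultaneously need its coefficients to be symmetric under duality, so that the uncontrolled determinants attached to non-self-dual characteristic-zero irreducibles cancel in dual pairs (individually they are only of absolute value $1$, not $\pm1$). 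Surjectivity of the decomposition map gives $[V]$ itself, but symmetrizing a lift under duality produces $2[V]$, an even multiple, which tells you only $\det^2=1$---vacuous. Your claim that the Frobenius--Schur indicator is ``additive across the decomposition matrix and invariant under reduction'' is not correct (symmetric ordinary irreducibles can reduce to combinations involving symplectic modular ones and vice versa), and in any case it does not supply the cancellation mechanism. The paper resolves precisely this point with modular representation theory: the matrix $D^{T}C^{-1}(\det C)$ (with $D$ the decomposition matrix and $C=DD^{T}$ the Cartan matrix) has integer entries, is equivariant under duality, and reduces to $(\det C)[V]$, where $\det C$ is a power of the odd characteristic and hence odd; this is the missing ingredient in your sketch.

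Two smaller issues: your first-paragraph reduction decomposes $V$ into direct summands (hyperbolic pairs, copies of an irreducible symplectic piece), which is not available in the modular case since $V$ need not be semisimple; the correct reduction, as in the paper, uses multiplicativity of $\det(\sigma,H^1(\Sigma_g,\,\cdot\,))$ in short exact sequences (via the long exact sequence, with $H^0$ and $H^2$ not contributing) to pass to composition factors, where non-self-dual factors pair off with their duals. Also, the lemma allows $\kappa$ of characteristic $2$; there the statement is immediate because Poincar\'e duality gives $\det(\sigma,H^1(\Sigma_g,V))=\det(\sigma,H^1(\Sigma_g,V^\vee))^{-1}$, so the determinant squares to $1$ and $-1=1$, a case your write-up silently omits.
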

 
 \begin{proof}  We will use algebraic properties of $\det ( \sigma, H^1( \Sigma_g, W))$ that hold for a general representation $W$ of $Q$ over an arbitrary field $\kappa$. Note that $\det ( \sigma, H^1( \Sigma_g, W))=\det ( \sigma, H^1( \Sigma_g, W^\vee))^{-1}$
  by Poincar\'e duality (which completes the proof of the lemma in characteristic 2). We now assume the characteristic of $\kappa$ is  odd.

  First, for an exact sequence $0 \to W_1 \to W_2 \to W_3 \to 1$ of representations of $Q$, we have \[ \det ( \sigma, H^1( \Sigma_g, W_2))=  \det ( \sigma, H^1( \Sigma_g, W_1)) \det ( \sigma, H^1( \Sigma_g, W_3)).\] This follows from the long exact sequence on cohomology and the fact that $H^0 (\Sigma_g, W)= W^Q$ and $H^2(\Sigma_g,W)=W_Q$ are fixed by $\sigma$. 
 Thus, this determinant defines a homomorphism from the representation ring to $\kappa^\times$. 
 
 Second, note that $\det ( \sigma, H^1( \Sigma_g, W))$ is preserved when we reduce a representation from characteristic zero to characteristic $p$. More precisely, if $Q$ acts on a free module $M$ over a local ring $R$ with residue field $\kappa$ of characteristic $p$ and fraction field $K$ of characteristic $0$,  we have
$\det ( \sigma, H^1( \Sigma_g, M\tensor_R \kappa))= \det ( \sigma, H^1( \Sigma_g, M\tensor_R K))$.  This is because alternating products of determinants on cohomology are preserved by change of coefficients and, again, $H^0$ and $H^2$ don't contribute.
 
We note that the determinant is preserved under extension of the field of scalars.  Thus we may assume $\kappa$ is a splitting field for  $Q$, and that $\kappa$ is a residue field at some prime $\wp$ of a number field $K$ which is  a splitting field for all subgroups and quotients of $Q$.  Further, since the composition series of a self-dual representation contains all non-self-dual irreducibles in equal multiplicity with their duals, it suffices to prove the theorem for irreducible self-dual representations.
 
 If $R$ is the ring of elements of $K$ with positive valuation at $\wp$,  every representation of $Q$ over $K$ has a $K$-basis such that the action of $Q$ is given by matrices over $R$, and thus can be reduced modulo $\wp$ to a representation of $Q$ over $\kappa$ \cite[Theorem 73.6]{CurtisReiner}.  
 Though the reduced representation over $\kappa$ is not unique, its multiset of isomorphism classes of composition factors is unique \cite[Theorem 82.1]{CurtisReiner}.
 It will thus suffice for us to find irreducible representations $W_i$ of $Q$ over $K$ and coefficients $c_i\in \Z$ such that this reduction sends $\prod_i W_i^{c_i}$ to 
a representation over $\kappa$ whose multiset of composition factors is an odd number 
of copies of $V$ and such that \[ \prod_i \det ( \sigma, H^1(\Sigma_g, W_i))^{c_i}= 1.\]
 
 To do this, we use some ideas from modular representation theory, specifically, the decomposition matrix and the Cartan matrix of $Q$. The decomposition matrix $D$ is defined as the matrix with one column for each irreducible representation over {$K$} and one row for each irreducible representation over $\kappa$, with the entry giving the multiplicity of the
 {representation over $\kappa$ in the composition series of the reduction (as above) of the representation over $K$}. The Cartan matrix $C$ is defined as $D D^T$. The key fact we need is that $\det C$ is a power of {the characteristic of $\kappa$}, and in particular is odd \cite[Theorem 84.17]{CurtisReiner}.  

Thus  \[ D^T C^{-1}  (\det C)\] is a matrix with integral entries. We let $W_i$ be all the irreducible representations of $Q$ over $K$ and let $c_i$ be the entry of  $D^T C^{-1}  (\det C)$ in the row corresponding to $W_i$ and the column corresponding to $V$.  Then the reduction of $\sum_i c_i[W_i]$ mod $p$ is $ (\det C)[V]$ because \[ D D^{T} C^{-1} (\det C) = (\det C) C C^{-1} = (\det C) I \] by definition. Because $(\det C)$ is odd, it remains to calculate
\begin{equation}\label{reduction-p-product} \prod_i \det ( \sigma, H^1(\Sigma_g, W_i))^{c_i}.\end{equation}
Because reduction mod $p$ commutes with duality, the matrix $D$ is preserved by swapping rows and columns with the rows and columns corresponding to dual representations. Because this holds for $D$, it holds for $C$, and thus for $D^T C^{-1}(\det C)$. Because $V$ is self-dual, it follows that $c_i =c_j$ if $W_i \cong 
\Hom(W_j,\kappa)$.  Because  $H^1(\Sigma_g, W_j)$ and $H^1(\Sigma_g, \Hom(W_j,\kappa))$ have inverse determinants, the contributions of each representation to this product cancels the contribution of its dual, leaving only the self-dual representations.

For the 
symmetrically self-dual
representations $W_i$, cup product and integration define a symplectic form on $H^1 ( \Sigma_g, W_i)$, which is preserved by $\sigma$, ensuring that the determinant of $\sigma$ is $1$.
For the symplectic representations, it follows from \cref{characteristic-zero-sign}.  Since $K$ is a splitting field of characteristic $0$, every self-dual irreducible representation of $Q$ is either symmetrically self-dual or symplectic.
Hence all the factors cancel, so the product \eqref{reduction-p-product} is $1$, and thus the determinant of $\sigma$ acting on $H^1(\Sigma_g, V)$ is $1$, as desired.  
 \end{proof}

 \begin{lemma}\label{odd-characteristic-parity} Let $M$ be a closed, oriented $3$-manifold. Let $V$ be a symplectic representation
  of $\pi_1(M)$ over a finite field $\kappa$ of odd characteristic. 
Then $\dim_{\kappa} H^0(M, V)+\dim_{\kappa} H^1(M, V)$ is even. \end{lemma}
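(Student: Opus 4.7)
The plan is to deduce the lemma by chaining together \cref{mapping-class-exists}, \cref{odd-characteristic-sign}, and \cref{parity-is-sign}. First, since $V$ is finite-dimensional over a finite field, the image of $\pi_1(M)$ in $\operatorname{GL}(V)$ is a finite group $Q$, and the action factors through a surjection $\pi_1(M) \twoheadrightarrow Q$, so I can regard $V$ as a symplectic representation of $Q$. By \cref{mapping-class-exists}, for $g$ large enough depending on $|Q|$, I may realize $M \cong H_g \cup_\sigma H_g$ for some mapping class $\sigma$ of $(\Sigma_g, *)$ preserving the induced surjection $f\colon \pi_1(\Sigma_g) \to Q$, with $f$ factoring through $\pi_1(H_g)$.

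Next, I would observe that because $V$ is symplectic, the invariant alternating form supplies a $Q$-equivariant isomorphism $V \cong V^\vee$, so in particular $V$ is self-dual. Then \cref{odd-characteristic-sign} applies and yields $\det(\sigma, H^1(\Sigma_g, V)) = 1$. Since $\kappa$ has odd characteristic, I may then invoke \cref{parity-is-sign}, which combined with the determinant computation says that $\dim_\kappa H^1(M, V) + \dim_\kappa H^0(M, V)$ is even.

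Finally, I would argue that $\dim_\kappa H^0(M, V) = \dim_\kappa V^Q$ is even. In the intended application (\cref{pi1-properties}(3), where $V$ is irreducible), $V^Q$ is a subrepresentation, so is either $0$ or $V$; the latter is impossible since the trivial $1$-dimensional representation admits no nondegenerate alternating form. Thus $V^Q = 0$, and the evenness of $\dim_\kappa H^1(M, V)$ follows. The conceptual heart of the argument is really \cref{odd-characteristic-sign}, which uses modular representation theory (decomposition matrix, odd Cartan determinant) to lift the sign computation from the characteristic-zero case \cref{characteristic-zero-sign}; the present lemma is then a bookkeeping consequence of that together with the Heegaard-splitting linear algebra in \cref{parity-is-sign}. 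The only subtlety to flag is that in odd characteristic dividing $|Q|$, $V^Q$ need not be even-dimensional in general for non-irreducible $V$, so irreducibility (or an analogous hypothesis) is what makes the last step go through cleanly.
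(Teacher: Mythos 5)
Your argument is the same as the paper's: the published proof of \cref{odd-characteristic-parity} is exactly the two-line combination of \cref{mapping-class-exists}, \cref{parity-is-sign}, and \cref{odd-characteristic-sign} that you carry out (factor the action through the finite image $Q$, realize $M$ by a Heegaard splitting compatible with the surjection, read off the parity from the determinant). The one place you go beyond the paper is in handling the $\dim_\kappa H^0(M,V)$ term that \cref{parity-is-sign} leaves in the parity statement, and your closing caveat is not mere pedantry: without irreducibility (or some hypothesis forcing $\dim_\kappa V^{\pi_1(M)}$ to be even), the conclusion as stated can genuinely fail. For example, take $M=L(p,1)$ a lens space, so $\pi_1(M)\cong \Z/p$, and let $V$ be the two-dimensional unipotent representation over $\F_p$ (one Jordan block); it carries the invariant determinant form, hence is symplectic, yet $H^1(M,V)\cong H^1(\Z/p,V)$ is one-dimensional, with $\dim H^0(M,V)=1$, so only the sum $\dim H^1(M,V)+\dim H^0(M,V)$ is even. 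Since the paper only ever invokes the lemma for the irreducible $V$ of \cref{pi1-properties}(3), where, as you note, $V^{Q}=0$ because a one-dimensional space admits no nondegenerate alternating form, your proof establishes everything that is actually used; the statement that follows directly from \cref{parity-is-sign} together with \cref{odd-characteristic-sign} is the evenness of $\dim H^1(M,V)+\dim H^0(M,V)$, exactly as you present it.
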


\begin{proof} Because $\kappa$ is finite, $V$ necessarily factors through a finite group $Q$. Choose a Heegaard splitting of $M$ whose genus is sufficiently large that \cref{mapping-class-exists} can be applied. The result then follows from \cref{parity-is-sign} and \cref{odd-characteristic-sign}.
\end{proof}

Combining Lemmas \ref{pi1-to-M} and \ref{odd-characteristic-parity} and the fact that for an irreducible symplectic (and thus non-trivial) representation $V$ of $\pi_1(M)$ we have $H^0(M, V)=0$, 
we conclude \cref{pi1-properties}(3).

We now turn to the characteristic $2$ case of parity in order to prove \cref{pi1-properties}(4). Our proof of the parity condition in this case relies on cobordism. Rather than directly compute $\dim H^1(M,V) \mod 2$, we will show that $\dim H^1(M,V) \mod 2$ depends only on the class of $M$ in a certain bordism group. We then calculate this bordism group.

For a CW complex $X$, the \emph{oriented bordism group of $X$} in dimension $n$ is the group generated by classes $[M]$ for each $n$-dimensional oriented (smooth, closed) manifold with a map to $X$, subject to the relations $[M \cup N ] = [M]+[N]$ and $[M]=0$ if $M$ is the boundary of an $n+1$-dimensional closed, oriented manifold with boundary endowed with a map to $X$ \cite{AtiyahBordism}.
We will be interested in the case when $X = BG$ for a finite group $G$ (as described in \cite[\S2]{ConnerFloyd}). The same argument will work for infinite discrete groups. 

For the next lemma, we extend the definition of $\ASp_\kappa(V)$ from finite fields $\kappa$ of characteristic $2$ to arbitrary perfect fields $\kappa$ of characteristic $2$ by defining $\mathcal H$ to be the central extension of $V$ by $\mathbb Z/4$ with extension class corresponding to the composition of the symplectic form with a fixed nonzero group homomorphism $T \colon \kappa \to  2\Z/4\Z$ and $\ASp_{\kappa}(V)$ to be the group of automorphisms of $\mathcal H$ acting trivially on $\mathbb Z/4$ and $\kappa$-linearly on $V$.

Note if $\kappa$ is a finite field, then $T$ is the composition of multiplication by $\lambda\in \kappa$ and the trace map.  Then the isomorphism $\lambda^{1/2}:V\ra V$ gives an isomorphism between the two Heisenberg groups and thus an isomorphism between their automorphism groups. In particular, in the finite field case, we may as well assume $T$ is the trace.

\begin{lemma}\label{parity-is-bordism} Let $n$ be a natural number and let $V$ be an even-dimensional representation of a discrete group $G$ over a  field $\kappa$ which has an invariant nondegenerate $\kappa$-bilinear form  
that is symmetric if $n$ is even and is symplectic if $n$ is odd. If $\kappa$ has characteristic $2$, we also assume that $n=1$, and $\kappa$ is perfect, and the homomorphism $G \to \Sp_\kappa(V)$ lifts to the affine symplectic group $\ASp_\kappa(V)$. 

Then the map that sends a $2n+1$-dimensional closed oriented smooth manifold $M$ with a map to $BG$ to 
$\sum_{i=0}^n \dim_\kappa H^i ( M, V) \mod 2$ depends only on the class of $M$ in the oriented bordism group of $BG$, and 
\[ [M] \to \sum_{i=0}^n \dim_\kappa H^i ( M, V) \mod 2\] defines a homorphism from this oriented bordism group  of $BG$ to $\mathbb Z/2$. 

\end{lemma}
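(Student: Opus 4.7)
The plan is to establish two things: additivity of the assignment under disjoint union, which is immediate from $H^i(M_1 \sqcup M_2; V) \cong H^i(M_1; V) \oplus H^i(M_2; V)$; and invariance under cobordism, which amounts to showing that whenever $M = \partial W$ for an oriented $(2n+2)$-manifold $W$ equipped with a map to $BG$ extending the one on $M$, the sum $\sum_{i=0}^n \dim_\kappa H^i(M; V)$ is even.

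For the cobordism step I would set $\alpha_i = \dim H^i(W; V)$, $\beta_i = \dim H^i(W, M; V)$, $\gamma_i = \dim H^i(M; V)$, and let $r_i$ denote the rank of the map $H^i(W, M; V) \to H^i(W; V)$ appearing in the long exact sequence of the pair $(W, M)$. The $G$-invariant nondegenerate bilinear form on $V$ gives $V \cong V^\vee$ as $G$-representations, so Lefschetz duality yields $\beta_i = \alpha_{2n+2-i}$; in particular $\alpha_{2n+2} = \beta_0 = 0$ because $W$ is connected with nonempty boundary, and also $r_0 = 0$. Exactness supplies $\gamma_i = \alpha_i - r_i + \beta_{i+1} - r_{i+1}$, and summing over $0 \le i \le n$ (so that the interior $r_i$'s appear twice and drop mod $2$) gives
\[
\sum_{i=0}^n \gamma_i \;\equiv\; \sum_{i=0}^{2n+2} \alpha_i + r_{n+1} \;\equiv\; \chi(W) \cdot \dim V + r_{n+1} \pmod 2.
\]
Since $\dim V$ is even by hypothesis, the problem reduces to showing that $r_{n+1}$ is even.

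The integer $r_{n+1}$ is the rank of $j \colon H^{n+1}(W, M; V) \to H^{n+1}(W; V)$. Via Lefschetz duality together with the form on $V$, this map corresponds to a bilinear pairing $B$ on $H^{n+1}(W; V)$ obtained from the cup product and the form on $V$, whose rank is exactly $r_{n+1}$. Graded commutativity of the cup product in degree $n+1$ contributes a sign $(-1)^{(n+1)^2} = (-1)^{n+1}$, which multiplied with the symmetry type of the form on $V$ gives $-1$ in both permitted cases (symmetric form with $n$ even, symplectic form with $n$ odd). Thus $B$ is skew-symmetric, and over a field of odd characteristic a skew-symmetric bilinear form has even rank, finishing that case.

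The main obstacle is the characteristic-$2$ case with $n=1$: here $B$ is symmetric, but in characteristic $2$ this is not enough to force even rank, and one must additionally verify that $B$ is alternating, i.e.\ $B(\alpha, \alpha) = 0$ for all $\alpha \in H^2(W; V)$. This is exactly where the lift $G \to \ASp_\kappa(V)$ enters: the Heisenberg extension $0 \to \Z/4 \to \mathcal H \to V \to 0$ encodes a $\Z/4$-valued quadratic refinement of the symplectic form on $V$, and the hypothesized lift lets one promote this to a cochain-level quadratic refinement on $C^2(W;V)$ whose linearization recovers the symplectic form. A Pontryagin-square / Wu-type diagonal argument then forces $\alpha \cup \alpha = 0$ in $H^4(W; \kappa)$ for every $\alpha \in H^2(W; V)$, so $B$ is alternating and $r_2$ is even. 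Carrying out this cochain-level refinement precisely---and checking it is natural under the gluing used to present $W$---is where the genuine technical work of the proof lies.
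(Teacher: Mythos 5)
Your reduction is a genuinely different route from the paper's: instead of decomposing the cobordism into surgeries via a Morse function and comparing the two sides of each surgery (the paper's method, which then needs the Mayer--Vietoris bookkeeping and the quoted fact about three maximal isotropic subspaces), you work directly with a coboundary $W$ of $M$, run the long exact sequence of the pair against Lefschetz duality, and identify the parity of $\sum_{i=0}^n\dim H^i(M,V)$ with the parity of the rank $r_{n+1}$ of $H^{n+1}(W,M;V)\to H^{n+1}(W;V)$, i.e.\ with the rank of the middle-dimensional pairing $B(x,y)=\int_{[W,M]}\omega(x\cup y)$. In characteristic $\neq 2$ this is complete and correct: the sign bookkeeping makes $B$ skew in both permitted cases (symmetric form, $n$ even; symplectic form, $n$ odd), skew implies alternating away from characteristic $2$, and alternating forms have even rank. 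Minor bookkeeping you should make explicit: discard closed components of $W$ so that $H^0(W,M;V)=H_0(W,M;V)=0$, and note that $B$ is really defined on $H^{n+1}(W,M;V)$ (or on the image of $j$ via relative lifts), since the absolute cup square lands in $H^{2n+2}(W;\kappa)=0$.

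The genuine gap is the characteristic-$2$, $n=1$ case, which is the case this lemma exists for (it is what produces $c_V$ and Theorem \ref{pi1-properties}(4)). There $B$ is symmetric, and even rank requires the alternating property $B(x,x)=\int_{[W,\partial W]}\omega(x\cup x)=0$ for $x\in H^2(W,\partial W;V)$; this is exactly where the $\ASp_\kappa(V)$ hypothesis must enter, and you have not proved it. Your sketch mis-states the target: ``$\alpha\cup\alpha=0$ in $H^4(W;\kappa)$'' is vacuous for a connected $4$-manifold with boundary, since that absolute group vanishes; what must vanish is the evaluation of the relative self-cup against the relative fundamental class, so any quadratic refinement you build has to be compatible with the pair $(W,\partial W)$. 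More substantively, the Heisenberg extension $1\to\Z/4\to\mathcal H\to V\to 1$ naturally refines \emph{degree-one} classes: the lift to $\ASp_\kappa(V)$ gives a boundary-type map $H^1(X;V)\to H^2(X;\Z/4)$, which is how the paper uses it (applied to $H^1$ of the $2$-torus appearing in the $k=n=1$ surgery, then integrated, then upgraded from $T\omega$ to $\omega$ by perfectness of $\kappa$). Your approach instead needs a \emph{degree-two}, $V$-coefficient Pontryagin-square-type operation $H^2(W,\partial W;V)\to H^4(W,\partial W;\Z/4)$ refining $x\mapsto T\omega(x\cup x)$, constructed from the $\ASp$-lift (equivalently from an invariant $\Z/4$-refinement of $T\omega$), together with a proof that its evaluation on $[W,\partial W]$ vanishes or at least is even. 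That construction and verification is of comparable difficulty to the paper's entire characteristic-$2$ argument, and you explicitly defer it; as it stands the proposal proves the lemma only away from characteristic $2$. (Note the statement you need is true --- it is equivalent, via your own reduction, to the lemma --- but truth is not a proof; either carry out the cochain-level refinement with the relative compatibility, or fall back on a surgery decomposition as in the paper so that the Heisenberg extension can be applied in degree one.)
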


\begin{proof}

 Because $\sum_{i=0}^n \dim H^i ( M, V) \mod 2$ is certainly additive under disjoint unions of the manifold, if it is a well-defined function then it is automatically a homomorphism.

Let $M_1$ and $M_2$ be two manifolds that are cobordant, i.e. there is a manifold $M'$ whose boundary is $M_1 \cup M_2$, with the orientation on $M_2$ reversed. We follow the standard procedure to express a cobordism as a series of Dehn surgeries, checking that it works in the presence of a map to $BG$. We can define a function $f$ on $M'$ that takes the value $0$ on $M_1$, $1$ on $M_2$, and values in $(0,1)$ on all other points of $M'$. Perturbing $f$, we can assume that $f$ is a Morse function, i.e. has only simple critical points, and takes different values on these critical points.

The level sets of $f$ between the critical values are then manifolds, so by induction among these manifolds we may assume that $f$ has exactly one critical point. Then $M_1$ and $M_2$ can be related by Dehn surgery, i.e., removing a submanifold of $M_1$ of the form $S^k \times B^{2n+1-k}$ and then gluing a submanifold of the form $B^{k+1} \times S^{2n-k}$ onto the same $S^k \times S^{2n-k}$ boundary to obtain $M_2$. Furthermore, because the Dehn surgery arises from the local geometry of $M'$ near the critical point, we can trivialize the $G$-bundle near the critical point and so assume that it is trivial on $S^k \times S^{2n-k}$.

We can relate the cohomology of the manifold before and after removing $S^k \times B^{2n+1-k}$ using the Mayer-Vietoris  sequence. Letting $U$ be the complement 
of $S^k \times B^{2n+1-k}$ in $M_1$, we have a long exact sequence
\[ H^i (M_1, V) \to H^i ( U, V) \oplus H^i ( S^k \times B^{2n+1-k}, V) \to H^i (S^k \times S^{2n-k}, V) .\]
Because $V$ is trivial on $S^k \times B^{2n+1-k}$, we have $H^i ( S^k \times B^{2n+1-k}, V)= V$ if $i=0$ or $k$ and $0$ otherwise (unless $k=0$, in which case it is $V \oplus V$ if $k=0$). Similarly, $H^i (S^k \times S^{2n-k}, V) $ vanishes unless $i= 0,k,2n-k, 2n$. In particular, if $ k\neq n$ then $H^n ( S^k \times S^{2n-k},V)=0$, so truncating the long exact sequence in degree $n$ and taking Euler characteristics, we obtain
\[ \sum_{i=0}^n (-1)^i \dim H^i (M_1,V)  + \sum_{i=0}^{n-1} (-1)^i \dim H^i ( S^k \times S^{2n-k}, V) \] \[ = \sum_{i=0}^n (-1)^i \dim H^i (U, V) + \sum_{i=0}^n (-1)^i \dim H^i ( S^k \times B^{2n+1-k}, V).\]

Since $\dim H^i ( S^k \times S^{2n-k}, V) $ and $\sum_{i=0}^n (-1)^i \dim H^i ( S^k \times B^{2n+1-k}, V)$ are divisible by $\dim V$, which is divisible by $2$, we obtain 
\[ \sum_{i=0}^n (-1)^i \dim H^i (M_1,V)  \equiv  \sum_{i=0}^n (-1)^i \dim H^i (U, V) \mod 2.\]
Applying the same argument to $M_2$, swapping $k$ and $2n-k$, we obtain the equality mod $2$ in the case $k \neq n$.

In the more difficult case $k=n$, we obtain by the same argument the equality
\[ \sum_{i=0}^n (-1)^i \dim H^i (M_1,V)  + \sum_{i=0}^{n} (-1)^i \dim H^i ( S^k \times S^{2n-k}, V) \]
\[-  (-1)^n \dim \coker ( ( H^n (U, V) \oplus H^n ( S^n \times B^{n+1}, V) ) \to H^n (S^n \times S^n, V) ) \]
\[= \sum_{i=0}^n (-1)^i \dim H^i (U, V) + \sum_{i=0}^n (-1)^i \dim H^i ( S^k \times B^{2n+1-k}, V).\]
Subtracting the analogous formula for $M_2$, it suffices to check that 
\begin{equation}\label{parity-surgery} \begin{split} & \dim \coker ( ( H^n (U, V) \oplus H^n ( S^n \times B^{n+1}, V) ) \to H^n (S^n \times S^n, V) ) \\  &\equiv \dim \coker ( ( H^n (U, V) \oplus H^n ( B^{n+1} \times S^n , V) ) \to H^n (S^n \times S^n, V) ) \mod 2 .\end{split}\end{equation}

Let $\omega$ be the nondegenerate form from the hypothesis.
Under the identification $H^n(S^n\times S^n,V)\cong V\times V,$ there is a natural quadratic form $Q$ on 
$H^n(S^n\times S^n,V)$ given by $Q(v_1,v_2)=\omega(v_1,v_2)$. 
In particular, this quadratic form has associated bilinear form $B_Q( (v_1,v_2),(w_1,w_2))=\omega(v_1,w_2)+\omega(w_1,v_2)$. Since $\omega$ is nondegenerate, $B_Q$ is nondegenerate.  Thus, (by definition) the quadratic form $Q$ is nondegenerate.  

We note  that $H^n ( S^n \times B^{n+1}, V) $ and $ H^n ( B^{n+1} \times S^n , V) $ are complementary maximal isotropic subspaces of $H^n (S^n \times S^n, V)$ for the quadratic form $Q$.
If we knew that $\im (H^n (U, V) \ra H^n (S^n \times S^n, V))$ was also maximal isotropic, we would obtain the parity condition \eqref{parity-surgery}. Indeed, it would follow from the fact  \cite[Example T.3.5]{ConradNotesGroups} that  for $W$ a vector space with a nondegenerate quadratic form of dimension divisible by $4$, $W_1, W_2$ complementary maximal isotropic subspaces, and $W_3$ a third maximal isotropic subspace, \[ \dim W / ( W_1 +W_3) \equiv \dim W / (W_2 + W_3) \mod 2.\]

It remains to verify that $\im (H^n (U, V) \ra H^n (S^n \times S^n, V))$ is maximal isotropic.
To find the dimension of the  image of $H^n(U, V)$, we can use the long exact sequence on relative cohomology to obtain
\[ H^0_c(U,V) \to H^0(U,V) \to H^0(S^n \times S^n, V)\to H^1_c(U,V) \to \]
\[ \dots \to  H^{2n} (S^n \times S^n, V)\to H^{2n+1} _c(U, V) \to H^{2n+1}(U,V).\]
By Poincar\'e duality, the first term in this sequence is dual to the last, and the second term dual to the second-from last, and so on.
 Thus the rank of the first map in this sequence is equal to the rank of the last map, and inductively the rank of the $r$th map is equal to the rank of the $r$th from last map, so the two maps
\[ H^n(U, V)\to H^n(S^n\times S^n, V) \to H^{n+1}_c ( U,V) \] have equal rank, and thus the image of $H^n(U, V)$ in  $H^n(S^n\times S^n, V)$ has half the dimension of $H^n(S^n\times S^n, V)$.
It remains to check that the image of $H^n(U, V)$ is isotropic, 
 which we do separately in the even and odd characteristic cases by giving additional formulas for the quadratic form.

We can also check that the cup product map, followed by $\omega$, followed by integration:
$$
H^n(S^n\times S^n,V) \times H^n(S^n\times S^n,V) \stackrel{\cup}{\ra} H^{2n}(S^n\times S^n,V\otimes V) \stackrel{\omega}{\ra}
H^{2n}(S^n\times S^n,\kappa)\ra \kappa
$$
is exactly $B_Q$.  Since the map $H_{2n}(\delta U,\kappa)\ra H_{2n}(U,\kappa)$ is the zero map, we see that $B_Q$ vanishes on
$H^n(U,V)\times H^n(U,V)$, since the cup product of two classes in the image of $H^n(U, V)$ lies in the image of $H^{2n}(U, V\otimes V)$ and thus, after applying $\omega$, lives in the image of $H^{2n}(U,\kappa)$, which vanishes.  In characteristic not $2$, this implies that 
$H^n(U,V)$ is isotropic for $Q$.

When $n=1$, we can give another construction of the quadratic form as follows.  Let $\bar{\beta}$ be a bilinear form on $V$ such that $\bar{\beta}  - \bar{\beta}^T=\omega$, and let $\beta$ be the composite of $\bar{\beta}$ with a homomorphism $T: \kappa \ra R$ for any abelian group $R$.
Let $H$ be the group extension of $V$ by $R$ corresponding to the
element in $H^2(V,R)$ whose co-cycle is given by $\beta$.  Consider the map
$$
H^1(S^1\times S^1,V)\ra H^2(S^1\times S^1,R)\ra R,
$$
given by the extension $H$ followed by integration.  If we identify $H^1(S^1\times S^1,V)\cong V \times V$  and $H^2(S^1\times S^1,\kappa)\ra \kappa$ (via integration), then we can compute that  the above map  sends $(v,v')\in V\times V$ to $\beta(v_1,v_2)-\beta(v_2,v_1)$, which is $T\omega(v_1,v_2)$, i.e. the composite  map  $H^1(S^1\times S^1,V)\ra R$ is $TQ$.
If the action of $\pi_1(U)$ on $V$ can be extended to an action on $H$ that is trivial on $R$, then by the same consideration as above (integration on $U$ over $H_{2}(\delta U,R)$ is 0), this realization of the quadratic form allows us to see that $TQ$ is $0$ on $H^n(U,V)$.  
When the characteristic of $\kappa$ is $2$, we apply this with $R=\Z/4\Z$ and $T$ the fixed nonzero linear form $\kappa \to \mathbb Z/2 \cong 2\mathbb Z/4$, so the group $\ASp_\kappa(V)$ gives automorphisms of $H$ trivial on $R$. 
We have $TQ(x)=0$ for all $x\in H^n(U,V)$, and for $\lambda\in \kappa$, we have an element $\sqrt{\lambda}\in \kappa$, and $T(\lambda Q(x))=TQ(\sqrt{\lambda} x)=0$.   Since $T$ is nonzero, we have that $Q(x)=0$ for all $x\in H^n(U,V)$, as desired. 
\end{proof}

\begin{lemma}\label{bordism-is-homology} For any CW complex $X$, the natural map from the oriented bordism group of $X$ in dimension $3$ to the homology $H_3( X, \mathbb Z)$ that sends a closed, oriented $3$-manifold $M$ to the fundamental class of $M$ inside $X$ is an isomorphism.

\end{lemma}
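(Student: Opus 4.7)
The plan is to prove this via the Atiyah--Hirzebruch spectral sequence for the oriented bordism homology theory, after first checking that the stated map is well-defined and additive.

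First I would verify well-definedness. Given a closed oriented $3$-manifold $M$ with $f\colon M \to X$ representing a bordism class, any bordism $W$ with $\partial W = M$ and $F\colon W \to X$ extending $f$ forces $f_*[M] = 0$ in $H_3(X)$: by the long exact sequence of the pair $(W, M)$, the relative fundamental class $[W, \partial W] \in H_4(W, M)$ maps under the connecting homomorphism to $[M] \in H_3(M)$, so the image of $[M]$ in $H_3(W)$ is zero, and hence its image in $H_3(X)$ via $F_*$ is zero. Additivity under disjoint union is immediate from the fact that the fundamental class of $M_1 \sqcup M_2$ is $[M_1] + [M_2]$.

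For bijectivity I would invoke the Atiyah--Hirzebruch spectral sequence for oriented bordism,
\[ E_2^{p,q} = H_p(X, \Omega_q^{SO}) \Rightarrow \Omega_{p+q}^{SO}(X), \]
where $\Omega_*^{SO}$ denotes the oriented bordism coefficient ring. The crucial input from Thom's classical computation is that $\Omega_0^{SO} = \mathbb Z$ and $\Omega_1^{SO} = \Omega_2^{SO} = \Omega_3^{SO} = 0$. Hence along the antidiagonal $p+q = 3$, every $E_2$ term vanishes except $E_2^{3,0} = H_3(X, \mathbb Z)$. All differentials entering or leaving $E_2^{3,0}$ at any page have source or target in groups of the form $H_p(X, \Omega_q^{SO})$ with $q \in \{1,2,3\}$, which are zero. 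Therefore $E_\infty^{3,0} = H_3(X, \mathbb Z)$, and the bordism filtration on $\Omega_3^{SO}(X)$ has a single nonzero subquotient, yielding an isomorphism $\Omega_3^{SO}(X) \cong H_3(X, \mathbb Z)$. The edge homomorphism of the spectral sequence is exactly the map $[M \xrightarrow{f} X] \mapsto f_*[M]$, so this identifies the map in the statement as the claimed isomorphism.

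The main obstacle is that the AHSS is being used as a black box; the substantive content of the proof is really Thom's calculation that the first three positive-dimensional oriented bordism groups of a point vanish. If one wished to avoid the spectral sequence, one could argue directly: surjectivity would be proved by taking a simplicial cycle representing a class in $H_3(X)$, thickening the dual $3$-complex to a closed oriented $3$-manifold mapping to $X$ via standard transversality/regular neighborhood constructions; injectivity would use the vanishing of low-dimensional oriented bordism to fill in manifolds whose fundamental class is null-homologous. Either route reduces to the same underlying Thom-theoretic computation, and for the purposes of this paper quoting the AHSS is the cleanest.
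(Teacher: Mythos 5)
Your proof is correct and follows essentially the same route as the paper: both use the Atiyah--Hirzebruch spectral sequence for oriented bordism, the vanishing of $\Omega_q^{SO}$ for $q=1,2,3$ to kill all other terms and differentials on the $p+q=3$ line, and the standard identification of the edge homomorphism with the fundamental-class map (which the paper simply cites from Conner--Floyd).
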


\begin{proof} The Atiyah-Hirzebruch spectral sequence of oriented bordism is a homological spectral sequence whose second page is $H_p(X, \Sigma_q)$ where $\Sigma_q$ is the oriented cobordism group of $q$-manifolds, and which converges to a complex whose $p+q$th term is the $p+q$th bordism group of $X$ \cite[Theorem 1.2]{ConnerFloyd}.

Now $\Sigma_q=0$ for $q=1,2,3$, so the only nonvanishing terms with $p+q<3$ have $q=0$, all the differentials out of $H_3(X, \Sigma_0)$ on the second and higher pages vanish. There are no differentials into $H_3(X, \Sigma_0)$ on the second and higher pages, since the differentials in a homological spectral sequence decrease the first index and increase the second index.

So the third oriented bordism group is $H_3(X,\mathbb Z)$. The fact that the fundamental class map witnesses this isomorphism is \cite[third sentence after Theorem 1.2]{ConnerFloyd}.
\end{proof}

Finally, we can deduce  \cref{pi1-properties}(4) from the following corollary of Lemmas \ref{parity-is-bordism} and \ref{bordism-is-homology}.

\begin{cor}\label{universal-class-exists} For each vector space $V$ over a finite field $\kappa$ of characteristic $2$, endowed with a symplectic form, there is a unique $2$-torsion class $c_V$ in the group cohomology $H^3 (\ASp_\kappa(V), \QZ)$ such that for any closed, oriented 3-manifold $M$ with a homomorphism $\pi_1(M) \to \ASp_\kappa(V)$, twice the integral of $c_V$ over $M$ is congruent to $\dim_{\kappa} H^1(M,V) + \dim_{\kappa} H^0(M,V)$ modulo $2$. \end{cor}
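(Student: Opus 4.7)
The plan is to combine the two preceding lemmas with a standard universal-coefficients identification to produce $c_V$ and verify its uniqueness.

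First, I will apply \cref{parity-is-bordism} in the case $n=1$ with the discrete group $G = \ASp_\kappa(V)$ acting on $V$ via its defining representation, which is symplectic, defined over the perfect field $\kappa$, and tautologically lifts to $\ASp_\kappa(V)$. This will produce a homomorphism from the oriented bordism group in dimension $3$ of $B\ASp_\kappa(V)$ to $\mathbb{Z}/2$, sending a manifold $M$ (equipped with a homomorphism $\pi_1(M) \to \ASp_\kappa(V)$) to $\dim_\kappa H^0(M,V) + \dim_\kappa H^1(M,V) \bmod 2$. Then \cref{bordism-is-homology} will identify this bordism group, via the fundamental class, with $H_3(\ASp_\kappa(V), \mathbb{Z})$, so I will obtain a homomorphism $\phi \colon H_3(\ASp_\kappa(V), \mathbb{Z}) \to \mathbb{Z}/2$.

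Second, I will repackage $\phi$ as a $2$-torsion element of $H^3(\ASp_\kappa(V), \QZ)$. Since $\ASp_\kappa(V)$ is finite, the vanishing $H^{>0}(\ASp_\kappa(V), \mathbb{Q}) = 0$ together with the long exact sequence of $0 \to \mathbb{Z} \to \mathbb{Q} \to \QZ \to 0$ yields $H^3(\ASp_\kappa(V), \QZ) \cong H^4(\ASp_\kappa(V), \mathbb{Z})$. Universal coefficients will then identify this with $\mathrm{Ext}^1(H_3(\ASp_\kappa(V), \mathbb{Z}), \mathbb{Z})$ (the Hom summand vanishing because $H_4$ is torsion for a finite group), and hence with $\mathrm{Hom}(H_3(\ASp_\kappa(V), \mathbb{Z}), \QZ)$. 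Restricting to $2$-torsion on both sides will give a canonical isomorphism $H^3(\ASp_\kappa(V), \QZ)[2] \cong \mathrm{Hom}(H_3(\ASp_\kappa(V), \mathbb{Z}), \mathbb{Z}/2)$. I will then define $c_V$ to be the $2$-torsion class corresponding to $\phi$; existence and uniqueness both follow immediately from this bijection.

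Finally, I will verify that the abstract pairing of $c_V$ with $[M]$ under the identifications above coincides with the concrete integration $\int_M c_V$. The map $H^3(X, \QZ) \cong \mathrm{Hom}(H_3(X, \mathbb{Z}), \QZ)$ for $X = B\ASp_\kappa(V)$ is realized by evaluation of a cocycle on a fundamental cycle; since $c_V$ is $2$-torsion, $\int_M c_V$ lands in $\frac{1}{2}\mathbb{Z}/\mathbb{Z} \subset \QZ$, and the phrase ``twice the integral'' reflects the canonical isomorphism $\frac{1}{2}\mathbb{Z}/\mathbb{Z} \xrightarrow{\sim} \mathbb{Z}/2$ sending $\frac{1}{2} \mapsto 1$. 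The only step requiring real care is confirming that the cocycle-cycle pairing agrees with the universal-coefficients pairing, but this is a foundational feature of universal coefficients and I do not expect it to be a serious obstacle; the main conceptual work of the corollary has already been carried out in \cref{parity-is-bordism}.
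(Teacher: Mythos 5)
Your proposal is correct and is essentially the paper's own argument: the paper proves this corollary by combining \cref{parity-is-bordism} and \cref{bordism-is-homology} and invoking the duality between $H_3(\ASp_\kappa(V),\Z)$ and $H^3(\ASp_\kappa(V),\QZ)$, which is exactly what you spell out. Your only deviation is cosmetic — you route the duality through $H^4(\ASp_\kappa(V),\Z)$ and $\mathrm{Ext}^1$, where one can more directly use that $\QZ$ is an injective $\Z$-module so universal coefficients gives $H^3(X,\QZ)\cong\Hom(H_3(X,\Z),\QZ)$ outright — and your reading of ``twice the integral'' via the identification of the $2$-torsion of $\QZ$ with $\Z/2$ matches the paper's intent.
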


Since $c_V$ is $2$-torsion, it necessarily arises from a class in $H^3 (\ASp_\kappa(V), \mathbb Z/2)$, although this class is not necessarily unique.  For the statement of \cref{pi1-properties}(4), we choose an arbitrary preimage, and, abusing notation, refer to it also as $c_V$, but in the body of the paper we will always take $c_V \in H^3 (\ASp_\kappa(V), \mathbb Q/\mathbb Z)$. 
Combining Corollary~\ref{universal-class-exists} and the fact that for an irreducible symplectic (and thus non-trivial) representation $V$ of $\pi_1(M)$ we have $H^0(M, V)=0$, 
we conclude \cref{pi1-properties}(4).

\begin{proof} This follows immediately from combining \cref{parity-is-bordism} and \cref{bordism-is-homology}, using the duality between homology with coefficients in $\mathbb Z$ and cohomology with coefficients in $\QZ$.
\end{proof}

\subsection{The affine symplectic group and the class $c_V$} \label{S:cv}
Throughout this subsection, we assume that $\kappa$ is a finite  field of characteristic $2$.  We will give an alternate description of the affine symplectic group, show some stability properties of the class 
$c_V\in H^3(\ASp_\kappa(V),\QZ)$, and prove $c_V$ is non-trivial. 

 If $V$ is a finite dimensional vector space of dimensional $2n$ over $\kappa$, with a symplectic form $\omega$, then we can choose a standard basis $e_i$ of $V$ (i.e. so $\omega(e_i,e_j)=0$ unless $j=i+n \pmod{2n}$, in  which case $\omega(e_i,e_j)=1$). 
 Let $W:=W_2(\kappa)$ be the ring of length two Witt vectors over $\kappa$. 
 Then we let $V_2=W^{2n}$, with basis $\tilde{e}_i$ so that $V_2/2V_2\ra V$ is an isomorphism of $\kappa$-vector spaces taking $\tilde{e}_i$ to $e_i$.  We define a symplectic form $\tilde{\omega}$ on $V_2$ by $\tilde{\omega}(\tilde{e}_i,\tilde{e}_j)=0$ unless $j=i+n \pmod{2n}$, in which case $\tilde{\omega}(\tilde{e}_i,\tilde{e}_j)=1$ if $1\leq i\leq n$ and $\tilde{\omega}(\tilde{e}_i,\tilde{e}_j)=-1$ if $n+1\leq i \leq 2n$.  We also view $\tilde{\omega}$ as a $2n\times 2n$ matrix over $W$  in the usual way.
Let $\Sp(V_2)$ be the group of $W$-module automorphisms of $V_2$ that preserve $\tilde{\omega}$, and we can check that there is a surjection
$$
\Sp(V_2) \ra  \Sp(V)
$$
induced by the reduction map $V_2\ra V$.  
The kernel $K$ of the above map consists of matrices $M\in \Sp(V_2)$ such that $M=I+2A$,  or equivalently,
$2n\times 2n$ matrices $M$ over $W$ such that $M=I+2A$ and $2 \tilde{\omega} A$ is symmetric.   So we see that $K$ is isomorphic to the additive group of symmetric $2n \times 2n$ matrices over $\kappa$ via the map that sends $I+2A$ to the reduction of $\tilde{\omega} A$ mod $2$.   Symmetric forms in characteristic $2$ have a natural homomorphism, evaluation on the diagonal, whose kernel corresponds to matrices with $0$ diagonal.  
This gives an exact sequence $1\ra N \ra K \ra \Hom(V,\kappa)\ra 1 $, where $N$ is the set of $M=I+2A$ such that $2\tilde{\omega}(v,Av)=0$ for all $v\in V_2$.  
One can easily check that $N$ is normal in $\Sp(V_2)$, and we define $\ASp_\kappa(V)$ to be $\Sp(V_2)/N,$ and have the exact sequence
$$
1\ra \Hom_\kappa(V,\kappa) \ra \ASp_\kappa(V) \ra \Sp_\kappa(V)\ra 1.
$$

\begin{lemma}
For $\kappa$ a finite field, the definition above agrees with our definition of $\ASp_\kappa(V)$ from the introduction. 
\end{lemma}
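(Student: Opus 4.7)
The plan is to construct, for each $M\in\Sp(V_2)$, an explicit automorphism $\phi_M$ of $\mathcal{H}$ acting trivially on $\Z/4$ and $\kappa$-linearly on $V$; to show $M\mapsto\phi_M$ is a group homomorphism $\Sp(V_2)\to\Aut_{\Z/4,\kappa}(\mathcal{H})$; and to check its kernel is exactly $N$ and its image is all of $\Aut_{\Z/4,\kappa}(\mathcal{H})$.

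To set things up, fix a $\kappa$-bilinear form $f\colon V\times V\to\kappa$ with $f-f^t=\omega$; since in the finite-field case we may take $T=2\operatorname{tr}$, we can identify $\mathcal{H}$ with $V\times \Z/4$ with multiplication $(v,a)(w,b)=(v+w,\,a+b+2\operatorname{tr}(f(v,w)))$. An automorphism of $\mathcal{H}$ trivial on $\Z/4$ and $\kappa$-linear on $V$ then has the form $(v,a)\mapsto(\bar\phi v,\,a+\chi(v))$ with $\bar\phi\in\Sp_\kappa(V)$ and $\chi\colon V\to\Z/4$ satisfying the cocycle identity $\chi(v+w)-\chi(v)-\chi(w)=2\operatorname{tr}(f(\bar\phi v,\bar\phi w)-f(v,w))$. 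Lift $f$ to a $W$-bilinear form $\tilde f\colon V_2\times V_2\to W$ with $\tilde f-\tilde f^t=\tilde\omega$ (for instance, in the standard basis, set $\tilde f(\tilde e_i,\tilde e_{i+n})=1$ for $1\leq i\leq n$ and require $\tilde f$ to vanish on all other basis pairs), and let $\operatorname{tr}_W\colon W\to\Z/4$ denote the Witt-vector trace, whose restriction to the subgroup $2W\cong\kappa$ agrees with $2\operatorname{tr}$. For $M\in\Sp(V_2)$ define
\[ \phi_M(v,a):=\bigl(\bar M v,\ a+\psi_M(v)\bigr),\qquad \psi_M(v):=\operatorname{tr}_W\bigl(\tilde f(M\tilde v,M\tilde v)-\tilde f(\tilde v,\tilde v)\bigr), \]
where $\tilde v\in V_2$ is any lift of $v\in V$.

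The key identity underlying all subsequent verifications is that, for $M\in\Sp(V_2)$, the $W$-valued form $\delta_M(v,w):=\tilde f(Mv,Mw)-\tilde f(v,w)$ is symmetric in $v,w$, since $\tilde f-\tilde f^t=\tilde\omega$ is preserved by $M$. From this symmetry one reads off: (i) $\psi_M$ is independent of the choice of lift $\tilde v$, because changing $\tilde v$ to $\tilde v+2u$ alters the inner expression by $4\delta_M(\tilde v,u)\equiv 0\pmod 4$; (ii) the cocycle identity for $\psi_M$ holds, since $\psi_M(v+w)-\psi_M(v)-\psi_M(w)=\operatorname{tr}_W(2\delta_M(\tilde v,\tilde w))=2\operatorname{tr}(f(\bar M v,\bar M w)-f(v,w))$; and (iii) $\phi_M\phi_{M'}=\phi_{MM'}$ follows by direct substitution using the formula for $\psi_M$. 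Hence $\phi_M\in\Aut_{\Z/4,\kappa}(\mathcal{H})$ and $M\mapsto\phi_M$ is a homomorphism.

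To identify the kernel, suppose $M=I+2A$ lies in $K=\ker(\Sp(V_2)\to\Sp_\kappa(V))$. Expanding and using $f+f^t\equiv\omega\pmod 2$ gives $\psi_M(v)=2\operatorname{tr}(\omega(v,\bar A v))$, so $\phi_M=\mathrm{id}$ iff $\operatorname{tr}(\omega(v,\bar A v))=0$ for every $v\in V$. The map $V\to\kappa$, $v\mapsto \omega(v,\bar A v)$, is additive (this is precisely the defining condition $M\in K$, which makes $\omega(v,\bar Aw)$ symmetric in $v,w$, so its polarization is $2\omega(v,\bar Aw)=0$) and satisfies $\omega(\lambda v,\bar A(\lambda v))=\lambda^2\omega(v,\bar A v)$. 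Thus if it were nonzero at some $v_0\in V$, the values $\{\omega(\lambda v_0,\bar A(\lambda v_0))\}_{\lambda\in\kappa}$ would exhaust the nonzero $\kappa$-line $\kappa\cdot\omega(v_0,\bar A v_0)$, on which $\operatorname{tr}$ cannot vanish identically; so $\operatorname{tr}\circ\omega(v,\bar A v)\equiv 0$ forces $\omega(v,\bar A v)\equiv 0$, i.e., $M\in N$. Finally, $\Sp(V_2)\to\Sp_\kappa(V)$ is surjective, and the induced injection $K/N\hookrightarrow\Hom_{\F_2}(V,\F_2)$ is a bijection by a dimension count (both groups have $\F_2$-dimension $2n\cdot\dim_{\F_2}\kappa$), yielding the isomorphism $\Sp(V_2)/N\cong\Aut_{\Z/4,\kappa}(\mathcal{H})=\ASp_\kappa(V)$.

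The principal obstacle is the bookkeeping across two simultaneous reductions: between $f$ and its Witt-vector lift $\tilde f$ (tracking which correction terms vanish mod $4$ versus mod $2$), and between $\operatorname{tr}\colon\kappa\to\F_2$ and $\operatorname{tr}_W\colon W\to\Z/4$ under the identifications $\F_2=2\Z/4$ and $\kappa\cong 2W$, so as to match the mod-$2$ formulas in the introduction's definition with the mod-$4$ formulas from the Witt-vector construction.
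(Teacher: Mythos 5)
Your proposal is correct and follows essentially the same route as the paper: you build the $\Sp(V_2)$-action on the Heisenberg extension from the Witt-vector lift $\tilde f$ (the paper's $B$) of a bilinear form splitting the symplectic form, show that $N$ is exactly the subgroup acting trivially, and finish by comparing the kernels of the two extensions of $\Sp_\kappa(V)$. The only cosmetic differences are that you write the action as an explicit cocycle $\psi_M$ directly on $\mathcal{H}=V\times\Z/4$ rather than passing through the auxiliary quotient $H=\tilde{H}/(2V_2\times 0)$, and you conclude surjectivity by the count $|K/N|=|\Hom_{\F_2}(V,\F_2)|$ where the paper instead exhibits the bijection $\gamma\mapsto T\gamma$.
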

\begin{proof}
Let $T$ be the trace map $W\ra \Z/4\Z$. 
Next, we let $\tilde{H}$ be the central extension of $V_2$ by $\Z/4\Z$ whose class in $H^2(V_2,\Z/4\Z)$ is given by the cocycle $T\tilde{\omega}$, i.e. as  a set 
$\tilde{H}$ is  $V_2\times \Z/4\Z$ and $(v,c)(w,d)=(v+w,c+d+T\tilde{\omega}(v,w))$.  Note that $2V_2\times 0$ is a normal subgroup of $\tilde{H}$, 
and we can define $H:=\tilde{H}/(2V_2\times 0)$.  We have an exact sequence
$$
1\ra \Z/4\Z\ra H \ra V \ra 1.
$$
Define the $W$-bilinear form $B: V_2\times V_2\ra W$ so that $B(e_i,e_j)$ is $1$ if $j=i+n$ and is $0$ otherwise.  Then $B(v,w)-B(w,v)=\tilde{\omega}(v,w)$.
We will now see that $H$ is the same as the group $\mathcal{H}$ in the definition of the affine symplectic group. 
For each element $v\in V$, we choose a lift $\tilde{v}\in V_2$, and we can check that the element $[(\tilde{v},-TB(\tilde{v},\tilde{v})]\in H$ does not depend on the choice of lift $\tilde{v}$.
For $v,w\in V_2$, we have the following equality in $H$,
$$
[(v,-TB(v,v)][(w,-TB(w,w)]=[(v+w,-TB(v+w,v+w)][(0,2TB(v,w))].
$$
Since $2B-2B^t$ gives the symplectic form on $V$, we see that the cocycle in $H^2(V,\Z/4\Z)$ defining the multiplication in $H$ agrees with the cocycle defining the group $\mathcal{H}$.

 We now give an action of $\Sp(V_2)$ on $H$ by $M \cdot [(v,a)] = [(M \cdot v, a)]$. 
 Because we defined the equivalence relation and group operation solely in terms of addition and the symplectic form, this action is well-defined.
Since for $M=I+2A\in N$ and $v\in V_2$ we have 
$[(v+2Av,0)]=[(v,T\tilde{\omega}(v,2Av))]=[(v,0)]$, we see that $\ASp_\kappa(V)$ acts on $H$.
Finally, we check that $\ASp_\kappa(V)$ is exactly the group of automorphisms of $H$ preserving the central $\mathbb Z/4$ which are $\kappa$-linear modulo $\mathbb Z/4$. 
Since $\ASp_\kappa(V)$ surjects onto $\Sp_\kappa(V)$, it suffices to check this when restricting to elements that act trivially on $V$.
One can see directly from the definition of multiplication on $H$ that the automorphisms of $H$ acting trivially on $V$ and on $\Z/4\Z$ are exactly the maps
$[(v,0)]\mapsto [(v,\alpha(v))],$ where $\alpha\in \Hom(V,\Z/4\Z)$, and that an element $\gamma\in \Hom_\kappa(V,\kappa)$ (viewed as an element of $\ASp_\kappa(V)$ as above) is an
automorphism of $H$ such that $\alpha=T\gamma$.  Thus we can conclude that this definition of $\ASp_\kappa(V)$ agrees with our original definition.
\end{proof}

This new description is convenient for making observations about the stability of the class $c_V$.  

\begin{enumerate}
\item If $V$ and $V'$ are symplectic vector spaces over $\kappa$, then the above description of $\ASp_\kappa$ in terms of matrices of Witt vectors allows us to  see 
that we have a map $\ASp_{\kappa}(V)\ra \ASp_{\kappa}(V\oplus V')$ (mapping $M\mapsto [\begin{smallmatrix}M & 0\\ 0 & I\end{smallmatrix}]$).
By the defining property of $c_V$, we see that the map $H^3(\ASp_{\kappa}(V\oplus V'),\QZ)\ra  H^3(\ASp_{\kappa}(V),\QZ)$ sends $c_{V\oplus V'}\mapsto c_V$.

\item If we view a $\kappa$-symplectic vector space $V$ as a representation over a subfield $\kappa'$, we can similarly use the descriptions in terms of matrices of Witt vectors to see we have a map
$\ASp_{\kappa}(V)\ra \ASp_{\kappa'}(V)$ and a corresponding map $H^3(\ASp_{\kappa'}(V),\QZ)\ra  H^3(\ASp_{\kappa}(V),\QZ)$.
By the defining property of $c_V$, we have $c_V \mapsto  [\kappa:\kappa'] c_V$ in this map, and in particular if  $[\kappa:\kappa'] $ is even then $c_V$ maps to $0$, and if $[\kappa:\kappa'] $  is odd, then $c_V$ is preserved.

\item If we have a symplectic vector space $V$ over $\kappa$, and $\kappa'$ is a finite extension of $\kappa$, then from the descriptions in terms of matrices of Witt vectors we can see we have a map
$\ASp_{\kappa}(V)\ra \ASp_{\kappa'}(V\tensor_\kappa \kappa')$ and a corresponding map $H^3(\ASp_{\kappa'}(V\tensor_\kappa \kappa'),\QZ)\ra  H^3(\ASp_{\kappa}(V),\QZ)$.
By the defining property of $c_V$, we have $c_{V \otimes_\kappa \kappa'} \mapsto c_V$ in this map.

\item Since $\dim_\kappa H^i(M,V)$ doesn't depend on the lift of $\pi_1(M)\ra \Sp_\kappa(V)$ to $\pi_1(M)\ra \ASp_\kappa(V)$, for any finite group $G$ with two maps $\phi_i : G \ra \ASp_\kappa(V)$ that agree in the quotient to $\Sp_\kappa(V)$, then $\phi_1^* (c_V)=\phi_2^*(c_V)$, where $\phi_i^* : H^3( \ASp_\kappa(V),\QZ)\ra H^3(G,\QZ)$.
\end{enumerate}

\begin{prop}\label{cvnonzero}
For any finite dimensional symplectic vector space $V$ over a finite field $\kappa$ of characteristic $2$, we have that $c_V\in H^3(\ASp_\kappa(V),\QZ)$ is non-zero.
\end{prop}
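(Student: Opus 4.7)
The plan is first to reduce, via the stability properties of $c_V$ established above, to a single minimal case, and then to exhibit an explicit oriented $3$-manifold where the parity invariant of \cref{universal-class-exists} is odd.

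For the reduction, stability property (3) implies that if $c_{V_0}$ is nonzero for $V_0 = \F_2^2$ over $\F_2$, then $c_{V_0 \otimes_{\F_2} \kappa}$ is nonzero for every finite extension $\kappa/\F_2$, since $c_V$ is the pullback of $c_{V\otimes\kappa'}$. Any nondegenerate symplectic $\kappa$-space splits orthogonally as a $2$-dimensional symplectic subspace (isomorphic to $V_0 \otimes_{\F_2} \kappa$) plus its orthogonal complement, so stability property (1) reduces the general statement to showing $c_{V_0} \neq 0$ for $V_0 = \F_2^2$ over $\F_2$. To identify the target group in this minimal case, observe that $\Sp_{\F_2}(V_0) = \SL_2(\F_2) \cong S_3$ and that $\Hom(V_0, \F_2) \cong V_0$ as $S_3$-modules. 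Since the Sylow $3$-subgroup of $S_3$ has no invariants on $V_0$, inflation--restriction gives $H^2(S_3, V_0) = 0$, so the extension $1 \to V_0 \to \ASp_{\F_2}(V_0) \to S_3 \to 1$ splits and $\ASp_{\F_2}(V_0) \cong V_0 \rtimes S_3 \cong S_4$, with $V_0$ realized as the Klein four subgroup of double transpositions.

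Now take $M = \mathbb{R}P^3$, an oriented closed $3$-manifold with $\pi_1(M) = \Z/2 = \langle \sigma \rangle$, and define $\rho \colon \pi_1(M) \to \ASp_{\F_2}(V_0) = S_4$ by $\sigma \mapsto (12)$. Conjugation by $(12)$ fixes $(12)(34)$ and swaps $(13)(24) \leftrightarrow (14)(23)$, so as a $\pi_1(M)$-module $V_0$ is isomorphic to the regular representation $\F_2[\Z/2]$. The cellular cochain complex of $\mathbb{R}P^3$ with coefficients in the local system $V_0$, computed from the $\F_2[\Z/2]$-free cellular chain complex of the universal cover $S^3$, is isomorphic to
\[
\F_2[\Z/2] \xrightarrow{1+\sigma} \F_2[\Z/2] \xrightarrow{1+\sigma} \F_2[\Z/2] \xrightarrow{1+\sigma} \F_2[\Z/2].
\]
Since $\ker(1+\sigma) = \im(1+\sigma) = \F_2 \cdot (1+\sigma)$ inside $\F_2[\Z/2]$, we read off $H^*(M, V_0) \cong (\F_2, 0, 0, \F_2)$, and in particular $\dim H^0(M, V_0) + \dim H^1(M, V_0) = 1$ is odd. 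By \cref{universal-class-exists}, $\int_M c_{V_0} \neq 0$, whence $c_{V_0} \neq 0$.

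The main obstacle is finding a test manifold on which the parity invariant is actually nonzero: if $\pi_1(M)$ acts trivially on $V$ then $\dim H^i(M, V) = \dim V \cdot \dim H^i(M, \F_2)$ is even and the invariant vanishes. The choice $\sigma \mapsto (12)$ --- the smallest nontrivial element of $\Sp(V_0)$ --- turns $V_0$ into a free $\F_2[\pi_1(M)]$-module, killing all positive-degree cohomology by Shapiro's lemma and leaving only the one-dimensional $H^0$ contribution, which produces the desired odd parity.
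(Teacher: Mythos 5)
Your proof is correct, and it follows the same overall strategy as the paper: reduce via the stability properties (1) and (3) to the base case $V_0=\F_2^2$ over $\F_2$ with $\ASp_{\F_2}(V_0)\cong S_4$, then exhibit a closed oriented $3$-manifold with a map to $\ASp_{\F_2}(V_0)$ on which the parity invariant of \cref{universal-class-exists} is odd. The difference is the witness. The paper uses the spherical manifold $S^3/G$ for $G$ the binary octahedral group, with the $2$-dimensional representation pulled back along $G\to S_3=\Sp_{\F_2}(V_0)$, the computation $H^1(S_3,V_0)=0$, and the section $S_3\to S_4$ to get the lift to $\ASp$; you instead map $\pi_1(\mathbb{RP}^3)=\Z/2$ directly to a transposition in $S_4$, so the local system is the free module $\F_2[\Z/2]$, $H^*(\mathbb{RP}^3,V_0)\cong H^*(S^3,\F_2)$, and the odd parity $\dim H^0+\dim H^1=1$ is immediate. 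Your choice is more elementary and self-contained (no lifting question arises since you land in $\ASp$ from the start), and it is in fact the same device the paper deploys later in \cref{dihedral-parity-facts}; what the paper's choice of the binary octahedral example buys is the additional conclusion recorded in \cref{R:fromS3}, namely that $c_{V_0}$ is pulled back from $\Sp_{\F_2}(V_0)$ via the section, which your $\mathbb{RP}^3$ computation alone does not give. Two small points of hygiene: the phrase ``inflation--restriction'' for $H^2(S_3,V_0)=0$ is a bit loose --- what you really use is the full Lyndon--Hochschild--Serre spectral sequence for $A_3\trianglelefteq S_3$ together with $H^0(A_3,V_0)=0$ and the vanishing of higher $A_3$-cohomology (or, more simply, restriction to the Sylow $2$-subgroup, on which $V_0$ is free); and note the paper identifies $\ASp_{\F_2}(V_0)\cong S_4$ by a slightly different argument (splitting over $A_3$ and a counting of index-$2$ overgroups of $A_4$), though your $H^2$-vanishing argument is equally valid.
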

\begin{proof}
Let $G$ be the binary octahedral group, which has the quaternion group $Q_8$ as a normal subgroup with quotient $S_3$.
Consider $M= S^3/G$, a spherical 3-manifold whose fundamental group is $G$.
Let $W$ be the two-dimensional representation over $\mathbb F_2$ on which $\pi_1(M)$ acts by $G\to S_3 = GL_2(\mathbb F_2)$. 
We have $H^0(M,W)=W^{S_3}=0$.
We have $H^*(A_3,W)=0$ and hence $H^*(S_3,W)=0$ by the Lyndon-Hochschild-Serre spectral sequence.  This allows us to compute
$\dim H^1(M,W)=1$ from the Lyndon-Hochschild-Serre spectral sequence.  

We will check that $\ASp_{\F_2}(W)$ is $S_4$. 
  The group $\ASp_{\F_2}(W)$ is some extension of $S_3$ by $W^\vee\cong W$, compatible with the action of $S_3$ on $W$. Restricted to $A_3$, this extension must split as a semidirect product because both groups appearing have relatively prime orders. The semidirect product is $A_4$, so $\ASp_{\F_2}(W)$ contains $A_4$ as an index $2$ subgroup, but the only such group which also surjects onto $S_3$ is $S_4$.  
  Because the map $S_4\to S_3$ admits a section, every surjection of any $\pi_1(M)$ to $\Sp_{\F_2}(W)$ automatically lifts onto $\ASp_{\F_2}(W)$.  
  
  Since $\dim H^0(M,W)+\dim H^1(M,W)=1$,  Corollary~\ref{universal-class-exists}
   shows that $c_W$ in $ H^3 ( \ASp_{\F_2}(W), \QZ)$ integrates nontrivially over $M$, and hence $c_W$ is non-zero. By stability properties (1) and (3), the class $c_V$ is nontrivial for any $V$.  
\end{proof}

\begin{remark}\label{R:fromS3}
Note that the identity map $S_4\ra S_4$ and the composite $S_4\ra S_3 \ra S_4$ of the section and the quotient are two maps $\ASp_{\F_2}(W)\ra \ASp_{\F_2}(W)$, that
agree in the quotient to $\Sp_{\F_2}(W)$.  Thus by stability property (4) above, we have that $c_W\in H^3 ( \ASp_{\F_2}(W), \QZ)$ pulls back from
$c_W\in H^3 ( \Sp_{\F_2}(W), \QZ),$ which has a unique non-zero $2$-torsion class. 
\end{remark}

\section{The Dunfield-Thurston random model and its moments}\label{S:DT}

In this section we describe the Dunfield-Thurston model for a random $3$-manifold and find the moments of the random groups given by the fundamental group of these random $3$-manifolds.

Dunfield and Thurston \cite{DunfieldThurston} proposed a model for random $3$-manifolds, defined using Heegaard splitting.
Recall that an \emph{Heegaard splitting} of a 3-manifold $M$ is an expression of it as a union of two 
copes of the
genus $g$ handlebody $H_g$ after identifying their boundaries, each the Riemann surface $\Sigma_g$ of genus $g$. 
It is easy to see that the resulting 3-manifold only depends on the choice of identification up to isotopy, i.e. on the mapping class. 
It is known that every 3-manifold $M$ has a Heegaard splitting of some genus $g$. Thus, generating a random 3-manifold from a random Heegaard splitting will not exclude without reason any class of 3-manifolds. 

The mapping class group of genus $g$, i.e. the group of (oriented) homeomorphisms from $\Sigma_g$ to itself, up to isotopy, is known to be finitely generated. Fix a finite set $T$ of generators, including the identity. (It would be natural to choose the set $T$ to be closed under inverses, but this is not necessary for our results. We require that $T$ includes the identity so that when we later apply the Perron-Frobenius theorem we are in the setting of an \emph{aperiodic} Markov chain on a finite state space.)

Let the random variable $\sigma_{g,L}$ in the mapping class group be a random word of length $L$ in the generators $T$, i.e. the product of $L$ independent, uniformly random elements of $T$. We define the Dunfield-Thurston random 3-manifold $M_{g,L}:=M_{\sigma_{g,L}}$ (as defined in the notation section, i.e. 
  the union of two copies of $H_g$ after identifying their boundary with the mapping class $\sigma_{g,L}$).
Our results will all cover the statistical properties of $M_{g,L}$ in the limit where, first, $L$ is sent to $\infty$, and then, $g$ is sent to $\infty$.

An \emph{{\Mdcorated/} group}
 is a group $G$ together with an element $s\in H_3(G, \mathbb Z)$, 
called the \emph{orientation}, and a morphism of \Mdcorated/ groups $(G_1, s_1) \to (G_2,s_2)$ is a group homomorphism $G_1 \to G_2$ such that the pushforward of $s_1$ is $s_2$. We will use the notation $\mathbf G$ to denote an \Mdcorated/ group with underlying group $G$.
We write $\tau_G$ (or $\tau$) for the morphism $H^3(G, \mathbb \QZ)\ra\QZ$ corresponding to $s$, using the isomorphism
$H^3(G,\Q/\Z)\cong \Hom(H_3(G,\Z),\Q/\Z)$ from the Universal Coefficient Theorem, and the evaluation on $s$ map
$\Hom(H_3(G,\Z),\Q/\Z)\ra \Q/\Z$.  We can also describe this map  $H^3(G, \mathbb \QZ)\ra\QZ$ as the map obtained by integrating along the homology class $s$.
For $M$ an oriented, closed $3$-manifold, let $\mathbf{\pi_1(M)}$ be the \Mdcorated/ group with underlying group $\pi_1(M)$ and with $s$ 
the image of the fundamental class in the map $H_3(M,\Z)\ra H_3(\pi_1(M),\Z)$.

We will find in \cref{P:decmom} the limiting moments of the random \Mdcorated/ groups $\mathbf{ \pi_1 (M_{g, L})}$.
We build off of work of Dunfield and Thurston, who found the moments in the un\Mdcorated/ analog  \cite[Theorem 6.21]{DunfieldThurston}.  Having the \Mdcorated/ version will be essential to our work in this paper.
To make things precise, we will need to also consider the \emph{pointed mapping class group} of $(\Sigma_g,*)$, i.e. oriented, pointed homeomorphisms of $\Sigma_g$, up to pointed isotopy.

\begin{lemma}\label{cobordism-class-additive} Let $\Sigma_g$ be a surface of genus $g$, the boundary of a handlebody $H_g$, and fix a base point $*$ on $\Sigma_g$. Let $Q$ be a finite group, and let $f\colon \pi_1(\Sigma_g ) \to Q$ be a homomorphism.
Let $\sigma_1$ and $\sigma_2$ be two pointed mapping classes of $\Sigma_g, *$. Assume that $  f, f\sigma_1,f\sigma_1\sigma_2 \colon \pi_1(\Sigma_g ) \to Q$ all factor through $\pi_1(\Sigma_g) \to \pi_1(H_g)$.
Then  $[M_{\sigma_1 \sigma_2} ] = [ M_{\sigma_1}] + [M_{\sigma_2}]$ in the oriented bordism group of $BQ$, where 
$M_{\sigma_1}$ and $M_{\sigma_1 \sigma_2}$ are given maps to $BQ$ using $f$, and $M_{\sigma_2}$ is given a map to $BQ$ using $f\sigma_1$.
\end{lemma}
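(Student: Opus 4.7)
The plan is to explicitly construct an oriented 4-manifold $W$ equipped with a map to $BQ$ whose oriented boundary, with its restricted map, is $M_{\sigma_1\sigma_2} \sqcup (-M_{\sigma_1}) \sqcup (-M_{\sigma_2})$, with the maps to $BQ$ specified in the lemma. Such a $W$ directly witnesses the identity $[M_{\sigma_1\sigma_2}] - [M_{\sigma_1}] - [M_{\sigma_2}] = 0$ in the oriented bordism group of $BQ$.

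Set $f_A = f$, $f_B = f\sigma_1$, and $f_C = f\sigma_1\sigma_2$, each of which factors through $\pi_1(H_g)$ by hypothesis; let $H_A, H_B, H_C$ be three copies of the handlebody $H_g$ carrying the corresponding maps $\pi_1(H_X) \to Q$. Then the three manifolds in the statement are the pairwise Heegaard splittings $M_{\sigma_1} = H_A\cup_{\sigma_1}H_B$, $M_{\sigma_2} = H_B\cup_{\sigma_2}H_C$, $M_{\sigma_1\sigma_2} = H_A\cup_{\sigma_1\sigma_2}H_C$, each equipped with the evident map to $BQ$.

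I would construct $W$ as follows. Let $\Delta$ be an oriented $2$-simplex with vertices $v_A, v_B, v_C$ and oriented boundary traversing $v_A\to v_B\to v_C\to v_A$ along edges $e_{AB}, e_{BC}, e_{CA}$. Start from the product $\Sigma_g\times \Delta$ and modify the bundle structure in a collar of $\partial \Delta$ by introducing mapping-class twists $\sigma_{AB}=\sigma_1$ along $e_{AB}$, $\sigma_{BC}=\sigma_2$ along $e_{BC}$, and $\sigma_{CA}=(\sigma_1\sigma_2)^{-1}$ along $e_{CA}$; the identity $\sigma_1\cdot\sigma_2\cdot(\sigma_1\sigma_2)^{-1} = 1$ ensures these twists are consistent as one traverses the full boundary loop. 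Then, at each vertex $v_X$, attach a 4-dimensional handlebody thickening $H_X\times N(v_X)$ (with $N(v_X)\cong [0,1]$ a small arc in $\partial\Delta$ around $v_X$) along $\Sigma_g\times N(v_X)$ via $\partial H_X = \Sigma_g$. After these attachments the boundary of $W$ falls into three disjoint closed 3-manifolds, one over each edge of $\Delta$: the piece over $e_{XY}$ is $H_X\cup_{\sigma_{XY}} H_Y = M_{\sigma_{XY}}$, and the orientation induced from the boundary orientation on $\partial\Delta$ accounts for the signs so that $\partial W = M_{\sigma_1\sigma_2}\sqcup(-M_{\sigma_1})\sqcup(-M_{\sigma_2})$.

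The map $W\to BQ$ is defined piecewise: on each handlebody cap $H_X\times N(v_X)$, use the given factoring $f_X: \pi_1(H_g)\to Q$; on $\Sigma_g\times\Delta$, use $f$, with the edge twists implementing the transitions $f\to f\sigma_1\to f\sigma_1\sigma_2$ between the fibers over $v_A$, $v_B$, $v_C$. The main obstacle is checking that these maps agree on all overlaps. This is where the hypothesis is used in an essential way: the twist $\sigma_{XY}$ along edge $e_{XY}$ precisely intertwines the maps $f_X$ and $f_Y$ on the $\Sigma_g$-fibers, and the fact that each $f_X$ factors through $\pi_1(H_g)$ is exactly what allows the corresponding handlebody cap $H_X$ to carry a well-defined map to $BQ$ extending the fiber map at $v_X$. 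Once this compatibility is verified, $W$ is a valid oriented 4-manifold over $BQ$ with the desired boundary, completing the proof.
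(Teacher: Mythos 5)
Your construction is correct in outline, but it takes a genuinely different route from the paper. The paper's proof glues the three product cobordisms $M_{\sigma_1\sigma_2}\times[0,1]$, $M_{\sigma_1}\times[1,2]$, $M_{\sigma_2}\times[1,2]$ along shared copies of $H_g$; this produces a 4-manifold over $BQ$ whose boundary is the three manifolds in question \emph{plus} a fourth component $M'=H_g\cup_{\mathrm{id}}H_g$, and the proof then disposes of $[M']$ by a separate homological argument (via \cref{bordism-is-homology}, using that $\pi_1(M')$ is free so $H_3(\pi_1(M'),\Z)=0$). Your triangle construction -- a $\Sigma_g$-piece over a 2-simplex capped off by $H_g\times[0,1]$ pieces at the vertices -- produces a 4-manifold whose boundary is \emph{exactly} $M_{\sigma_1\sigma_2}\sqcup(-M_{\sigma_1})\sqcup(-M_{\sigma_2})$ with the prescribed maps to $BQ$, so you never need the auxiliary component or the appeal to the bordism-is-homology computation; the hypotheses that $f$, $f\sigma_1$, $f\sigma_1\sigma_2$ factor through $\pi_1(H_g)$ enter in precisely the same role in both arguments, namely to extend the map to $BQ$ over the handlebody caps. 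What your approach buys is a more self-contained, purely geometric cobordism; what it costs is more bookkeeping, which you should tighten in two places. First, "introducing mapping-class twists in a collar of $\partial\Delta$" is not quite the right formalization, since every $\Sigma_g$-bundle over the disk is trivial; the cleaner statement is to keep $\Sigma_g\times\Delta$ as a product and build the twists into the attaching maps of the vertex caps (attach $H_A$ by the identity, $H_B$ by a representative of $\sigma_1$, $H_C$ by one of $\sigma_1\sigma_2$), after which the boundary component over each edge is visibly $H_X\cup_\phi H_Y$ with $\phi$ the appropriate composite ($\sigma_1$, $\sigma_2$, $(\sigma_1\sigma_2)^{\pm1}$) and the consistency condition you cite is automatic. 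Second, the orientation claim $\partial W=M_{\sigma_1\sigma_2}\sqcup(-M_{\sigma_1})\sqcup(-M_{\sigma_2})$ is asserted rather than checked; it is true with the paper's orientation conventions for $M_\sigma$, but since the whole content of the lemma is a signed identity in the bordism group, that verification (including the standard identification of the component over $e_{CA}$ with $M_{\sigma_1\sigma_2}$ up to orientation) should be written out.
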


\begin{proof}  We do this by finding an explicit cobordism between $M_{\sigma_1 \sigma_2}$ and a disjoint union of $M_{\sigma_1}$, $M_{\sigma_2}$, and a third manifold which we can separately check is cobordant to zero. Observe that each of $M_{\sigma_1 \sigma_2}$,$M_{\sigma_1}$, $M_{\sigma_2}$  is a union of two copies of $H_g$, which we will call $H_g$ and $H_g'$ to distinguish them.
The map to $BQ$ is given by $f$ on the copy of $H_g$ in $M_{\sigma_1\sigma_2}$ and $M_{\sigma_1}$,  by $f\sigma_1$ on the copy of $H_g'$ in $M_{\sigma_1}$ and $H_g$ in $M_{\sigma_2}$, and by $f\sigma_1\sigma_2$ on the copies of $H_g'$ in $M_{\sigma_2}$ and $M_{\sigma_1 \sigma_2}$.

We start with $M_{\sigma_1\sigma_2} \times [0,1]$, $M_{\sigma_1} \times [1,2]$, and $M_{\sigma_2} \times [1,2]$.  

We glue $M_{\sigma_1\sigma_2} \times [0,1]$ to $M_{\sigma_1} \times [1,2]$ by identifying $H_g \times 1$ with $H_g \times 1$ via the identity map.

We glue $M_{\sigma_1\sigma_2} \times [0,1]$ to $M_{\sigma_2} \times [1,2]$ by identifying $H_g' \times 1$ with $H_g' \times 1$ via the identity map.

This produces a connected four-manifold with boundary, whose boundary components are $M_{\sigma_1\sigma_2} \times 0$ with negative orientation, $M_{\sigma_1} \times 2$ with positive orientation, $M_{\sigma_2} \times 2$ with positive orientation, and a fourth component $M'$, the union of the $H_g' \times 1$  from $M_{\sigma_1} \times [1,2]$ and the $H_g \times 1$ from $M_{\sigma_2} \times [1,2]$.

We can map this 4-manifold to $BQ$ because our gluings were compatible with the maps to $BQ$. This gives a relation $[M_{\sigma_1 \sigma_2} ] = [ M_{\sigma_1}] + [M_{\sigma_2}] + [M']$ in the bordism group of $BQ$. It remains to check that $[M']=0$.

This manifold $M'$ is the union of two copies of $H_g$ glued along the identity map  of their boundaries. To check $[M']=0$, we can use \cref{bordism-is-homology} to reduce to showing that the fundamental class of $M'$ vanishes in $H_3( BQ,\Z)$, and note that this fundamental class lies in the image of $H_3 ( \pi_1(M'),\Z)$, but $\pi_1(M') = \pi_1(H_g) = F_g$ is the free group on $g$ generators and has no higher homology.
 \end{proof}

\begin{lemma}\label{cobordism-class-is-homomorphism} Let $\Sigma_g$ be a surface of genus $g$, the boundary of a handlebody $H_g$, and fix a base point $*$ on $\Sigma_g$. Let $Q$ be a finite group, and let $f\colon \pi_1(\Sigma_g ) \to Q$ be a homomorphism. Assume that $f$ factors through $\pi_1(\Sigma_g) \to \pi_1(H_g)$.
Let $\mathcal{M}_{f,g}$ be the subgroup of the pointed mapping class group of $(\Sigma_g,*)$ preserving $f$. 
For $\sigma\in\mathcal{M}_{f,g},$ we have that $f$ extends to a map $\pi_1 (M_{\sigma}) \to Q$, giving a map $M_\sigma \to BQ$.

The function  sending $\sigma \in \mathcal{M}_{f,g}$ to the class of $M_\sigma$ in the oriented bordism group of $BQ$ is a homomorphism. If $g$ is sufficiently large depending on $Q$, and $f$ is surjective, then this homomorphism is surjective. \end{lemma}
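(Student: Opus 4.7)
For the homomorphism property, I would invoke \cref{cobordism-class-additive} directly. If $\sigma_1,\sigma_2 \in \mathcal{M}_{f,g}$ then both preserve $f$, so $f$, $f\sigma_1$, and $f\sigma_1\sigma_2$ all equal $f$, which factors through $\pi_1(H_g)$ by hypothesis. \cref{cobordism-class-additive} then gives $[M_{\sigma_1\sigma_2}] = [M_{\sigma_1}] + [M_{\sigma_2}]$, with all three manifolds given maps to $BQ$ via $f$, which is precisely the homomorphism property.

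For surjectivity, I would identify the target with $H_3(Q,\mathbb{Z})$ using \cref{bordism-is-homology}. The image is a subgroup, so it suffices to show (i) every class realized by a closed oriented $3$-manifold $N$ with a surjection $\pi_1(N) \to Q$ lies in the image, and (ii) such classes exhaust $H_3(Q,\mathbb{Z})$. For (ii) I would use a standard connect-sum argument: realize an arbitrary class as $[N']$ for some $3$-manifold $N'$ with a map to $BQ$, take the connect sum with a fixed closed oriented $3$-manifold $N_0$ admitting a surjection $\pi_1(N_0) \to Q$ (which exists for every finite group), and use additivity of the fundamental class under connect sum together with (i) applied to $N_0$ to place the original class in the image.

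The central step is (i). Given such $N$, I would choose a Heegaard splitting of $N$ of genus $g$ sufficiently large with respect to $Q$; by \cref{mapping-class-exists}, we may arrange $N \cong M_{\sigma_0}$ as oriented manifolds mapping to $BQ$, where $\sigma_0$ preserves the induced surjection $f_0 \colon \pi_1(\Sigma_g) \to Q$, which factors through $\pi_1(H_g)$. Since $f_0$ is typically not equal to $f$, I would invoke \cref{all-handlebodies-are-the-same} to produce a pointed mapping class $\tau$ of $\Sigma_g$ extending to $H_g$ with $f \circ \tau_* = f_0$. Setting $\sigma := \tau \sigma_0 \tau^{-1}$, a direct computation gives
\[ f \circ \sigma_* = f \circ \tau_* \circ (\sigma_0)_* \circ \tau_*^{-1} = f_0 \circ (\sigma_0)_* \circ \tau_*^{-1} = f_0 \circ \tau_*^{-1} = f, \]
so $\sigma \in \mathcal{M}_{f,g}$. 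Moreover, applying the extension of $\tau^{-1}$ over $H_g$ simultaneously to both copies of $H_g$ in the construction of $M_\sigma$ converts the gluing map from $\sigma$ to $\sigma_0$ and the boundary surjection from $f$ to $f_0$, yielding a homeomorphism $M_\sigma \cong M_{\sigma_0} \cong N$ that intertwines the two maps to $BQ$. Hence $[M_\sigma] = [N]$ in the bordism group.

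The main obstacle is bookkeeping: keeping straight the conventions for the mapping-class-group action on surjections $\pi_1(\Sigma_g) \to Q$, respecting base points and orientations, and verifying that the homeomorphism $M_\sigma \cong M_{\sigma_0}$ induced by (the extension of) $\tau$ really does intertwine the two maps to $BQ$ rather than merely the induced maps on $\pi_1$. These are essentially routine verifications once the conventions are fixed.
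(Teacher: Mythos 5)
Your proposal is correct and follows essentially the same route as the paper: the homomorphism property via \cref{cobordism-class-additive}, and surjectivity via \cref{mapping-class-exists} plus a conjugation supplied by \cref{all-handlebodies-are-the-same}, the only cosmetic difference being that you arrange surjectivity of $\pi_1(N)\to Q$ by connect-summing with an auxiliary $N_0$ and subtracting $[N_0]$ inside the image subgroup, whereas the paper connect-sums with copies of $S^2\times S^1$, which leave the bordism class unchanged. One small point worth making explicit (as the paper does): since the bordism group is $H_3(Q,\mathbb Z)$ and hence finite, a single $g$ depending only on $Q$ dominates the Heegaard genera of representatives of all classes, which is what is needed to get surjectivity of the map from $\mathcal{M}_{f,g}$ at one fixed $g$ rather than a genus depending on the class.
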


\begin{proof}To prove the map is a homomorphism, we must check that for two mapping classes  $\sigma_1, \sigma_2\in \mathcal M_{f,g}$, that $[M_{\sigma_1 \sigma_2} ] = [ M_{\sigma_1}] + [M_{\sigma_2}]$ in the oriented bordism group. This  is a special case of \cref{cobordism-class-additive}. Indeed, if $f$ factors through $\pi_1(\Sigma_g) \to \pi_1(H_g)$ and $\sigma_1$ and $\sigma_2$ preserve $f$ then $f\sigma_1$ and $f\sigma_1\sigma_2=f$ also factor through $\pi_1(\Sigma_g) \to \pi_1(H_g)$.

 Let us check surjectivity. By \cref{bordism-is-homology} the bordism group is finite, so it suffices to show that each bordism class can arise for sufficiently large $g$. Each bordism class arises from a $3$-manifold $M$ with a homomorphism $\pi_1(M) \to Q$. 
We can take the connect sum of $M$ with many manifolds of the form $S^2 \times S^1$,
with homomorphisms from their fundamental group to $Q$, 
and without changing the class in the bordism group we can thus assume that we have $\pi_1(M) \to Q$ surjective.  By \cref{mapping-class-exists}, for $g$ sufficiently large we can split the 3-manifold $M$ into a union of two handlebodies $H_g$ glued by a mapping class $\sigma$ of the boundary $\Sigma_g $, in such a way that the homomorphism $F\colon \pi_1(\Sigma_g) \to \pi_1 ( M) \to Q$ is preserved by $\sigma$.
Thus the bordism class we are considering is the image of $\sigma\in \mathcal{M}_{F,g}.$

Furthermore, by \cref{all-handlebodies-are-the-same}, we can see that $f=F\sigma_0$ for some mapping class $\sigma_0$ of $(H_g,*).$
It follows that the bordism class we are considering is the image of $\sigma_0^{-1}\sigma\sigma_0\in \mathcal{M}_{f,g}.$
Thus, for any surjection $f: \pi_1(H_g) \to Q$, for $g$ sufficiently large, any fixed bordism class arises from some element of $\mathcal{M}_{f,g},$ so the group homomorphism is surjective, as desired.

Thus the lemma follows for $g$ large enough that each class in the bordism group is represented by a $3$-manifold that has a Heegaard genus at most $g$ and also large enough for  Lemmas \ref{all-handlebodies-are-the-same} and \ref{mapping-class-exists} to hold for $Q$. 
\end{proof}

\begin{prop}\label{P:decmom} For each $g, L$, let $M_{g,L}$ be a random variable valued in $3$-manifolds obtained by taking the genus $g$ Heegaard splitting arising from the mapping class of a uniform random length $L$ word $\sigma_{g,L}$ in a fixed generating set (including the identity) for the  genus $g$ mapping class group. For $\mathbf H$ a finite \Mdcorated/ group, we have
 \[ \lim_{ g\to \infty} \lim_{L \to \infty} \mathbb E \left[ \Surj( \mathbf{ \pi_1 (M_{g, L})}, {\mathbf H} ) \right] =  \frac{  \abs{H}  \abs{ H_2 (H, \mathbb Z ) } }{  \abs{ H_1(H, \mathbb Z) } \abs{ H_3(H, \mathbb Z) }} .\] 
\end{prop}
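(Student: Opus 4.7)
The plan is to reduce to the un\Mdcorated/ version of this formula, due to Dunfield-Thurston, and to produce the extra factor $\tfrac{1}{|H_3(H,\Z)|}$ via an equidistribution argument for the image of the fundamental class in $H_3(H,\Z)$. By \cite[Theorem 6.21]{DunfieldThurston},
\[ \lim_{g\to\infty}\lim_{L\to\infty} \E\bigl[|\Surj(\pi_1(M_{g,L}), H)|\bigr] = \frac{|H|\,|H_2(H,\Z)|}{|H_1(H,\Z)|}, \]
and any un\Mdcorated/ surjection $\phi\colon\pi_1(M)\twoheadrightarrow H$ lifts to an \Mdcorated/ surjection $\mathbf{\pi_1(M)}\to\mathbf H$ if and only if $\phi_*[M]=s_H$ in $H_3(H,\Z)$. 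Thus it suffices to show that, in the iterated limit, the probability that $\phi_*[M_{g,L}]=s_H$ conditional on $\phi$ being a surjection tends to $1/|H_3(H,\Z)|$, uniformly in $\phi$.

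I will do this by extending the Markov chain argument underlying the Dunfield-Thurston computation. For fixed $g$, the random walk $\sigma_{g,L}$ projects to an aperiodic (since $\mathrm{id}\in T$) Markov chain on a finite set encoding the surjection $f\colon \pi_1(\Sigma_g)\to H$ induced by a given $\phi$, and by Perron-Frobenius this converges to its unique stationary distribution. I augment this state space by the class $c\in H_3(H,\Z)$ representing the bordism class of $M_{\sigma_{g,L}}$ with its current map to $BH$. \cref{cobordism-class-additive} is precisely the cocycle identity guaranteeing that the augmented process is itself an aperiodic Markov chain: extending $\sigma$ by a generator $\tau$ on the right changes $c$ by $[M_\tau]$ taken with the current base labeling, independently of earlier history.

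The augmented chain is a skew product over the base chain, so its irreducibility on fibers is equivalent to the group of fiber shifts arising from closed loops in the base being all of $H_3(H,\Z)$. By a telescoping application of \cref{cobordism-class-additive}, the fiber shift around a loop at a factoring surjection $f$---i.e.\ a word $\sigma\in\mathcal{M}_{f,g}$---is exactly $[M_\sigma]$, the image of $\sigma$ under the homomorphism of \cref{cobordism-class-is-homomorphism}; for $g$ sufficiently large depending on $H$, that homomorphism is surjective, so the fiber-shift group at $f$ exhausts $H_3(H,\Z)$. Hence the augmented chain's stationary distribution has fibers of constant size $|H_3(H,\Z)|$ over each base state, so in the $L\to\infty$ limit the $H_3(H,\Z)$-coordinate is uniformly distributed conditionally on the base. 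Summing the resulting factor $1/|H_3(H,\Z)|$ over $\phi$ and taking $g\to\infty$ combined with the Dunfield-Thurston formula yields the claimed expression.

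The principal obstacle is the careful construction of the augmented Markov chain and verification that its stationary distribution has the claimed product form, since the fiber dynamics is a cocycle whose effect depends nontrivially on the base coordinate. The key input that overcomes this is the surjectivity of $\mathcal M_{f,g}\to H_3(H,\Z)$ for factoring $f$ and sufficiently large $g$, supplied by \cref{cobordism-class-is-homomorphism}; once this forces the mixing of the $H_3$-coordinate, the remainder of the argument is bookkeeping on top of the existing Dunfield-Thurston Perron-Frobenius framework.
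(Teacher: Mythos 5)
Your overall strategy is the one the paper uses: start from Dunfield--Thurston's unoriented moment and show that, conditionally on the pulled-back surjection factoring through the handlebody, the class $f_*[M_{\sigma_{g,L}}]\in H_3(H,\Z)$ equidistributes, the key input being the surjectivity of $\mathcal{M}_{f,g}\to H_3(H,\Z)$ from \cref{cobordism-class-is-homomorphism}. However, the mechanism you propose for the equidistribution --- a skew-product Markov chain with fiber $H_3(H,\Z)$ and per-generator increments $[M_\tau]$ --- is not well-defined as stated, and this is a genuine gap. The bordism class of $M_\tau$ ``with the current base labeling'' presupposes a map $M_\tau\to BH$, which requires both $f\sigma$ and $f\sigma\tau$ to factor through $\pi_1(H_g)$; for a generic step of the walk the pulled-back homomorphism does not factor, so the increment has no meaning, and likewise your fiber coordinate (``the bordism class of $M_{\sigma_{g,L}}$ with its current map to $BH$'') is undefined at non-factoring times. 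The same issue defeats the telescoping step: \cref{cobordism-class-additive} carries the hypothesis that $f$, $f\sigma_1$ and $f\sigma_1\sigma_2$ all factor through $\pi_1(H_g)$, so it cannot be iterated over the letters of a word $\sigma\in\mathcal{M}_{f,g}$ whose intermediate prefixes give non-factoring surjections --- the generic situation. Thus the assertion that the fiber shift around a loop at $f$ equals $[M_\sigma]$ is exactly the point that remains unproved, and with it the irreducibility and the uniform fiber distribution.

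The paper sidesteps any stepwise cocycle. It shows directly that for each factoring $f'$ in the orbit of $f$ and each $s\in H_3(H,\Z)$, the set of mapping classes $\sigma$ with $f\sigma=f'$ and $f_*[M_\sigma]=s$ is a coset of the single finite-index subgroup $\ker(\mathcal{M}_{f,g}\to H_3(H,\Z))$: on the stabilizer this is \cref{cobordism-class-is-homomorphism} (whose proof only applies \cref{cobordism-class-additive} to elements of $\mathcal{M}_{f,g}$, where the factoring hypotheses hold), and the case $f'\neq f$ is reduced to it by composing with a mapping class sending $f$ to $f'$ that extends over the handlebody, supplied by \cref{all-handlebodies-are-the-same} --- a lemma your argument never invokes but which is what controls the fiber distribution at base states other than $f$. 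Perron--Frobenius equidistribution of the walk over the resulting finite coset space (aperiodic since $\mathrm{id}\in T$) then yields the factor $1/|H_3(H,\Z)|$, and summing over $f$ recovers the stated limit. Your skew-product picture can be repaired in this spirit (for instance, defining increments only along excursions between factoring base states, where \cref{cobordism-class-additive} genuinely applies, or equivalently proving that $\sigma\mapsto(f\sigma,\,f_*[M_\sigma])$ factors through a finite quotient of the pointed mapping class group), but as written the central step is missing.
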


\begin{proof} 
First fix $g$ and $L$. 
Let $\Sigma_g$ be a surface of genus $g$, with base point $*$, and let $H_g$ be a handlebody with boundary $\Sigma_g$. 
 Now, we will refine things slightly and consider lifts of our generators of the mapping class group to the pointed mapping class group of $(\Sigma_g,*)$
so that we may lift $\sigma_{g,L}$ to a pointed mapping class.  Since
the pointed mapping class group surjects onto the usual mapping class group, this will not affect the distribution of $M_{g,L}$.
For $\sigma$ a mapping class of $\Sigma_g$ preserving $*$, 
 a surjection $\pi_1(M_\sigma) \to H$ is a surjection $f: \pi_1(\Sigma_g) \to H$, factoring through $\pi_1(H_g)$, whose pullback by $\sigma$ factors also through $\pi_1(H_g)$. 

The expectation of the number of  surjections 
${ \pi_1 (M_{g,L})} \ra {H} $
 is the sum over surjections $f \colon \pi_1( \Sigma_g )  \to Q$ factoring through $\pi_1(H_g)$ of the fraction of words $\sigma$ of length $L$ which send $f$ to another surjection factoring through $\pi_1(H_g)$.  As the word length grows, $f\sigma$ will equidistribute in its mapping class group orbit by the Perron-Frobenius theorem, and so the limit as $L$ goes to $\infty$ of this expectation is equal to the sum over $f$ of the fraction of elements in the orbit of $f$ that factor through $\pi_1(H_g)$.
Dunfield and Thurston  \cite[Theorem 6.21]{DunfieldThurston} showed that the limit (in $L$) converges to $  \frac{  \abs{H}  \abs{ H_2 (H, \mathbb Z ) } }{  \abs{ H_1(H, \mathbb Z) }}$ as $g$ goes to $\infty$.

The expectation of the number of \Mdcorated/ surjections  is the sum over $f$ of the fraction of $\sigma$ in the pointed mapping class group such that (1) $f\sigma$ factors through $\pi_1(H_g)$ and 
(2) the fundamental class $f_*[M_\sigma]$ is equal to $s\in H_3(H,\Z)$.  When $g$ is sufficiently large, by \cref{cobordism-class-is-homomorphism} and \cref{bordism-is-homology}, for $\sigma$ in the stabilizer of $f$, taking the fundamental class of $f_*[M_\sigma]$  gives a surjective  homomorphism from the stabilizer of $f$ to $H_3(H, \mathbb Z)$. Thus $\sigma$ which stabilize $f$ and give fundamental class mapping to $s$ form a coset for the kernel of $\mathcal{M}_{f,g}\ra H_3(H,\Z)$.  

In fact, the $\sigma$ that send $f$ to any other fixed $f' \colon \pi_1(\Sigma_g) \to H$ factoring through $\pi_1(H_g)$ and give fundamental class mapping to $s$
form a coset for the kernel of the homomorphism $\mathcal{M}_{f,g}\rightarrow H_3(H,\Z)$. This follows from the previous claim after composing with a mapping class of $\Sigma_g$ sending $f$ to $f'$ and extending to a mapping class of $H_g$, whose existence is guaranteed by \cref{all-handlebodies-are-the-same}.

So we fix a surjection $f:\pi_1(\Sigma_g)\ra H$.
For sufficiently large $g$, the limit as $L$ goes to $\infty$ of the fraction  of $\sigma$ of length $L$ satisfying this condition for which $f$ can be extended to an \Mdcorated/ surjection $\pi_1(M_\sigma)\ra H$  is
 the number of surjections $\pi_1(\Sigma_g)\ra H$ factoring through $\pi_1(H_g)$ in the mapping class group orbit of $f$ divided by 
the product of the size of the orbit of $f$ and $\abs{H_3(H, \mathbb Z)}$. So the limit as $L$ goes to $\infty$ is the fraction of surjections in the orbit factoring through $\pi_1(H_g)$ divided by $\abs{H_3(H, \mathbb Z)}$. Thus the limit as $L$ goes to $\infty$ of the expected number of \Mdcorated/ surjections is the limit as $L$ goes to $\infty$ of the expected number of surjections, divided by $\abs{H_3(H, \mathbb Z)}$.
Using Dunfield and Thurston's result, this converges as $g$ goes to $\infty$ to $  \frac{  \abs{H}  \abs{ H_2 (H, \mathbb Z ) } }{  \abs{ H_1(H, \mathbb Z) } \abs{ H_3(H, \mathbb Z) }} $.
\end{proof}

\section{The main theorem on the distribution}\label{s-main-lc}

Now we turn to determining the distribution of the (\Mdcorated/) group $\mathbf{ \pi_1 (M_{g, L})}$ from its moments determined in \cref{P:decmom}.
In this section we will state our main technical theorem on the distribution of $\mathbf{ \pi_1 (M_{g, L})}$ and set up the notation for the proof.
We will first describe our approach informally.  
Given a random group $\pi$, suppose one wants to determine the probability that $\pi\cong G$ for a fixed group $G$. Certainly in such situations there is a surjection $\pi \ra G$, in fact $|\Aut(G)|$ of them, so $\E[|\Surj(\pi,G)|]/|\Aut(G)|$ provides an upper bound on this probability.  However, this is likely an overestimate.  If a surjection $\pi \ra G$ is not an isomorphism, it factors through a surjection $\pi \ra E$, where $E$ is a minimal non-trivial extension of $G$.  Thus the moments  $\E[|\Surj(\pi,E)|]$ over
all $E$ tell us about the extent to which this is an overestimate, and 
$$
\frac{\E[|\Surj(\pi,G)|]}{|\Aut(G)|} -\sum_E \frac{\E[|\Surj(\pi,E)|]}{|\Aut(E)|}\frac{|\Surj(E,G)|}{|\Aut(G)|}
$$
is our next estimate for $\Prob(\pi\cong G)$, which would be correct if $\pi$ was supported only on $G$ and minimal non-trivial extensions of $G$.
More generally one can work out that this second estimate gives
 a lower bound on $\Prob(\pi\cong G)$.  The undercounting now is because $\pi$ might surject onto more than one minimal non-trivial extension of $G$,
and one could add another term to account for this, and continue on analogous to inclusion-exclusion, leading to an infinite sum.
There are two major obstacles to such an approach, the first algebraic and the second analytic.  The algebraic obstacle is that is it not at all clear how to evaluate an infinite sum involving a group, its extensions, surjections between them, and automorphisms and group cohomology (appearing, for us, in the moments) of the group and its extensions, \emph{for an arbitrary finite group} $G$.   The second obstacle is that a priori it is not clear that this infinite sum converges, and indeed in general it will not.  

In this paper we overcome both obstacles.  On the algebraic side, we relate the group cohomology of $G$ and its minimal extensions with precise formulas structured in such a way that we can evaluate the necessary infinite sum, and indeed express it as a product.  Of course, the group cohomology of $G$ is obviously related to the group cohomology of its extensions, but the work is in finding precise formulas that allow us to evaluate the infinite sum.  This requirement for workable formulas has necessitated  our considering \Mdcorated/ groups.

On the analytic side,  we must confront the fact that the infinite sums in some cases truly fail to converge.  This is where we use the topological input we have proven in Section \ref{s-3-properties}, which shows that the fundamental group of a 3-manifold is not an arbitrary group but rather has certain parity restrictions on its group cohomology.  These restrictions allow us to do the inclusion-exclusion in a smaller category of $G$-extensions where the sum will actually converge.

As an example, let $G$ be a finite group and $V$ an absolutely irreducible symplectic representation of $G$ over an odd characteristic finite field $\kappa$ such that $\dim_{\kappa} H^1(G,V)$ is even.   If $\pi_1(M)$ surjects onto $V\rtimes G$, then using  \cref{lem-even_ei} to compute $H^1(\pi_1(M),V)$  and  using \cref{pi1-properties} to show $H^1(\pi_1(M),V)$ is even dimensional, it follows that
 $\pi_1(M)$ also surjects onto $V^2\rtimes G$.  
 One can imagine how this kind of result allows us to skip steps in the inclusion-exclusion sketched above to obtain a sum that might converge even if the original one did not.

\subsection{Definitions}
Let $G$ be a finite group.  A \emph{$[G]$-group} is a group $H$ together with a homomorphism from $G$ to $\Out(H)$.  A morphism of $[G]$-groups is a homomorphism $f: H\ra H'$ such that for each element $g\in G$, for each lift $\sigma_1$ of the image of $g$ from $\Out(H)$ to $\Aut(H)$, there is a lift $\sigma_2$ of the image of $g$ from $\Out(H')$ to $\Aut(H')$ such that $\sigma_2\circ f=f\circ \sigma_1$.  
Note that $G$ acts on the set of normal subgroups of a $[G]$-group $H$, and we say a nontrivial $[G]$-group is \emph{simple} if it has no nontrivial proper fixed points for this action.  
If we have an exact sequence $1\ra N \ra H \ra G\ra 1$, then $N$ is naturally a $[G]$-group.  A normal subgroup $N'$ of $N$ is fixed by the $\Out(G)$ action if and only if it is normal in $H$.

\subsection{Setup}\label{ss-lc-notation}
Fix a finite \Mdcorated/ group $\mathbf G$. A minimal non-trivial extension of $G$ (in the sense that its map to $G$ does not factor through any non-trivial quotients)
 is either by an irreducible representation $V$ of $G$ over $\F_p$ for some prime $p$ (a finite simple abelian $[G]$-group) or a finite simple non-abelian $[G]$-group $N$.   In the first case, $H^2(G, V)$ classifies the different extensions, and in the second case there is a unique extension up to isomorphism given by the fiber product $\Aut(N)\times_{\Out(N)} G$ (where $\Aut(N)$ and $\Out(N)$ here are automorphisms and outer automorphisms of $N$ in the category of groups). 
   When we make our inclusion-exclusion argument, we will fix a finite set of these minimal extensions, and determine the expected number of surjections from a random $3$-manifold group to $G$ that don't extend to our chosen extensions.  Now we will choose and name those extensions.

 Fix a tuple $\underline{V}= (V_1,\dots, V_n)$ of irreducible representations of $G$ over fields $\F_{p_i}$ for primes $p_i$.
  Write $\kappa_i = \operatorname{End}_{{G}}(V_i)$, a finite field, and $q_i = |\kappa_i|$, a prime power. For each $i$, fix also a $\kappa_i$-subspace $W_i \subseteq H^2(G, V_i)$, forming a tuple $\underline{W}$.  We will only be avoiding extensions by $V_i$ whose extension class is in $W_i$. 
 Fix a tuple $\underline{N}= (N_1,\dots, N_m)$ of non-abelian finite simple $[G]$-groups.

We say the following (isomorphism classes of) extensions of $G$ are \emph{minimally material}. 
\begin{itemize}

\item  For $1\leq i \leq n$,  every extension $1 \to V_i \to H \to G \to 1$ whose extension class lies in $W_i$.

\item For $1\leq i \leq m$, the extension $\Aut(N_i)\times_{ \Out(N_i)} G\to {G}$.

\end{itemize}

For an \Mdcorated/ group  $\mathbf K$, define $L_{\mathbf G, \underline{V}, \underline{W}, \underline{N}} (\mathbf K )$ to be the number of surjective \Mdcorated/ morphisms 
$f\colon \mathbf K \to \mathbf G$  that do not 
factor through any minimally material extension $\mathbf H\to \mathbf G$. The main case to keep in mind is the following.

\begin{lemma}\label{L:Linnicecase}
Let $\mathcal{C}$ be a finite set of finite groups and $\bG$ a level-$\mathcal{C}$ oriented group. 
If the   $\{V_i\}$ are the irreducible $G$-representations such that $V_i\rtimes G$ is level-$\mathcal{C}$, and $W_i$ is the set of all the 
level-$\mathcal{C}$ extensions of $G$ by $V_i$, and the
$\{N_i\}_i$ are the finite simple non-abelian $[G]$-groups $N$ such that $\Aut(N)\times_{ \Out(N)} G$ is level-$\mathcal{C}$, 
then (1)  $\{V_i\}_i$ and $\{N_i\}_i$ are finite sets, and (2) if $\bK$ is a finitely generated oriented group, then
 ${L_{\bG, \underline{V}, \underline{W}, \underline{N}} (\bK)}{}$ is $|\Aut(\bG)|$ when $\bK^\C\cong \bG$ and $0$ otherwise. 
\end{lemma}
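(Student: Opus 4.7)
Our plan is to reduce both claims to the observation that $F_k^\C$, the maximal level-$\C$ quotient of the free group $F_k$, is finite for each $k$ (stated after the definition of level-$\C$), which implies that the isomorphism classes of $k$-generated level-$\C$ groups form a finite set.

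For (1), we bound generators. Any $V \in \underline V$ makes $V \rtimes G$ generated by a generating set of $G$ together with a single nonzero vector of $V$ (by irreducibility), hence $(d_G+1)$-generated for fixed $d_G$ and so drawn from a finite list; each such group recovers $V$ as the kernel of its projection to $G$. For $\underline N$, any simple $[G]$-group $N$ is characteristically simple as an abstract group, so $N = S^r$ with $S$ abstractly simple; when $S$ is non-abelian it is $2$-generated, and since $G$ permutes the $r$ factors transitively (by simplicity of $N$ as a $[G]$-group), $\Aut(N) \times_{\Out(N)} G$ is $(d_G+2)$-generated, giving the desired finiteness.

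For (2), the key fact is that every oriented surjection $f \colon \bK \to \bG$ factors through $\bK^\C$ since $\bG$ is level-$\C$. If $\bK^\C \simeq \bG$, then $f$ corresponds to an oriented isomorphism $\bK^\C \to \bG$, giving $|\Aut(\bG)|$ choices, and no such $f$ can factor through any minimally material $\bH \to \bG$: such $\bH$ is level-$\C$ with $|\bH|>|\bG|$, and so cannot be a quotient of $\bK^\C \simeq \bG$. If $\bK^\C \not\simeq \bG$ but some oriented surjection exists, we construct a minimally material extension through which $f$ factors: let $K = \ker(\bK^\C \to \bG) \neq 1$, pick $K' \lhd \bK^\C$ maximal among normal subgroups properly contained in $K$, and set $\bH = \bK^\C/K'$, oriented by pushing forward $s_\bK$. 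Then $K/K'$ is a minimal normal subgroup of $\bH$ and hence characteristically simple.

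In the abelian case, $K/K'$ is an irreducible $\F_p[\bH]$-module whose action descends to $\bG$, so it is also an irreducible $\F_p[\bG]$-module $V$; the map $\bH \times_\bG \bH \to V \rtimes \bG$, $(h_1,h_2) \mapsto (h_1 h_2^{-1}, \pi(h_2))$, is a surjective group homomorphism, exhibiting $V \rtimes \bG$ as a quotient of a level-$\C$ group, so $V \in \underline V$ and the class of $\bH$ lies in $W$ automatically. In the non-abelian case, $K/K'$ is a product $N^r$ of non-abelian simple groups; each factor is normalized by $K/K'$ itself, so the permutation action of $\bH$ descends to $\bG$, and transitivity follows from minimality, making $N^r$ a simple $[\bG]$-group in the paper's sense. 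Since $Z(N^r) = 1$, the extension $\bH \to \bG$ is uniquely $\Aut(N^r) \times_{\Out(N^r)} \bG$, which is minimally material. The step we expect to require the most care is this last one: one is initially tempted to look for a minimally material extension with kernel a single abstract simple factor, but the resolution is that the paper's notion of ``simple $[G]$-group'' permits the whole product $N^r$ (with transitive $\bG$-action) to serve as the simple $[G]$-group, so $N^r$ itself belongs to $\underline N$.
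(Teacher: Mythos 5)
Your proposal is correct, and for part (2) it follows essentially the same route as the paper, just with the details made explicit: the paper's proof is three lines, observing that any surjection $\bK\to\bG$ factors through the finite level-$\C$ group $\bK^{\C}$, and that a non-isomorphism $\bK^{\C}\to \bG$ factors through some minimal non-trivial extension of $G$, which, being a quotient of $\bK^{\C}$, is level-$\C$ and hence minimally material; the paper compresses the rest into the remark that ``the key feature is that level-$\C$ groups are closed under fiber products and quotients,'' which is precisely your fiber-product map $\bH\times_{\bG}\bH\to V\rtimes G$ (needed to place $V$ on the list $\{V_i\}$ when the level-$\C$ extension $\bH$ is non-split) together with quotient-closure for the non-abelian case, where uniqueness of the extension by a centerless simple $[G]$-group gives $\bH\cong \Aut(N)\times_{\Out(N)}G$. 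Your identification of the ``simple $[G]$-group may be a product $S^r$'' point is exactly right. For part (1) you diverge mildly: the paper cites \cite[Proof of Theorem 4.12]{Liu2019}, whereas you give a self-contained argument bounding generators of $V\rtimes G$ and of $\Aut(N)\times_{\Out(N)}G$ and invoking finiteness of $F_k^{\C}$ (which the paper records in the notation section); this is a legitimate and arguably more transparent substitute. One small point the paper's proof flags that you omit: for the tuple $\underline{W}$ to fit the framework of Section \ref{ss-lc-notation}, one must check that $W_i$, the set of level-$\C$ extension classes, is a $\kappa_i$-subspace of $H^2(G,V_i)$; this again follows from closure under fiber products and quotients (sums of classes come from fiber products, scalar multiples from pushforward along $G$-endomorphisms), and while it is not part of conclusions (1)--(2), it is needed for $L_{\bG,\underline{V},\underline{W},\underline{N}}$ to be an instance of the general setup used later. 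Minor implicit steps (e.g.\ that $\ker(\bK^{\C}\to\bG)\neq 1$ in your second case because an oriented surjection between groups of equal order would be an oriented isomorphism) are fine as written.
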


\begin{proof}
The first claim is shown in \cite[Proof of Theorem 4.12]{Liu2019}.
The key feature required for the second claim is that level-$\mathcal{C}$ groups are closed under fiber products and quotients.  
Note that this implies the $W_i$ as defined in the lemma are $\kappa_i$-subspaces.
Since $K$ is finitely generated, $K^\C$ is finite and hence level-$\C$.
If $K^\C \ra G$ is a surjection that is not an isomorphism, then it factors through some minimal non-trivial extension of $G$, and that extension, since it is a quotient of $K^\C$, is level-$\C$, which proves the lemma.
\end{proof}

Our goal will now be to determine the asymptotics of $\E[L_{\mathbf G, \underline{V}, \underline{W}, \underline{N}} (\mathbf K )]$, which includes
computing the asymptotics of $\Prob({\bf \pi_1(M_{g,L})}^\C\cong \bG)$.  

Let $\tau: H^3 ( G, \mathbb Q/\mathbb Z) \to \mathbb Q/\mathbb Z$ be the map induced by integrating against the homology class of $\mathbf G$.
Let $\delta_{N_i}$ be the differential $ d^{0,2}_3 \colon H^2 ( N_i, \QZ)^{ {G} } \to H^3 ({G},\QZ)$  appearing in the Lyndon-Hochschild-Serre spectral sequence  computing $H^{p+q} (  {G} \times_{ \Out(N_i)} \Aut(N_i), \QZ) $ from $H^p ( {G}, H^q ( N_i, \QZ))$.
We define weights  $w_{N_i}=w_{N_i}(\tau)$  to be positive numbers depending on the above data, as in the following table.
\begin{center}
{Table 1: Definition of the $w_{N_i}$}\\
\begin{tabular}{ |c|c| } 
 \hline
 Condition &$w_{N_i}$  \\ 
  \hline  \hline
 $\tau \circ \delta_{N_i}=0$ &   $  e^{ - \frac{  \left| H^2( N_i, \mathbb Q/ \mathbb Z )^{{G}} \right| }{ | Z_{ \Out(N_i)} ( {G} ) |} }  $  \\   \hline
 $\tau \circ \delta_{N_i}\ne 0$ &  $1$ \\ 
 \hline
\end{tabular}
\end{center}
Here $Z_{ \Out(N_i)} ( {G} )$ is the centralizer of the image of $G$ in $\Out(N_i)$ (the outer automorphism group of $N_i$ as a group).

Next we will define analogous weights for the $V_i$.
For any $i$, let $W_i^{\tau}$ consist of all those $\alpha \in W_i$ such that $\tau ( \alpha \cup \beta) =0 $ for all $\beta \in H^1 ( {G}, V_i^\vee)$.
If $V_i$ has odd characteristic $p$ and is $\mathbb F_p$-self-dual, let $\epsilon_i$ be the Frobenius-Schur indicator, which is $1$ if $V_i$ is symmetric, $0$ if $V_i$ is unitary, and $-1$ if $V_i$ is $\kappa_i$-symplectic (see Section~\ref{S:Notation} for definitions).
If $V_i$ has even characteristic and is $\mathbb F_2$-self-dual, then either $V_i$ is trivial, in which case we set $\epsilon_i=1$, or $V_i$ is $\F_2$-symplectic, in which case we 
let $\epsilon_i$ be $-1$ if the action of $G$ on $V_i$ lifts to 
$\ASp_{\kappa_i}(V_i)$, $0$ if the action lifts to
$\ASp_{\F_2}(V_i)$ but not $\ASp_{\kappa_i}(V_i)$, and $1$ if the action doesn't lift to $\ASp_{\F_2}(V_i)$. 

Regardless of characteristic, we will say that $V_i$ is \emph{A-symplectic} if $\epsilon_i=-1$: in other words, in odd characteristic, ``A-symplectic" means the same thing as ``$\kappa_i$-symplectic", and in even characteristic, it refers to representations $V_i$ lifting to $\ASp_{\kappa_i}(V_i)$. Note that whether $V_i$ is A-symplectic in even characteristic does not depend on a choice of symplectic form. Since $V_i$ is irreducible, it is only possible to change the symplectic form by multiplication by a scalar $a\in \kappa_i$, and this is equivalent to multiplying each vector by $\sqrt{a}\in \kappa_i$ and thus does not change whether the action of $G$ lifts to $\ASp_{\kappa_i}(V_i)$.

If $\epsilon_i=-1$, define $c_{V_i}\in H^3(\ASp_{\kappa_i}(V),\QZ)$ as in \cref{universal-class-exists} if the characteristic is even and $c_{V_i}=0$ if the characteristic is odd.  Then the weights $w_{V_i}=w_{V_i}({\tau})$ are defined in the following table.

\begin{center}
{Table 2: Definition of the $w_{V_i}$}\\
\begin{tabular}{|c|c|c|} 
 \hline
  $V_i$ $\F_{p_i}$-self-dual? & Conditions & $w_{V_i}$  \\ 
  \hline  \hline
yes & $W_i^{\tau}\neq0 $  & $0$ \\
 \hline
 yes,  $\epsilon_i > -1$ & $W_i^{\tau}=0 $  &  $\prod_{j=1}^{\infty} ( 1+ q_i^{- j- \frac{\epsilon_{i}-1}{2}} )^{-1}$ \\
 \hline
 yes,  $\epsilon_i = -1$ & $W_i^{\tau}=0 $, $2 \nmid \dim_{\kappa_i} H^1(G, V_i) - 2 \tau (c_{V_i}) $&  $0$ \\
 \hline
  yes,  $\epsilon_i = -1$ & $W_i^{\tau}=0 $, $2 \mid \dim_{\kappa_i} H^1(G, V_i) - 2 \tau (c_{V_i})$ &  $\prod_{j=1}^{\infty} ( 1+ q_i^{- j })^{-1}$ \\
 \hline
 no,   &   & $\displaystyle{\prod_{j=1}^{\infty}  
\left(1-q_i^{-j} \frac{\abs{W_i^\tau}\abs{H^1(G, V_i^\vee)}}{\abs{H^1(G, V_i)}}\right) }$     
    \\
 $V_i^\vee\not\cong V_j$ any $j$  &   &  \\
    \hline
 no, $V_i^\vee\cong V_j$   & 
$W_i^\tau = W_j^\tau =0 $,  
       & $\prod_{k=1}^{\infty} (1 -q_i^{-k})^{1/2}$ \\
             for $j\ne i$ & 
 $\dim H^1( G, V_i) =\dim H^1(G, V_i^\vee) $ &
     \\
 \hline
   no, $V_i^\vee\cong V_j$   & $W_i^{\tau} \ne0$ or $W_i^\tau \neq 0$ or  & $0$ \\
               for $j\ne i$ &  $\dim H^1( {G}, V_i) \neq \dim H^1({G}, V_i^\vee) $ &  \\
               \hline
\end{tabular}
\end{center}

Note that $w_{V_i}$ vanishes in many cases. Notably, it vanishes whenever $W_i^\tau \neq 0$ and $V_i$ is dual to $V_j$ for some $j$ (regardless of whether $i=j$). Less obviously, if $V_i^\vee\not\cong V_j$ for any $j$, then $w_{V_i}=0 $ if $\dim H^1({G}, V_i) < \dim W_i^\tau + \dim H^1({G}, V_i^\vee)$.

Now with these weights defined we can state our main technical theorem on the distribution of $\mathbf{\pi_1 (M_{g,L} ) }$.
Let $\mu_{g,L, \mathcal C}$ be the probability measure of the random variable $\mathbf{\pi_1^{\mathcal C}(M_{g,L})}$, which is just slightly more convenient notation for $(\mathbf{\pi_1(M_{g,L})})^{\mathcal C}$.  Note that the level-$\mathcal{C}$ completion of an oriented group is naturally oriented.

\begin{theorem}\label{localized-counting} For each $g, L$, let $M_{g,L}$ be the Dunfield-Thurston random $3$-manifold as defined in Section~\ref{S:DT}. For every $\bG, \underline{V}, \underline{W}, \underline{N}$ as above,
\[ \lim_{ g\to \infty} \lim_{L \to \infty} \mathbb E \left[ L_{\bG, \underline{V}, \underline{W}, \underline{N}} (\mathbf{\pi_1 (M_{g,L} ) })\right]=  \frac{ 
 \abs{G}\abs{H_2 (G, \mathbb Z) }  }{ \abs{H_1(G,\mathbb Z)} \abs{H_3(G, \mathbb Z)}} \prod_{i=1}^n w_{V_i} \prod_{i=1}^m w_{N_i} .\]
In particular,  for $\C$ a finite set of finite groups, and $\bG, \underline{V}, \underline{W}, \underline{N}$ as in \cref{L:Linnicecase},
$$
\lim_{ g\to \infty} \lim_{L \to \infty}\mu_{g,L, \mathcal C}(\bG)
=  \frac{  \abs{G}\abs{H_2 (G, \mathbb Z) }  }{\abs{\Aut(\bG)} \abs{H_1(G,\mathbb Z)} \abs{H_3(G, \mathbb Z)}} \prod_{i=1}^n w_{V_i} \prod_{i=1}^m w_{N_i}.
$$
\end{theorem}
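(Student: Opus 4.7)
The plan is to establish \cref{localized-counting} by combining the moment formula of \cref{P:decmom} with an inclusion-exclusion identity over minimally material extensions, and then performing an algebraic reorganization of the resulting alternating sum into an Euler-type product that matches the weights in Tables~1 and~2. First I would prove a Möbius-style identity expressing
\[ L_{\bG, \underline{V}, \underline{W}, \underline{N}}(\mathbf K) = \sum_{\mathbf H \to \bG} c_{\mathbf H}\, |\Surj(\mathbf K, \mathbf H)|, \]
where $\mathbf H$ ranges over iterated extensions of $\bG$ whose composition factors are drawn from the specified $V_i$ (with extension class in $W_i$) and $N_i$, and $c_{\mathbf H}$ is a sign-weighted combinatorial coefficient reflecting the multiplicity pattern of each factor. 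By construction, a surjection $\mathbf K \twoheadrightarrow \bG$ that lifts to some minimally material $\mathbf H'$ contributes zero on the right-hand side by the usual cancellation; a surjection that lifts to no such $\mathbf H'$ contributes exactly once. (Here I would take care that the \Mdcorated/ structure is carried along, so that only lifts preserving the class in $H_3$ count.)

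Taking expectations and passing to the iterated limit, \cref{P:decmom} turns each term into
$c_{\mathbf H}\frac{|H||H_2(H,\Z)|}{|H_1(H,\Z)||H_3(H,\Z)|}$, so the task reduces to evaluating an infinite alternating sum over all iterated extensions of $\bG$ by the given factors. Stratifying by the tuple of multiplicities $(k_1,\ldots,k_n,\ell_1,\ldots,\ell_m)$ with which the $V_i$ and $N_i$ appear, the sum factors as a product of independent contributions from each $V_i$ and each $N_i$, because extensions by distinct simple $[\bG]$-groups do not interact at the level of $H^2$ classification once the Lyndon-Hochschild-Serre machinery is set up. Thus I would compute each factor separately. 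For the nonabelian $N_i$, a single extension is possible and the key input is the behavior of $\tau$ under the $d_3$ differential $\delta_{N_i}\colon H^2(N_i,\QZ)^G\to H^3(G,\QZ)$: when $\tau\circ\delta_{N_i}\neq 0$ the extension fails to carry $\tau$ so only the trivial contribution survives (giving $w_{N_i}=1$), while when $\tau\circ\delta_{N_i}=0$ one sums a geometric series in $1/|Z_{\Out(N_i)}(G)|$ weighted by $|H^2(N_i,\QZ)^G|$ to produce the exponential weight. For each $V_i$ the calculation requires systematically counting extensions with class in $W_i$ and pairing them via Poincaré duality/cup product into $H^3(G,\QZ)$; the resulting sums telescope into Euler products of the form $\prod_j(1 \pm q_i^{-j+\cdot})^{\pm 1}$ whose precise exponent is dictated by the duality type of $V_i$ (non-self-dual giving the linear factor, symmetric and unitary giving $\prod (1+q_i^{-j-(\epsilon_i-1)/2})^{-1}$).

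The main obstacle is the A-symplectic case ($\epsilon_i = -1$), where the naive alternating sum fails to converge absolutely. This is exactly where the topological constraints of \cref{pi1-properties}(3) and (4) become essential: they force $\dim_{\kappa_i} H^1(\pi_1(M), V_i) \equiv 2\tau(c_{V_i}) \pmod 2$, so a priori only half of the ostensible surjections to $V_i$-extensions can actually be realized by a $3$-manifold group. To exploit this, I would rewrite the inclusion-exclusion as a sum restricted to the support of the distribution—grouping together pairs of extensions that differ by a parity-forbidden step—and show that this regrouping converges and matches the stated Euler product $\prod_j (1+q_i^{-j})^{-1}$ (when the parity condition $2\mid \dim H^1(G,V_i)-2\tau(c_{V_i})$ holds) or vanishes (when it does not). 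Verifying that the algebraic identities produced by the spectral sequence exactly mirror this reorganization, and bounding the tails of the sum uniformly in $g,L$ so that the limit may be exchanged with the infinite sum, is the most delicate part of the argument and will occupy the bulk of Sections~5–7.
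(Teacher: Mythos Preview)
Your overall architecture matches the paper's: an inclusion--exclusion identity over material extensions of $\bG$, application of the moment formula \cref{P:decmom} termwise, factorization of the resulting sum into local contributions indexed by the $V_i$ and $N_i$, and spectral-sequence calculations to identify each local factor with the corresponding $w_{V_i}$ or $w_{N_i}$. Two technical points, however, are handled differently in the paper than in your sketch, and in each case your description is vague enough that it is not clear it would succeed.

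First, for the A-symplectic convergence problem, the paper does \emph{not} ``restrict to the support'' or ``group pairs of extensions.'' Instead it introduces a modified coefficient $T_{\bH}$ built from weighted paths, where each path $P$ through a chain of material extensions carries an extra factor $\beta_P = \prod 1/q_j$ over those A-symplectic $V_j$ for which the path visits an intermediate $\bH_i$ with $\dim H^1(\bH_i,V_j)=\dim H^1(\bG,V_j)+1$. The inclusion--exclusion identity with these modified coefficients (\cref{L:ie}) is \emph{only} valid when $\bK$ is a $3$-manifold group, precisely because the parity constraint from \cref{pi1-properties} guarantees that the maximal material quotient $Q_\rho$ is attainable, so the ``extra'' $\beta$-factor never appears in the telescoping step. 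When the coefficients are later evaluated (\cref{calculating-th}), this $\beta_P$ produces $Q_i(e_i)=q_i^{e_i(e_i-3)/2}$ rather than $q_i^{\binom{e_i}{2}}$, and it is exactly this shift that makes the $q$-series converge in \cref{self-dual-w}. Your ``grouping pairs'' idea is in the right spirit but is not the actual mechanism.

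Second, the exchange of the iterated limit with the infinite sum over $\bH$ is not just a matter of uniform tail bounds. The paper proves a separate convergence theorem (\cref{P:robustness-exchange}) via a radical-tower argument: one shows that any surjection $K\to H$ factors through a bounded-length chain of semisimple radicals, which lets one dominate the sum after finitely many applications of Fatou--Lebesgue. This is combined with a weak-limit / subsequence argument in $g$ and a final pair of Fatou inequalities to pin down the limit. Your proposal to ``bound the tails uniformly in $g,L$'' does not indicate how to get such control, and a direct attempt would run into the fact that the moments themselves grow with $\bH$.
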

 In the case that  we take no $V_i$'s and $N_i$'s, then \cref{localized-counting} is just \cref{P:decmom}.  Taking all possible
$V_i$'s and $N_i$'s (relevant to a level-$\C$) gives the second statement of the theorem.  The first statement is a general flexible result that allows one to interpolate between these two extremes.

\subsection{Proof of Theorem \ref{localized-counting} from the major inputs}
Now we will state the main results that go into the proof of \cref{localized-counting} and show how the theorem follows from these inputs.

We first define the class of extensions of $G$ that will arise in our inclusion-exclusion formula.
We call a $G$-extension $H\ra G$ \emph{material} if it is a finite fiber product over $G$ of finitely many minimally material extensions, and 
a ${\bG}$-extension $\bH\ra \bG$
\emph{attainable} if  for each $i$ such that  $V_i$ is A-symplectic, we have $\dim_{\kappa_i}  H^1(H, V_i) \equiv 2\tau ( c_{V_i}) \mod 2 $ (motivated by Theorem~\ref{pi1-properties}). 

Next we will define the coefficients that will appear in our inclusion-exclusion formula.
Let $I$ be a set of finite \Mdcorated/ groups that includes exactly one from each isomorphism class.
For $\bH \in I$, we define a path $P$ from $\bH$ to $\bG$ to be sequence $\bH_s, \bH_{s-1},
\dots, \bH_0$ for some $s\geq 0$ with $\bH_i \in I$, with $\bH_s=\bH$ and $\bH_0=\bG$, along 
with choices $f_i \colon \bH_i \to \bH_{i-1}$ for each $1\leq i \leq s$ of surjective \Mdcorated/ morphisms that are not 
isomorphisms, such that each composite map $H_i\ra G$ is material. 
We write $\Path(\bH,\bG)$ for the set of such paths.
We write $\abs{P}=s$ for the length of the path and define
\begin{equation*}
 \alpha_P : = \prod_{i=0}^{|P|-1} \frac{1}{ \abs{ \Aut(\bH_i)} }  \quad\quad\quad
 \beta_P: = \prod_{\substack{j \\ V_j \textrm{ A-symplectic} \\ \dim_{\kappa_j} H^1 (\bH_i ,V_j) = \dim_{\kappa_j} H^1( \bG, V_j)+1 \textrm{ for some } i}}  \frac{1}{q_j}.
 \end{equation*}
 There is a path of length $0$ from $\bH$ to $\bG$ if and only if $\bH=\bG$, and there is one path of length $0$ from $\bG$ to $\bG$.

For $\bH\in I$, we define  \[ T_{\bH} : =\sum_{P\in\Path(\bH,\bG)} (-1)^{\abs{P}} \alpha_P \beta_P.\]
 Here the precise $\alpha_P$ factor is a necessary normalization. The exact value of $\beta_P$ factor is somewhat arbitrary, but some such factor is necessary to improve the rate of convergence of certain sums in the case where some $V_j$ is A-symplectic (and without it these sums will not converge). One can see many of the main ideas on a first read while ignoring the $\beta_P$ factor.

The following lemma is our basic inclusion-exclusion formula, which gives the number of surjections from a $3$-manifold group to $\bG$, not lifting to any minimally material extension, in terms of total numbers of surjections to various extensions of $\bG$.  We will prove Lemma~\ref{L:ie} in Section~\ref{S:ie} using group theory.

\begin{lemma}\label{L:ie}
Let $\bG, \underline{V}, \underline{W}, \underline{N}$ be as above. 
Assume $\bG\in I $ and $\bG$ is an attainable $\bG$-extension.
 If $\bK$ is the (oriented) 
 fundamental group of a $3$-manifold, then we have
\[ \frac{L_{\bG, \underline{V}, \underline{W}, \underline{N}} (\bK)}{|\Aut(\bG)|} = \sum_{\bH \in I} \frac{ T_{\bH} }{ \abs{ \Aut(\bH)} } \Surj(\bK,\bH).\]
\end{lemma}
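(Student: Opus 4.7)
The plan is to expand $T_{\bH}$ and reorganize the resulting double sum by the composite surjection $\psi = f_1 \circ \cdots \circ f_{|P|} \circ \rho : \bK \to \bG$. Given a triple $(\bH, \rho, P)$ with path $\bH_s = \bH \xrightarrow{f_s} \cdots \xrightarrow{f_1} \bH_0 = \bG$, the kernels $N_i := \ker(f_{i+1} \circ \cdots \circ f_s \circ \rho)$ form a strictly descending chain $N_s \subsetneq N_{s-1} \subsetneq \cdots \subsetneq N_0 = \ker \psi$ of normal subgroups of $\bK$ with every $\bK/N_i \to \bG$ a material map of oriented groups. Conversely, each such chain arises from exactly $\prod_{i=1}^{s} |\Aut(\bK/N_i)|$ triples, coming from choices of identification $\bK/N_i \cong \bH_i$ for $i \ge 1$. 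Combining these counts with the weight $(-1)^s \alpha_P \beta_P / |\Aut(\bH)|$, the $|\Aut(\bH_i)|$ factors cancel for $i \ge 1$, leaving the contribution of each $\psi$ to the right-hand side equal to
\[
\frac{1}{|\Aut(\bG)|} \, \sum_{\text{material chains from } \ker\psi} (-1)^s \beta_P.
\]
The lemma therefore reduces to showing this chain sum equals $\mathbf{1}_{\psi \text{ admits no lift to any minimally material extension}}$.

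When $\psi$ admits no such lift, only the length-$0$ chain contributes and equals $1$. In the contrary case I would prove cancellation by exploiting the product structure of material extensions: every material extension is a fiber product over $\bG$ of minimally material extensions, so its kernel decomposes as a direct product of the individual minimally material kernels. Consequently the poset of material quotients $\bK/N$ with $N \le \ker \psi$ factors as a product of posets indexed by ``types'': one factor for each $V_i$ with its admissible extension classes in $W_i$, and one for each non-abelian $N_j$. The $\beta_P$ weight likewise factors as a product over the A-symplectic $V_j$'s, so the chain sum factors as a product of per-type sums, and it suffices to show each per-type sum equals $1$ if $\psi$ does not lift to any minimally material extension of that type, and $0$ otherwise.

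For non-abelian $N_j$-type, Goursat's lemma classifies the normal subgroups of $N_j^k$ (as products of diagonal copies), reducing the sum to a standard M\"obius-inversion computation that cancels. For non-A-symplectic $V_i$-type, lifts of $\psi$ are parametrized, via cocycles and the $W_i$-constraint, by an affine space over a $\kappa_i$-vector space; chains correspond to flags in the associated subspace lattice, and the M\"obius function of that lattice supplies the cancellation. The main obstacle is the A-symplectic $V_i$ case. Here the naive alternating sum does not cancel, because the minimally material $V_i$-extension changes $\dim_{\kappa_i} H^1$ by $1$ rather than $0$, producing a parity-one offset. The $\beta_P$ factor of $1/q_i$ is calibrated precisely to compensate for chains passing through the configuration $\dim H^1(\bK/N_i, V_i) = \dim H^1(\bG, V_i) + 1$; what makes this compensation valid is Theorem~\ref{pi1-properties}(4), applied to the 3-manifold group $\bK = \mathbf{\pi_1(M)}$, which pins $\dim_{\kappa_i} H^1(\bK, V_i) \pmod 2$ to $2\tau_{\bK}(c_{V_i})$ and, combined by functoriality with the attainability hypothesis on $\bG$, forces $\dim_{\kappa_i} H^1(\bK, V_i) \equiv \dim_{\kappa_i} H^1(\bG, V_i) \pmod 2$. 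This matching parity causes the weighted alternating sum over chains to collapse to zero whenever $\psi$ lifts, completing the proof.
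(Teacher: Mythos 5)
Your bookkeeping reduction is correct and matches the skeleton of the paper's argument: regrouping the double sum by the composite oriented surjection $\psi\colon \bK\to\bG$, cancelling the $\abs{\Aut(\bH_i)}$ factors, and reducing the lemma to the claim that the $\beta$-weighted alternating sum over chains of material quotients below $\ker\psi$ equals the indicator that $\psi$ has no minimally material lift. The gap is that the crux of that claim -- the A-symplectic case -- is asserted rather than proved. First, the parity you invoke is attached to the wrong group: what is needed is $\dim_{\kappa_i}H^1(Q_\psi,V_i)\equiv\dim_{\kappa_i}H^1(G,V_i)\bmod 2$ for the maximal material quotient $Q_\psi$ of $\psi$ (the paper's $\mathbf{Q_\rho}$), and \cref{pi1-properties}(4) only controls $\dim H^1(\bK,V_i)$; since $H^1$ of a proper quotient is in general smaller, ``functoriality'' does not transfer the parity. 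The transfer requires the observation that $\bK\to Q_\psi$ admits no further minimally material lift, so that $\dim H^1(\bK,V_i)=\dim H^1(Q_\psi,V_i)$ by \cref{lem-even_ei}; this is exactly \cref{maximal-quotient-attainable}, which your sketch never establishes. Second, even granting the parity, ``matching parity causes the sum to collapse to zero'' is not an argument: the vanishing is not a formal consequence, since if the offset could equal $1$ (e.g.\ $V_i$ occurring in $\ker(Q_\psi\to G)$ with multiplicity one and split class) the chain sum would be $1-1/q_i\neq 0$. Under the parity constraint a genuine mechanism is still needed, and the simplest one is the paper's: pair each chain with the chain obtained by appending or removing $\ker(\bK\to Q_\psi)$ at the bottom; this is sign-reversing, and it is $\beta$-preserving precisely because $\dim H^1(Q_\psi,V_i)-\dim H^1(G,V_i)$ is even and hence never $1$. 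That single involution proves your indicator identity outright, with no factorization by type -- it is the paper's proof of the lemma.

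The factorization step you assert is also not free. Chains in a product of posets do not decompose into pairs of chains, so ``the chain sum factors as a product of per-type sums'' needs either an induction on the semisimple kernel (as the paper carries out in the analogous \cref{calculating-th}) or a signed shuffle identity, together with a verification -- again via \cref{lem-even_ei} -- that the $\beta$-weight of a chain depends only on its $V_j$-isotypic projections; and identifying the poset of material quotients below $\ker\psi$ with the lattice of normal subgroups of $\ker(Q_\psi\to G)$ already presupposes that every such quotient factors through $Q_\psi$, i.e.\ \cref{maximal-quotient-attainable} once more. Two smaller inaccuracies: the $Q_\psi$-normal subgroups of $N_j^{f_j}$ for non-abelian simple $N_j$ are sub-products of the factors, not ``products of diagonal copies'' (diagonal subgroups are not normal); and for non-A-symplectic $V_i$ with nontrivial classes in $W_i$ the cancellation is Hall's theorem for the relevant lattice rather than a literal flag count. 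None of these side issues is fatal, and your route could be completed, but as written the hardest step of the lemma is the one left without proof, and the paper's global append-$Q_\rho$ involution is both the missing ingredient and a substantially shorter path.
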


Given \cref{L:ie} and \cref{P:decmom}, to find $\E \left[ L_{\bG, \underline{V}, \underline{W}, \underline{N}} (\mathbf{\pi_1 (M_{g,L} ) })\right]$ we naturally seek to evaluate the  sum in the following proposition.  This is the most difficult part of the argument, and occurs in Section \ref{S:evalsum}, where we do a detailed spectral sequence analysis.  
\begin{prop}\label{P:evalsum}
If $\bG\in I$ is an attainable $\bG$-extension, and if for any  $i$ such that $V_i^\vee \cong V_j$ for some $j$ we have $W_i^\tau = 0$, then
$$\sum_{\bH \in I} \frac{ T_{\bH} }{ \abs{ \Aut(\bH)} } \frac{  \abs{H}  \abs{ H_2 (H, \mathbb Z ) } }{  \abs{ H_1(H, \mathbb Z) } \abs{ H_3(H, \mathbb Z) }}
= \frac{ \abs{G}  \abs{ H_2(G, \Z) } }{ \abs{\Aut(\bG)} \abs{ H_1(G, \Z)}  \abs{  H_3(G , \Z )} } \prod_{i=1}^r w_{V_i} \prod_{i=1}^s w_{N_i},$$
and the sum is absolutely convergent.
\end{prop}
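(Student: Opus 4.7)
The plan is to unfold the definition of $T_{\bH}$, swap the order of summation, and reorganize the result as a product indexed by the minimally material extensions of $\bG$.

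First, substituting the definition of $T_{\bH}$ gives
\[
\text{LHS} = \sum_{\bH\in I}\sum_{P\in\Path(\bH,\bG)} (-1)^{|P|}\,\alpha_P\,\beta_P \,\cdot\frac{1}{|\Aut(\bH)|}\cdot\frac{|H|\,|H_2(H,\Z)|}{|H_1(H,\Z)|\,|H_3(H,\Z)|}.
\]
Because $\alpha_P\cdot |\Aut(\bH)|^{-1}=\prod_{i=0}^{|P|}|\Aut(\bH_i)|^{-1}$, the double sum collapses to a single sum over material chains $\bG=\bH_0\leftarrow\bH_1\leftarrow\cdots\leftarrow\bH_s$, weighted by sign, by the product of inverse automorphism orders along the chain, by $\beta_P$, and by the ratio $|H_s||H_2(H_s,\Z)|/(|H_1(H_s,\Z)||H_3(H_s,\Z)|)$ at the top. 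I would pull out $|G||H_2(G,\Z)|/(|\Aut(\bG)||H_1(G,\Z)||H_3(G,\Z)|)$ so that the remaining sum runs over material chains over $\bG$.

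Second, I would classify chains by type. Each nontrivial step $\bH_i\to\bH_{i-1}$ in a material chain has kernel built out of minimal $\bG$-invariant normal subgroups of $\bH_i$, and each such minimal factor is, by the classification of minimally material extensions, isomorphic as a $[\bG]$-group to some $V_k$ (with extension class in $W_k$) or to some $N_\ell$. After refining paths so that each step removes a single minimal factor, and absorbing the resulting Möbius coefficients, the reindexed sum factors as a product of independent contributions indexed by $V_1,\dots,V_n,N_1,\dots,N_m$: different simple $[\bG]$-modules give orthogonal minimal normal subgroups in any material extension, extensions of different types commute up to isomorphism, and summation over orderings produces multinomial coefficients that convert nested sums into products. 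Inside each factor, I would use the Lyndon-Hochschild-Serre spectral sequence to express the ratio $|H_k(H_i,\Z)|/|H_k(H_{i-1},\Z)|$ in terms of cohomology of the kernel and the transgressions $\delta_{V_i}$, $\delta_{N_j}$.

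Third, I would evaluate each factor. For a nonabelian simple $N_j$, perfectness kills the contribution to $H_1$, so the alternating sum over the number of $N_j$-layers involves only $H^2(N_j,\QZ)^{G}$ and $\delta_{N_j}$; its value matches Table~1, giving $\exp\bigl(-|H^2(N_j,\QZ)^G|/|Z_{\Out(N_j)}(G)|\bigr)$ when $\tau\circ\delta_{N_j}=0$, and $1$ otherwise through telescoping. For an abelian $V_i$, the sum runs over elementary $\kappa_i$-spaces $V_i^n$ equipped with extension class in $W_i^n$, weighted by $|\GL_n(\kappa_i)|^{-1}$ together with cohomological factors from the LHS spectral sequence; standard $q$-series identities (of the form $\sum_n q^{-\binom{n}{2}}/|\GL_n(\kappa_i)|$) together with the pairing $H^2(G,V_i)\times H^1(G,V_i^\vee)\to\QZ$ entering via Poincaré duality reduce this to the infinite products in Table~2. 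The self-dual cases split into the three Frobenius-Schur types according to the symmetry of the resulting pairing on $H^1(G,V_i)$, exactly as in the $\epsilon_i$ dichotomy.

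The hardest step will be the analytic one. A priori the sum over material chains is infinite, and when some $V_i$ is A-symplectic the corresponding factor diverges naively because $\dim_{\kappa_i}H^1(\bH_i,V_i)$ grows without bound along the chain. The $\beta_P$ factor supplies the missing $q_i^{-1}$ damping that restores absolute convergence, and the attainability hypothesis on $\bG$, combined with \cref{pi1-properties}(4) applied via the class $c_{V_i}$, forces an odd-parity cancellation: contributions from chains with $\dim_{\kappa_i}H^1(\bH_i,V_i)\not\equiv 2\tau(c_{V_i})\bmod 2$ cancel pairwise, producing the vanishing/nonvanishing dichotomy in rows 3 and 4 of Table~2. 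Carrying out this cancellation — tracking $\tau(c_{V_i})$ through the spectral sequence uniformly along arbitrarily long chains, and verifying that the limit of the partial sums matches the claimed product — is the technical heart of the argument.
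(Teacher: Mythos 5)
Your overall skeleton (unfold $T_{\bH}$ into a sum over material chains, evaluate the M\"obius-type chain sum in closed form, factor the moment sum by type of kernel, and finish with $q$-exponential identities) does match the paper's strategy, which proceeds via \cref{calculating-th}, \cref{Mef-formula}, \cref{inner-sum-product} and the local-factor lemmas. But the step you single out as the technical heart rests on a mechanism that is not correct. There is no parity-based pairwise cancellation of chains, and nothing requires tracking $\tau(c_{V_i})$ through spectral sequences along arbitrarily long chains. In the actual argument, the hypothesis $W_i^\tau=0$ (via \cref{C:dimH1}) is what pins down exactly when the $\beta_P$ factor occurs, so that $T_\pi$ evaluates in closed form with the damped exponent $Q_i(e_i)=q_i^{\binom{e_i}{2}-e_i}$ in the A-symplectic case; the resulting local sum $\sum_{e_i} q_i^{e_i(e_i-1)}/\abs{GL_{e_i}(\kappa_i)}$ is then absolutely convergent and equals $\prod_j(1+q_i^{-j})^{-1}$ unconditionally (\cref{self-dual-w}). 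Attainability of $\bG$ enters only at the very last moment, to say that this value coincides with $w_{V_i}$ as defined in Table 2: the vanishing row of the table is excluded by hypothesis (and the non-attainable situation is handled separately by \cref{L:zerocases}, not inside this proposition). Contributions with $\dim H^1(\bH,V_i)$ of either parity all appear in the $q$-exponential sum and none of them cancel against each other; a proof organized around such a cancellation would not go through.

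Relatedly, you place the genuinely hard computation in the wrong spot and mischaracterize it. The Frobenius--Schur trichotomy does not come from ``the symmetry of the resulting pairing on $H^1(G,V_i)$''; it comes from computing $\abs{E_3^{0,2}}=\abs{\ker d_2^{0,2}}$, i.e.\ analyzing the differential on $H^2(V_i^{e_i},\QZ)^G=(\wedge^2 (V_i^\vee)^{e_i})^G$ and showing that its (non)vanishing is governed by whether the $G$-action lifts to $\ASp_{\kappa_i}(V_i)$ (\cref{dual-M}); this Witt-vector/Heisenberg analysis, together with the K\"unneth--Tor argument needed to make the sum over extensions multiplicative across distinct simple factors and to split the orientation condition $\tau(\ker\pi^*)=0$ (\cref{lem-tau}, \cref{inner-sum-product}), is where the real work lies, and your sketch treats both as routine. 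Also note the cup-product pairing you invoke comes from the orientation class $\tau$ on the finite group $G$, not from Poincar\'e duality, and in the case $\tau\circ\delta_{N_i}\neq 0$ the non-abelian factor equals $1$ because all terms with $f_i>0$ vanish (no compatible orientation exists), not by telescoping.
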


However, to apply \cref{P:evalsum} to prove Theorem~\ref{localized-counting}, we need to handle several analytic questions of the existence of limits and whether they can be interchanged with infinite sums in our particular situation.  
For this, we use  \cref{L:llim} and \cref{P:robustness-exchange}, which are proven in Section \ref{S:switch}.
\cref{L:llim} is relatively straightforward as it is only about limiting behavior in $L$, but \cref{P:robustness-exchange} involves
some intricate group theory arguments along with the analytic arguments.

\begin{lemma}\label{L:llim}
Let $\bG, \underline{V}, \underline{W}, \underline{N}$ be as above,  let $\mathcal{C}$ be any finite set of finite  groups, and let $g\geq 1$.
The limit 
$$
\lim_{L\ra\infty} \mu_{g,L,\mathcal{C}}(\mathbf{G}),
$$ 
exists.   We define $\mu_{g,\infty,\mathcal{C}}(\mathbf{G})$ to be the limit above. 

\end{lemma}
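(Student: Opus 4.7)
The plan is to show that the event $\mathbf{\pi_1^{\mathcal C}(M_\sigma)} \cong \mathbf G$ depends on $\sigma$ only through its image in a finite quotient of the mapping class group, after which convergence follows from the Perron--Frobenius theorem applied to a finite-state Markov chain. By \cref{L:Linnicecase}, this event is determined by the following data, as $K$ ranges over the finite collection $\mathcal S$ consisting of $G$ together with its (finitely many) minimally material extensions: (i) which surjections $f \colon \pi_1(\Sigma_g) \twoheadrightarrow K$ have the property that both $f$ and $f\sigma$ factor through $\pi_1(H_g)$, and (ii) the resulting fundamental classes $f_*[M_\sigma] \in H_3(K, \mathbb Z)$. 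As in the proof of \cref{P:decmom}, I would lift the generators of the mapping class group to the pointed mapping class group $\pi_{g,*}$; this does not change the associated $3$-manifolds $M_\sigma$, so it suffices to establish convergence for the lifted random walk on $\pi_{g,*}$.

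The next step is to construct a finite-index subgroup $K_0 \subseteq \pi_{g,*}$ such that the event is constant on each coset $K_0 \sigma$. The set $\Phi := \bigsqcup_{K \in \mathcal S} \Hom(\pi_1(\Sigma_g), K)$ is finite, so the action of $\pi_{g,*}$ on $\Phi$ by pre-composition has finite-index kernel $K_{\mathrm{act}}$, and datum (i) is determined by the image of $\sigma$ in $\pi_{g,*}/K_{\mathrm{act}}$. For datum (ii), \cref{cobordism-class-is-homomorphism} implies that for each $f \in \Phi$ with codomain $K$, the assignment $\sigma \mapsto f_*[M_\sigma]$ on the stabilizer of $f$ (which contains $K_{\mathrm{act}}$) is a group homomorphism into the finite group $H_3(K, \mathbb Z)$. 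Let $K_0 \subseteq K_{\mathrm{act}}$ be the intersection of the kernels of these finitely many homomorphisms; it has finite index in $\pi_{g,*}$. The cobordism additivity of \cref{cobordism-class-additive}, applied with $\sigma_1 = k \in K_0$ and $\sigma_2 = \sigma$ (noting that $k$ fixes every $f \in \Phi$, so $fk = f$, and that $f_*[M_k] = 0$ by construction of $K_0$), then gives $f_*[M_{k\sigma}] = f_*[M_\sigma]$ for every relevant $f$. Both (i) and (ii), and hence the event itself, are therefore constant on each coset $K_0 \sigma$.

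Finally, the random walk on $\pi_{g,*}$ projects to a well-defined Markov chain on the finite coset space $K_0 \backslash \pi_{g,*}$, since the transition $K_0\sigma \mapsto K_0\sigma t$ depends only on the coset $K_0\sigma$. Because our generating set contains the identity and generates the mapping class group, the resulting chain is aperiodic and irreducible on the set of cosets reachable from the identity coset, so by the Perron--Frobenius theorem its time-$L$ distribution converges to a stationary distribution as $L \to \infty$. Summing over those cosets on which the event holds gives the convergence $\mu_{g,L,\mathcal C}(\mathbf G) \to \mu_{g,\infty,\mathcal C}(\mathbf G)$. The main subtlety is the descent of the orientation datum (ii) to a finite quotient, which is precisely where the cobordism machinery from Section~\ref{s-3-properties} plays the essential role; without it, one could control datum (i) from finiteness of $\Phi$ alone, but not datum (ii).
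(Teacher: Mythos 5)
Your proof is correct and takes essentially the same route as the paper: both arguments show that the oriented event factors through a finite coset space of the pointed mapping class group---using finiteness of the relevant homomorphism data together with \cref{cobordism-class-is-homomorphism}, \cref{cobordism-class-additive}, and \cref{bordism-is-homology} to descend the fundamental-class datum to a finite-index subgroup---and then conclude by Perron--Frobenius, the paper merely packaging the data through $\pi_1^{\mathcal C}(\Sigma_g)$ and the single subgroup $J_{g,\mathcal C}$ rather than through \cref{L:Linnicecase} and the $\Hom$-sets to $G$ and its minimally material extensions. One cosmetic point: define your bordism homomorphisms, and hence $K_0$, using only those $f\in\Phi$ that factor through $\pi_1(H_g)$, since $f_*[M_\sigma]$ and the homomorphism of \cref{cobordism-class-is-homomorphism} are only defined for such $f$; these are the only $f$ entering datum (ii), so nothing in your argument changes.
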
 

We say an oriented group is level-$\mathcal C$ if and only if the underlying group is level-$\C$.
 Let $I_{\mathcal C}$ be a set consisting of one representative of each isomorphism class of finite level-$\C$ \Mdcorated/ groups.
 
\begin{prop}\label{P:robustness-exchange}
For some finite set of  finite groups $\C$, for each $\bK \in I_\C$, let $p_{\bK}^n$ be a sequence of nonnegative real numbers such that  $\lim_{n \to \infty} p_{\bK}^n$ exists. Suppose that, for every $\bH\in I_\C$, we have
\begin{equation*}
{\sup_n} \sum_{\bK\in I_\C} |\Surj(\bK,\bH)| p_{\bK}^n <\infty.
\end{equation*}
Then for every $\bH\in I_\C$, we have that 
\begin{equation*}
\lim_{n \to \infty} \sum_{\bK\in I_\C}  |\Surj(\bK,\bH)| p_{\bK}^n= \sum_{\bK\in I_\C}  |\Surj(\bK,\bH)|  \lim_{n \to \infty} p_{\bK}^n.
\end{equation*}
\end{prop}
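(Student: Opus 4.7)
The plan is to combine Fatou's lemma with a tightness (no-escape-of-mass) argument. Since $|\Surj(\bK, \bH)| p_\bK^n$ is a nonnegative function of $\bK$ converging pointwise to $|\Surj(\bK, \bH)| p_\bK^\infty$, where $p_\bK^\infty := \lim_n p_\bK^n$, Fatou on the countable discrete space $I_\C$ yields
\[
\sum_{\bK \in I_\C} |\Surj(\bK, \bH)| p_\bK^\infty \;\leq\; \liminf_{n \to \infty} \sum_{\bK \in I_\C} |\Surj(\bK, \bH)| p_\bK^n \;\leq\; \sup_n \sum_{\bK \in I_\C} |\Surj(\bK, \bH)| p_\bK^n < \infty.
\]
The matching upper bound on the $\limsup$ reduces to tightness: for every $\varepsilon > 0$, there should exist a finite $S \subseteq I_\C$ with $\sup_n \sum_{\bK \notin S} |\Surj(\bK, \bH)| p_\bK^n < \varepsilon$. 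Given tightness, splitting the sum into $\bK \in S$ (finite, so limit and sum commute) and $\bK \notin S$ (bounded by $\varepsilon$ uniformly in $n$), then sending $\varepsilon \to 0$, yields the conclusion.

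To establish tightness, I would argue by domination: for each integer $N \geq 1$, construct an oriented group $\bH_N \in I_\C$ together with a finite subset $S_N \subseteq I_\C$ satisfying
\[
|\Surj(\bK, \bH_N)| \;\geq\; N \cdot |\Surj(\bK, \bH)| \qquad \text{for every } \bK \in I_\C \setminus S_N.
\]
Applying the hypothesis to $\bH_N$ would then yield
\[
\sum_{\bK \notin S_N} |\Surj(\bK, \bH)| p_\bK^n \;\leq\; \frac{1}{N} \sup_m \sum_{\bK \in I_\C} |\Surj(\bK, \bH_N)| p_\bK^m,
\]
and taking $N$ large delivers the required $\varepsilon$. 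The natural candidate is $\bH_N = V^{a(N)} \rtimes \bH$, with orientation chosen to push forward to that of $\bH$, for $V$ a carefully chosen nontrivial irreducible $\bH$-module over some $\F_p$ with $V \rtimes \bH \in I_\C$. Note that $V^{a(N)} \rtimes \bH$ is a fiber product over $\bH$ of $a(N)$ copies of $V \rtimes \bH$, hence lies in $I_\C$ by closure of level-$\C$ groups under fiber products. The number of surjective lifts of a surjection $\bK \to \bH$ to $\bK \to \bH_N$ is governed, up to a non-surjective error that can be handled separately, by $|Z^1(\bK, V)|^{a(N)}$; for $\bK$ outside a finite exceptional set, $Z^1(\bK, V)$ is nontrivial, so taking $a(N)$ large produces the required lower bound.

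The main obstacle is the rigorous execution of this domination step: one must verify that for every $\bH \in I_\C$ there exists a suitable irreducible $\bH$-module $V$ with $V \rtimes \bH \in I_\C$ (or else handle exceptional $\bH$ via nonabelian simple $[\bH]$-extensions $N$ with $\Aut(N) \times_{\Out(N)} \bH \in I_\C$), that orientations are compatibly tracked through the fiber products and semidirect products, and that the uniform cohomological estimate $|Z^1(\bK, V)| \geq 2$ holds for all but finitely many $\bK \in I_\C$ surjecting onto $\bH$. These ingredients require structural results on level-$\C$ groups and cohomological growth estimates in the spirit of moment-problem arguments for random groups, and together they form the technical heart of this proposition.
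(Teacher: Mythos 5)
Your Fatou argument for the lower bound is fine (it is also how the paper concludes), but the tightness-via-domination step at the heart of your plan has a genuine gap. First, the cohomological quantity you propose to use is the wrong one: lifts of a surjection $f\colon K\to H$ to $V\rtimes H$ are counted by $|Z^1(K,V)|$, but the \emph{surjective} lifts number $\frac{|V|}{|V^{H}|}\bigl(|H^{1}(K,V)|-|H^{1}(H,V)|\bigr)$, which vanishes whenever $H^{1}(K,V)=H^{1}(H,V)$; so your estimate ``$|Z^1(\bK,V)|\ge 2$ outside a finite set'' is trivially true (coboundaries) but gives no surjective lifts. Second, and more fundamentally, no single $V$ (nor a single nonabelian $N$) can dominate outside a \emph{finite} set. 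Take $\C=\{H,\Z/p\}$ with $H$ nonabelian simple and $p\nmid|H|$: the level-$\C$ groups are exactly the $H^{a}\times(\Z/p)^{b}$, and the only minimal level-$\C$ extensions of $H$ are $H\times\Z/p$ (trivial module) and $H\times H$. For the first choice (and any fiber power of it), every $\bK=H^{a}$ surjects onto $H$ but has no surjection onto $H\times(\Z/p)^{a(N)}$ (perfectness); for the second, every $\bK=H\times(\Z/p)^{b}$ surjects onto $H$ but has no surjection onto $H\times H$ (too few nonabelian composition factors). In either case the exceptional set $S_N$ is infinite, so applying the hypothesis to your $\bH_N$ gives nothing. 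Any repair must combine several extension types simultaneously and prove a uniform lower bound over all level-$\C$ groups $\bK$, which is precisely the hard content. There is also an orientation issue you elide: a surjective lift of an oriented surjection $\bK\to\bH$ is an oriented surjection onto $(H_N,t)$ for whichever $t$ the fundamental class of $\bK$ pushes to, not for your one preferred orientation, so at minimum you must sum over the finitely many orientations of $H_N$ compatible with $\bH$.

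This is where the paper proceeds differently. It first proves the unoriented statement: the moment sum $\sum_{K}S_{K,H}p_K^n$ is rewritten by factoring each surjection through the radical of a semisimple extension $R\to H$; a dominance lemma bounds $\sup_n$ of each inner sum by comparing the fiber power $R_{\mathbf e}$ against the finitely many $R_{\mathbf d}$ with $\mathbf d\in\{0,1,2\}^{m}$, using binomial-coefficient multiplicities of induced maps, so the uniform control of the tails comes from the moment hypothesis itself rather than from a pointwise domination $|\Surj(\bK,\bH_N)|\ge N|\Surj(\bK,\bH)|$ off a finite set (which, as above, can fail). The limit is then pushed inside step by step by Fatou--Lebesgue, the induction terminating because radical towers of length exceeding $\max_{G\in\C}|G|$ force the map $b\colon K\to R_k$ to be an isomorphism; the oriented statement is finally deduced from the unoriented one by summing over orientations together with one more application of Fatou. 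Your overall philosophy (use moments of larger groups to prevent escape of mass) is in the same spirit, but the specific domination construction you propose does not survive, and fixing it essentially forces you into the paper's radical/fiber-power machinery.
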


Finally, our input results Lemma~\ref{L:ie} and Proposition~\ref{P:evalsum} require certain hypotheses on $\bG$, but the following result, which follows from Lemmas~\ref{maximal-quotient-attainable} and \ref{lem:to_alpha_zero}, and is much easier than the rest of the argument,  shows that these are the only $\bG$ relevant for our purposes.
\begin{lemma}\label{L:zerocases}
If $\bG$ is not an attainable $\bG$-extension, or if for some $i, j$ we have $V_i^\vee \cong V_j$ and $W_i^\tau \neq 0$, then
$$
L_{\bG, \underline{V}, \underline{W}, \underline{N}} (\mathbf{\pi_1 (M_{g,L} ) }) =0.
$$
\end{lemma}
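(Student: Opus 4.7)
The plan is to show that in each of the two failure cases, \emph{every} oriented surjection $f \colon \pi_1(M) \to G$ from a closed oriented $3$-manifold $M$ already factors through some minimally material extension, so that $L_{\bG, \underline V, \underline W, \underline N}(\mathbf{\pi_1(M)}) = 0$ for every $M$, and in particular for the random $M_{g,L}$. The unifying mechanism will be to produce a dimension excess $\dim H^1(\pi_1(M), V) > \dim H^1(G, V)$ for some $V$ in $\underline V$; via the five-term inflation-restriction sequence for $1 \to N \to \pi_1(M) \to G \to 1$ (with $N = \ker f$), such an excess is exactly the dimension of the kernel of the transgression $d_2 \colon H^1(N, V)^G \to H^2(G, V)$. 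A nonzero $\phi \in \ker d_2$, viewed as a $G$-equivariant homomorphism $N \to V$, is automatically surjective because $V$ is irreducible, so the quotient map $\pi_1(M) \twoheadrightarrow \pi_1(M)/\ker\phi \cong V \rtimes G$ exhibits $f$ as a \emph{surjective} lift to the split extension. Since $W$ is a $\kappa$-subspace it contains $0$, so this split extension is minimally material, and $f$ is excluded from the count.

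For Case 1 (non-attainability), choose an A-symplectic $V_i$ with $\dim_{\kappa_i} H^1(G, V_i) \not\equiv 2\tau(c_{V_i}) \pmod 2$. Applying Theorem~\ref{pi1-properties}(3) or (4) to the pulled-back representation on $\pi_1(M)$, together with the orientation identity $\int_M f^* c_{V_i} = \tau(c_{V_i})$ (which holds because $f$ is an oriented morphism), yields $\dim_{\kappa_i} H^1(\pi_1(M), V_i) \equiv 2\tau(c_{V_i}) \pmod 2$. The two parities differ while inflation is injective, so $\dim H^1(\pi_1(M), V_i) > \dim H^1(G, V_i)$, and the mechanism above produces the desired surjective lift to $V_i \rtimes G$.

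For Case 2, fix a nonzero $\alpha \in W_i^\tau$. If $f^* \alpha = 0$, then $\alpha$ lies in the image of $d_2$: writing $\alpha = d_2(\phi)$ with $\phi$ nonzero and hence surjective, the quotient $\pi_1(M) \twoheadrightarrow \pi_1(M)/\ker\phi$ realizes $f$ as a surjective lift to the minimally material nonsplit extension of class $\alpha$. Otherwise $f^* \alpha \neq 0$ in $H^2(\pi_1(M), V_i)$, and Theorem~\ref{pi1-properties}(2) supplies $\beta' \in H^1(\pi_1(M), V_i^\vee) \cong H^1(\pi_1(M), V_j)$ with $\int_M (f^* \alpha \cup \beta') \neq 0$. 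If $\beta'$ were inflated from some $\beta \in H^1(G, V_j)$, orientation preservation would force $\int_M f^*(\alpha \cup \beta) = \tau(\alpha \cup \beta) = 0$ by the defining property of $W_i^\tau$, a contradiction. Hence $\dim H^1(\pi_1(M), V_j) > \dim H^1(G, V_j)$, and the Case 1 argument produces a surjective lift of $f$ to $V_j \rtimes G$.

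The main obstacle is conceptual rather than computational: bridging the $H^1$-level parity and duality statements of Theorem~\ref{pi1-properties} with the $H^2$-level obstruction theory that controls factoring through an extension. The transgression $d_2$ together with the irreducibility of $V_i$ are precisely what effect this conversion, turning a dimension excess in $H^1$ into an actual surjective lift along a split extension; meanwhile, the fact that each $W_j$ is a linear subspace, and hence contains $0$, guarantees that this split extension is always available as a minimally material one.
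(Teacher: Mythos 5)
Your proposal is correct and takes essentially the same route as the paper: your Case 2 reproduces the dichotomy on $f^*\alpha$ and the orientation argument of \cref{lem:to_alpha_zero}, and your Case 1 is exactly the attainability argument underlying \cref{maximal-quotient-attainable}, both resting on \cref{pi1-properties} together with the transgression bookkeeping of \cref{lem-even_ei}. The only cosmetic difference is that you bypass the construction of $\mathbf{Q_\rho}$ and obtain the surjective lift to the split extension $V_i \rtimes G$ (or to the extension of class $\alpha$) directly from a nonzero element of the transgression kernel.
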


\begin{proof}[Proof of Theorem~\ref{localized-counting}]
When the hypothesis of Lemma~\ref{L:zerocases} is satisfied, we can see from the definition of the $w_{V_i}$ that the right-hand side of 
 Theorem~\ref{localized-counting} is $0$ as well, concluding the theorem in those cases.

Now we may assume $\bG$ is an attainable $\bG$-extension and that for any  $i$ such that $V_i^\vee = V_j$ for some $j$ we have $W_i^\tau \neq 0$.
Let $\mathcal{C}$ be any finite set of finite groups such that all minimally material extensions of $G$ are level-$\C$.
For the limit in $g$, it is not so clear that a limiting distribution of $\mu_{g,\infty, \mathcal C}$ even exists.  However, by a diagonal argument, we can always consider a weak limit.
 Let $\mu_{ \infty, \infty, \mathcal C}$ be a weak limit of $\mu_{g,\infty, \mathcal C}$ over a convergent sequence of $g$, i.e. a sequence $g_s$ chosen so that for all $\bK\in I_\mathcal{C}$, the limit $\lim_{s\rightarrow\infty} \mu_{g_s,\infty,\mathcal{C}}(\bK)$ exists (and $g_s\ra\infty$).

Since $\pi_1 (M_{g,L} )^\C $ is a quotient of $\pi_1^{\mathcal C}(\Sigma_g)$,  which is finite,
  $\pi_1 (M_{g,L} )^\C $ takes finitely many possible values, independently of $L$.
Thus, given $g_s$ and $\C$,  there is  finite subset of $I^\mathcal C$ containing the support of $\mu_{g_s,L,\mathcal C}$ for all $L$, and 
$$
\lim_{L\ra\infty} \sum_{\bK\in I_\C} |\Surj(\bK,\bH)| \mu_{g_s,L, \mathcal C}(\bK)=
 \sum_{\bK\in I_\C} |\Surj(\bK,\bH)| \mu_{g_s,\infty, \mathcal C}(\bK).
$$
We next apply \cref{P:robustness-exchange} with $p_{\bK}^s=\mu_{g_s,\infty, \mathcal C}(\bK)$, using the above equality and \cref{P:decmom} to check the hypothesis, and obtain, for any $\bH\in I$,
\begin{equation}\label{lm-ml}
\lim_{s\ra\infty} \sum_{\bK\in I_\C} |\Surj(\bK,\bH)| \mu_{g_s,\infty, \mathcal C}(\bK)=
 \sum_{\bK\in I_\C} |\Surj(\bK,\bH)| \mu_{\infty,\infty, \mathcal C}(\bK).
 \end{equation}
Combining the above two equations and \cref{P:decmom} 
we have the following,
\begin{align*}
\sum_{\bH \in I} \frac{ T_{\bH} }{ \abs{ \Aut(\bH)} } \sum_{\bK\in I_\C} |\Surj(\bK,\bH)| \mu_{\infty,\infty, \mathcal C}(\bK)
&=\sum_{\bH \in I} \frac{ T_{\bH} }{ \abs{ \Aut(\bH)} } \lim_{s\ra\infty} \lim_{L\ra\infty} \sum_{\bK\in I_\C} |\Surj(\bK,\bH)| \mu_{g_s,L, \mathcal C}(\bK)\\
&=\sum_{\bH \in I} \frac{ T_{\bH} }{ \abs{ \Aut(\bH)} } \frac{  \abs{H}  \abs{ H_2 (H, \mathbb Z ) } }{  \abs{ H_1(H, \mathbb Z) } \abs{ H_3(H, \mathbb Z) }},
\end{align*}
and moreover by \cref{P:evalsum}, all these sums are absolutely convergent.  So we can exchange the order of summation and obtain
\begin{align*}
\sum_{\bK\in I_\C} \mu_{\infty,\infty, \mathcal C}(\bK) \sum_{\bH \in I} \frac{ T_{\bH} }{ \abs{ \Aut(\bH)} } |\Surj(\bK,\bH)| 
&=\sum_{\bH \in I} \frac{ T_{\bH} }{ \abs{ \Aut(\bH)} } \frac{  \abs{H}  \abs{ H_2 (H, \mathbb Z ) } }{  \abs{ H_1(H, \mathbb Z) } \abs{ H_3(H, \mathbb Z) }}.
\end{align*}
Thus by  \cref{L:ie} we have
\begin{equation}\label{S2}
\sum_{\bK\in I_\C} \mu_{\infty,\infty, \mathcal C}(\bK) \frac{L_{\bG, \underline{V}, \underline{W}, \underline{N}} (\bK)}{|\Aut(\bG)|}
=\sum_{\bH \in I} \frac{ T_{\bH} }{ \abs{ \Aut(\bH)} } \frac{  \abs{H}  \abs{ H_2 (H, \mathbb Z ) } }{  \abs{ H_1(H, \mathbb Z) } \abs{ H_3(H, \mathbb Z) }}.
\end{equation}

In particular, in the case of main interest when the hypothesis of Lemma~\ref{L:Linnicecase} is satisfied, we use that lemma to see that Equation \eqref{S2} says
\begin{equation}\label{E:findprob}
 \mu_{\infty,\infty, \mathcal C}(\bG) 
=\sum_{\bH \in I} \frac{ T_{\bH} }{ \abs{ \Aut(\bH)} } \frac{  \abs{H}  \abs{ H_2 (H, \mathbb Z ) } }{  \abs{ H_1(H, \mathbb Z) } \abs{ H_3(H, \mathbb Z) }}.
\end{equation}
Since every weak limit of $ \mu_{g,\infty, \mathcal C}(\bG)$ is the same, it follows that $\lim_{g\ra\infty}\mu_{g,\infty, \mathcal C}(\bG)$ exists and is given as  above, which can be combined with Proposition~\ref{P:evalsum} to prove the second statement of the theorem.

For general $V_i,W_i,N_i$,  we must exchange our two limits with one final sum.
We have
\begin{equation}\label{justLfromfin}
\sum_{\bK\in I_\C} \mu_{g,\infty, \mathcal C}(\bK) \frac{L_{\bG, \underline{V}, \underline{W}, \underline{N}} (\bK)}{|\Aut(\bG)|}=
 \lim_{L\ra\infty}  \sum_{\bK\in I_\C} \mu_{g,L, \mathcal C}(\bK) \frac{L_{\bG, \underline{V}, \underline{W}, \underline{N}} (\bK)}{|\Aut(\bG)|}.
\end{equation}
because the only $\bK$ which give nonzero terms in the sum on each side are those level-$\C$ groups that can be generated by $2g$ elements, a finite set, and we may exchange finite sums with limits. 
For the limit in $s$,  Fatou's lemma gives
\begin{equation}\label{fatou-inf} \sum_{\bK\in I_\C}   \mu_{\infty,\infty, \mathcal C}(\bK) \frac{L_{\bG, \underline{V}, \underline{W}, \underline{N}} (\bK)}{|\Aut(\bG)|}  \leq \liminf_{s\ra\infty} \sum_{\bK\in I_\C} \mu_{g_s,\infty, \mathcal C}(\bK) \frac{L_{\bG, \underline{V}, \underline{W}, \underline{N}} (\bK)}{|\Aut(\bG)|}.\end{equation}
Since $L_{\bG, \underline{V}, \underline{W}, \underline{N}}(\bK) \leq \abs{\Surj (\bK , \bG)} $, Fatou's lemma also gives
$$\sum_{\bK\in I_\C}   \mu_{\infty,\infty, \mathcal C}(\bK) \frac{ \Surj (\bK , \bG)- L_{\bG, \underline{V}, \underline{W}, \underline{N}} (\bK)}{|\Aut(\bG)|}  \leq \liminf_{s\ra\infty} \sum_{\bK\in I_\C} \mu_{g_s,\infty, \mathcal C}(\bK) \frac{\Surj (\bK , \bG) -  L_{\bG, \underline{V}, \underline{W}, \underline{N}} (\bK)}{|\Aut(\bG)|}$$
which subtracted from \eqref{lm-ml} gives 
\begin{equation}\label{fatou-sup}\sum_{\bK\in I_\C}   \mu_{\infty,\infty, \mathcal C}(\bK) \frac{L_{\bG, \underline{V}, \underline{W}, \underline{N}} (\bK)}{|\Aut(\bG)|}  \geq \limsup_{s\ra\infty} \sum_{\bK\in I_\C} \mu_{g_s,\infty, \mathcal C}(\bK) \frac{ L_{\bG, \underline{V}, \underline{W}, \underline{N}} (\bK)}{|\Aut(\bG)|}.\end{equation}

Combining \eqref{fatou-inf} and \eqref{fatou-sup}, we have
$$\sum_{\bK\in I_\C}   \mu_{\infty,\infty, \mathcal C}(\bK) \frac{L_{\bG, \underline{V}, \underline{W}, \underline{N}} (\bK)}{|\Aut(\bG)|}  = \lim_{s\ra\infty} \sum_{\bK\in I_\C} \mu_{g_s,\infty, \mathcal C}(\bK) \frac{ L_{\bG, \underline{V}, \underline{W}, \underline{N}} (\bK)}{|\Aut(\bG)|},$$
and then using Equation~\eqref{justLfromfin}, we have
\begin{equation}\label{Elaststep}\sum_{\bK\in I_\C}   \mu_{\infty,\infty, \mathcal C}(\bK) \frac{L_{\bG, \underline{V}, \underline{W}, \underline{N}} (\bK)}{|\Aut(\bG)|}  = \lim_{s\ra\infty} \lim_{L\ra\infty}  \sum_{\bK\in I_\C} \mu_{g_s,L, \mathcal C}(\bK) \frac{L_{\bG, \underline{V}, \underline{W}, \underline{N}} (\bK)}{|\Aut(\bG)|},\end{equation}
Because \eqref{Elaststep} holds for any subsequence $g_s$ such that $\mu_{g_s,\infty,\mathcal C}$ converges weakly, we have
\begin{equation}\label{contradiction-equation}\sum_{\bK\in I_\C}   \mu_{\infty,\infty, \mathcal C}(\bK) \frac{L_{\bG, \underline{V}, \underline{W}, \underline{N}} (\bK)}{|\Aut(\bG)|}  = \lim_{g\ra\infty} \lim_{L\ra\infty}  \sum_{\bK\in I_\C} \mu_{g,L, \mathcal C}(\bK) \frac{L_{\bG, \underline{V}, \underline{W}, \underline{N}} (\bK)}{|\Aut(\bG)|}.\end{equation}
Indeed, if \eqref{contradiction-equation} is false, we can pass to a subsequence $g_s$ on which the right side either converges to a different value or diverges to $\infty$, then pass to a further subsequence on which $\mu_{g_s,\infty,\mathcal C}$ converges, obtaining a contradiction with \eqref{Elaststep}. This gives \eqref{contradiction-equation}, which together with \eqref{S2} and Proposition~\ref{P:evalsum}
handles the general case.
\end{proof}

\section{Inclusion-exclusion lemma}\label{S:ie}

The goal of this section is to prove Lemma~\ref{L:ie}, the identity we use for inclusion-exclusion. 
We will first need one preliminary result, which is also
used in the proof of Lemma~\ref{L:zerocases}, to settle the non-attainable case of Theorem~\ref{localized-counting}.
When $\bK$ is a $3$-manifold (\Mdcorated/) group and $\rho: \bK \ra \bG$ a surjection, there is a maximal quotient of $\bK$ that sees the {material extensions of $G$}. 

\begin{lemma}\label{maximal-quotient-attainable}
Let $\bK$ be a $3$-manifold (\Mdcorated/) group, $\bG$ a finite \Mdcorated/ group, and $\rho: \bK \ra \bG$ a surjection.
 Let ${\bf Q_\rho}$ be the quotient of $\bK$ by the intersection of the kernels of all surjective lifts of $\rho$ to minimally material extensions of $G$.  Then 
 ${\bf Q_\rho}$ is a finite, attainable, material extension of $\bG$, and any lift of $\rho$ to a material extension factors through $K\ra {Q_\rho}$.  
\end{lemma}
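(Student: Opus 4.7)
First I plan to show $\mathbf{Q_\rho}$ is finite. Since $K = \pi_1(M)$ is finitely generated and the data $(\underline{V}, \underline{W}, \underline{N})$ is finite, there are only finitely many isomorphism classes of minimally material extensions of $G$, and $\Hom(K, H)$ is finite for each finite $H$. Thus only finitely many isomorphism classes of surjective lifts of $\rho$ to minimally material extensions exist; picking a representative $\tilde\rho_i\colon K \twoheadrightarrow H_i$ for each class (two representatives of the same class have the same kernel), the intersection $\bigcap_i \ker \tilde\rho_i$ equals the defining kernel of $K \to Q_\rho$, so $Q_\rho$ embeds as a subgroup of the finite fiber product $P := H_1 \times_G \cdots \times_G H_s$. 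The surjection $Q_\rho \to G$ is inherited from $\rho$.

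The main obstacle is showing $\mathbf{Q_\rho}$ is a material $\mathbf{G}$-extension. My plan is to prove the more general structural claim that every sub-$G$-extension of a material extension is material, by induction on the number of minimally material factors of the ambient extension. The essential input is a \emph{surjective-or-section dichotomy}: because the kernel of each minimally material $H$ is either an irreducible $G$-module $V_i$ or a $[G]$-simple non-abelian group $N_j$, any sub-$G$-extension of a minimally material $H$ is either $H$ itself or a section isomorphic to $G$. For the inductive step, write the ambient extension as $H_1 \times_G P'$ with $P'$ material of smaller size, and project $Q$ to both factors. If the projection to $H_1$ has image a section of $H_1 \to G$, then $Q$ projects isomorphically onto its image $Q' \subseteq P'$, which is material by induction. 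Otherwise $Q$ surjects onto $H_1$, and a Goursat-style analysis gives $Q \cong H_1 \times_R Q'$ for a common quotient $R$ of $H_1$ and $Q'$ dominating $G$; minimal materiality of $H_1$ forces $R = G$ or $R = H_1$, making $Q$ either a fiber product of materials or isomorphic to $Q'$, material in either case. The abelian case requires an additional verification: for each $V_i$, the component of the extension class of $Q$ in $H^2(G, V_i)^{n_i}$ is the unique $\kappa_i$-linear lift of the corresponding component of the class of $P$ along the inclusion of sub-$G$-modules, and since $W_i$ is a $\kappa_i$-subspace it is closed under such lifts, so each component lies in $W_i^{n_i}$.

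For the universal factorization claim, let $\tilde\rho\colon K \to H$ be any lift of $\rho$ to a material extension $H = H_1' \times_G \cdots \times_G H_r'$. Each projection $K \to H_i'$ is a lift to a minimally material extension, which by the dichotomy either surjects (and factors through $K \to Q_\rho$ by the definition of $Q_\rho$) or has image a section $\cong G$ (and thus factors through $K \to G$, which itself factors through $K \to Q_\rho \to G$). Combining these across factors, $\tilde\rho$ factors through $K \to Q_\rho$.

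Finally, for attainability I establish the inflation isomorphism $H^1(Q_\rho, V_i) \cong H^1(\pi_1(M), V_i)$ for each A-symplectic $V_i$, via the correspondence between $H^1(K, V_i)$ and $V_i$-conjugacy classes of lifts $K \to V_i \rtimes G$ of the structure map. Injectivity of inflation is standard, and surjectivity is exactly the factorization claim just established applied to $V_i \rtimes_0 G$, which is minimally material because $0$ lies in the $\kappa_i$-subspace $W_i$. With this isomorphism in hand, the attainability condition $\dim_{\kappa_i} H^1(Q_\rho, V_i) \equiv 2\tau(c_{V_i}) \pmod 2$ follows from \cref{pi1-properties}(4) applied to $M$: the A-symplectic hypothesis provides a lift $\pi_1(M) \to \ASp_{\kappa_i}(V_i)$, and under the identification $s_G = \rho_*[M]$ of orientations, $2\tau(c_{V_i})$ matches $\int_M c_{V_i}$ mod $2$.
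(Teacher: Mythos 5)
Your finiteness and attainability arguments are fine and essentially match the paper's (attainability comes down to $H^1(Q_\rho,V_i)\cong H^1(K,V_i)$ plus \cref{pi1-properties}(4), which is exactly what the paper does via \cref{lem-even_ei}). The gap is in the materiality/factorization step: your ``surjective-or-section dichotomy'' is false for the non-abelian minimally material extensions. If $N_i$ is a non-abelian simple $[G]$-group, a subgroup of $\Aut(N_i)\times_{\Out(N_i)}G$ surjecting onto $G$ may meet $N_i$ in a proper nontrivial subgroup: $[G]$-simplicity only excludes proper nontrivial subgroups of $N_i$ that are normal in the whole extension, not arbitrary subgroups. Concretely, if $G\to\Out(A_5)$ is trivial, the minimally material extension is $A_5\times G$, and $A_4\times G$ is a sub-$G$-extension which is neither the whole group nor a section; it is also not material (its kernel $A_4$ is not a semisimple $[G]$-group), so your ``more general structural claim'' that every sub-$G$-extension of a material extension is material is false, and the induction built on the dichotomy collapses in the branch where the projection to a non-abelian factor is not surjective. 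The same failure infects your universal factorization argument, and indeed the statement you try to prove there is too strong: for arbitrary (non-surjective) lifts the conclusion can fail outright. For example, take $G=1$, $\underline{N}=(A_5)$, and $K$ the binary tetrahedral group (a spherical $3$-manifold group): there are no surjections $K\to A_5$, so $Q_\rho$ is trivial, yet the non-surjective lift $K\twoheadrightarrow A_4\subset A_5$ does not factor through $Q_\rho$.

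The repair is to use the extra information you actually have about $Q_\rho$, namely that it surjects onto \emph{each} factor: by construction $Q_\rho$ is the image of $K$ in the fiber product of the targets of the \emph{surjective} lifts. The correct structural statement, and the one the paper invokes (analogous to \cite[Lemma 5.3]{Liu2020}), is that a subgroup of a fiber product of minimal non-trivial extensions of $G$ that surjects onto each factor is itself a fiber product of a subset of those extensions; your $A_4\times G$ example does not surject onto the factor $A_5\times G$, so it is not a counterexample to this. Likewise, the factorization claim should be proved (and is only needed, e.g.\ in \cref{L:ie}) for \emph{surjective} lifts $K\twoheadrightarrow H$ with $H$ material: then each projection $K\to H_i'$ is surjective, hence its kernel contains $\ker(K\to Q_\rho)$ by the definition of $Q_\rho$, and the factorization is immediate without any dichotomy. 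Your abelian analysis (irreducibility forces the intersection with $V_i$ to be $0$ or $V_i$, and the extension class of a sub-extension stays in the $\kappa_i$-subspace $W_i$) is sound and is why the attainability step, which only involves the extensions $V_i\rtimes G$, goes through.
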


Before giving the proof of Lemma~\ref{maximal-quotient-attainable}, we record a basic fact of group cohomology that we will use repeatedly.
\begin{lemma}\label{lem-even_ei} 
Let $\pi: H\ra G$ be a surjection of groups.  Non-zero morphisms of $G$-groups in $\Hom_G((\ker\pi)^{ab}, V_i)$
correspond exactly to surjections from $H$ to extensions of $G$ by $V_i$ that are compatible with the map to $G$. 
If $S$ is the subset of those morphisms that correspond to trivial extensions (along with the $0$ morphism), then we have an exact sequence
$$
1\ra  H^1(G,V_i) \ra  H^1(H,V_i) \ra S\ra 1.
$$
Also, the kernel of $H^2(G,V_i)\ra H^2(H,V_i)$ is the set of those extensions of $G$ by $V$ that occur as quotients of $H$, compatibly with the map to $G$ (along with $0$).
\end{lemma}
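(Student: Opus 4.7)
The plan is to deduce all three assertions from the Lyndon--Hochschild--Serre five-term exact sequence for the short exact sequence $1\to K\to H\to G\to 1$, where $K=\ker\pi$, combined with an explicit identification of the transgression map with the construction that turns a $G$-equivariant homomorphism $K^{ab}\to V_i$ into a quotient extension of $G$ by a submodule of $V_i$.

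First I would verify the correspondence in part~(1). Given a nonzero $G$-equivariant $\phi\colon K^{ab}\to V_i$, let $\tilde\phi\colon K\to V_i$ be the composite with $K\to K^{ab}$. Since $\ker\tilde\phi$ is $G$-stable inside $K$, and since conjugation of $K$ by $H$ realizes the $G$-action through $\pi$, the subgroup $\ker\tilde\phi$ is normal in $H$; then $H/\ker\tilde\phi$ fits into an extension $1\to \Img(\phi)\to H/\ker\tilde\phi\to G\to 1$, and the map from $H$ onto this extension is a surjection lifting $\pi$. Conversely, any surjection $H\twoheadrightarrow E$ onto an extension $1\to V_i\to E\to G\to 1$ over $G$ restricts on $K$ to a surjection $K\to V_i$ landing in the abelian group $V_i$, hence factors through $K^{ab}$ and is automatically $G$-equivariant because $H$ acts on $K$ by conjugation and on $V_i$ through $\pi$. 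These two constructions are mutually inverse (in the appropriate sense of surjections onto the image extension), yielding the bijection.

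For parts~(2) and~(3), I would invoke the five-term inflation--restriction sequence
\[
0\to H^1(G,V_i^K)\to H^1(H,V_i)\to H^1(K,V_i)^G\xrightarrow{d}H^2(G,V_i^K)\to H^2(H,V_i).
\]
Because $V_i$ is a $G$-module inflated to $H$, the group $K$ acts trivially, so $V_i^K=V_i$ and $H^1(K,V_i)=\Hom(K^{ab},V_i)$, whence $H^1(K,V_i)^G=\Hom_G(K^{ab},V_i)$. The key computational step is identifying the transgression $d$ with the map that sends $\phi\in\Hom_G(K^{ab},V_i)$ to the class in $H^2(G,V_i)$ of the extension $1\to V_i\to H/\ker\tilde\phi\to G\to 1$ constructed in the previous paragraph (or, when $\phi$ is not surjective, its pushout along $\Img(\phi)\hookrightarrow V_i$); the zero morphism is sent to the zero (split) class. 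Granting this identification, an element of $\Hom_G(K^{ab},V_i)$ lies in $\ker d$ precisely when its associated extension is split, i.e.\ precisely when it lies in $S$. Exactness of the five-term sequence then gives both the short exact sequence $0\to H^1(G,V_i)\to H^1(H,V_i)\to S\to 0$ of part~(2), and, via exactness at $H^2(G,V_i)$, the statement of part~(3): $\ker(H^2(G,V_i)\to H^2(H,V_i))$ is the image of $d$, which by the correspondence in part~(1) is exactly the set of extension classes of $G$ by $V_i$ arising as quotients of $H$ compatible with $\pi$.

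The main obstacle is the explicit identification of the transgression differential $d$ with the "take a quotient of $H$" construction; this is the standard but slightly delicate description of transgression, and one verifies it by picking a set-theoretic section of $\pi$, computing a $2$-cocycle representing the extension $H/\ker\tilde\phi\to G$ directly from the section, and matching it with the formula for $d$ coming from either the spectral sequence or the definition of the connecting map in the long exact sequence associated with $0\to V_i\to \mathrm{Coind}^H_K(V_i)\to\mathrm{Coind}^H_K(V_i)/V_i\to 0$. Everything else is a formal consequence.
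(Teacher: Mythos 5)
Your proposal is correct and follows essentially the same route as the paper: the five-term inflation--restriction sequence from the Lyndon--Hochschild--Serre spectral sequence, together with the identification of the transgression $d_2^{0,1}$ with the pushforward $\phi\mapsto\phi_*(\alpha)$ of the extension class of $H$ (the paper cites \cite[Theorem 2.4.3]{Neukirch2000} for this, where you propose a direct cocycle verification). Note only that in the setting of the lemma the $V_i$ are irreducible $G$-representations, so a nonzero $G$-morphism $K^{ab}\to V_i$ is automatically surjective, which is what makes your part~(1) correspondence with extensions of $G$ by $V_i$ (rather than by $\Img(\phi)$) go through as stated.
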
 

\begin{proof}
We can use the Lyndon-Hochschild-Serre spectral sequence to compute $H^*(H,V_i)$. 
From the edge maps, we have that $H^1(G,V_i)\ra H^1(H,V_i)$ is an injection whose
cokernel is 
 the kernel of $d_2^{0,1}: \Hom_G(\ker\pi,V_i)\ra H^2(G,V)$.
The map $d_2^{0,1}$ is the transgression \cite[Theorem 2.4.3]{Neukirch2000}, and we can check that
$d_2^{0,1}(\phi)=\phi_*(\alpha)$, where $\alpha\in H^2(G,\ker\pi^{ab})$ is the class of the extension $H$. 
From this it follows that $S=\ker d_2^{0,1}$. 
Further, the edge map gives that the kernel of $H^2(G,V_i)\ra H^2(H,V_i)$ is $\im d_2^{0,1}$, and the second claim follows.
 \end{proof}

\begin{proof}[Proof of Lemma~\ref{maximal-quotient-attainable}]

First, we must check that $Q_\rho$ is finite.  Because $K$ is a 3-manifold group, $K$ is finitely generated, and thus there are finitely many surjections from $K$ to each minimally material extension of $G$. Since there are finitely many minimally material extensions, there are finitely many lifts of $\rho$ to minimally material extensions of $G$. Thus the quotient $Q_\rho$ of $K$ by the intersection of the kernels of these lifts is finite. 

Second, we must check that $Q_\rho$ is a fiber product over $G$ of minimally material extensions.  More generally, one can prove that any subgroup of a fiber product of minimal non-trivial extensions that surjects onto each factor must be a fiber product of a subset of the extensions.  The argument is analogous to that in \cite[Lemma 5.3]{Liu2020}.

Finally, we must check the conditions for attainability.  
Because $\bK$ is isomorphic to the fundamental group of a $3$-manifold,
by Theorem~\ref{pi1-properties} we have
that $\bK\ra \bG$ is attainable.  We apply \cref{lem-even_ei} to $K\ra Q_\rho$.  Since $K\ra Q_\rho$ does not lift to $K\ra V_i\rtimes Q_\rho$ for an minimally material extension $V_i \rtimes G$ of $G$, we have that $\dim H^1({Q}_\rho , V_i)  =\dim  H^1({K} , V_i)$ for all such $V_i$ and thus ${\bf Q_\rho}\ra \bG$ is attainable.
\end{proof}

\begin{proof}[Proof of Lemma~\ref{L:ie}] 
We have
\[  \sum_{\bH \in I} \frac{ T_{\bH} }{ \abs{ \Aut(\bH)} } \Surj(\bK,\bH) = \sum_{\bH \in I} \sum_{\phi\in \Surj(\bK,\bH)} \sum_{ P\in\Path(\bH,\bG)}  (-1)^{\abs{P}} \alpha_P \beta_P \frac{1}{  \abs{ \Aut(\bH)} } .\]
Each term of the sum on the right defines a surjection $\rho \colon \bK \to \bG$, the composition of the map $\phi$ with the maps $f_i$ in the path $P$. 
By \cref{maximal-quotient-attainable}, we have that $\phi$ factors through $\mathbf{Q_\rho}$.  
Note that $\bK$ has only finitely many surjections to $\bG$, and the corresponding $\mathbf{Q_\rho}$ have only finitely many quotients.  Thus in the sum on the right, there are only finitely many $\bH$ for which the sum over $\phi,P$ is non-empty.

Now there are two possibilities for a term in the sum on the right. Either $\mathbf{Q_\rho}$ is isomorphic to $\bH_s$, or it is not.  Using the terms where $\mathbf{Q_\rho}$ is not isomorphic to $\bH_s$, we will cancel all the terms where $\mathbf{Q_\rho}$ is isomorphic to $\bH_s$ except for those where $s=0$ and $\mathbf{Q_\rho}\cong\bG$, which will contribute $L_{\bG, \underline{V}, \underline{W}, \underline{N}} (K)$.

Consider a path where $\bH_s$ is not isomorphic to $\mathbf{Q_\rho}$. We can adjust the path by adding the unique member $\bH_{s+1}$ of $I$ isomorphic to $\mathbf{Q_\rho}$. For morphisms, we replace $\phi \colon \bK \to \bH_s$ with a morphism $\phi' \colon \bK \to \bH_{s+1}$ obtained by composing the projection $\bK \to \mathbf{Q_\rho}$ with one of the $\abs{\Aut(\bH_{s+1})}$ isomorphisms $\mathbf{Q_\rho} \to \bH_{s+1}$. We then take $f_{s+1}$ to be the unique surjection $\bH_{s+1} \to \bH_s$ whose composition with $\phi'$ is $\phi$, which exists because the kernel of $\phi'$ is the intersection of the kernels of all surjections from $\bK$ to material extension of $G$, and therefore is contained in the kernel of $\phi$.
Because $\bG$ is attainable by assumption and $\bH_{s+1}$ is attainable by \cref{maximal-quotient-attainable}, we have \[\dim H^1 ( H_{s+1},V_i) \equiv 2 \tau (c_{V_i})\equiv \dim H^1 ( G,V_i) \] and thus their difference cannot be $1$. Hence no additional factors of $\frac{1}{q_i}$ are added to $\beta_P$ by this adjustment.

This adjustment has the effect of raising $\abs{P}$ by $1$, multiplying $\alpha_P$ by $\frac{1}{ \abs{\Aut(\bH_{s})}}$, and fixing $\beta_P$. Thus, the terms corresponding to the $\abs{\Aut(\bH_{s+1})}$ new paths exactly cancel the term corresponding to the original path. Each term where $\bH_s$ is isomorphic to $\mathbf{Q_\rho}$ arises exactly once from this construction, via the truncated path obtained by removing $\bH_s$ and replacing $\phi$ with $f_{s} \circ \phi$, except for the terms where $s=0$.

The remaining terms in the sum are those with $s=0$ and $\mathbf{Q_\rho} \cong \bH_0 = \bG$. Such terms are simply given by \Mdcorated/ maps $\rho  \colon \bK \to \bG$  that induce an isomorphism $\mathbf{Q_\rho} \cong \bG$ and have $(-1)^{\abs{P}} \alpha_P \beta_P = 1 \cdot 1= 1$.  The condition $\mathbf{Q_\rho} \cong \bG$ means that every lift of $\rho \colon K \to G$ to a material extension of $G$ 
fails to be surjective, so the number of surjections $\rho$ with $\mathbf{Q_\rho} \cong \bG$ is  $L_{\bG, \underline{V}, \underline{W}, \underline{N}} (K) $.
\end{proof}

\begin{remark}
By using a parity hypothesis in \cref{L:ie}, we had the flexibility to introduce the $\beta_P$ term into the sum, and indeed the lemma would hold if we replaced $1/q_j$, in the definition of $\beta_P$ with anything else.  The particular choice will only matter in \cref{calculating-th}, where it causes $T_\bH$ to take a smaller value in the affine symplectic case. In \cref{self-dual-w} this smaller term gives a convergent sum, which without the $\beta_P$ factor wouldn't converge.
\end{remark}

\section{Convergence theorem for the moments}\label{S:switch}
\cref{P:decmom} found the limiting moments of our distributions of interest, and in this section, we will show that 
the limiting moments agree with the moments of the limiting distribution (assuming the limiting distribution exists), proving Proposition~\ref{P:robustness-exchange}.  This is a non-trivial analytic question, as limits and infinite sums don't always commute.
 The main challenge is to express our group-theoretic sums in terms of something whose analytic behavior we can control.
 Also, in Section~\ref{S:llim}, we will prove Lemma~\ref{L:llim}.

 
\subsection{Definitions} 
A \emph{$G$-group} $H$ is a group with an action of $G$.  A $G$-group is \emph{simple} if it contains no non-trivial, proper, normal subgroups that are fixed (setwise) by $G$.  A $G$-group is \emph{semisimple} if it is a finite direct product of simple $G$-groups.
A \emph{semisimple} $[G]$-group is a finite direct product of simple $[G]$-groups.  
If $\pi:E \ra G$ is a group homomorphism, we call $\pi$ \emph{semisimple} (resp. \emph{simple}) if $\pi$ is surjective and $\ker(\pi)$ is a semisimple (resp. simple) $[G]$-group (or equivalently, a semisimple (resp. simple) $E$-group).
If $\pi: E \ra G$ is a surjective group homomorphism, we say that a surjective group homomorphism $\phi:E\ra R$ is a \emph{radical} of $\pi$ if  if $\ker \phi$ is the intersection of all maximal proper $E$-normal subgroups of $\ker \pi$.  Note in this case $\pi$ factors through $\phi$ and the resulting map $R\ra G$ is semisimple.  Indeed, in this case every intermediate quotient $E\ra Q\ra G$ such that $Q\ra G$ is semisimple factors through $E\ra R$.
 
\subsection{Limit of moments is moments of limit for (unoriented) groups}
We will first give a result for unoriented groups, which we expect to be useful in other contexts, and then we will show Proposition~\ref{P:robustness-exchange}, for oriented groups, follows from it. 
For any two groups $G,H$, let $S_{G,H}:=|\Surj(G,H)|$.
For a  set $\C$  of finite groups, let $J_\mathcal{C}$ be a set of groups consisting of one  
from each isomorphism class of finite level-$\mathcal{C}$ groups. 
The main result of this subsection is the following.

\begin{theorem}\label{robustness-exchange-un\Mdcorated/}
For some finite set of finite groups $\C$, for each $K \in J_\C$,  let $p_{K}^n $  be a sequence of nonnegative real numbers such that  $\lim_{n \to \infty} p_{K}^n$ exists. Suppose that, for every $H\in J_\C$ we have
\begin{equation}\label{max-moment}
{\sup_n} \sum_{K\in J_\C} S_{K,H} p_{K}^n <\infty.
\end{equation}
Then for every $H\in J_\C$ we have that 
\begin{equation*}
\lim_{n \to \infty} \sum_{K\in J_\C}  S_{K,H} p_{K}^n= \sum_{K\in J_\C} S_{K,H} \lim_{n \to \infty} p_{K}^n.
\end{equation*}
\end{theorem}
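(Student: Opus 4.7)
The plan is to establish the theorem by combining pointwise convergence on finite truncations with a uniform-in-$n$ tail bound coming from the moment hypothesis. Fatou's lemma applied to the nonnegative summands $S_{K,H}\,p_K^n$ already gives
\[ \sum_K S_{K,H} \lim_n p_K^n \;\le\; \liminf_n \sum_K S_{K,H}\,p_K^n, \]
so it suffices to prove the reverse inequality for the $\limsup$. Since $J_\C$ is countable, I fix an exhaustion $J_\C = \bigcup_N F_N$ by finite subsets (e.g.\ bounding $|K|$). On any fixed $F_N$ the finite sum $\sum_{K \in F_N} S_{K,H}\,p_K^n$ converges as $n \to \infty$ to $\sum_{K \in F_N} S_{K,H}\,p_K^\infty$, so the theorem reduces to the uniform tail bound
\[ \lim_{N \to \infty} \sup_n \sum_{K \notin F_N} S_{K,H}\,p_K^n \;=\; 0. \]

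The key algebraic input is a pointwise comparison between $S_{K,H}$ and next-layer moments. By the argument of \cref{L:Linnicecase}, for a finite set $\C$ each $H \in J_\C$ has only finitely many isomorphism classes of minimal proper level-$\C$ extensions, call them $E_1, \dots, E_r$: the level-$\C$ extensions of $H$ by irreducible $\F_p$-representations $V$ with $V \rtimes H$ level-$\C$, together with the groups $\Aut(N) \times_{\Out(N)} H$ for the finitely many simple non-abelian $[H]$-groups $N$ with this fiber product level-$\C$. For $K \in J_\C$ with $K \not\cong H$ and $\phi\colon K \twoheadrightarrow H$, the kernel $\ker \phi$ has a minimal $K$-normal subgroup $M$; since $K/M$ is then a minimal level-$\C$ extension of $H$, we have $K/M \cong E_i$ for some $i$, so $\phi$ factors as $K \twoheadrightarrow E_i \twoheadrightarrow H$. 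Counting those $\phi$ that factor through a given $E_i$ by the number of lifting surjections $K \twoheadrightarrow E_i$ yields the pointwise estimate
\[ S_{K,H} \;\le\; \sum_{i=1}^r S_{K, E_i} \qquad (K \not\cong H). \]

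I would then iterate this bound along chains of minimal extensions $H = H_0 \leftarrow H_1 \leftarrow \dots \leftarrow H_s$, organized by a M\"obius-style inclusion-exclusion in the spirit of the path sum in \cref{L:ie}, to express $\sum_K S_{K,H}\,p_K^n$ as an absolutely convergent alternating series in the pointwise-convergent quantities $|\Aut(H')|\,p_{H'}^n$ for $H'$ ranging over iterated level-$\C$ extensions of $H$. The hard part will be establishing absolute convergence of this series \emph{uniformly in $n$}: the number of chains of length $s$ can grow exponentially in $s$ while the individual terms are controlled only by the uniform moment bounds $\sup_n \sum_K S_{K, H'}\,p_K^n \le C_{H'}$, so balancing these requires a careful choice of normalization weights in the inclusion-exclusion (analogous to, but playing a different role from, the $\beta_P$ factors in \cref{L:ie}) together with a quantitative comparison of $C_{H'}$ to the combinatorial growth rate of chains. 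Once uniform absolute convergence is established, the limit $n \to \infty$ passes through the sum termwise; applying the same inversion to the limiting measure $p^\infty$ (which has finite moments by Fatou) then produces $\sum_K S_{K,H}\,p_K^\infty$, yielding the desired equality.
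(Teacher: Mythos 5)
Your reduction to a uniform tail bound and your use of Fatou for one inequality are fine, and the pointwise estimate $S_{K,H}\le\sum_i S_{K,E_i}$ for $K\not\cong H$ is correct. But the proof does not close, and the place where it stops is exactly the heart of the theorem. Iterating the minimal-extension bound along chains $H=H_0\leftarrow H_1\leftarrow\dots\leftarrow H_s$ gives, for each chain, a term controlled only by a constant $C_{H_s}=\sup_n\sum_K S_{K,H_s}p_K^n$ with no decay in $s$, while the number of chains (and of iterated extensions $H'$) grows without bound; the hypothesis \eqref{max-moment} gives no quantitative relation between $C_{H'}$ and that combinatorial growth, so "a careful choice of normalization weights" is not a technicality you can defer --- it is the entire difficulty. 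Note also that weights in the spirit of $\beta_P$ from \cref{L:ie} are not available here: that lemma leans on the attainability/parity input from \cref{pi1-properties}, whereas the present statement is purely analytic and must hold for arbitrary nonnegative $p_K^n$ with bounded moments. Finally, your plan to rewrite the moment as an alternating series in the quantities $\abs{\Aut(H')}\,p_{H'}^n$ is the direction in which convergence is known to be delicate (this is precisely the regime where moments are too large), and nothing in your sketch controls it uniformly in $n$.

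The paper's proof supplies the missing quantitative mechanism in a different way. Instead of chains of minimal extensions of $H$, it groups surjections $K\to H$ by their \emph{radical}: each surjection factors canonically through a semisimple extension $a\colon R\to H$, giving the splitting \eqref{surjection-radical-splitting}. Writing $R\cong R_{\mathbf e}=\prod_i C_i^{e_i}$ as a fiber product of the finitely many simple extensions $C_i\to H$, \cref{dominance} compares the count of radical surjections onto $R_{\mathbf e}$ with the moments at the finitely many bounded-multiplicity groups $R_{\mathbf d}$, $\mathbf d\in\{0,1,2\}^m$: projecting onto $d_i$ of the $e_i$ factors produces $\prod_i\binom{e_i}{d_i}$ distinct maps, so the $R_{\mathbf e}$-term is bounded by $C/\prod_i\bigl(1+e_i+\binom{e_i}{2}\bigr)$, which \emph{is} summable over $\mathbf e\in\mathbb N^m$. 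That quadratic gain, extracted from the group theory rather than from any choice of weights, is what makes the Fatou--Lebesgue exchange in \cref{radical-exchange-limit} legitimate. Moreover, the iteration is not closed by cancellation of an infinite alternating series: by \cref{radical-tower-bound}, a radical sequence of length $k$ (with $k$ the maximal order of a group in $\C$) forces the map $b\colon K\to R_k$ to be an isomorphism, so after $k$ steps the inner sums are finite and the limit passes through trivially. If you want to rescue your outline, you would need to prove a uniform-in-$n$ summability statement of the strength of \cref{dominance} for your chain decomposition, and you would need a termination mechanism playing the role of \cref{radical-tower-bound}; as written, neither is present.
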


To prove Theorem~\ref{robustness-exchange-un\Mdcorated/} we need to exchange the sum with the limit.
We will do this by breaking it up into a series of sums and exchanging them with the limit one at a time. Each step will be proven using the 
Fatou-Lebesgue theorem. 
The first step in breaking up our sum into a series of sums is the following identity, for any finite group $H$,
 \begin{equation}\label{surjection-radical-splitting}  \sum_{K \in J_\C}  S_{K,H}  p^n_{K} = \sum_{ R \in J_\C} \sum_{ \substack{ a \colon R \ra H\\ \textrm{semisimple}}} \sum_{K \in J_\C} \sum_{ \substack{ b\colon K \ra R \\ b = \operatorname{radical}(a \circ b)}} \frac{p^n_K}{ \abs{\Aut(R)}}\end{equation}

  We will use the following result to check the hypothesis when we use the Fatou-Lebesgue theorem.
   \begin{lemma}\label{dominance} 
 Let $\mathcal{C}$ be a finite set of finite groups.
For all $K \in J_\C$ and $n \in \mathbb N$  let $p_{K}^n \geq 0$  be a real number. Suppose that, for every $H\in J_\C$, Equation \eqref{max-moment} holds.
Then, for every $H\in J_\C$, we have
$$
\sum_{ R \in J_\C} \sum_{ \substack{ a \colon R \ra H\\ \textrm{semisimple}}}  \sup_{n \in \mathbb N} \Bigl( \sum_{K \in J_\C} \sum_{ \substack{ b\colon K \ra R \\ b = \operatorname{radical}(a \circ b)}} \frac{p_{K}^n}{ \abs{\Aut(R)} } \bigr) < \infty.$$
\end{lemma}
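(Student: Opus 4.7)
My plan starts from the identity \eqref{surjection-radical-splitting}. Setting $M_{R,a}^n := \frac{1}{|\Aut(R)|}\sum_{K\in J_\C}\sum_b p_K^n$ for the inner double sum in the lemma, the identity reads
\[
\sum_{R\in J_\C}\sum_{\substack{a\colon R\to H\\ \text{semisimple}}} M_{R,a}^n \;=\; \sum_{K\in J_\C} S_{K,H}\,p_K^n,
\]
which is uniformly bounded in $n$ by the moment hypothesis applied to $H$. Thus $\sup_n\sum_{R,a}M_{R,a}^n<\infty$, whereas the desired conclusion $\sum_{R,a}\sup_n M_{R,a}^n<\infty$ is strictly stronger and requires producing a dominating function for each sequence $n\mapsto M_{R,a}^n$.

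Next I would parametrize the outer sum. A semisimple $a\colon R\to H$ has $\ker a$ a semisimple $[H]$-group, isomorphic to $\prod_{i=1}^r N_i^{k_i}$ for simple $[H]$-groups $N_i$. By the level-$\C$ hypothesis, only finitely many isomorphism classes of simple $[H]$-groups arise as subquotients (cf.\ the proof of \cite[Theorem 4.12]{Liu2019}), so I may take $r$ fixed and index pairs $(R,a)$ by a multiplicity vector $\underline k=(k_1,\ldots,k_r)\in\mathbb{N}^r$ together with a finite-for-each-$\underline k$ set of extension-class data specifying $R$.

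The heart of the argument is to dominate $\sup_n M_{R,a}^n$ shape by shape using moments of larger auxiliary groups. For each $\underline k$, I would construct a single level-$\C$ group $\tilde H_{\underline k}$, for example as the fiber product over $H$ of all shape-$\underline k$ semisimple extensions of $H$, such that the sum of $M_{R,a}^n$ over shape-$\underline k$ pairs is bounded by $\sum_K S_{K,\tilde H_{\underline k}}p_K^n$ divided by an explicit combinatorial factor coming from the projections $\tilde H_{\underline k}\to R$. The moment hypothesis applied to $\tilde H_{\underline k}$ then gives a uniform-in-$n$ bound, and summability over $\underline k\in\mathbb{N}^r$ follows from the fact that $|\Aut(R)|$ contains super-exponentially growing wreath-product factors $|\Aut(N_i)|^{k_i}k_i!$ dominating both the polynomial growth of the number of extension classes of shape $\underline k$ and the combinatorial factor. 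The main obstacle I anticipate is making this growth balance quantitative: one must simultaneously track $|\tilde H_{\underline k}|$, the combinatorial normalizations, and $|\Aut(R)|$ to verify absolute convergence of the resulting sum over $\underline k$.
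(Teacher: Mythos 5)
Your reduction of the problem is right: the identity \eqref{surjection-radical-splitting} only gives $\sup_n$ of the \emph{sum} over $(R,a)$, and the lemma needs the sum of the $\sup_n$'s, so a domination argument is required. But the domination you propose does not work, for two reasons. First, the key inequality is in the wrong direction: a surjection $K \ra R$ onto a single shape-$\underline{k}$ extension does not lift to the fiber product $\tilde H_{\underline k}$ of \emph{all} shape-$\underline{k}$ extensions, so the shape-$\underline{k}$ sum cannot be bounded by $\sum_K S_{K,\tilde H_{\underline k}}p_K^n$ (divided by anything): take $K=R$ itself with $b$ the identity, and the left side is positive while $S_{K,\tilde H_{\underline k}}=0$. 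Second, and more fundamentally, even if you had some per-shape comparison with a $\underline{k}$-dependent auxiliary group, the hypothesis \eqref{max-moment} gives for each fixed group $H'$ only a finite bound $C_{H'}=\sup_n\sum_K S_{K,H'}p_K^n$ with no uniformity across groups; since your construction uses infinitely many auxiliary groups $\tilde H_{\underline k}$, the constants $C_{\tilde H_{\underline k}}$ could a priori grow faster than any function of $\underline{k}$, and the wreath-product growth of $\abs{\Aut(R)}$ cannot compensate for unknown constants. (Note also that $\abs{\Aut(R)}$ is not a free source of decay: the numerators $\sum_K\sum_b p_K^n$ themselves can grow with $R$, which is exactly what needs to be controlled.)

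The paper's proof goes the opposite way: it writes $R=R_{\mathbf e}$ as a fiber product over $H$ of the finitely many simple extensions $C_i\ra H$, and compares with the \emph{finite} family $R_{\mathbf d}$, $\mathbf d\in\{0,1,2\}^m$, by composing a radical surjection $b\colon K\ra R_{\mathbf e}$ with the $\prod_i\binom{e_i}{d_i}$ coordinate projections $R_{\mathbf e}\ra R_{\mathbf d}$. The radical condition lets one recover $\ker b$ from the composite (compose with $a_{\mathbf d}$ and take the radical), so distinct $b$'s give distinct composites up to $\Aut(R_{\mathbf e})$; this yields $\bigl(\sum_K\sum_b p_K^n/\abs{\Aut(R_{\mathbf e})}\bigr)\prod_i\binom{e_i}{d_i}\le\sum_K S_{K,R_{\mathbf d}}p_K^n$. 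Summing over the finitely many $\mathbf d$ gives a single constant $C$ dominating every shape, and the decay needed for convergence over $\mathbf e\in\mathbb N^m$ comes from the combinatorial factor $\prod_i\sum_{d=0}^2\binom{e_i}{d}$ (quadratic in each $e_i$, hence summable), not from the moments or from $\abs{\Aut(R)}$. If you want to salvage your outline, this "project down to boundedly many factors and use the radical condition for injectivity up to automorphism" step is the missing idea.
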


\begin{proof} 
Given $H,$ let $(C_1, a_1),\dots (C_m,a_m)$ be pairs of a member $C_i$ of $J_{\C}$ together with a simple morphism $a_i: C_i \to H$. This set is finite by  \cite[Lemmas 6.1, 6.11]{Liu2020}.
For any semisimple morphism $a \colon R \ra H$, 
we can express $a$ as a fiber product of simple morphisms 
and thus can write $R = \prod_{i=1}^m C_i^{e_i}$ for some natural numbers $e_i$, with the products taken over $H$.  Call this fiber product $R_{\mathbf e}$ and let $a_{\mathbf e}$ be its projection to $H$. 
%
Let $$C = \sup_{n \in \mathbb N}  \sum_{ \mathbf d\in \{0,1,2\}^m}   \sum_{G \in J_\C} S_{G, R_{\mathbf d}}   {p_{G}^n}.$$

Given a group $K \in J_\C$ and a map $b \colon  K \ra R_{\mathbf e}$ such that $b = \operatorname{radical} ( a_{ \mathbf e} \circ b)$, we obtain $\prod_{i=1}^m {\binom{e_i}{d_i}}$ distinct homomorphisms $K \to R_{\mathbf d}$, by composing $b$ with the projections of $R_{\mathbf e} \ra R_{\mathbf d}$ onto $d_i$ of the $e_i$ factors of type $C_i$, for all $i$.
 If two surjections $b,b'$ give the same map $c: K \ra R_{\mathbf d}$ then $b,b'$ must be equal up to multiplication with an element of $\Aut(R_{\mathbf e})$, since we can recover $\ker(b)$ by composing $c$ with $a_{\mathbf d}$ and taking the radical. This implies, for fixed ${\mathbf e},{\mathbf d}$,
$$ \sum_{K \in J_\C} \sum_{ \substack{ b\colon K \ra R_{\mathbf e} \\ b = \operatorname{radical}(a_{\mathbf e}  \circ b)}} \frac{p_{K}^n}{ \abs{\Aut(R_{\mathbf e} )} }  \cdot \prod_{i=1}^m \binom{e_i}{d_i}  \leq     \sum_{K \in  J_\C} S_{K, R_{\mathbf d}}  {p_{K}^n} .$$

Summing over $\mathbf d$, we obtain
$$ \sum_{K \in J_\C} \sum_{ \substack{ b\colon K \ra R_{\mathbf e} \\ b = \operatorname{radical}(a_{\mathbf e}  \circ b)}} \frac{p_{K}^n}{ \abs{\Aut(R_{\mathbf e} )} }  \cdot \prod_{i=1}^m  \sum_{d=0}^2 \binom{e_i}{d}  \leq   \sum_{ \mathbf d\in \{0,1,2\}^m}    \sum_{K \in J_\C} S_{K, R_{\mathbf d}}  {p_{K}^n}  \leq C. $$
Thus
$$ \sum_{K \in J_\C} \sum_{ \substack{ b\colon K \ra R_{\mathbf e} \\ b = \operatorname{radical}(a_{\mathbf e}  \circ b)}} \frac{p_{K}^n}{ \abs{\Aut(R_{\mathbf e} )} }  \leq \frac{C}{ \prod_{i=1}^m  \sum_{d=0}^2 \binom{e_i}{d} }.$$
Now since every pair $R$ and $a \colon R \ra H$ semisimple is isomorphic to $ (R_{\mathbf e}, a_{\mathbf e})$ for some $\bf e$, we have
\begin{align*}\sum_{ R \in J_\C} \sum_{ \substack{ a \colon R \ra H\\ \textrm{semisimple}}}  \sup_{n \in \mathbb N} \Bigl( \sum_{K \in J_\C} \sum_{ \substack{ b\colon K \ra R \\ b = \operatorname{radical}(a \circ b)}} \frac{p_{K}^n}{ \abs{\Aut(R)} } \bigr) 
& \leq \sum_{ \mathbf e \in \mathbb N^m}  \sup_{n \in \mathbb N}  \Bigl( \sum_{K \in J_\C} \sum_{ \substack{ b\colon K \ra R_{\mathbf e} \\ b = \operatorname{radical}(a_{\mathbf e} \circ b)}} \frac{p_{K}^n}{ \abs{\Aut(R_{ \mathbf e} )} } \bigr) \\
&\leq \sum_{ \mathbf e \in \mathbb N^m} \frac{C}{ \prod_{i=1}^m  \sum_{d=0}^2 \binom{e_i}{d} } = C \prod_{i=1}^m \sum_{e=0}^{\infty} \frac{1}{ \sum_{d=0}^2 \binom{e}{d}}  < \infty,
\end{align*}
as desired.\end{proof}

Given a sequence of maps $K\stackrel{b}{\ra} R_k \stackrel{a_k}{\ra} R_{k-1} \cdots \stackrel{a_1}{\ra} R_0$, we say $b,a_k,\dots,a_1$ is a \emph{radical sequence} if
for all $1\leq i\leq k$, we have that $a_{i+1} \circ \dots \circ a_k \circ b: K\ra R_i$ is a radical of $ a_{i} \circ a_{i+1} \circ \dots \circ a_k \circ b: K\ra R_{i-1}.$
Lemma~\ref{dominance} will allow us to inductively prove the following result using the  Fatou-Lebesgue theorem, moving the limit further and further past the sum as $k$ increases.
\begin{lemma}\label{radical-exchange-limit}  
 Let $\mathcal{C}$ be a finite set of finite groups.
For all $K \in J_\C$ and $n \in \mathbb N$,  let $p_{K}^n \geq 0$  be a real number. 
 Suppose that Equation~\eqref{max-moment} holds for every finite group $H$.
Then for every $H\in I_\C$ and all natural numbers $k$ we have
$$ \limsup_{n \to \infty} \sum_{K\in J_\C} S_{K,H} p_{K}^n \leq  \sum_{\substack{ R_0,R_1, \dots, R_k \in J_\C \\ R_0=H}} \sum_{ \substack{a_i \colon R_i \to R_{i-1} \\ i=1,\dots k \\ \textrm{semisimple}} }  \limsup_{n \to \infty} \sum_{ K \in J_\C} \sum_{ \substack { b \colon K \ra R_k \\  
b,a_k,\dots,a_1 \textrm{ rad. seq.} 
 }} \frac{ p_{K}^n }{\prod_{i=1}^k \abs{ \Aut(R_i) }} .$$

\end{lemma}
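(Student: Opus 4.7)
The plan is to argue by induction on $k$. The base case $k = 0$ is immediate: the products and sums over $(R_1, \dots, R_k)$ and $(a_1, \dots, a_k)$ are empty, the radical-sequence condition is vacuous, $b$ ranges over all surjections $K \to H$, and both sides of the claimed inequality equal $\limsup_n \sum_K S_{K,H}\, p_K^n$.

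For the step from $k$ to $k+1$, I would fix a tuple $(R_0 = H, R_1, \dots, R_k) \in J_\C^{k+1}$ together with semisimple maps $a_i \colon R_i \to R_{i-1}$, and apply the splitting identity \eqref{surjection-radical-splitting}---with $R_k$ in place of $H$---to the inner sum over $K$ and $b \colon K \to R_k$. Concretely, every such surjection $b$ factors uniquely, up to an automorphism of the intermediate group absorbed into the $1/|\Aut(R_{k+1})|$ factor, as $b = a_{k+1} \circ b'$ with $b' \colon K \to R_{k+1}$ the radical of $b$ and $a_{k+1} \colon R_{k+1} \to R_k$ semisimple. The original radical-sequence condition on $(b, a_k, \dots, a_1)$ depends only on the further composites $K \to R_{i-1}$ for $i \leq k$, which are left unchanged by this refinement, so it is preserved; it is merely supplemented by the new constraint $b' = \operatorname{radical}(a_{k+1} \circ b')$. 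This yields precisely the level-$(k+1)$ inner sum.

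Having rewritten the level-$k$ inner sum as a double sum over $(R_{k+1}, a_{k+1})$ of the level-$(k+1)$ inner sum, I would exchange $\limsup_n$ with this newly introduced outer sum via the reverse Fatou (Fatou--Lebesgue) lemma for counting measure. The required dominating function is the supremum in $n$ of the level-$(k+1)$ inner sum, whose summability over $(R_{k+1}, a_{k+1})$ is exactly \cref{dominance} applied with $R_k$ in place of $H$. This application is legitimate because hypothesis \eqref{max-moment} is assumed at every group in $J_\C$, hence at $R_k$. Substituting the resulting inequality into the inductive hypothesis at level $k$ produces the inequality at level $k+1$.

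The main point---rather than obstacle---is producing a uniform dominating function at every stage of the induction, which is supplied by \cref{dominance}; the bookkeeping for the radical-sequence condition under iterated decomposition is purely formal, since each new splitting adds a constraint on the freshly introduced factor without disturbing the constraints on earlier ones.
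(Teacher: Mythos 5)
Your proposal is correct and follows essentially the same route as the paper: induction on $k$, an exact rewriting of the level-$k$ inner sum via the radical splitting (the same mechanism as \eqref{surjection-radical-splitting}, applied at $R_k$), and then reverse Fatou to pull $\limsup_n$ past the new sum over $(R_{k+1}, a_{k+1})$, with \cref{dominance} at $H=R_k$ supplying the dominating function (noting, as the paper does, that the extra radical-sequence constraints only shrink the inner sum, so domination still holds).
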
 

\begin{proof} The proof is by induction on $k$.
The case $k=0$ is trivial.  Now we assume the lemma is true for $k$.
We have
$$\sum_{ K \in J_\C} \sum_{ \substack { b \colon K \ra R_k \\   b,a_k,\dots,a_1 \textrm{ rad. seq.} }} \frac{ p_{K}^n }{  \prod_{i=1}^k \abs{ \Aut(R_i) }}=  \sum_{R_{k+1} \in J_\C} \sum_{ \substack{ a_{k+1} \colon R_{k+1} \to R_k \\ \textrm{semisimple}}}  \sum_{ K \in J_\C} \sum_{ \substack { b \colon K \ra R_{k+1} \\  b,a_{k+1},\dots,a_1 \textrm{ rad. seq.} }} \frac{ p_{K}^n }{  \prod_{i=1}^{k+1} \abs{ \Aut(R_i) }} .$$
To complete the induction, it suffices to show that 
$$
\limsup_{n \to \infty}\sum_{R_{k+1},a_{k+1}} \sum_{ K,b }  \frac{ p_{K}^n }{  \prod_{i=1}^{k+1} \abs{ \Aut(R_i) }}
\leq \sum_{R_{k+1},a_{k+1}} \limsup_{n \to \infty} \sum_{ K,b }  \frac{ p_{K}^n }{  \prod_{i=1}^{k+1} \abs{ \Aut(R_i) }},
$$
(where the sums are over the same sets as in the previous equation).
This follows from the Fatou-Lebesgue theorem, using \cref{dominance} with $H=R_k$ to check the hypothesis.  (Our sum in
$b$ is over a smaller set than in \cref{dominance} since we require $b,a_{k+1},\dots,a_1$ to be a radical sequence and not just $b,a_{k+1}$, but this only improves the upper bound.)
\end{proof}

Finally, we will now show that, given $\mathcal{C}$, for some $k$ eventually the inner sums over $K,b$ on the right-hand side of Lemma~\ref{radical-exchange-limit}
become trivial and we have fully exchanged the sum and limsup.
\begin{lemma}\label{L:subdirectss}
 Let $\mathcal{C}$ be a finite set of finite groups.  If $G_1,G_2$ are finite groups such that $G_i\ra G_i^{\C}$ are semisimple for $i=1,2$, then if $S$ is a subdirect product of $G_1,G_2$, then $S\ra S^{\C}$ is semisimple.
\end{lemma}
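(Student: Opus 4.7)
\textbf{Plan for the proof of Lemma \ref{L:subdirectss}.} The plan is to analyze the residual $K := \ker(S \to S^{\mathcal{C}})$ as a subdirect product of the residuals $N_i := \ker(G_i \to G_i^{\mathcal{C}})$, and then use Goursat's lemma together with the semisimplicity of each $N_i$ as an $S$-group to decompose $K$ into a direct product of simple $S$-groups.

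First I would show $K$ is a subdirect product of $N_1$ and $N_2$ inside $N := N_1 \times N_2$. Let $\pi_i \colon S \to G_i$ be the projections, which are surjective since $S$ is subdirect. The image $S'$ of $S$ in $G_1^{\mathcal{C}} \times G_2^{\mathcal{C}}$ is a subdirect product of these two level-$\mathcal{C}$ groups, hence a fiber product over a common (level-$\mathcal{C}$) quotient, and so is itself level-$\mathcal{C}$ by closure of the formation under fiber products. Thus $S \to S'$ is a level-$\mathcal{C}$ quotient of $S$, which factors through $S \to S^{\mathcal{C}}$, giving $K \subseteq \ker(S \to S') = S \cap N$. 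It remains to check $\pi_i(K) = N_i$, which follows from the general fact that the residual of a formation is preserved under surjective morphisms, i.e., $\phi(\ker(\Gamma \to \Gamma^{\mathcal{C}})) = \ker(\Gamma' \to (\Gamma')^{\mathcal{C}})$ for any surjection $\phi \colon \Gamma \to \Gamma'$, applied to each $\pi_i$.

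Each $N_i$ is semisimple as a $G_i$-group by hypothesis, hence as an $S$-group via the surjection $\pi_i$, so $N$ is a semisimple $S$-group. Let $C_i := K \cap N_i$. By subdirectness of $K$ in $N_1 \times N_2$, for each $n \in N_1$ there is $m \in N_2$ with $(n,m) \in K$; conjugation by $(n,m)$ preserves $K$, so $C_1$ is normal in $N_1$, and symmetrically $C_2$ is normal in $N_2$. Each $C_i$ is also $S$-invariant. Granting the structural fact stated in the next paragraph, each $C_i$ is semisimple and admits an $S$-invariant complement $C_i' \subset N_i$, so $N_i = C_i \times C_i'$. Goursat's lemma identifies $K$ as the preimage in $N_1 \times N_2$ of the graph of an $S$-equivariant isomorphism $\phi \colon N_1/C_1 \to N_2/C_2$; via $N_i/C_i \cong C_i'$ this becomes the direct product decomposition $K = C_1 \times C_2 \times \Delta_\phi$, where $\Delta_\phi \subset C_1' \times C_2'$ is the graph of the induced $S$-equivariant isomorphism $C_1' \to C_2'$. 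Each factor is a semisimple $S$-group, so $K$ is as well.

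The main obstacle is the structural fact: every $S$-invariant normal subgroup of a semisimple $S$-group is itself semisimple and admits an $S$-invariant complement. I would prove this by induction on $|N|$: write $N = A \times N'$ with $A$ a simple direct $S$-factor, and let $L \subseteq N$ be $S$-invariant normal. The intersection $L \cap A$ is $S$-invariant and normal in the simple $S$-group $A$, hence is $1$ or $A$. If $L \cap A = A$, then $L = A \times (L \cap N')$ with $L \cap N'$ $S$-invariant normal in $N'$, and induction finishes. If $L \cap A = 1$, then the projection $L \to N'$ is an $S$-equivariant injection onto an $S$-invariant normal subgroup $\tilde L$ of $N'$ (normal because $L$ is normal in $N$); by induction $\tilde L$ is semisimple with $S$-invariant complement $M$, so $L \cong \tilde L$ is semisimple, and $A \times M$ is an $S$-invariant complement of $L$ in $N$.
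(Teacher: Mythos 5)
Your proof is correct, and it follows the same basic reduction as the paper: both arguments pass to $K=\ker(S\to S^{\mathcal{C}})$, observe that $K$ lies inside $N_1\times N_2$ (the product of the residuals $N_i=\ker(G_i\to G_i^{\mathcal{C}})$), note that $N_1\times N_2$ is a semisimple $S$-group because each $N_i$ is semisimple over $G_i$ and $S\to G_i$ is surjective, and conclude that $K$ is semisimple. Where you diverge is in how that last step is justified: the paper only records that the projection of $K$ to each $N_i$ is normal in $N_i$ (immediate from $K\trianglelefteq S$ and surjectivity of $S\to G_i$) and then cites \cite[Lemma 5.3]{Liu2020} for the conclusion, whereas you prove the structural input from scratch --- Goursat's lemma applied to $K$ as a subdirect product of $N_1,N_2$ (with full subdirectness $\pi_i(K)=N_i$ obtained from the standard fact that the $\mathcal{C}$-residual is preserved under surjections, a fact the paper does not need), combined with an induction showing that every $S$-invariant normal subgroup of a semisimple $S$-group is a semisimple direct $S$-factor. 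Your route is longer but self-contained; the paper's is shorter but outsources the group theory. One sentence is worth adding at the end: in the decomposition $K=C_1\times C_2\times\Delta_\phi$ you use that $\Delta_\phi\cong C_1'$ is semisimple, which follows at once by applying your structural fact to $C_1'$ itself (it is an $S$-invariant normal subgroup of $N_1$, being a direct factor), or by noting that the complements produced by your induction are visibly products of simple $S$-factors.
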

\begin{proof}
Let $K_i=\ker (G_i\ra G_i^{\C})$.  We have a commutative diagram
$$
\begin{tikzcd} 1 \arrow[r] \arrow[d] & K \arrow[r] \arrow[d]& S \arrow[r]\arrow[d] & S^{\C}  \arrow[d] &  \\ 
1 \arrow[r] & K_1\times K_2 \arrow[r] & G_1\times G_2 \arrow[r] & G_1^{\C} \times G_2^{\C}.  \end{tikzcd}
$$
Since $S^{\C}\ra G_i^{\C}$ is surjective, through that morphism, $K_i$ is also a semisimple $[S^{\C}]$-group, and thus $K_i$ is also a semisimple $S$-group (under conjugation by elements of $S$).  We have that $K$ is a $S$-subgroup of the semisimple $S$-group $K_1\times K_2$, 
and since $S\ra G_i$ is surjective, this means that the projection of $K$ to each $K_i$ is normal in $K_i$.  It follows that $K$ is a semisimple $S$-group (e.g. see \cite[Lemma 5.3]{Liu2020}), as desired.  
\end{proof}

\begin{lemma}\label{L:quotientss}
 Let $\mathcal{C}$ be a finite set of finite groups.  Let $G$ be a finite group such that $G\ra G^{\C}$ is semisimple and let $Q$ be a quotient of $G$. Then $Q \to Q^{\C}$ is semisimple.\end{lemma}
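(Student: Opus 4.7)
Set $N = \ker(G \to Q)$ and $M = \ker(G \to G^{\mathcal{C}})$, so $M$ is a semisimple $G$-group by hypothesis. The argument has three steps.

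First, I will identify $Q^{\mathcal{C}}$ with $G/MN$. The group $G/MN$ is a quotient of $G^{\mathcal{C}} = G/M$ (so is level-$\mathcal{C}$) and of $Q = G/N$, so by maximality of $Q^{\mathcal{C}}$ it is a quotient of $Q^{\mathcal{C}}$. Conversely, the kernel of $G \twoheadrightarrow Q^{\mathcal{C}}$ contains $M$ (since $Q^{\mathcal{C}}$ is a level-$\mathcal{C}$ quotient of $G$, hence factors through $G^{\mathcal{C}}$) and contains $N$ (since $Q^{\mathcal{C}}$ is a quotient of $Q$), so contains $MN$; this makes $Q^{\mathcal{C}}$ a quotient of $G/MN$. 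Mutual surjectivity between finite groups forces $Q^{\mathcal{C}} = G/MN$, whence $\ker(Q \to Q^{\mathcal{C}}) = MN/N \cong M/(M \cap N)$.

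Second, I will check that the $G$-action on $M/(M \cap N)$ factors through $Q$, so it suffices to prove $M/(M \cap N)$ is a semisimple $G$-group. Since $M$ and $N$ are normal in $G$, so is $M \cap N$, and for $n \in N$, $m \in M$ the commutator $[n,m] = nmn^{-1}m^{-1}$ lies in both $M$ and $N$ by normality, hence in $M \cap N$. Thus $N$ acts trivially on $M/(M \cap N)$, and the induced $G$-action is a $Q$-action.

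Third, and most substantively, I will verify that any quotient of a semisimple $G$-group by a $G$-invariant normal subgroup is semisimple. Writing $M$ as a direct product of simple $G$-groups and grouping by isomorphism type yields an abelian part (a semisimple $\mathbb{F}_p[G]$-module for various $p$) and a non-abelian part, which split independently in $M$ because a non-abelian simple $G$-group is perfect while abelian ones die in the derived series. For the abelian part, semisimplicity passes to quotients by standard module theory. For the non-abelian part, each non-abelian simple $G$-group $A$ has trivial center (its center is $G$-invariant and normal in $A$, hence trivial or all, and $A$ is non-abelian), which rules out diagonal $G$-invariant normal subgroups of $A \times A$: a direct commutator computation shows normality of $\{(a, \phi(a)) : a \in A\}$ would force $\phi(a_1)^{-1} a_2 \in Z(A) = 1$ for all $a_1, a_2$, which is impossible. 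Hence $G$-invariant normal subgroups of the non-abelian part are sub-products, whose quotients are again sub-products and therefore semisimple.

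The main obstacle is this third step: although semisimplicity being preserved under quotients is folklore in module categories, here it must be carried out in the larger category of $G$-groups, which requires separating the abelian and non-abelian simple $G$-group components and invoking triviality of centers to rule out diagonal normal subgroups.
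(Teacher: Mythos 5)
Your proof is correct and follows essentially the same route as the paper: identify $Q^{\mathcal C}$ with $G/MN$, observe that $\ker(Q\to Q^{\mathcal C})\cong M/(M\cap N)$ with $G$-action factoring through $Q$, and conclude by passing semisimplicity to the quotient. The only difference is that you spell out the final step (quotients of semisimple $G$-groups by $G$-invariant normal subgroups are semisimple, via the abelian/non-abelian splitting and triviality of centers), which the paper treats as a standard fact; your argument for it is sound.
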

 
 \begin{proof} Let $K = \ker (G \ra G^\C)$ and $N= \ker (G \ra Q)$. Then $G/(KN)$ is a quotient of $G^{\C}$ and hence is level-$\C$, and is also a quotient of $Q$, so $(G/KN)$ is a quotient of $Q^{\C}$. Furthermore, since $KN\subset \ker(G\ra Q^{\C})$, we have that $Q^{\C}=G/(KN)$ and 
the kernel of $Q \ra Q^{\C}$ is $KN/N \cong K / (K \cap N)$.  Since $K$ is a semisimple $G$-group, $K/ (K \cap N)$ is a semisimple $G$-group, thus, because the action of $G$ on it factors through $Q$, a semisimple $Q$-group.
 \end{proof}

\begin{lemma}\label{one-lower-semisimple}
 Let $\mathcal{C}$ be a finite set of finite groups, and let $\C'$ be a set of groups that contains all proper quotients of groups in $\C$.
 Then for a finite group $G\in J_\C$, we have that $G\ra G^{\C'}$ is semisimple.
\end{lemma}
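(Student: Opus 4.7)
The plan is to show that the class $S$ of finite groups $G$ for which $G \to G^{\C'}$ is semisimple contains every level-$\C$ group. Since level-$\C$ is defined as the smallest class of groups containing $\C$ and closed under quotients and subdirect products (fiber products), it suffices to verify (i) $\C \subseteq S$, (ii) $S$ is closed under subdirect products, and (iii) $S$ is closed under quotients. Closures (ii) and (iii) are precisely the content of Lemmas~\ref{L:subdirectss} and \ref{L:quotientss}, so the real content lies in the base case.

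To settle the base case, I fix $G \in \C$ and set $K = \ker(G \to G^{\C'})$. For each minimal nontrivial normal subgroup $M$ of $G$, the quotient $G/M$ is a proper quotient of a member of $\C$, hence belongs to $\C'$ by hypothesis and is therefore level-$\C'$; consequently $K \subseteq M$. Distinct minimal normal subgroups of a finite group intersect trivially (their intersection is normal in $G$ and properly contained in each, so it is zero by minimality), so if $G$ admits at least two minimal normal subgroups then $K = 1$. Otherwise $G$ has a unique minimal normal subgroup $M_0$ (or $G$ is trivial, in which case there is nothing to prove), and $K \subseteq M_0$; since $M_0$ has no proper nontrivial subgroup that is normal in $G$, this forces $K \in \{1, M_0\}$. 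In every case $K$ is either trivial or itself a minimal normal subgroup of $G$, hence simple as a $[G^{\C'}]$-group, and in particular semisimple.

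The only non-formal step is the base case, which rests on two short observations: that $K$ lies in every minimal normal subgroup (immediate from the hypothesis on $\C'$), and that distinct minimal normals meet trivially (standard). I do not anticipate a serious obstacle; the recursive work is absorbed into the two preceding lemmas, and the analysis of minimal normals handles the only new ingredient, namely how the hypothesis on $\C'$ interacts with membership in $\C$.
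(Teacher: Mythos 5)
Your proof is correct and follows essentially the same route as the paper: the kernel of $G\to G^{\C'}$ is contained in a minimal normal subgroup (because the corresponding proper quotient lies in $\C'$), hence is trivial or minimal normal and so semisimple, and the general level-$\C$ case then follows from Lemmas~\ref{L:subdirectss} and \ref{L:quotientss} via the closure description of level-$\C$. The only difference is cosmetic: your case split on whether $G$ has one or several minimal normal subgroups is unnecessary, since the argument with a single minimal normal subgroup already gives $K\in\{1,N\}$.
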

\begin{proof}
For $G\in \C$, if $G$ is a simple group, then clearly $G\ra G^{\C'}$ is semisimple.  Otherwise, let $N$ be a minimal non-trivial normal subgroup of $G$.
We have $G\ra G^{\C'} \ra G/N$, and so $G^{\C'}$ is either $G$ or $G/N$ and in either case $G\ra G^{\C'}$ is semisimple.
Since any group of level-$\C$ is contained in the closure of $\C$ under taking subdirect products and quotients, the conclusion follows from Lemma~\ref{L:subdirectss} and Lemma~\ref{L:quotientss}.
\end{proof}

\begin{lemma}\label{L:radfactor}
Let $G$ be a finite group and $N_1, N_2\subset N_3$ be normal subgroups of $G$ such that $G/N_2\ra G/N_3$ is semisimple.
Then $G/(N_2\cap N_1)\ra G/N_1$ is semisimple, and so if $G\ra G/N_4$ is the radical of $G/N_1$, then 
$G\ra G/(N_2\cap N_1)$ and $G\ra G/N_2$ factor though $G\ra G/N_4$.
\end{lemma}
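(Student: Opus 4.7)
The plan is to identify the kernel of $G/(N_2\cap N_1) \to G/N_1$ with a normal $[G]$-subgroup of the given semisimple kernel $N_3/N_2$, deduce its semisimplicity, and then invoke the universal property of the radical.

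First, I would use the second isomorphism theorem to obtain a $G$-equivariant isomorphism $N_1/(N_2\cap N_1) \cong N_1 N_2/N_2$. Since $N_1 \subseteq N_3$ and both $N_1,N_2$ are normal in $G$, the product $N_1 N_2$ is a normal subgroup of $G$ contained in $N_3$, so $N_1 N_2/N_2$ is a $[G/N_2]$-normal subgroup of $N_3/N_2$.

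Next I would invoke the standard fact (in the spirit of \cite[Lemma 5.3]{Liu2020}) that any normal $[G]$-subgroup of a semisimple $[G]$-group is itself semisimple: if one writes the ambient group as a direct product of simple $[G]$-groups, a normal $[G]$-subgroup is itself a direct product of some of the simple factors (up to isomorphism, grouping isotypic components). Applying this to the semisimple $[G/N_2]$-group $N_3/N_2$ and its normal $[G/N_2]$-subgroup $N_1 N_2/N_2$ shows the latter is semisimple as a $[G/N_2]$-group, hence also as a $[G/(N_2\cap N_1)]$-group via the surjection $G/(N_2\cap N_1) \twoheadrightarrow G/N_2$. Therefore the kernel $N_1/(N_2\cap N_1)$ of $G/(N_2\cap N_1) \to G/N_1$ is a semisimple $[G/(N_2\cap N_1)]$-group, so $G/(N_2\cap N_1) \to G/N_1$ is semisimple.

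For the second claim, I would appeal to the characterization of the radical recalled in Section~6.1: if $G \to G/N_4$ is the radical of $G \to G/N_1$, then every intermediate factorization $G \to Q \to G/N_1$ with $Q \to G/N_1$ semisimple must itself factor through $G \to G/N_4$. Taking $Q = G/(N_2\cap N_1)$, the semisimplicity established above gives $N_4 \subseteq N_2\cap N_1$. In particular $N_4 \subseteq N_2$, so $G \to G/N_2$ also factors through $G \to G/N_4$, completing the proof.

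The only nontrivial step is the assertion that normal $[G]$-subgroups of semisimple $[G]$-groups are semisimple; everything else is either the second isomorphism theorem or the defining property of the radical. I would handle that step by reducing to the primary decomposition of $A=\prod_i A_i^{e_i}$ into isotypic components (each $A_i$ a simple $[G]$-group), observing that a normal $[G]$-subgroup respects this decomposition, and then noting that within each isotypic component a normal $[G]$-subgroup corresponds to a $\operatorname{End}_{[G]}(A_i)$-submodule (where $\operatorname{End}_{[G]}(A_i)$ is a division ring if $A_i$ is abelian, and one uses a direct argument in the non-abelian simple case) and is thus itself a direct summand. This is the sort of statement that is implicit in \cite{Liu2020} and can be cited rather than reproved in full.
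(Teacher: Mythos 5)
Your proposal is correct and follows essentially the same route as the paper: identify the kernel $N_1/(N_2\cap N_1)$ (via the second isomorphism theorem) with the normal $G$-invariant subgroup $N_1N_2/N_2$ of the semisimple $G$-group $N_3/N_2$, conclude it is semisimple, and then apply the universal property of the radical recorded in Section~6.1. The paper's proof is just a terser version of the same argument, so no further comparison is needed.
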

\begin{proof}
Since  $G/N_2\ra G/N_3$ is semisimple, 
$N_3/N_2$ is a semisimple $G$-group.  Since $N_1/(N_2 \cap N_1)$ is a normal, $G$-invariant subgroup of 
$N_3/N_2$, we have that $N_1/(N_2 \cap N_1)$ is a semisimple $G$-group and hence $G/(N_2\cap N_1)\ra G/N_1$ is semisimple.
\end{proof}

\begin{prop}\label{radical-tower-bound} 
Let $\mathcal{C}$ be a set of finite groups each of order at most $k$.  Then if $K\in I_C$ and $b:K\ra R_k$ and $a_{i}: R_i \ra R_{i-1}$ (for $1\leq i\leq k$)
are group homomorphisms such that $b,a_k,\dots,a_1$ is a radical sequence, then $b$ is an isomorphism. 
\end{prop}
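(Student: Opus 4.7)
Let $\phi_i = a_{i+1} \circ \cdots \circ a_k \circ b \colon K \to R_i$ and $N_i = \ker \phi_i$, so the hypothesis says that $N_i$ is the intersection of all maximal proper $K$-normal subgroups of $N_{i-1}$. Equivalently, $N_i$ is the smallest $K$-normal subgroup of $N_{i-1}$ such that $N_{i-1}/N_i$ embeds into a direct product of simple $K$-groups. The plan is to reduce the proposition to a Loewy-length bound: if I can exhibit a filtration $K = M_0 \supset M_1 \supset \cdots \supset M_L = \{1\}$ by $K$-normal subgroups, with $L \leq k$ and each quotient $M_{i-1}/M_i$ semisimple as a $K$-group, then a comparison argument will force $N_L = \{1\}$, hence $N_k = \{1\}$, and $b$ an isomorphism.

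To construct such a filtration I would iterate Lemma~\ref{one-lower-semisimple}. Set $\C_0 = \C$ and inductively let $\C_{j+1}$ be the set of proper quotients of groups in $\C_j$. A proper quotient at least halves the order, so after $L := \lceil \log_2 k \rceil \leq k$ iterations every group in $\C_L$ is trivial. Setting $K_0 = K$ and $K_{j+1} = K_j^{\C_{j+1}}$, Lemma~\ref{one-lower-semisimple} yields that each $K_j \to K_{j+1}$ is semisimple, and the process terminates with $K_L = \{1\}$. The subgroups $M_i := \ker(K \to K_{L-i})$ are then $K$-normal, strictly decreasing from $M_0 = K$ to $M_L = \{1\}$, and $M_{i-1}/M_i \cong \ker(K_{L-i} \to K_{L-i+1})$ is semisimple as a $K_{L-i}$-group, hence as a $K$-group.

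Finally, I would complete the comparison by induction on $i$ to show $N_i \subseteq M_i \cap N_0$. Given $N_i \subseteq M_i$, the subgroup $N_i \cap M_{i+1}$ is $K$-normal in $N_i$, and the quotient $N_i/(N_i \cap M_{i+1})$ embeds into the semisimple $K$-group $M_i/M_{i+1}$, so it injects into a direct product of simple $K$-groups; by the minimality characterization, $N_{i+1} \subseteq N_i \cap M_{i+1} \subseteq M_{i+1}$. Taking $i = L$ yields $N_L \subseteq M_L = \{1\}$, and since the $N_i$ decrease monotonically and $k \geq L$, we have $N_k = \{1\}$, so $b$ is injective; being surjective as a radical, it is an isomorphism.

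The main subtle point is the minimality characterization of the radical: one needs to observe that any embedding of $N_{i-1}/N'$ into a direct product of simple $K$-groups $\prod T_j$ exhibits $N'$ as the intersection $\bigcap_j \ker(N_{i-1} \to T_j)$ of certain maximal proper $K$-normal subgroups of $N_{i-1}$ (those projections that are non-zero are surjective onto simples). Once this is in hand, the inductive comparison is mechanical, and the proof proceeds as outlined.
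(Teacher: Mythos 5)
Your overall strategy is essentially the paper's: use \cref{one-lower-semisimple} to produce a filtration of $K$ by kernels of iterated level completions whose successive quotients are semisimple $K$-groups, then compare that filtration with the radical tower by induction. (The paper takes $\C_i$ to be the quotients of groups in $\C$ of order at most $i$ and works with the image of $K$ in $K^{\C_i}\times R_0$, getting a length-$k$ tower; your iterated proper-quotient classes give a tower of length $\lceil\log_2 k\rceil\le k$, a harmless variant.)

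However, the step you yourself flag as the main subtle point is wrong as written. It is not true that $N_i$ is the smallest $K$-normal subgroup $N'$ of $N_{i-1}$ such that $N_{i-1}/N'$ merely \emph{embeds} $K$-equivariantly into a direct product of simple $K$-groups, and your parenthetical claim that every nonzero projection is automatically surjective onto the simple factor is false: a $K$-invariant subgroup of a simple $K$-group need not be normal in it, so simplicity gives you nothing about it. Concretely, take $K=N=\Z/4$ (so the conjugation action is trivial) and $N'=1$: then $N/N'=\Z/4$ embeds $K$-equivariantly into the simple $K$-group $A_6$ with trivial action, yet $N'=1$ does not contain the radical of $N$, which is $2\Z/4\Z$. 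The correct characterization requires $N_{i-1}/N'$ to be a subdirect product of simple $K$-groups (projections surjective), equivalently isomorphic to a direct product of simple $K$-groups, not just a subgroup of one.

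Your application survives because of a feature you never invoke: the subgroup you embed is $N_i/(N_i\cap M_{i+1})\cong N_iM_{i+1}/M_{i+1}$, which is \emph{normal} in $K/M_{i+1}$ since $N_i$ and $M_{i+1}$ are normal in $K$. Normality forces its projection to each simple factor $T_j$ of $M_i/M_{i+1}$ to be normal in $T_j$, hence trivial or all of $T_j$, and then your kernels really are intersections of maximal proper $K$-normal subgroups of $N_i$ and the inductive comparison goes through. More cleanly, this step is exactly \cref{L:radfactor} applied with $G=K$, $N_1=\ker(K\to R_i)$, $N_2=M_{i+1}$, $N_3=M_i$ (the induction hypothesis supplies $N_1\subseteq N_3$), which yields $N_{i+1}\subseteq M_{i+1}\cap N_i$ directly; this is also how the paper runs the induction. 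With that repair your argument is correct.
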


\begin{proof}
Let $\C_i$ be the set of all quotients of groups in $\mathcal{C}$ of order at most $i$.    Let $K_i$ be the image of $K$ in $K^{ \C_i} \times R_0$.
We apply Lemma~\ref{L:radfactor} with $G=K$ and $G/N_2=K^{\C_{i+1}}$ and  $G/N_3=K^{\C_{i}}$ and $G/N_1=K_{i}$, using
\cref{one-lower-semisimple} to see that $K^{\C_{i+1}}\ra K^{\C_{i}}$ is semisimple.
Then $G/(N_2\cap N_1)=K_{i+1}$ and so
$K_{i+1}\ra K_{i}$ is semisimple by the first part of Lemma~\ref{L:radfactor}. 

We will show by induction that $K\ra K_i$ factors through 
$a_{i+1} \circ \dots a_k \circ b : K \ra R_i.$  When $i=1$, we have $K_1=R_0,$ and this is automatic.
For the induction step, assuming $K \ra K_i$ factors through $K \ra R_i$, we apply the second part of Lemma~\ref{L:radfactor}
with $G=K$, $G/N_2=K_{i+1}$, $G/N_3=K_i$, and $G/N_1=K_i$, and conclude that since $K\ra R_{i+1}$ is the radical of $K\ra R_i$, it factors through
$K_{i+1}$.

Thus we have that $K\ra K_k=K$ factors through $b: K\ra R_k$, and we conclude the lemma. 
\end{proof}

Putting this all together  we can prove \cref{robustness-exchange-un\Mdcorated/}.

\begin{proof}[Proof of \cref{robustness-exchange-un\Mdcorated/}] 
Let $k$ be the maximal order of a group in $\mathcal{C}$.
By Proposition \ref{radical-tower-bound}, the sums over $G,b$ on the right-hand side of Lemma \ref{radical-exchange-limit} are finite, and can be exchanged with the limit and so we have
\begin{align*}
 \limsup_{n \to \infty} \sum_{K\in J_\C} S_{K,H} p_{K}^n &\leq  \sum_{\substack{ R_0,R_1, \dots, R_k \in J_\C \\ R_0=H}} \sum_{ \substack{a_i \colon R_i \to R_{i-1} \\ i=1,\dots k \\ \textrm{semisimple}} }   \sum_{ K \in J_\C} \sum_{ \substack { b \colon K \ra R_k \\  
b,a_k,\dots,a_1 \textrm{ rad. seq.} 
 }}  \frac{ \lim_{n \to \infty} p_{K}^n }{\prod_{i=1}^k \abs{ \Aut(R_i) }}\\
&=  \sum_{K\in J_\C} S_{K,H}  \lim_{n \to \infty} p_{K}^n.
\end{align*}
Fatou's lemma gives 
\begin{align*}
 \liminf_{n \to \infty} \sum_{K\in J_\C} S_{K,H} p_{K}^n &\geq \sum_{K\in J_\C} S_{K,H}  \lim_{n \to \infty} p_{K}^n,
\end{align*}
and the theorem follows.
\end{proof}

\subsection{Limit of moments is moments of limit for oriented groups}
Now we will see that \cref{P:robustness-exchange}, i.e. a version of Theorem~\ref{robustness-exchange-un\Mdcorated/} for oriented groups, follows directly from Theorem~\ref{robustness-exchange-un\Mdcorated/} (as Fatou's lemma gives one inequality, and we have equality after a finite sum over orientations).  For oriented groups $\bK,\bH$ let $S_{\bK,\bH}$ denote the number of oriented surjections $\Surj(\bK,\bH)$.

\begin{proof}[Proof of \cref{P:robustness-exchange}]
 For $K \in J_{\C}$ and $s \in H_3(K,\Z)$, let $(K,s)$ denote the oriented group, and let
  $$p_{K}^n = \sum_{ s \in H_3(K,\Z)} \frac{|\Aut(K,s)|}{|\Aut(K)|} p_{ (K,s)}^n.$$
Then for $H\in J_{\C}$,  
\begin{align*} \sum_{K \in J_\C}  S_{K,H} p_{K}^n  &=  \sum_{K \in J_\C}   \sum_{ s \in H_3(K,\Z)} S_{K,H}  \frac{|\Aut(K,s)|}{|\Aut(K)|}
 p_{(K,s) }^n \\
  &=  \sum_{K \in J_\C}   \sum_{ s \in H_3(K,\Z)} \sum_{ t \in H_3(H,\Z)} S_{(K,s),(H,t)}  \frac{|\Aut(K,s)|}{|\Aut(K)|}
 p_{(K,s) }^n \\ 
  & = \sum_{ t \in H_3(H,\Z)} \sum_{\bK \in I_\C} S_{\bK, (H,t)} p_{\bK}^n   
\end{align*} 
  because, given $s$, each surjection $\pi :K \to K$ is a surjection $(K,s)\to (H,t)$ of \Mdcorated/ groups for exactly one $t$,
  and by the orbit-stabilizer theorem, each isomorphism class of $(K,s)$ appears $\frac{|\Aut(K)|}{|\Aut(K,s)|}$ times in the sum.
In particular, the hypothesis \eqref{max-moment} holds by summing the hypothesis of \cref{P:robustness-exchange} over $t$ and we have
\begin{align*}
\lim_{n\ra\infty} \sum_{K \in J_\C}  S_{K,H} p_{K}^n = \sum_{K \in J_\C}  \lim_{n\ra\infty} S_{K,H} p_{K}^n
\end{align*} 
Also, 
\[ \sum_{ t \in H_3(H,\Z)}  \liminf_{n \to \infty}  \sum_{\bK \in I_\C} S_{\bK, (H,t)} p_{\bK}^n  \leq 
\lim_{n \to \infty}  \sum_{ t \in H_3(H,\Z)} \sum_{\bK \in I_\C} S_{\bK, (H,t)} p_{\bK}^n   =
\lim_{n\to\infty} \sum_{K \in J_\C}  S_{K,H} p_{K}^n.
\]
So
\begin{align}\label{W1}
\sum_{ t \in H_3(H,\Z)}  \liminf_{n \to \infty}  \sum_{\bK \in I_\C} S_{\bK, (H,t)} p_{\bK}^n  \leq & 
\sum_{K \in J_\C}  \lim_{n\ra\infty} S_{K,H} p_{K}^n\\
=& \sum_{K \in J_\C} S_{K,H} \sum_{ s \in H_3(K,\Z)}
\frac{|\Aut(K,s)|}{|\Aut(K)|} \lim_{n\to\infty} p_{ (K,s)}^n\notag\\
=&
\sum_{t \in H_3(H,\Z)} \sum_{\bK \in I_\C} S_{\bK, (H,t)}  \lim_{n \to \infty} p_{\bK}^n\notag.
\end{align}
  By Fatou's lemma, we have, for each $t \in H_3(H,\Z)$
\begin{align}\label{W2}
  \liminf_{n \to \infty}  \sum_{\bK \in I_\C} S_{\bK, (H,t)} p_{\bK}^n  \geq & 
 \sum_{\bK \in I_\C} S_{\bK, (H,t)}  \lim_{n \to \infty} p_{\bK}^n.
\end{align}  
Since the sum over $t$ of the inequalities in \eqref{W2} is the opposite of the inequality in \eqref{W1}, all of these inequalities must be equalities and we have,
for each $t \in H_3(H,\Z)$,
\begin{align*}
  \liminf_{n \to \infty}  \sum_{\bK \in I_\C} S_{\bK, (H,t)} p_{\bK}^n =
 \sum_{\bK \in I_\C} S_{\bK, (H,t)}  \lim_{n \to \infty} p_{\bK}^n.
 \end{align*}  
 Since the same statement holds for any subsequence of $n$, the proposition follows.
\end{proof}

 \subsection{Convergence in $L$: Proof of \cref{L:llim}}\label{S:llim}

\begin{proof}[Proof of \cref{L:llim}]
Let $K_g$ be the kernel of the map $\pi_1(\Sigma_g)\ra \pi_1(H_g)$ .
We have that $\pi_1(M_{g,L})$ is the quotient of $\pi_1(\Sigma_g)$ by $K_g$ and $\sigma_{g,L}(K)$, where $\sigma_{g,L}$ is the random element of the mapping class group that we used to define $M_{g,L}$.
Note that the mapping class group of $\Sigma_g$ acts on $\pi_1^{\mathcal{C}}(\Sigma_g)$. Let $\bar{K}_g$ denote the image of $K_g$ in $\pi_1^{\mathcal{C}}(\Sigma_g)$.
From the definition of level-$\mathcal{C}$ completion, we have
$$\pi_1^{\mathcal{C}}(M_{g,L})=\left(  \pi_1^{\mathcal{C}}(\Sigma_g)/( \bar{K_g}, \sigma_{g,L}(\bar{K_g})) \right)^{\mathcal{C}}.$$  

Let us first check that the limit of the probability that $\pi_1^{\mathcal{C}}(M_{g,L})$ is isomorphic to $G$ as an unoriented group exists. To do this, we observe that $\pi_1^{\mathcal{C}}(\Sigma_g)$ is finite and the action of the mapping class group on this group factors through a finite group. By the above equation, the isomorphism class of $\pi_1^{\mathcal{C}}(M_{g,L})$ depends only on the image of $\sigma_{g,L}$ in this finite group. That image equidistributes by the Perron-Frobenius theorem, showing that a limiting probability exists.

We now consider oriented groups. Let $H_{g,\mathcal C} $ be the subgroup of the mapping class group that fixes every element of $\pi_1^{\mathcal{C}}(\Sigma_g)$. By \cref{cobordism-class-is-homomorphism}, there is a homomorphism from $H_{g, \mathcal C}$ to the bordism group of $B \pi_1^{\mathcal{C}}(\Sigma_g)$ that sends a mapping class $\sigma$ to the bordism class of the associated 3-manifold. Let $J_{g,\mathcal C}$ be the kernel of this homomorphism.

We claim that the isomorphism class of $\pi_1^{\mathcal{C}}(M_{g,L})$, together with its orientation, depends only on $\sigma_{g, L}$ modulo $J_{g,\mathcal C}$. Having checked this, the Perron-Frobenius theorem will again imply that a limiting measure exists (with equal mass placed on the isomorphism class arising from each coset of $J_{g,\mathcal C}$).

To check this claim, let $\sigma_1$ be any mapping class and let $\sigma_2$ be a mapping class in $J_{g,\mathcal C}$. The identity $$\pi_1^{\mathcal{C}}(M_{\sigma })=\left(  \pi_1^{\mathcal{C}}(\Sigma_g)/( \bar{K_g}, \sigma(\bar{K_g})) \right)^{\mathcal{C}} ,$$ together with the fact that $\sigma_2 $ acts trivially on $\pi_1^{\mathcal C}(\Sigma_g)$,  gives an isomorphism between  $\pi_1^{\mathcal C} (M_{\sigma_1})$ and $\pi_2^{\mathcal C}(M_{\sigma_1\sigma_2})$, showing that both are isomorphic to $\left(  \pi_1^{\mathcal{C}}(\Sigma_g)/( \bar{K_g}, \sigma_1(\bar{K_g})) \right)^{\mathcal{C}} $. Call this group $Q$. It remains to check that this isomorphism preserves the orientation. Since the orientation of a quotient $Q$ of the fundamental group of a 3-manifold is determined by the bordism class of that manifold in $BQ$, it suffices to check that $[M_{\sigma_1}] = [M_{\sigma_1 \sigma_2}]$ in the third bordism group of $BQ$. Now we apply \cref{cobordism-class-additive} to the group $Q$ and the homomorphism $f \colon \pi_1(\Sigma_g) \to \pi_1^{\mathcal C}(\Sigma_g) \to Q$. By definition of $Q$, this factors through $\pi_1(H_g)$, as does its pullback under $\sigma_1$, and every homomorphism $\pi_1(\Sigma_g) \to \pi_1^{\mathcal C}(\Sigma_g) \to Q$ is preserved by $\sigma_2$, so  \cref{cobordism-class-additive} implies that \[[ M_{\sigma_1 \sigma_2} ]= [M_{\sigma_1}]+ [M_{\sigma_2}].\]

By construction of $J_{g,\mathcal C}$, the class of $M_{\sigma_2}$ in the bordism group $B\pi_1^{\mathcal C}(\Sigma_g)$ vanishes. Since $Q$ is a quotient of $\pi_1^{\mathcal C}(\Sigma_g)$, the map to $BQ$ factors through the map to $B\pi_1^{\mathcal C}(\Sigma_g)$, so the class $[M_{\sigma_2}]$ in the bordism group of $BQ$ vanishes. This proves that $[M_{\sigma_1}] = [M_{\sigma_1 \sigma_2}]$, as desired.
\end{proof}

\section{Evaluation of the main group-theoretic sum}\label{S:evalsum}
The goal of this section is to prove Proposition~\ref{P:evalsum}.
We do this by partially evaluating the $T_{\bH}$, using detailed spectral sequence analysis to express the remaining sum as a $q$-series, and then applying  $q$-series identities.
Theorem~\ref{pi1-properties} places certain restrictions on the fundamental group of a $3$-manifold, 
and first we will see that we can avoid certain groups in our analysis, motivating the hypothesis of Proposition~\ref{P:evalsum}.

\begin{lemma} \label{lem:to_alpha_zero}
For any $i$, if $V_i^\vee \cong V_j$ for some $j$ and $W_i^\tau \neq 0$, then  \[ L_{\bG, \underline{V}, \underline{W}, \underline{N}} (
{\bf \pi_1 (M)}
 ) =0 \] for any $3$-manifold $M$. \end{lemma}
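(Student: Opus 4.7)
The plan is to show that, under the hypotheses, any oriented surjection $\phi\colon \mathbf{\pi_1(M)} \to \bG$ must lift to a surjective homomorphism onto some minimally material extension of $\bG$, so no such $\phi$ is counted by $L$. Fix a nonzero $\alpha \in W_i^\tau$ (which exists by hypothesis) and let $E_\alpha$ denote the extension of $G$ by $V_i$ with class $\alpha$; since $\alpha \in W_i$, this extension is minimally material. Suppose toward contradiction that some $\phi$ contributes to $L$. I will analyze $\phi^*(\alpha) \in H^2(\pi_1(M), V_i)$ in two cases.

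In the first case, assume $\phi^*(\alpha) = 0$, so $\phi$ lifts to some $\tilde\phi\colon \pi_1(M) \to E_\alpha$. The key observation is that every such lift is automatically surjective because $\alpha \neq 0$: the image $I := \tilde\phi(\pi_1(M))$ surjects onto $G$, and $I \cap V_i$ is a $G$-invariant subgroup of the irreducible $G$-module $V_i$; if $I$ were proper then $I \cap V_i = 0$, so $I$ would be the image of a group-theoretic section $G \to E_\alpha$ of the extension, forcing $\alpha = 0$, a contradiction. Equipping $E_\alpha$ with the pushforward orientation $\tilde\phi_*(s_M)$ makes both $\bE_\alpha \to \bG$ oriented (because $\phi$ is) and $\tilde\phi$ an oriented surjection onto a minimally material extension, contradicting the assumption on $\phi$.

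In the second case, assume $\phi^*(\alpha) \neq 0$. By Theorem~\ref{pi1-properties}(2) applied to $M$ with $V = V_i$, there exists $\beta'' \in H^1(\pi_1(M), V_i^\vee)$ with $\int_M \phi^*(\alpha) \cup \beta'' \neq 0$, where the natural evaluation $V_i \otimes V_i^\vee \to \F_{p_i} \hookrightarrow \QZ$ is used to interpret the cup product in $H^3(\pi_1(M), \QZ)$. By the inflation-restriction sequence packaged in Lemma~\ref{lem-even_ei}, $\beta''$ either lies in the image of the inflation $\phi^*\colon H^1(G, V_i^\vee) \to H^1(\pi_1(M), V_i^\vee)$ or it maps to a nonzero element of the set $S$ described there. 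In the first subcase, writing $\beta'' = \phi^*(\beta)$ with $\beta \in H^1(G, V_i^\vee)$, naturality of cup product together with $\phi$ being an oriented morphism gives $\int_M \phi^*(\alpha) \cup \phi^*(\beta) = \tau(\alpha \cup \beta)$, which vanishes because $\alpha \in W_i^\tau$ --- contradiction. In the second subcase, the image of $\beta''$ in $S$, now viewed in $\Hom_G((\ker\phi)^{\mathrm{ab}}, V_j)$ via the identification $V_i^\vee \cong V_j$, corresponds by Lemma~\ref{lem-even_ei} to a surjection $\pi_1(M) \to V_j \rtimes G$; since $0 \in W_j$ the split extension $V_j \rtimes G$ is itself minimally material, again contradicting the assumption on $\phi$.

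The main obstacle I anticipate is keeping the first case honest: one must carefully distinguish between the existence of a lift of $\phi$ to $E_\alpha$ and the existence of a \emph{surjective} lift, and then verify via the irreducibility argument that non-surjectivity of any lift would force $\alpha = 0$. The rest of the argument is a matched pair of routine applications of Theorem~\ref{pi1-properties}(2) and Lemma~\ref{lem-even_ei}, glued together by the orientation identity $\int_M \phi^*(\cdot) = \tau(\cdot)$, and crucially uses that the hypothesis $V_i^\vee \cong V_j$ ensures that the only other place $\beta''$ could live already gives a surjection to a minimally material split extension.
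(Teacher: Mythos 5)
Your proof is correct and follows essentially the same route as the paper: the same dichotomy on whether $\phi^*\alpha$ vanishes, with the nonvanishing case handled exactly as in the paper via Theorem~\ref{pi1-properties}(2), Lemma~\ref{lem-even_ei}, and the orientation identity $\int_M \phi^*(\cdot)=\tau(\cdot)$ forcing a surjection onto the split (minimally material) extension $V_j\rtimes G$. Your explicit irreducibility argument for surjectivity of the lift in the first case is just an unpacking of the last statement of Lemma~\ref{lem-even_ei}, which the paper cites directly.
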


\begin{proof}
Consider a surjection $f: {\bf \pi_1(M)}\ra \bG$.   Fix  a nonzero class $\alpha \in W_i^\tau \subseteq H^2( G, V_i)$. 
By \cref{lem-even_ei}, if $f^* \alpha  \in H^2( \pi_1(M), V_i)$ vanishes, then $f$ lifts to an extension of $G$ corresponding to $\alpha$, which is minimally material. 

If $f^* \alpha \in H^2(\pi_1(M), V_i) \neq 0$, by \cref{pi1-properties}(2), there exists $\beta \in H^1( \pi_1(M), V_j)$ such that $\tau ( f^*\alpha \cup \beta) \neq 0$.  Cochains representing $H^1( \pi_1(M),V_j)$ exactly give splittings of $ V_j\rtimes \pi_1(M) \ra \pi_1(M)$, so $\beta$ gives a splitting, which composes with $f$ to give $f':\pi_1(M)\ra V_j\rtimes G$.  By the irreducibility of $V_j$, the image of $f'$ is either surjective or isomorphic to $G$.  If $\operatorname{im} f'$ is isomorphic to $G$,
that implies that $\beta$ came from a splitting given by a $\beta' \in H^1( G, V_j)$.  
 Then, because $f$ is a map of oriented groups, \[ 0 \neq \int_M ( f^*\alpha \cup \beta) = \int_M ( f^*\alpha \cup f^* \beta') = \tau(\alpha \cup \beta') =0 \] because $\alpha \in W_i^\tau$, giving a contradiction.
So, we conclude that $\pi_1(M)\ra V_j\rtimes G$ is a surjection. 

Thus, in either case, the surjection $f$ lifts to a surjection to a minimally material extension and thus is not counted in $L_{\bG, \underline{V}, \underline{W}, \underline{N}}$.
\end{proof} 

\begin{proof}[Proof of Lemma \ref{L:zerocases}] The case that for some $i,j$ we have  $V_i^\vee \cong V_j$ and $W_i^\tau \neq 0$ is Lemma \ref{lem:to_alpha_zero}. In the case that $\bG$ is not an attainable $\bG$-extension, Lemma \ref{maximal-quotient-attainable} gives for any surjection $f\colon {\mathbf \pi_1(M)}\to \bG$ the existence of a quotient ${\mathbf Q}_\rho$ of ${\mathbf \pi_1(M)}$ that is a finite attainable material extension of $\bG$. By definition of material, ${Q}_\rho$ is a fiber product of minimally material extensions of $G$, each a quotient of $\pi_1(M)$. Since $\bG$ is not attainable but ${\mathbf Q}_\rho$ is, ${\mathbf Q}_\rho\not\cong \bG$, thus this set of minimally material extensions is nonempty, hence there is a minimally material extension of $G$ that $f$ lifts to, therefore $f$ does not contribute to $L_{\bG, \underline{V}, \underline{W}, \underline{N}}$. \end{proof}

Next we give a lemma about when a surjection of groups can lift to an oriented map.

\begin{lemma}\label{lem-tau}
Let $1\ra F \ra H \stackrel{\pi}{\ra} G\ra 1$ be an extension of groups, where $\bG$ is oriented with $\tau:H^3(G,\QZ)\ra \QZ$ corresponding to the orientation.  
We have $\tau(\ker \pi^*)=0$, i.e. there is an orientation on $H$ compatible with $\pi$, 
 if and only if $\tau \circ d_2^{1,1}=0$ and $\tau \circ d_3^{0,2}=0$, where the $d_r^{p,q}$ are the differentials in the 
Lyndon-Hochschild-Serre spectral sequence to compute $H^3(H,\QZ)$ (from $F$ and $G$).
Further, we have $\tau \circ d_2^{1,1}=0$ if and only if, for $\alpha \in H^2(G,F^{ab})$ the extension class of $H$, we have
$\tau(\alpha\cup \beta)=0$ for all $\beta\in H^1(G, \Hom(F,\QZ))$.
\end{lemma}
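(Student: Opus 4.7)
The plan is to apply the Lyndon--Hochschild--Serre spectral sequence $E_2^{p,q} = H^p(G, H^q(F, \QZ)) \Rightarrow H^{p+q}(H, \QZ)$. Since $\QZ$ carries trivial action, $E_2^{3,0} = H^3(G, \QZ)$, and the inflation map $\pi^*$ is the edge homomorphism, which factors as $E_2^{3,0} \twoheadrightarrow E_\infty^{3,0} \hookrightarrow H^3(H, \QZ)$. In particular $\ker \pi^* = \ker(E_2^{3,0} \twoheadrightarrow E_\infty^{3,0})$, so the condition $\tau(\ker \pi^*) = 0$ is equivalent to $\tau$ factoring through $E_\infty^{3,0}$.

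First I would enumerate the differentials that affect $E^{3,0}$. Since the spectral sequence is concentrated in the first quadrant, no nontrivial differentials leave $E_r^{3,0}$ for $r \geq 2$, and the only differentials landing in $E^{3,0}$ are $d_2^{1,1}\colon E_2^{1,1} \to E_2^{3,0}$ and $d_3^{0,2}\colon E_3^{0,2} \to E_3^{3,0}$. Therefore $E_\infty^{3,0}$ is obtained from $H^3(G, \QZ)$ by first quotienting out $\im d_2^{1,1}$ and then (on the quotient $E_3^{3,0}$) quotienting out $\im d_3^{0,2}$. It follows that $\tau$ vanishes on $\ker \pi^*$ if and only if $\tau \circ d_2^{1,1} = 0$ and the induced map on $E_3^{3,0}$ satisfies $\tau \circ d_3^{0,2} = 0$, which proves the first equivalence.

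For the second equivalence, the key is to identify $d_2^{1,1}$ with cup product by the extension class. Note that $H^1(F, \QZ) = \Hom(F^{ab}, \QZ)$, so the $q = 1$ row of the spectral sequence depends only on $F^{ab}$ with its induced $G$-action. Naturality of the LHS spectral sequence applied to the surjection $H \twoheadrightarrow H/[F, F]$ identifies the differential $d_2^{1,1}$ for the given extension with the corresponding differential for $1 \to F^{ab} \to H/[F,F] \to G \to 1$, an extension with abelian kernel classified by $\alpha \in H^2(G, F^{ab})$. For such an extension with abelian kernel, the standard formula (see, e.g., \cite[Theorem 2.4.3]{Neukirch2000} and its extensions for higher $p$) identifies $d_2^{p,1}\colon H^p(G, \Hom(F^{ab}, \QZ)) \to H^{p+2}(G, \QZ)$ with cup product by $\alpha$, using the evaluation pairing $\Hom(F^{ab}, \QZ) \otimes F^{ab} \to \QZ$. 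Setting $p = 1$ yields $d_2^{1,1}(\beta) = \alpha \cup \beta$ for $\beta \in H^1(G, \Hom(F, \QZ))$, so $\tau \circ d_2^{1,1} = 0$ is exactly the condition that $\tau(\alpha \cup \beta) = 0$ for all such $\beta$.

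The main obstacle will be the precise identification of $d_2^{1,1}$ with cup product by $\alpha$; this is classical for central extensions, and the naturality comparison above reduces the general (possibly nonabelian $F$) case to the abelianized extension, where only the fact that $H^1(F, \QZ) = H^1(F^{ab}, \QZ)$ and that the $G$-action on $F^{ab}$ is intrinsic are needed.
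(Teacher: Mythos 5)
Your proposal is correct and follows essentially the same route as the paper: the first equivalence via the edge-map analysis of $E^{3,0}$ (only $d_2^{1,1}$ and $d_3^{0,2}$ can hit it), and the second via the standard identification of $d_2^{1,1}$ with cup product against the extension class $\alpha$, as in \cite[Theorem 2.4.4]{Neukirch2000}. Your extra step of reducing to the abelianized extension $1\to F^{ab}\to H/[F,F]\to G\to 1$ by naturality is a harmless elaboration of what that citation already covers.
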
  

\begin{proof}
The first claim follows from the edge map of the spectral sequence, and the second because $d_2^{1,1}: H^1(G,F^\vee)\ra H^3(G,\QZ)$ is (up to sign)
 the cup product with  $\alpha$ 
  \cite[Theorem 2.4.4]{Neukirch2000}.
\end{proof}

In the case  $W_i^\tau=0$, we can deduce a useful consequence.
\begin{cor}\label{C:dimH1}
 If $\pi: \bH\ra \bG$ is a material \Mdcorated/ surjection and 
  for some $i$ we have $W_i^\tau=0$, then  
$\dim H^1(H,V_i)=\dim H^1(G,V_i)+\dim \Hom_G (\ker \pi, V_i)$.  
\end{cor}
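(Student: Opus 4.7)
My plan is to apply \cref{lem-even_ei} to the extension $1\to K\to H\to G\to 1$ with $K:=\ker\pi$ and reduce the claim to showing that the transgression $d\colon \Hom_G(K^{ab},V_i)\to H^2(G,V_i)$, $\phi\mapsto \phi_*(\alpha)$, vanishes identically, where $\alpha\in H^2(G,K^{ab})$ is the extension class of $H$. Indeed, that lemma produces a short exact sequence
\[
0 \to H^1(G,V_i)\to H^1(H,V_i)\to S \to 0,
\]
with $S=\ker d$; since $V_i$ is abelian, $\Hom_G(K,V_i)=\Hom_G(K^{ab},V_i)$, so the desired dimension equality amounts to $d\equiv 0$.

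I would pin down $\phi_*(\alpha)$ in two steps: first show $\phi_*(\alpha)\in W_i$, then show $\tau(\phi_*(\alpha)\cup\beta)=0$ for all $\beta\in H^1(G,V_i^\vee)$; together these place $\phi_*(\alpha)$ in $W_i^\tau=0$. For the containment in $W_i$, write the material extension as a fiber product $H=E_1\times_G\cdots\times_G E_m$ of minimally material extensions and note $K=\prod_j K_j$ with $K_j:=\ker(E_j\to G)$. Because elements of distinct $K_j$ commute, $[K,K]=\prod_j[K_j,K_j]$, so the abelianized quotient $H/[K,K]$ is itself the fiber product of the $E_j/[K_j,K_j]$ over $G$, and therefore $\alpha$ decomposes under $H^2(G,K^{ab})=\prod_j H^2(G,K_j^{ab})$ as $(\alpha_1,\dots,\alpha_m)$, giving $\phi_*(\alpha)=\sum_j (\phi_j)_*(\alpha_j)$. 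The $j$-th summand vanishes whenever $E_j$ is of the non-abelian type $\Aut(N_k)\times_{\Out(N_k)} G$, because $N_k$ is a direct product of non-abelian finite simple groups and hence perfect, so $K_j^{ab}=0$. For a $j$ where $E_j$ is an extension by $V_k$ with $\alpha_j\in W_k$, Schur's lemma forces $\phi_j=0$ unless $V_k\cong V_i$; in that case, after absorbing any duplicates in the tuple $\underline V$ into a common $W_i$, the map $\phi_j$ is a $\kappa_i$-scalar under the identification and $(\phi_j)_*(\alpha_j)$ is a $\kappa_i$-multiple of $\alpha_j$, still in the $\kappa_i$-subspace $W_i$. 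Summing yields $\phi_*(\alpha)\in W_i$.

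For the vanishing of $\tau(\phi_*(\alpha)\cup\beta)$, I would exploit that $\pi$ is a morphism of oriented groups. By \cref{lem-tau} this gives $\tau(\alpha\cup\beta')=0$ for every $\beta'\in H^1(G,(K^{ab})^\vee)$. Given $\beta\in H^1(G,V_i^\vee)$, set $\beta':=\phi^\vee_*(\beta)\in H^1(G,(K^{ab})^\vee)$; naturality of cup product, together with the commutativity of the two pairings $K^{ab}\otimes V_i^\vee\to K^{ab}\otimes(K^{ab})^\vee\to \QZ$ and $K^{ab}\otimes V_i^\vee\to V_i\otimes V_i^\vee\to \QZ$ induced by $\phi$ and evaluation, yields $\tau(\phi_*(\alpha)\cup\beta)=\tau(\alpha\cup\beta')=0$. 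Combining this with $\phi_*(\alpha)\in W_i$ and $W_i^\tau=0$ forces $\phi_*(\alpha)=0$, as required. The main obstacle I expect is step (i): correctly decomposing the extension class of a fiber product and handling possible repetitions in $\underline V$; once one organizes by isomorphism class of irreducible representation, both the vanishing of $K_j^{ab}$ for non-abelian components and the $\kappa_i$-linearity argument for the $V_k$-extension components slot in cleanly.
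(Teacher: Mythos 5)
Your proof is correct and follows essentially the same route as the paper: the paper's own argument likewise combines \cref{lem-tau} (the oriented-surjection condition forces $\tau(\alpha\cup\beta)=0$) with $W_i^\tau=0$ to conclude that the $V_i$-isotypic part of the extension class vanishes, and then reads off the dimension formula from \cref{lem-even_ei}. Your write-up simply makes explicit the details the paper compresses into one sentence (the decomposition of the class of a fiber product, Schur's lemma killing the other isotypic components, perfectness of the non-abelian kernels, and the adjunction $\tau(\phi_*\alpha\cup\beta)=\tau(\alpha\cup\phi^\vee_*\beta)$), all of which check out.
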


\begin{proof}
Let $F=\ker\pi$ and $\alpha\in H^2(G,F^{ab})$ be the extension class of $H$.
Since $W_i^\tau=0$ and $\pi$ is a product of extensions with classes in the $W_i$, it follows 
from \cref{lem-tau}
that 
the image of $\alpha$ in $H^2(G,V_i^{e_i})$ is $0$.  
Then in \cref{lem-even_ei} we have $S=\Hom_G (\ker \pi, V_i)$.
%
%
%
\end{proof}

Next, we will partially evaluate $T_{\bH}$.
To do this, first define, for  $\pi \colon \bH \to \bG$ a material $\bG$-extension,  \[ T_\pi   : =\sum_{ \substack{ P\in\Path(\bH,\bG) \\ \textrm{composite} ( P) = \pi }}  (-1)^{\abs{P}} \alpha_P \beta_P.\]
(The composite of a path of length $0$ is the identity map.)
Furthermore, for such $\pi$, we can write the $[G]$-group $\ker \pi$ as a product $\prod_{i=1}^r V_i^{e_i} \times \prod_{i=1}^s N_i^{f_i}$ for tuples $\underline{e}, \underline{f}$. 
 In this setting, we say that $\pi$ has \emph{type} $(\underline{e},\underline{f})$ (which in particular implies $\pi$ is material).  It will turn out that $T_\pi$ depends only on the type of $\pi$.

 For any $i$ from $1$ to $r$, if $V_i$ is not A-symplectic, define $Q_i(e_i)$ to be $q_i^{ \binom{e_i}{  2}} $.
 
 If $V_i$ is A-symplectic, we define $ Q_i (e_i)$ to be $ q_i^{ \binom{e_i}{2}- e_i} = q_i^{ \frac{e_i (e_i-3)}{2}}$.

\begin{lemma}\label{calculating-th} Assume for each $i$ such that $V_i$ is A-symplectic that $W_i^\tau=0$.

Then, for a material non-trivial $\bG$-extension $\pi$ of type $(\underline{e}, \underline{f})$, we have  \[ T_\pi = \frac{1}{ \abs{\Aut(\bG)}} (-1)^{ \sum_i e_i + \sum_i f_i} \prod_i Q_i (e_i).\]

 \end{lemma}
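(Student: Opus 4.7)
The plan is to reinterpret $T_\pi$ as a weighted alternating sum over chains in the lattice of $\bH$-normal subgroups of $\ker\pi$, and to evaluate this sum by factoring it across the isotypic decomposition $\ker\pi \cong \prod_i N_i^{e_i} \times \prod_j V_j^{f_j}$.

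First I would set up a bijection between paths $P \in \Path(\bH,\bG)$ with $\operatorname{composite}(P) = \pi$ and strict chains of $\bH$-normal subgroups
\[ 0 = K_s \subsetneq K_{s-1} \subsetneq \cdots \subsetneq K_0 = \ker\pi, \]
given by $K_i = \ker(\bH \to \bH_i)$. Since $\ker\pi$ is semisimple as a $[\bG]$-group, every quotient $\ker\pi/K_i$ is again semisimple, so each intermediate $\bH/K_i \to \bG$ is automatically a material extension, and every such chain produces a valid path (and conversely). Pulling the factor $|\Aut(\bH_0)| = |\Aut(\bG)|$ out of $\alpha_P$, one obtains
\[ T_\pi = \frac{1}{|\Aut(\bG)|} \sum_{\mathrm{chains}} (-1)^s \Bigl(\prod_{i=1}^{s-1} \frac{1}{|\Aut(\bH/K_i)|}\Bigr) \beta_P. \]

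Next I would factor the sum over isotypic components. The lattice $\mathcal{L}$ of $\bH$-normal subgroups of $\ker\pi$ is a product: one factor for each $N_i^{e_i}$ (a Boolean-type lattice once one accounts for the $[\bG]$-action on the summands) and one factor for each $V_j^{f_j}$ (the lattice of $\kappa_j$-subspaces of a $\kappa_j$-vector space of dimension $f_j$). For the weighted sum to factorize as a product, the weight $1/|\Aut(\bH/K_i)|$ must also split multiplicatively across components. This is the main technical point: using the hypothesis $W_j^\tau = 0$ for A-symplectic $V_j$ and \cref{C:dimH1}, one shows $\bH$ is a fiber product over $\bG$ of extensions with a single isotypic kernel, so its automorphism group decomposes accordingly. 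Combined with the standard identity that alternating sums over chains in a product of lattices factor as the product of alternating sums in each factor (given a multiplicative weight), this reduces the claim to a computation on each component.

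Then I would compute each factor. For $N_i^{e_i}$ the Boolean-type structure contributes $(-1)^{e_i}$ (the reduced Möbius number) and no $q$-factor. For $V_j^{f_j}$ the weighted alternating sum over chains in the $\kappa_j$-subspace lattice of $\kappa_j^{f_j}$ evaluates, after cancelling the $|\Aut|$ normalizations against overcounting of chains in each orbit, to $(-1)^{f_j} q_j^{\binom{f_j}{2}}$ by the classical formula for the Möbius function of a subspace lattice. Multiplying these factors together yields $(-1)^{\sum e_i + \sum f_j} \prod_j q_j^{\binom{f_j}{2}}$, which is the asserted formula in the non-A-symplectic case.

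Finally, in the A-symplectic case I would account for $\beta_P$. By \cref{C:dimH1} (using $W_j^\tau = 0$), $\dim_{\kappa_j} H^1(\bH/K_i, V_j) - \dim_{\kappa_j} H^1(\bG, V_j)$ equals $\dim_{\kappa_j} \Hom_{\bG}(\ker\pi/K_i, V_j)$, i.e.\ the $\kappa_j$-dimension of the $V_j$-isotypic part of $\ker\pi/K_i$. As the chain ascends, this dimension takes each value $0,1,\dots,f_j$ exactly once at some intermediate $\bH_i$, and precisely at the $f_j$ positions where its value equals $1$ does the $\beta_P$ factor pick up a $q_j^{-1}$. This inserts a total factor $q_j^{-f_j}$, reducing $q_j^{\binom{f_j}{2}}$ to $q_j^{\binom{f_j}{2}-f_j}$ and matching $Q_j(f_j)$.

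The main obstacle will be the second step: establishing the multiplicative factorization of $|\Aut(\bH/K_i)|$ across isotypic components. The automorphism group of an extension $1 \to \ker\pi \to \bH \to \bG \to 1$ is generally not the product of automorphism groups associated with its pieces; one must use the hypothesis $W_j^\tau = 0$ (and semisimplicity of the kernel) to rule out cross-terms mixing different isotypic components, and carefully track the contribution of derivations and outer automorphisms.
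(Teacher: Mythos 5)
Your overall skeleton — identifying paths with chains of $H$-normal subgroups of $\ker\pi$, factoring over isotypic components, and evaluating with $q$-binomial/M\"obius identities — is the same strategy as the paper's proof, but two of your key steps are wrong as stated. First, the correspondence between paths with composite $\pi$ and chains $1=K_s\subsetneq\cdots\subsetneq K_0=\ker\pi$ is not a bijection: for a fixed chain, the paths inducing it are obtained by choosing an isomorphism $H/K_i\cong \bH_i$ with the chosen representative for each $1\le i\le s-1$, so there are exactly $\prod_{i=1}^{s-1}\abs{\Aut(\bH_i)}$ of them, and this multiplicity cancels the corresponding factors of $\alpha_P$ on the nose. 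The correct reduction is therefore $\abs{\Aut(\bG)}\,T_\pi=\sum_{\mathrm{chains}}(-1)^s\beta$ with weight $1$, not your formula with residual $1/\abs{\Aut(\bH/K_i)}$ weights. This makes your ``main technical point'' (factoring $\abs{\Aut(\bH/K_i)}$ across isotypic components) a non-issue in the correct argument, and in any case your proposed fix is false: $\Aut(\bH_i)$ is the full automorphism group of the oriented group, containing $\Aut$ of $G$ and inner automorphisms, and it does not decompose as a product over isotypic pieces of the kernel even when $\bH_i$ is a fiber product over $\bG$.

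Second, your treatment of $\beta_P$ misreads its definition. For each A-symplectic $V_j$, $\beta_P$ contributes a \emph{single} factor $q_j^{-1}$ if some intermediate $\bH_i$ satisfies $\dim H^1(\bH_i,V_j)=\dim H^1(\bG,V_j)+1$ (equivalently, by \cref{C:dimH1}, the $V_j$-multiplicity of $\ker\pi/K_i$ is exactly $1$), and contributes $1$ otherwise; it is never $q_j^{-f_j}$. Moreover a chain need not pass through multiplicity $1$ at all (the length-one chain $1\subsetneq\ker\pi$ has $\beta=1$ whenever the multiplicity is at least $2$), so the claim that the multiplicity ``takes each value $0,1,\dots,f_j$ exactly once'' fails, and you cannot obtain $Q_j$ by multiplying the subspace-lattice M\"obius number $(-1)^{f_j}q_j^{\binom{f_j}{2}}$ by a uniform $q_j^{-f_j}$. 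The net effect of $\beta$ on the summed total does happen to be a factor $q_j^{-f_j}$, but that is a genuine cancellation identity, not a chain-by-chain bookkeeping fact: one must redo the weighted alternating chain sum, e.g.\ by induction on the bottom layer of the chain as in the paper, where the $\beta$-weight replaces the vanishing $q$-binomial identity $\sum_{e'}(-1)^{e-e'}\binom{e}{e'}_{q}q^{\binom{e-e'}{2}}=0$ by a modified identity (with exponent $\binom{e-e'}{2}-(e-e')$ and a separate correction when the complementary multiplicity equals $1$) whose vanishing for $e\ge 1$ is what produces $Q_j(e)=q^{\binom{e}{2}-e}$. Without an argument of this kind (and without the corrected weight-$1$ chain sum from the first point), the proof is incomplete, even though the final formula you state is the right one.
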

  
 \begin{proof} A path $P \in \Path(\bH, \bG)$ of length $\pl$ with composite $\pi$ gives us a sequence of $K_i = \ker ( H \to H_i)$, where the $K_i$ are normal subgroups of $H$. The isomorphism type of each $H_i$ is determined by the $K_i$, but given the $K_i$, there are $\prod_{i=0}^{\pl-1} \abs{\Aut(\bH_i)}$ choices of path with these $H_i$ that give the appropriate kernels in $H$, and exactly $\prod_{i=1}^{\pl-1} \abs{ \Aut (\bH_i)}$ of these choices have composite $\pi$.
 
 Furthermore, by \cref{C:dimH1}, the factor $\frac{1}{q_i}$ appears in $\beta_P$, if and only if, for some $j$, the multiplicity of $V_i$ in $K_j$ is $e_i-1$. Thus letting \[ \beta_{K_0,\dots, K_\pl} = \prod_{\substack{ i \\  V_i \textrm{ A-symplectic}\\ \operatorname{mult}_{V_i} (K_j) = \operatorname{mult}_{V_i} (K_0) -1 \textrm{ for some } j}} \frac{1}{q_i} ,\] we have
  \begin{equation}\label{calculating-th-reduction} \abs{\Aut(\bG)} \sum_{ \substack{ P\in\Path(\bH,\bG) \\ \textrm{composite} ( P) = \pi }}  (-1)^{\abs{P}} \alpha_P \beta_P   = \sum_{ 1 = K_\pl \subsetneq K_{\pl-1} \subsetneq  \dots \subsetneq K_0 = \ker \pi} (-1)^\pl   \beta_{K_0,\dots,K_s} 
 \end{equation}
   where the latter sum is over chains of normal $H$-subgroups $K_i$ of $\ker \pi$.
   We prove, by induction on the number of simple factors of a semisimple $H$-group $F$ that 
  \begin{equation}\label{calculating-th-induction} \sum_{ 1 = K_\pl \subsetneq K_{\pl-1} \subsetneq  \dots \subsetneq K_0 = F} (-1)^\pl  \beta_{K_0,\dots,K_s} = (-1)^{ \sum_i e_i + \sum_i f_i} \prod_i Q_i(e_i)\end{equation}
 where $F \cong \prod_{i} V_i ^{e_i} \times \prod_i N_i^{f_i}$. This is certainly true for trivial $F$. Now let $F$ be nontrivial, so any chain has $\pl \geq 1$.

We have $K_{\pl-1} = \prod_i V_i^{e_i'} \times \prod_i N_i^{f_i'}$ for some $e_i' \leq e_i$ and $f_i' \leq f_i$.  Given such $\underline{e}'=(e_1',e_2',\dots)$ and $\underline{f}'=(f_1',f_2',\dots)$ there are $\prod_{i} \binom{e_i}{e_i'}_{q_i} \times \prod_i \binom{f_i}{f_i'} $ ways of choosing a $K_\pl$ with these multiplicities, where $\binom{e_i}{e_i'}_{q_i}$
denotes the $q$-binomial coefficient (see \cite[Section 5]{Liu2020} for the basics of semisimple $H$-groups).  So 
\begin{align*} &\sum_{ \substack{1 = K_\pl \subsetneq K_{\pl-1} \subsetneq  \dots \subsetneq K_0 = F
 }} (-1)^\pl \beta_{K_0,\dots, K_s} \\
 = &\sum_{ \substack{ \underline{e}', \underline{f}' \\ e'_{i}\leq e_i, f_i' \leq f_i \\ \textrm{ not all }0}} \prod_i \binom{e_i}{e_i'}_{q_i} \prod_i \binom{f_i}{f_i'}   \sum_{\substack{ \prod_i V_i^{e'_i} \times \prod_i G_i^{f_i'}    \subsetneq K_{\pl-2} \subsetneq \dots \subsetneq K_0 = \prod_i V_i^{e_i} \times \prod_i G_i^{f_i} }} (-1)^s \beta_{K_0,\dots,K_\pl} \\
 = &\sum_{ \substack{ \underline{e}', \underline{f}' \\ e_{i}'\leq e_i, f_i' \leq f_i \\ \textrm{ not all }0 }}  \prod_i \binom{e_i}{e'_i}_{q_i} \prod_i \binom{f_i}{f_i'}   \sum_{0  \subsetneq K'_{\pl-2} \subsetneq  \dots \subsetneq K'_0 = \prod_i V_i^{e_i- e'_i} \times \prod_i G_i^{f_i - f_i'}} (-1)^s  \beta_{K_0,\dots,K_\pl} ,
 \end{align*}
 where $K'_i=K_i/K_{\pl-1}$. We have $\operatorname{mult}_{V_i} (K_j) - \operatorname{mult}_{V_i} (K_0)= \operatorname{mult}_{V_i} (K_j') - \operatorname{mult}_{V_i} (K_0')$ for all $j$. Thus \[ \frac{ \beta_{K_0,\dots, K_{\pl}}}{\beta_{K'_0,\dots, K'_{\pl-1}} }  =\prod_{ \substack{i \\ V_i \textrm{ A-symplectic} \\ e_i =e'_{i}=1 } } \frac{1}{q_i}  \] since the contributions of $i$ to $\beta_{K_0,\dots, K_{\pl}}$ and $\beta_{K'_0,\dots, K'_{\pl-1}}$ agree unless $\operatorname{mult}_{V_i} (K_\pl ) - \operatorname{mult}_{V_i} (K_0) =1$ but  $\operatorname{mult}_{V_i} (K_j ) - \operatorname{mult}_{V_i} (K_0) =0$ for all $j< \pl$ which happens exactly when $e_i = e'_i=1$. Thus by induction we have
\begin{align*} 
&\sum_{ \substack{1 = K_\pl \subsetneq K_{\pl-1} \subsetneq  \dots \subsetneq K_0 = F}} (-1)^s \beta_{K_0,\dots, K_\pl} \\
= &\sum_{ \substack{ \underline{e}', \underline{f}' \\ e_{i}'\leq e_i, f_i' \leq f_i \\ \textrm{ not all }0 }}  \prod_i \binom{e_i}{e'_i}_{q_i} \prod_i \binom{f_i}{f_i'}  \prod_{ \substack{i \\ V_i \textrm{ A-symplectic} \\ e_i =e'_{i}=1 } } \frac{1}{q_i}  \sum_{0  \subsetneq K'_{\pl-2} \subsetneq  \dots \subsetneq K'_0 = \prod_i V_i^{e_i- e'_i} \times \prod_i G_i^{f_i - f_i'}}(-1)^s\beta_{K'_0,\dots, K'_{\pl-1}}  \\ 
  \end{align*}
  \begin{align*}
= &\sum_{ \substack{ \underline{e}', \underline{f}' \\ e'_{i}\leq e_i, f_i' \leq f_i \\ \textrm{ not all }0 }} \prod_i \binom{e_i}{e_i'}_{q_i} \prod_i \binom{f_i}{f_i'}   \prod_{ \substack{i \\ V_i \textrm{ A-symplectic} \\ e_i =e'_{i}=1 }} \frac{1}{q_i} (-1)   (-1)^{ \sum_i (e_i-e_i') + \sum_i (f_i-f_i')}  \prod_i Q_i(e_i - e_i')  \\=& - (-1) (-1)^{ \sum_i e_i + \sum_i f_i} \prod_i Q_i(e_i)  \\
  + &\sum_{ \substack{ \underline{e}', \underline{f}' \\ e'_{i}\leq e_i, f_i' \leq f_i } } \prod_i \binom{e_i}{e_i'}_{q_i} \prod_i \binom{f_i}{f_i'} (-1)  (-1)^{ \sum_i (e_i -e_i') + \sum_i (f_i-f_i')} \prod_i Q_i(e_i-e'_i)\prod_{ \substack{i \\ V_i \textrm{ A-symplectic} \\ e_i =e'_{i}=1 }} \frac{1}{q_i} .
  \end{align*}
 The final sum in the equation above factors over $i$.
   Let us check that each factor is $0$ unless the corresponding $e_i$ or $f_i$ is $0$. For the $f_i$ factors, it follows from the binomial theorem that $\sum_{f_i' \leq f_i} \binom{f_i}{f_i'} (-1)^{f_i-f_i'}=0$ if $f_i>0$. For the factors with $V_i$ not A-symplectic, it follows from the $q$-binomial theorem that $\sum_{e_i' \leq e_i} \binom{e_i}{e_i'}_{q_i} (-1)^{e_i-e_i'} q_i^{ \binom{e_i - e_i'}{2}}=0$ if $e_i>0$. Finally, for the A-symplectic factors,  if $e_i>1$ we have 
 \[ \sum_{e_i' \leq e_i} \binom{e_i}{e_i'}_{q_i} (-1)^{e_i- e_i'} q_i^{ \binom{e_i - e_i'}{2} - (e_i-e'_{i}) }= \prod_{j=0}^{e_i-1} ( 1 - q_i^{j-1}) =0 \] by the $q$-binomial theorem, and if 
 $e_i=1$ we have 
 \[ \binom{1}{0}_{q_i} q_i^{ \binom{1}{2} -1 } -  \binom{1}{1}_{q_i} q_i^{\binom{0}{2} - 0 } \frac{1}{q_i} = q_i^{-1} -q_i^{-1} = 0.\]
  Thus we have proven \eqref{calculating-th-induction} by induction.  Combined with \eqref{calculating-th-reduction}, this proves the lemma.
  \end{proof}

To prove Proposition~\ref{P:evalsum}, we need to evaluate 
$$
\sum_{\bH \in I} \frac{ T_{\bH} }{ \abs{ \Aut(\bH)} } \frac{  \abs{H}  \abs{ H_2 (H, \mathbb Z ) } }{  \abs{ H_1(H, \mathbb Z) } \abs{ H_3(H, \mathbb Z) }}.$$
Using $T_{\bH}=\sum_{\pi: \bH \ra \bG} T_\pi$, and \cref{calculating-th} we will calculate this via the following sums
  \[ M (\underline{e}, \underline{f}) = \sum_{ \bH \in I } \sum_{ \substack{\pi: \bH\ra \bG \\ \textrm{type } \underline{e}, \underline{f}} } \frac{ \abs{H}  \abs{ H_2 (H, \mathbb Z ) } }{ \abs{ \Aut(\bH)}   \abs{ H_1(H, \mathbb Z) } \abs{ H_3(H, \mathbb Z) }},\]
  which will occupy most of the rest of this section. 

 Fix for now $\underline{e}$ and $\underline{f}$, and let $F$ be the $[G]$-group $\prod_{i=1}^r V_i^{e_i} \times \prod_{i=1}^{s} N_i^{f_i}$. 
 It will be convenient to dualize, observing that $\abs{H_i (H,\mathbb Z) } = \abs{ H^i (H, \QZ)}$.
 For an exact sequence $ 1 \to  F \to H \to G \to 1$ (inducing the given $[G]$-action on $F$), consider the Lyndon-Hochschild-Serre spectral sequence calculating $H^{p+q} ( H, \QZ) $. Its second page satisfies $E_2^{p,q} = H^p ( G, H^q (F, \QZ))$.  The key differentials for us in this spectral sequence are:
\begin{align*}
d_{2}^{0,1} \colon E_2^{0,1} \to E_2^{2,0} \quad  \quad \quad 
d_{2}^{1,1} \colon E_2^{1,1} \to E_2^{3,0}  \quad  \quad \quad 
d_{2}^{0,2} \colon E_2^{0,2} \to E_2^{2,1} \quad \quad \quad 
d_3^{0,2} \colon E_3^{0,2} \to E_3^{3,0}. 
\end{align*}
Given such an exact sequence, let $\Aut_{F,G}(H)$ be the group of automorphisms of $H$ that are the identity on $F$ and fix the map $H\ra G$. 
The next lemma explains how to calculate $M (\underline{e}, \underline{f})$ using information about this spectral sequence.

\begin{lemma}\label{Mef-formula}  For 
$F = \prod_{i=1}^r V_i^{e_i} \times \prod_{i=1}^{s} N_i^{f_i}$, 
we have
\begin{equation}\label{moment-spectral-formula} {|\Aut_{[G]}(F)|} M( \underline{e}, \underline{f}) = \frac{ \abs{G}  \abs{ H^2(G, \QZ) } }{  \abs{ H^1(G, \QZ)}  \abs{  H^3(G, \QZ )}  }       \frac{ \abs{H^1 (G, H^1(F, \QZ))  } } {    \abs{ H^1 (F, \QZ)^G}  }\sum_{ \substack{ 1\to F \to H \stackrel{\pi}{\to} G \to 1\\ \tau ( \ker \pi^*) =0 } }\frac{|F|}{\abs{\Aut_{F,G}(H)}}    \abs{ E_3^{0,2} } .\end{equation}
\end{lemma}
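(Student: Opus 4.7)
The idea is to reparametrize $M(\underline e, \underline f)$ as a sum over actual (not isomorphism classes of) exact sequences $1 \to F \to H \to G \to 1$, and then apply the Lyndon--Hochschild--Serre spectral sequence $E_2^{p,q} = H^p(G, H^q(F, \QZ)) \Rightarrow H^{p+q}(H, \QZ)$ to express each $|H_i(H,\Z)| = |H^i(H, \QZ)|$ in terms of spectral sequence data and then simplify.

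\emph{Step 1 (reparametrization).} An isomorphism class of an oriented surjection $\pi \colon \bH \to \bG$ of type $(\underline e, \underline f)$ is the same as an isomorphism class of (actual sequence $1 \to F \to H \to G \to 1$ with the prescribed $[G]$-action on $F$, orientation $s \in H_3(H, \Z)$ with $\pi_*(s) = s_G$), modulo isomorphisms of $H$ inducing $\id$ on $G$ and preserving $s$. Applying orbit-stabilizer for the $\Aut_{[G]}(F)$-action on actual extensions taken modulo the finer isomorphism that also fixes $F$ pointwise, we obtain
\[
|\Aut_{[G]}(F)|\, M(\underline e, \underline f) = \sum_{\text{actual ext.}} \frac{|H|\,|H_2(H,\Z)|}{|\Aut_{F, G}(H)|\,|H_1(H,\Z)|\,|H_3(H,\Z)|}\cdot N_H,
\]
where $N_H$ is the number of orientations $s$ on $H$ compatible with $s_G$.

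\emph{Step 2 (counting orientations).} For each extension, $N_H = |\ker \pi_*|$ if $s_G \in \im \pi_*$ and $N_H = 0$ otherwise. By Pontryagin duality the first condition is equivalent to $\tau(\ker \pi^*) = 0$, matching the constraint on the sum in the lemma (via \cref{lem-tau}), and $|\ker \pi_*| = |\coker \pi^*| = |H^3(H, \QZ)|/|\im \pi^*|$. Thus $N_H / |H_3(H, \Z)| = 1/|\im \pi^*|$, and the spectral sequence edge map identifies $|\im \pi^*| = |E_\infty^{3,0}|$.

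\emph{Step 3 (spectral sequence cancellations).} In the range $p + q \leq 3$, only $d_2^{0,1}, d_2^{1,1}, d_2^{0,2}$, and $d_3^{0,2}$ contribute. Writing each $|E_\infty^{p,q}|$ as $|E_2^{p,q}|$ modified by the appropriate kernels and images, the product
\[
\frac{|H|\,|H_2(H, \Z)|}{|H_1(H, \Z)|\,|H_3(H, \Z)|}\cdot N_H = \frac{|F|\,|G|\,|E_\infty^{0,2}|\,|E_\infty^{1,1}|\,|E_\infty^{2,0}|}{|E_\infty^{0,1}|\,|E_\infty^{1,0}|\,|E_\infty^{3,0}|}
\]
telescopes via the repeated use of $|E_2^{p,q}| = |\ker d_r^{p,q}|\cdot|\im d_r^{p,q}|$: the factor $|\im d_2^{0,1}|$ cancels between $E_\infty^{2,0}$ and $E_\infty^{0,1}$, $|\im d_2^{1,1}|$ cancels between $E_\infty^{1,1}$ and $E_\infty^{3,0}$, and $|\im d_3^{0,2}|$ cancels between $E_\infty^{0,2}$ and $E_\infty^{3,0}$. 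The surviving terms reassemble into
\[
\frac{|G|\,|H^2(G, \QZ)|}{|H^1(G, \QZ)|\,|H^3(G, \QZ)|} \cdot \frac{|H^1(G, H^1(F, \QZ))|}{|H^1(F, \QZ)^G|} \cdot |F| \cdot |E_3^{0,2}|,
\]
using $|E_3^{0,2}| = |\ker d_2^{0,2}|$ and $|H| = |F|\cdot|G|$. Substituting back into the formula from Step 1 yields the lemma.

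\emph{Main obstacle.} The most delicate part is the spectral sequence bookkeeping in Step 3---tracking precisely which powers of $|\ker d|$ and $|\im d|$ appear and verifying that every such factor cancels correctly. The orientation condition $\tau(\ker \pi^*) = 0$ is the linchpin linking the topological constraint to the spectral sequence differentials through \cref{lem-tau}, and one must also check carefully that the orbit-stabilizer identification in Step 1 produces exactly the automorphism group $\Aut_{F,G}(H)$ in the denominator (with the residual $\Aut_{[G]}(F)$ action exhausted by the factor on the left).
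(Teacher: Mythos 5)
Your proposal is correct and takes essentially the same route as the paper's proof: the same orbit–stabilizer reparametrization over actual extensions weighted by $1/\abs{\Aut_{F,G}(H)}$, the same count of compatible orientations producing the factor $1/\abs{\Img \pi^*}=1/\abs{E_\infty^{3,0}}$, and the same Lyndon--Hochschild--Serre cancellations leading to $\abs{E_2^{1,1}}\abs{E_3^{0,2}}$ over $\abs{H^1(F,\QZ)^G}\abs{H^3(G,\QZ)}$. The only cosmetic difference is that you count orientations $s\in H_3(H,\Z)$ and pass through Pontryagin duality, whereas the paper works directly with the dual functionals $\tau_H$ on $H^3(H,\QZ)$; also, \cref{lem-tau} is not actually needed at this step, since the condition $\tau(\ker\pi^*)=0$ enters here only as the duality criterion you already gave.
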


In the sum in Lemma~\ref{Mef-formula}, $F$ and $G$ are fixed, and the sum is over isomorphism classes of  material extensions $H$ of $G$ by $F$
compatible with the given $[G]$-structure on $F$, or equivalently, classes $\alpha \in H^2(G,F^{ab})$, such that for $\pi^*:H^3(G,\QZ)\ra H^3(H,\QZ)$, we have $\tau ( \ker \pi^*) =0$.   
 Throughout the rest of the section, we will have similar sums over exact sequences, and they will always mean the analogous thing, in particular requiring that the extensions are material.

\begin{proof}Recall $\tau\in H^3(G,\QZ)\ra \QZ$ is the map given by the orientation of $\bG$.
Let $J$ be a set of finite groups containing exactly one group from each isomorphism class.  
Given an $H \in J$,  we have that $\Aut(H)$ acts on  choices of $\tau_H \in H^3(H,\QZ)^\vee$,  with orbits corresponding to $\bH \in I$ and stabilizers 
$\Aut(\bH)$.
So $$
 M (\underline{e}, \underline{f}) = \sum_{ H \in J } \sum_{\tau_H \in H^3(H,\QZ)}
  \sum_{ \substack{\pi: H\ra G \\ \textrm{type } \underline{e}, \underline{f} \\ \tau_H\circ \pi^*=\tau}} \frac{ \abs{H}  \abs{ H_2 (H, \mathbb Z ) } }{ \abs{ \Aut(H)}   \abs{ H_1(H, \mathbb Z) } \abs{ H_3(H, \mathbb Z)}}.
$$
We can extend a $\pi:H\ra G$ of type $ \underline{e}, \underline{f}$ to an exact sequence
$1\ra F \ra H\ra G \ra 1$  in $|\Aut_{[G]}(F)|$ ways (compatible with the $[G]$ structure on $F$).   Also, $\Aut(H)$ acts on these exact sequences with orbits corresponding to isomorphism classes of
extensions $H$ of $G$ by $F$ (compatible with the $[G]$ structure on $F$)
and stabilizers $\Aut_{F,G}(H)$.

Thus we can rewrite
 \[ M(\underline{e}, \underline{f}) = \frac{1}{|\Aut_{[G]}(F)|}\sum_{\substack{  1\to F \to H \stackrel{\pi}{\to} G \to 1 } }\frac{1 }{|\Aut_{F,G}(H)|}  \sum_{\substack{ \tau_H \colon H^3( H, \QZ ) \to \QZ \\  \tau_H \circ \pi^* = \tau }}   \frac{  \abs{H}  \abs{ H^2 (H, \QZ ) } }{  \abs{ H^1(H, \QZ) } \abs{ H^3(H, \QZ) }} .\] 

 Because none of the terms in the sum over $\tau_H$ depend on $\tau_H$, we can replace this sum with the count of $\tau_H \colon H^3 (H, \QZ) \to \QZ$ that satisfy $\tau_H \circ \pi^* = \tau$.  Because $H^3 (H, \QZ)$ is a finite group, this number is $\frac{ \abs{ H^3 (H, \QZ)}}{ \abs{ \Img \pi^*}}$ if $\tau$ is trivial on $\ker \pi^*$ and $0$ otherwise.
 \[ M(\underline{e}, \underline{f}) = \frac{1}{|\Aut_{[G]}(F)|}\sum_{\substack{  1\to F \to H \to G \to 1\\ \tau (\ker \pi^*) =0  } }\frac{1}{\abs{\Aut_{F,G}(H)}}    \frac{  \abs{H}  \abs{ H^2 (H, \QZ ) } }{ \abs{ \Img \pi^*}    \abs{ H^1(H, \QZ) } } .\]  
Now 
\[ \abs{H^1( H, \QZ) }= \abs{ E_{\infty}^{0,1}} \abs{ E_{\infty}^{1,0} } = \abs{ \ker d_2^{0,1} } \abs{ E_2^{1,0}} =  \frac{ \abs{ H^1 (F, \QZ)^G}}{ \abs{ \Img d_2^{0,1}}}  \abs{ H^1( G, \QZ)} .\]
Furthermore
\begin{align*}
 \abs{ E_\infty^{2,0}} = \abs {E_3^{2,0}} = \abs{ \coker d_2^{0,1}} = \frac{ \abs {H^2(G, \QZ)}}{  \abs{\Img d_2^{0,1}}},  
\quad \quad \abs{E_{\infty}^{1,1}} =   \abs{E_3^{1,1}} = \abs{\ker d_2^{1,1} }=  \frac{ \abs{E_2^{1,1}} }{\abs{ \Img d_2^{1,1}}},  
\\ E_3^{0,2} = \ker d_2^{0,2} , \textrm{ and} 
\quad\quad \abs{E_{\infty}^{0,2} }= \abs{E_4^{0,2} }=\abs{ \ker d_3^{0,2} }=\frac{ \abs{E_3^{0,2} } }{\abs{ \Img d_3^{0,2}} }. 
\end{align*}
We therefore have \[ \abs{ H^2( H, \QZ) } =  \abs{ E_{\infty}^{2,0}} \cdot \abs{ E_{\infty}^{1,1} } \cdot  \abs{E_{\infty}^{0,2}} = \frac{ \abs {H^2(G, \QZ)}}{  \abs{\Img d_2^{0,1}}}   \cdot \frac{ \abs{E_2^{1,1}} }{\abs{ \Img d_2^{1,1}}}   \cdot \frac{ \abs{E_3^{0,2} } }{\abs{ \Img d_3^{0,2}} } \] 
We have $E_{\infty}^{3,0} = \Img \pi^* $ from the edge map of the spectral sequence. This means $\abs{ \Img d_2^{1,1}} \abs{ \Img d_3^{0,2}} \abs{ \Img \pi_* } = \abs{E_2^{3,0} } = \abs{ H^3 (G, \QZ) }$. 
This gives
\begin{align*}&\frac{   \abs{ H^2 (H, \QZ ) } }{ \abs{ \Img \pi^*}    \abs{ H^1(H, \QZ) } } =\frac{ \abs{ H^2(G, \QZ) }  \abs{E_2^{1,1}}  \abs{E_3^{0,2} }  \abs{ \Img d_2^{0,1}}  }{    \abs{ \Img \pi^*}  \abs{\Img d_2^{0,1}} \abs{ \Img d_2^{1,1}} \abs{ \Img d_3^{0,2}} \abs{ H^1 (F, \QZ)^G}  \abs{ H^1( G, \QZ)} } \\ &=  \frac{ \abs{ H^2(G, \QZ)}} {\abs{H^1(G, \QZ)} \abs{ H^3(G, \QZ)} }   \frac{\abs{E_2^{1,1}}  \abs{E_3^{0,2} }    } {   \abs{ H^1 (F, \QZ)^G}  }  
=  \frac{ \abs{ H^2(G, \QZ)}} {\abs{H^1(G, \QZ)} \abs{ H^3(G, \QZ)} }   \frac{ \abs{H^1 (G, H^1(F, \QZ)) }  } {    \abs{ H^1 (F, \QZ)^G}  }   \abs{E_3^{0,2} }  .
\end{align*}
Thus
\[{|\Aut_{[G]}(F)|} M(\underline{e}, \underline{f}) = \sum_{\substack{  1\to F \to H \to G \to 1\\ \tau (\ker \pi^*) =0  } }  \frac{\abs{H} }{ \abs{\Aut_{F,G}(H)} }  \frac{ \abs{ H^2(G, \QZ)}} {\abs{H^1(G, \QZ)} \abs{ H^3(G, \QZ)} }   \frac{ \abs{H^1 (G, H^1(F, \QZ))  } } {    \abs{ H^1 (F, \QZ)^G}  }   \abs{E_3^{0,2} }\] \[ = \frac{ \abs{G} \abs{ H^2(G, \QZ)}} {\abs{H^1(G, \QZ)} \abs{ H^3(G, \QZ)} }  \frac{ \abs{H^1 (G, H^1(F, \QZ))   } }{    \abs{ H^1 (F, \QZ)^G}  }  \sum_{\substack{  1\to F \to H \to G \to 1\\ \tau (\ker \pi^*) =0  } }   \frac{\abs{F} }{ {\abs{\Aut_{F,G}(H)}}}  \abs{E_3^{0,2} }  \] where we use $\abs{H} = \abs{F} \abs{G}$. \end{proof}

The next few lemmas let us write $M(\underline{e},\underline{f})$ as a product of local factors by showing a multiplicativity property.

 \begin{lemma}\label{automorphism-of-extension} For 
$F = \prod_{i=1}^r V_i^{e_i} \times \prod_{i=1}^{s} N_i^{f_i}$, and $H$ as in \cref{Mef-formula},
  \[ \frac{|F|}{\abs{ \Aut_{F,G}(H)}}= \prod_{i=1}^r
  \frac{\abs{ V_i^{G} }^{e_i}}{   \abs{ H^1 ( G, V_i)^{e_i}}  }
 \prod_{i=1}^s |N_i|^{f_i}  .\] \end{lemma}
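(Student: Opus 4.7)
The first step is to identify $\Aut_{F,G}(H)$ with a group of $1$-cocycles. For $\phi \in \Aut_{F,G}(H)$, define $d \colon H \to F$ by $d(h) = \phi(h) h^{-1}$; this lands in $F$ because $\phi$ induces the identity on $G$, it satisfies the crossed homomorphism identity $d(h_1 h_2) = d(h_1) \cdot h_1 d(h_2) h_1^{-1}$ (with $H$ acting on $F$ by conjugation) because $\phi$ is a homomorphism, and it vanishes on $F$ because $\phi|_F = \mathrm{id}$. Conversely, any such $d$ yields an endomorphism $\phi(h) = d(h) h$ which is easily checked to be injective (hence, $H$ being finite, an automorphism) fixing $F$ pointwise and inducing the identity on $G$. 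Thus $|\Aut_{F,G}(H)|$ equals the number of $1$-cocycles $H \to F$ for the conjugation action that vanish on $F$.

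Next I would exploit the decomposition $F = \prod_i V_i^{e_i} \times \prod_j N_j^{f_j}$. Each factor is a $[G]$-isotypic component of $F$ and is therefore characteristic as a $[G]$-group, hence stable under $H$-conjugation. A cocycle $d$ therefore decomposes coordinate-wise into cocycles $d_i \colon H \to V_i^{e_i}$ and $e_j \colon H \to N_j^{f_j}$, each vanishing on $F$, and it suffices to count each type of component separately and multiply.

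For an abelian factor $V_i^{e_i}$: the direct-product structure of $F$ shows that every other factor of $F$ centralizes $V_i^{e_i}$, and $V_i^{e_i}$ is abelian, so $F$ acts trivially and the conjugation action of $H$ factors through the given $G$-module structure on $V_i^{e_i}$. A cocycle $H \to V_i^{e_i}$ vanishing on $F$ is then exactly a cocycle $G \to V_i^{e_i}$, contributing
\[ |Z^1(G, V_i^{e_i})| = |H^1(G,V_i)|^{e_i} \cdot \bigl(|V_i|/|V_i^G|\bigr)^{e_i}. \]
For a non-abelian factor $N_j^{f_j}$, the claim is that each component is trivial. Projecting to a single copy of $N_j$ and letting $e \colon H \to N_j$ be the resulting cocycle vanishing on $F$, combining the cocycle identity with $e|_F = 1$ gives $e(fh) = f e(h) f^{-1}$, while the identity $fh = h(h^{-1}fh)$ with $h^{-1}fh \in F$ gives $e(fh) = e(h)$; hence $e(h)$ is fixed by the conjugation action of every $f \in F$. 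But only the $N_j$ factor itself acts non-trivially on this copy of $N_j$ (the other factors centralize it in $F$), and it does so by inner automorphisms, so the fixed points equal $Z(N_j) = 1$ by simplicity. Thus $e \equiv 1$.

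Combining these counts yields $|\Aut_{F,G}(H)| = \prod_i \bigl(|H^1(G,V_i)| \cdot |V_i|/|V_i^G|\bigr)^{e_i}$, and dividing $|F| = \prod_i |V_i|^{e_i} \prod_j |N_j|^{f_j}$ by this gives the stated formula. The only mildly subtle step is the non-abelian case, where one needs to observe that the $F$-action on each simple non-abelian factor $N_j$ is inner via the projection $F \to N_j$ and to invoke $Z(N_j) = 1$; everything else is a bookkeeping exercise in the standard correspondence between automorphisms of extensions and $1$-cocycles.
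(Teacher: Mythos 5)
Your proof is correct and follows essentially the same route as the paper's: identify $\Aut_{F,G}(H)$ with $1$-cocycles valued in $F$ that vanish on $F$, split them according to the factors of $F$, count $|Z^1(G,V_i^{e_i})|=|H^1(G,V_i)|^{e_i}\,|V_i|^{e_i}/|V_i^G|^{e_i}$ for the abelian factors, and kill the non-abelian contribution using triviality of the center (the paper phrases this last point via the canonical extension $\Aut(N_j^{f_j})\times_{\Out(N_j^{f_j})}G$, but it is the same centralizer fact). The only unremarked step is that projecting a cocycle onto a single copy of $N_j$ presupposes each copy is $H$-stable under conjugation; this does hold here because the prescribed $[G]$-action on $F$ is a product action and inner automorphisms of $F$ preserve each copy, and in any case you can avoid the issue by running your centralizer argument on all of $N_j^{f_j}$ at once, whose center is already trivial.
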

 
 \begin{proof}The group $H$ is a fiber product over $G$ of the extensions by the $V_i^{e_i}$ and 
 $N_i^{f_i}$ separately, and any element  of $ \Aut_{F,G}(H)$ acts separately on the factors.  The 
 factors for $N_i^{f_i}$ have no automorphisms fixing $N_i^{f_i}$ and $H$ since the extension is 
 canonically $H\times_{\Out(N_i^{f_i})}\Aut(N_i^{f_i})$. An automorphism of the extensions of $H$ 
 by $V_i^{e_i}$ is a cocycle in the standard presentation for $H^1( G, V_i^{e_i})$, and it is a 
 coboundary if and only if it acts as conjugation by an element of $V_i^{e_i}$.  Conjugation by an 
 element is a trivial automorphism if and only if the element is central, which happens exactly if it is $G$-invariant, so $|\Aut_{F,G}(H)|$ is $\abs{ H^1 ( G, V_i)^{e_i}}   \abs{ V_i^{e_i} } /  
 \abs{ V_i^{G} }^{e_i} $. \end{proof}

 \begin{lemma}\label{automorphism-of-F}  For 
$F = \prod_{i=1}^r V_i^{e_i} \times \prod_{i=1}^{s} N_i^{f_i}$,\[\abs{\Aut_{[G]}(F)} = \prod_{i=1}^r \abs{GL_{e_i}(\kappa_i) }\prod_{i=1}^s  \abs{N_i}^{f_i} \abs{ Z_{\Out(N_i)} (G)}^{f_i} f_i! .\]
\end{lemma}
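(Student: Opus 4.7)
The plan is to reduce the computation of $\Aut_{[G]}(F)$ to a product of local factors, one for each simple $[G]$-group appearing in the decomposition of $F$, and then to compute each factor directly.

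First I would argue that since each $V_i$ and each $N_j$ is a simple $[G]$-group, and they are pairwise non-isomorphic (and, in particular, abelian vs.\ non-abelian factors cannot map nontrivially to one another), any $[G]$-equivariant endomorphism of $F$ must preserve each isotypic component, i.e.\ send $V_i^{e_i}$ to $V_i^{e_i}$ and $N_j^{f_j}$ to $N_j^{f_j}$. This uses that a nonzero $[G]$-morphism with simple source has trivial kernel, so its image is a simple $[G]$-subgroup of the target, together with the fact that the simple $[G]$-subgroups of $N_j^{f_j}$ are precisely the $f_j$ coordinate copies of $N_j$ (and similarly for $V_i^{e_i}$). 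Consequently
\[ \Aut_{[G]}(F)=\prod_{i=1}^r \Aut_{[G]}(V_i^{e_i})\;\times\;\prod_{j=1}^s \Aut_{[G]}(N_j^{f_j}).\]

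For the abelian factors, the $[G]$-structure on $V_i$ coincides with a $G$-action (since $V_i$ is abelian, $\Out=\Aut$), so $\Aut_{[G]}(V_i^{e_i})=\Aut_G(V_i^{e_i})$. By Schur's lemma, $\End_G(V_i)=\kappa_i$, hence $\End_G(V_i^{e_i})=M_{e_i}(\kappa_i)$ and $\Aut_{[G]}(V_i^{e_i})=GL_{e_i}(\kappa_i)$, giving the first factor.

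For the non-abelian factors, I would use that for $N_j$ a non-abelian finite simple group, $\Aut(N_j^{f_j})=\Aut(N_j)\wr S_{f_j}$, and the natural map $\Aut(N_j)\wr S_{f_j}\to \Out(N_j)\wr S_{f_j}=\Out(N_j^{f_j})$ has kernel $\Inn(N_j)^{f_j}\cong N_j^{f_j}$ (using $Z(N_j)=1$). The diagonal $[G]$-structure on $N_j^{f_j}$ is the image of $G\to\Out(N_j)$ sitting inside $\Out(N_j)^{f_j}\subset \Out(N_j)\wr S_{f_j}$ diagonally. An element $(\phi_1,\ldots,\phi_{f_j};\pi)\in \Out(N_j)\wr S_{f_j}$ commutes with each diagonal element $(g,\ldots,g;1)$ if and only if each $\phi_k$ lies in $Z_{\Out(N_j)}(G)$ (the permutation $\pi$ is automatically allowed because the $G$-action on each factor is identical). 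Thus the centralizer of the image of $G$ in $\Out(N_j)\wr S_{f_j}$ is $Z_{\Out(N_j)}(G)\wr S_{f_j}$, of order $|Z_{\Out(N_j)}(G)|^{f_j}\,f_j!$. The subgroup of $[G]$-automorphisms is exactly the preimage of this centralizer in $\Aut(N_j)\wr S_{f_j}$, and multiplying by the kernel size $|N_j|^{f_j}$ gives the desired factor $|N_j|^{f_j}|Z_{\Out(N_j)}(G)|^{f_j}\,f_j!$.

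The only step that needs a little care is the first one — ruling out $[G]$-morphisms mixing the isotypic components, and in particular showing that the $S_{f_j}$ permutations on $N_j^{f_j}$ really do lift to $[G]$-automorphisms (so that the symmetric group factor is genuinely present). The first follows from simplicity plus the abelian/non-abelian dichotomy; the second from the fact that the $G$-action is diagonal, so any coordinate permutation is $[G]$-equivariant. With these in hand the formula is immediate by multiplying the three displayed factors.
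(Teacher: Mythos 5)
Your proposal is correct and follows essentially the same route as the paper: split $\Aut_{[G]}(F)$ over the isotypic factors, get $GL_{e_i}(\kappa_i)$ from Schur's lemma on the abelian part, and on the non-abelian part use that an automorphism is a $[G]$-automorphism exactly when its image in $\Out(N_i^{f_i})$ centralizes the image of $G$, yielding $\abs{N_i}^{f_i}\abs{Z_{\Out(N_i)}(G)}^{f_i}f_i!$. The paper compresses into single sentences what you spell out (the wreath-product description of $\Aut(N_i^{f_i})$ and $\Out(N_i^{f_i})$ and the centralizer of the diagonal copy of $G$), so the extra detail is fine and no new ideas are needed.
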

\begin{proof} An automorphism of $F$ acts separately on each factor, so 
\[ \abs{\Aut_{[G]}(F)} =  \prod_{i=1}^r \abs{\Aut_{[G]} (V_i^{e_i} )}\times \prod_{i=1}^{s} \abs{ \Aut_{[G]}. ( N_i^{f_i})}.\]
A $G$-endomorphism of $V_i^{e_i}$ is given by a $e_i \times e_i$ matrix over $\kappa_i$ and it is an automorphism if and only if the matrix is invertible.
A automorphism of $N_i^{f_i}$ is a $[G]$-automorphism if and only if its image in $\Out(N_i^{f_i})$ commutes with $G$, so
\[ \abs{ \Aut_{[G]} ( N_i^{f_i}) }= \abs{N_i}^{f_i}  \abs{ Z_{\Out(N_i^{f_i}) } (G) } = \abs{N_i}^{f_i} \abs{ Z_{\Out(N_i)} (G)}^{f_i} f_i! .\]
\end{proof}

\begin{lemma} \label{pullback-differential} Let \[ \begin{tikzcd} 1 \arrow[r] \arrow[d] & F \arrow[r] \arrow[d,"\rho"]& H \arrow[r]\arrow[d] & G \arrow[r] \arrow[d,"id"] & 1\arrow[d] \\ 
1 \arrow[r] & F'\arrow[r] & H' \arrow[r] & G \arrow[r] & 1 \\ \end{tikzcd}\] be a commutative diagram of groups with both rows exact.  Then the differentials in the Lyndon-Hochschild-Serre spectral sequences $(E,d),(E',d')$ computing $H^{p+q} (H , \QZ)$ and $H^{p+q} (H' , \QZ)$, respectively, from $H^p ( G, H^q ( F, \QZ))$ and $H^p ( G, H^q ( F', \QZ))$, respectively, are compatible with the pullback map 
\[ \rho^*: H^p ( G, H^q ( F', \QZ)) \to H^p ( G, H^q ( F, \QZ)),\]
i.e. for all $r\geq 2$,
$$
\rho^* (d')_r^{p,q} =d_r^{p,q} \rho^*,
$$  
where pullback maps $\rho^*$ on pages past the second page are well-defined by the commutativity of these diagrams on previous pages.

\end{lemma}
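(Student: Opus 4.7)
The plan is to deduce the result from the general naturality of the Lyndon-Hochschild-Serre spectral sequence with respect to morphisms of group extensions. The commutative diagram in the hypothesis is exactly a morphism of short exact sequences with the identity on the base $G$, so it induces a morphism of the two filtered complexes (for example, the bar complexes of $H'$ and $H$ with coefficients in $\QZ$, filtered by the preimage of the filtration coming from bar resolutions of $G$), and consequently a morphism of the associated spectral sequences.

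First, I would recall the construction of the LHS spectral sequence as the Grothendieck spectral sequence for the composition of functors $(-)^G \circ (-)^F$, or equivalently from the double complex obtained by combining a projective $\Z[G]$-resolution and a projective $\Z[F]$-resolution of $\Z$. The morphism $H \to H'$, restricting to $\rho \colon F \to F'$ and covering the identity on $G$, makes $\QZ$ (viewed as a trivial $H'$-module) pull back to $\QZ$ (viewed as a trivial $H$-module), compatibly with restriction to $F'$ and $F$. By functoriality of these resolutions one obtains a morphism of double complexes, which preserves the filtration and therefore induces a morphism of spectral sequences $(E'_r,d'_r) \to (E_r,d_r)$ commuting with all differentials.

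Second, I would identify the induced map on the $E_2$-page with the stated $\rho^*$: since the map on the outer group $G$ is the identity, the morphism on $E_2^{p,q} = H^p(G, H^q(-, \QZ))$ is just $H^p(G, -)$ applied to the pullback $H^q(F',\QZ) \to H^q(F,\QZ)$ induced by $\rho$, which is $\rho^*$ by definition. Since a morphism of spectral sequences commutes with every differential, we get $\rho^* (d')_r^{p,q} = d_r^{p,q} \rho^*$ on $E_2$, and then inductively on every higher page, with the induced maps on later pages being well-defined precisely because they arise from restriction to kernels and passage to quotients by images on the preceding page.

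The argument is essentially a citation of naturality, so there is no real obstacle; the only thing to be careful about is to use a construction of the LHS spectral sequence (e.g.\ via a Cartan--Eilenberg resolution, as in \cite[Ch.~2]{Neukirch2000}) that is manifestly functorial in the data $(F \hookrightarrow H \twoheadrightarrow G, \text{coefficients})$, so that the morphism of input data given by the diagram automatically produces a morphism of spectral sequences. Once that framework is set up, the conclusion is immediate.
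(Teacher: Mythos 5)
Your argument is correct and is essentially the paper's own proof: the paper likewise constructs the spectral sequence from an explicit functorial bicomplex built out of bar resolutions, namely $K^{p,q}=\Hom_G(B_p(G,\Hom_F(B_q(H,\QZ))))$, and observes that the map of extensions induces a map of bicomplexes and hence of all pages, compatibly with the differentials. Identifying the induced map on $E_2$ with $\rho^*$, as you do, completes the same argument.
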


\begin{proof} 
This follows from the definition of the spectral sequence as the spectral sequence of the bicomplex $K^{p,q}:=\Hom_G(B_p(G,\Hom_F(B_q(H,\QZ) )))$, where $B_*$ denotes the bar resolution and the differentials of the bicomplex come from the differentials on the bar resolutions
(see \cite[Section XI.10]{MacLane1995}, \cite[Section 3]{Huebschmann1981}).  The pullback map $(K')^{p,q} \ra K^{p,q}$ induces the pullback maps on all the pages of the spectral sequence and is compatible with the differentials.
\end{proof}

\begin{lemma}\label{inner-sum-product} Let $F_a$ and $F_b$ be  $[G]$-groups that are each finite products of the $V_i$ and $N_i$. Assume that, for each irreducible representation $V_i$ over $\F_p$  of $G$ appearing in $F_a$, the dual representation $V_i^\vee:=\Hom(V_i,\F_p)$  does not appear in $F_b$. 
Then 
\[ \sum_{ \substack{ 1\to (F_a \times F_b) \to H \stackrel{\pi_{ab}}{\to} G \to 1\\ \tau ( \ker \pi_{ab}^*) =0 } }  \abs{E_3^{0,2} }  = \Biggl( \sum_{ \substack{ 1\to F_a\to H \stackrel{\pi_a}{\to} G \to 1\\ \tau ( \ker \pi_a^*) =0 } }  \abs{E_3^{0,2} } \Biggr)  \Biggl( \sum_{ \substack{ 1\to F_b \to H \stackrel{\pi_b}{\to} G \to 1 \\ \tau ( \ker \pi_b^*) =0 } }  \abs{ E_3^{0,2} }\Biggr)   .\]

\end{lemma}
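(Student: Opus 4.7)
The plan is to realize an extension $H$ of $G$ by $F_a \times F_b$ as a fiber product $H_a \times_G H_b$ of an extension by $F_a$ and one by $F_b$, decompose $H^*(F_a \times F_b, \QZ)$ via K\"unneth so that the hypothesis kills the cross term, and then check that both $|E_3^{0,2}|$ and the condition $\tau(\ker \pi_{ab}^*) = 0$ factor multiplicatively. To set up the bijection: $F_a$ and $F_b$ are $[G]$-invariant normal subgroups of $F_a \times F_b$, hence normal in $H$, so $H_a := H/F_b$ and $H_b := H/F_a$ are extensions of $G$ by $F_a$ and $F_b$ respectively, and $H \cong H_a \times_G H_b$; conversely any such fiber product is an extension of $G$ by $F_a \times F_b$. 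Since the non-abelian simple $[G]$-groups $N_i$ are perfect and contribute uniquely to whichever factor contains them, while abelian extension classes live in $H^2(G, F_a^{ab}) \oplus H^2(G, F_b^{ab})$, materiality factors: $H$ is material iff both $H_a$ and $H_b$ are.

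Next I would compute $H^q(F, \QZ)$ via K\"unneth. Using that $\QZ$ is divisible, $H^q(F, \QZ) = \Hom(H_q(F, \Z), \QZ)$, so homology K\"unneth gives
\[
H^1(F, \QZ) = H^1(F_a, \QZ) \oplus H^1(F_b, \QZ),
\]
\[
H^2(F, \QZ) = H^2(F_a, \QZ) \oplus H^2(F_b, \QZ) \oplus \Hom(F_a^{ab} \otimes F_b^{ab}, \QZ).
\]
Perfectness of the $N_i$ gives $N_i^{ab} = 0$, so $F_a^{ab}$ and $F_b^{ab}$ are direct sums of the $V_i$'s in $F_a$ and $F_b$. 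By Schur's lemma, the $G$-invariants of the cross term decompose over pairs $(V_a \subseteq F_a^{ab},\, V_b \subseteq F_b^{ab})$ as $\Hom_G(V_a, V_b^\vee)$, which vanishes unless $V_a \cong V_b^\vee$---precisely excluded by the hypothesis. Consequently $H^0(G, H^2(F, \QZ))$ and $H^p(G, H^1(F, \QZ))$ each split cleanly as the direct sum of their $F_a$- and $F_b$-parts.

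Applying Lemma~\ref{pullback-differential} to the projections $H \to H_a$ and $H \to H_b$ (with kernel maps $F \twoheadrightarrow F_a$ and $F \twoheadrightarrow F_b$) identifies each summand with the pullback from the smaller extension's spectral sequence and shows that the differentials $d_2^{0,2}$, $d_2^{1,1}$, and $d_3^{0,2}$ for $H$ are block-diagonal with respect to the above decomposition. Hence $|E_3^{0,2}(H)| = |E_3^{0,2}(H_a)| \cdot |E_3^{0,2}(H_b)|$, and by Lemma~\ref{lem-tau} the condition $\tau(\ker \pi_{ab}^*) = 0$ (i.e.\ $\tau \circ d_2^{1,1} = 0$ and $\tau \circ d_3^{0,2} = 0$) splits block-wise into the analogous conditions for $H_a$ and $H_b$. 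The sum over $H$ thus becomes the product of the sums over $H_a$ and $H_b$. The main obstacle is ensuring the K\"unneth decomposition of $H^*(F, \QZ)$ is preserved by the spectral sequence differentials; the vanishing of the cross term in $H^2(F,\QZ)$---the one place where the hypothesis truly bites---removes the only obstruction at the $d_2^{0,2}$ step, and Lemma~\ref{pullback-differential} handles the rest.
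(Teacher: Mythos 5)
Your proof is correct and follows essentially the same route as the paper: split $H$ as a fiber product $H_a\times_G H_b$, show the $E_2^{0,2}$ and $E_2^{p,1}$ terms split along the pullbacks from the two quotient extensions (the duality hypothesis killing the only cross term), and use Lemma~\ref{pullback-differential} together with Lemma~\ref{lem-tau} to factor both $\abs{E_3^{0,2}}$ and the condition $\tau(\ker\pi^*)=0$. The only cosmetic difference is that you obtain the cross term directly as $\Hom(F_a^{ab}\otimes F_b^{ab},\QZ)$ via homology K\"unneth and universal coefficients, whereas the paper uses the integral cohomology K\"unneth sequence and then identifies the resulting $\operatorname{Tor}$ term with the same tensor product; the key vanishing of $G$-invariants is identical.
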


\begin{proof} Every extension $H_{ab}$ of $G$ by $F_a \times F_b$ is the fiber product of an extension $H_a$ of $G$ by $F_a$ and an extension $H_b$ of $G$ by $F_b$. Thus, matching terms on both sides, it suffices to show that 
\begin{equation}\label{E3-product} E_{3, H_{ab}}^{0,2}=  E_{3, H_a}^{0,2} \times E_{3, H_b}^{0,2}\end{equation} and 
$\tau ( \ker \pi_{ab}^*) =0 $ if and only if $\tau ( \ker \pi_{a}^*) =0 $ and $\tau ( \ker \pi_{b}^*) =0 $ .

In each of the spectral sequences we have $E_{3}^{0,2}=\ker(d_2^{0,2})$. We first consider the $E_2^{0,2}$ terms and will  check that the product of natural pullback maps
\begin{equation}\label{E02splitting} H^0 (G , H^2(F_{a}, \QZ)) \times H^0 (G , H^2(F_{b}, \QZ))\to H^0 (G , H^2(F_{ab}, \QZ))\end{equation} is an isomorphism. 
For any finite group $F$, the exact sequence $0 \to \mathbb Z \to \mathbb Q \to \QZ\to 0$ gives an isomorphism $H^q ( F, \mathbb Q/\mathbb Z) \cong H^{q+1} (F, \mathbb Z)$ for $q>0$. 
Because $F_a$ and $F_b$ are finite, $H^1(F_a,\mathbb Z)= H^1(F_b, \mathbb Z)=0$, so, by the K\"unneth formula with principal ideal domain coefficients, and we have an exact sequence 
\[ 
1\ra 
 H^3 (F_a, \mathbb Z) \times H^3 (F_b, \mathbb Z) \ra H^3 (F_a \times F_b, \mathbb Z)\ra \operatorname{Tor}^1 ( H^2 (F_a, \mathbb Z), H^2(F_b, \mathbb Z))\ra 1.\]

Thus, to prove \eqref{E02splitting}, it suffices to prove $H^0(G, \operatorname{Tor}^1 ( H^2 (F_a, \mathbb Z), H^2(F_b, \mathbb Z)))=0$.  We have $H^2(F_a, \mathbb Z)= H^1(F_a, \mathbb Q/\mathbb Z) = \Hom (F_a, \mathbb Q/\mathbb Z)$ is a product of vector spaces over finite fields, and the same for $H^2(F_b, \mathbb Z)$.  For such finite abelian groups $A,B$, there is a isomorphism
$\operatorname{Tor}^1(A,B)\ra A \otimes B$ functorial in both $A$ and $B$. It suffices to check that $\Hom (F_a, \mathbb Q/\mathbb Z)\otimes \Hom (F_b, \mathbb Q/\mathbb Z)$ contains no nontrivial element that  $G$-invariant. Such an element would give a nontrivial $G$-invariant $\QZ$-valued bilinear form on $F_a \times F_b$ (again using that $F_a,F_b$ are products of vector spaces over finite fields), which cannot exist because of our assumption on the irreducible factors of $F_a$ and $F_b$.

More straightforwardly, the product of natural pullback maps \[ H^1 (F_a, \QZ) \times H^1(F_b, \QZ) \to H^1(F_a\times F_b, \QZ)\] is an isomorphism because these cohomology groups are the same as sets of homomorphisms to $\QZ$, hence \begin{equation}\label{pullback-product-1} H^p (G , H^1(F_{a}, \QZ)) \times H^p (G , H^1(F_{b}, \QZ))\to H^p (G , H^1(F_{a} \times F_b, \QZ))\end{equation}  is an isomorphism for all $p$.

Using \eqref{E02splitting}, the $p=2$ case of \eqref{pullback-product-1}, and Lemma \ref{pullback-differential}, it follows that \[ d_{2,ab}^{0,2}: H^0 (G , H^2(F_{ab}, \QZ)) \to H^2 (G , H^1(F_{ab}, \QZ))\] is the product of $ d_{2,a}^{0,2}$ and $ d_{2,b}^{0,2}$. Hence $E_{3,G_{ab}}^{0,2}= \ker d_{2,ab}^{0,2}$ is the product of the kernel $E_{3, G_{a}}^{0,2}$ of $d_{2,a}^{0,2}$  and the kernel $E_{3, G_{b}}^{0,2}$ of $d_{2,b}^{0,2}$, verifying \eqref{E3-product}. 

We have $\tau (\ker \pi_{ab}^*)=0$ if and only if $\tau \circ d_{2,ab}^{1,1}=0$ and $\tau \circ d_{3,ab}^{0,2}=0$.  Using Lemma \ref{pullback-differential} and the $p=1$ case of \eqref{pullback-product-1}, the map $d_{2,ab}^{1,1}$ is the sum of $d_{2,a}^{1,1}$ and $d_{2,b}^{1,1}$, hence $\tau \circ d_{2,ab}^{1,1}=0$ if and only if $\tau \circ d_{2,a}^{1,1}=0$ and $\tau \circ d_{2,b}^{1,1}=0$. Similarly, using Lemma \ref{pullback-differential} and \eqref{E3-product}, $d_{3,ab}^{0,2}$ is the sum of $d_{3,a}^{0,2}$ and $d_{3,b}^{0,2}$, hence $\tau \circ d_{3,ab}^{0,2}=0$ if and only if $\tau \circ d_{3,a}^{0,2}=0$ and $\tau \circ d_{3,b}^{0,2}=0$.  \end{proof}

%
%
%
%
%
%
%
%
%

We now define the local factors that we will write $M(\underline{e}, \underline{f})$ as a product of.

For any $V_i$ that is not dual to $V_j$ for any $j\neq i$,  let
\[ M_i (e_i) = \frac{1}{ \abs{ GL_{e_i}(\kappa_i)} }  \frac{ \abs{ H^1(G, V_i^\vee)}^{e_i}} {  \abs{ H^1(G, V_i)}^{e_i}}   \sum_{ \substack{ 1\to V_i^{e_i} \to H \stackrel{\pi}{\to} G \to 1\\ \tau ( \ker \pi^*) =0 } }  \abs{ E_3^{0,2} } .\]

For $V_i$ and $V_{i'}$, dual to each other, and non-isomorphic, define
\[ M_{i,i'} (e_i, e_{i'} ) = \frac{1}{ \abs{ GL_{e_i}(\kappa_i) }\abs{GL_{e_{i'}}(\kappa_{i'})} }    \abs{ H^1(G, V_{i'} )}^{e_i -e_{i'} }   \abs{ H^1(G, V_i)}^{e_{i'}-e_i}  \sum_{ \substack{ 1\to (V_i^{e_i} \times V_{i'} ^{e_{i'}})  \to H \stackrel{\pi}{\to} G \to 1\\ \tau ( \ker \pi^*) =0 } }  \abs{ E_3^{0,2} } .\] 

For any $N_i$, let
\[ \eta_i (f_i) = \frac{1}{ \abs{ Z_{\Out(N_i)} (G)}^{f_i} f_i! } 
 \sum_{ \substack{ 1\to N_i^{f_i} \to H \stackrel{\pi}{\to} G \to 1\\ \tau ( \ker \pi^*) =0 } }  \abs{ E_3^{0,2} }. \]

\begin{lemma}\label{L:Mefform}
We have
\begin{equation} \label{moment-product-formula} M(\underline{e},\underline{f})  =
\frac{ \abs{G}  \abs{ H^2(G, \QZ) } }{  \abs{ H^1(G, \QZ)}  \abs{  H^3(G , \QZ )}  }\prod_{ \substack{ i \in \{1,\dots ,r \} \\ V_i^\vee \not\cong V_j \textrm{ for any } j\neq i}} {M}_i ( e_i) \prod_{\substack{ \{i, i'\} \subseteq \{1,\dots, r\}  \\  i \neq i' \\ V_i \cong V_{i'}^\vee  }} {M}_{i,i'} (e_i, e_{i'})  \prod_{i=1}^s {\eta}_i (f_i ).\end{equation}  \end{lemma}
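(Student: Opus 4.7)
The plan is to start from Lemma~\ref{Mef-formula} and show that every factor on its right side splits as a product of contributions indexed by certain blocks of $F$. First, substitute the formulas from Lemma~\ref{automorphism-of-F} for $|\Aut_{[G]}(F)|$ and from Lemma~\ref{automorphism-of-extension} for $|F|/|\Aut_{F,G}(H)|$; both are already written as products over the indices $i$ (for the $V$'s) and $j$ (for the $N$'s). This already produces explicit local factors of the form $|V_i^G|^{e_i}/(|H^1(G,V_i)|^{e_i} |GL_{e_i}(\kappa_i)|)$ for each $i$ and $1/(|Z_{\Out(N_j)}(G)|^{f_j} f_j!)$ for each $j$.

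Second, decompose the index set of $F$ into three kinds of blocks: (a) a single $V_i^{e_i}$ for those $i$ such that $V_i^\vee \not\cong V_{i'}$ for any $i' \neq i$; (b) a pair $V_i^{e_i} \times V_{i'}^{e_{i'}}$ for each unordered pair $\{i,i'\}$ with $i\neq i'$ and $V_{i'} \cong V_i^\vee$; and (c) a single $N_j^{f_j}$ for each $j$. By construction, for each such block $F_a$ and its complement $F_b = F/F_a$, no irreducible representation appearing in $F_a$ has its dual appearing in $F_b$ (the non-abelian $N_j$ do not contain any such $V_i$). Iterating Lemma~\ref{inner-sum-product} therefore factors the sum $\sum_H |E_3^{0,2}|$ as a product of analogous sums over extensions of $G$ by each block.

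Third, the ratio $|H^1(G, H^1(F,\QZ))|/|H^1(F,\QZ)^G|$ factors along the same decomposition. Since each $N_j$ is non-abelian simple and hence perfect, $F^{\mathrm{ab}} = \prod_i V_i^{e_i}$ as a $G$-module, so $H^1(F,\QZ) = \Hom(F^{\mathrm{ab}},\QZ)$ splits as a direct sum over the $V$-blocks; the $N$-blocks contribute trivially. Both $G$-invariants and $H^1(G,-)$ commute with direct sums, giving the required factorization.

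Finally, collect local factors and match them against the definitions of $M_i$, $M_{i,i'}$, and $\eta_j$. For a type-(a) block, $H^1(V_i^{e_i},\QZ)= (V_i^\vee)^{e_i}$, giving an extra ratio $|V_i^G|^{e_i}/|(V_i^\vee)^G|^{e_i}$, which equals $1$: irreducibility of $V_i$ forces $V_i^G$ to be $0$ unless $V_i$ is the trivial representation, in which case $V_i \cong V_i^\vee$ and both invariants agree. The surviving factors assemble exactly into $M_i(e_i)$. For a type-(b) block, $V_i$ and $V_{i'}$ are non-isomorphic irreducibles and hence both non-trivial, so $V_i^G = V_{i'}^G = 0$; the $H^1$-factors $|H^1(G,V_{i'})|^{e_i}|H^1(G,V_i)|^{e_{i'}}$ combine with the $|H^1(G,V_i)|^{e_i}|H^1(G,V_{i'})|^{e_{i'}}$ already present to yield the powers $|H^1(G,V_{i'})|^{e_i - e_{i'}}$ and $|H^1(G,V_i)|^{e_{i'} - e_i}$ in $M_{i,i'}$. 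For a type-(c) block, the cohomology ratio is $1$ and the $|N_j|^{f_j}$ factors cancel, leaving $\eta_j(f_j)$. The main obstacle is purely bookkeeping: verifying that all powers of $|V_i^G|$, $|H^1(G,V_i)|$, and $|N_j|$ cancel in precisely the pattern dictated by the definitions of $M_i$, $M_{i,i'}$, and $\eta_j$.
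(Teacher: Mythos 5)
Your proposal is correct and follows essentially the same route as the paper: substitute Lemmas~\ref{automorphism-of-extension} and \ref{automorphism-of-F} into Lemma~\ref{Mef-formula}, iterate Lemma~\ref{inner-sum-product} to factor the sum of $\abs{E_3^{0,2}}$ over the single-$V_i$, dual-pair, and $N_j$ blocks, use $H^1(F,\QZ)=\prod_i (V_i^\vee)^{e_i}$ to split the remaining cohomology ratio, and match the resulting local factors with the definitions of $M_i$, $M_{i,i'}$, and $\eta_j$. The only cosmetic difference is your justification that $\abs{V_i^G}^{e_i}/\abs{(V_i^\vee)^G}^{e_i}=1$ by a trivial/nontrivial case split, where the paper simply notes that $(V_i)^G$ and $(V_i^\vee)^G$ are dual and hence have the same order; both arguments are fine.
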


\begin{proof}  Combining Lemmas \ref{Mef-formula}, \ref{automorphism-of-extension}, and \ref{automorphism-of-F}, and noting that $H^1( F, \QZ ) = \prod_{i=1}^r (V_i^\vee)^{e_i}$, we have
 \[ M(\underline{e}, \underline{f}) \frac { \abs{ H^1(G, \QZ)}  \abs{  H^3(G ,\QZ )}  } { \abs{G}  \abs{ H^2(G, \QZ) } } \] \[= \prod_{i=1}^r  \frac{1}{ \abs{ GL_{e_i}(\kappa_i)} }\frac{ \abs{ V_i^G}^{e_i} } { \abs{ (V_i^\vee)^G }^{e_i}}  \frac{ \abs{ H^1(G, V_i^\vee)}^{e_i}} {  \abs{ H^1(G, V_i)}^{e_i}}   \prod_{i=1}^s \frac{ \abs{N_i}^{f_i}}{\abs{N_i}^{f_i} \abs{ Z_{\Out(N_i)} (G)}^{f_i} f_i! }   \sum_{ \substack{ 1\to F \to H \to G \to 1\\ \tau ( \ker \pi^*) =0 } }    \abs{ E_3^{0,2} } \]
 \[ =  \prod_{i=1}^r    \frac{1}{ \abs{ GL_{e_i}(\kappa_i) }}\frac{ \abs{ H^1(G, V_i^\vee)}^{e_i}} {  \abs{ H^1(G, V_i)}^{e_i}}   \prod_{i=1}^s \frac{1}{ \abs{ Z_{\Out(N_i)} (G)}^{f_i} f_i! }   \sum_{ \substack{ 1\to F \to H \to G \to 1\\ \tau ( \ker \pi^*) =0 } }    \abs{ E_3^{0,2} } \] since the $V_i$ are irreducible representations, so $(V_i)^G $ and $(V_i^\vee)^G$ are dual and have the same order.

We inductively apply Lemma \ref{inner-sum-product} to express the inner sum over extensions as a product of sums associated to individual $V_i$ and $N_i$ factors or dual pairs of $V_i$. We then note that, by definition, the $M_i, M_{i,i'}$, $\eta_i$ factors incorporate these sums together with the extra  \[ \prod_{i=1}^r   \frac{1}{ \abs{ GL_{e_i}(\kappa_i) } }\frac{ \abs{ H^1(G, V_i^\vee)}^{e_i}} {  \abs{ H^1(G, V_i)}^{e_i}}   \prod_{i=1}^s \frac{1}{ \abs{ Z_{\Out(N_i)} (G)}^{f_i} f_i! }   \]  terms. The lemma immediately follows.
\end{proof}

 The remaining subsections compute the local factors for different types of $N_i, V_i$, in order of increasing difficulty. 
 
  \subsection{Non-abelian groups}

 Recall $\delta_{N_i}$ is the differential $ d^{0,2}_3 \colon H^2 ( N_i, \QZ)^{ {G} } \to H^3 ({G},\QZ)$ appearing in the Lyndon-Hochschild-Serre spectral sequence  computing $H^{p+q} (  {G} \times_{ \Out(N_i)} \Aut(N_i), \QZ) $ from $H^p ( {G}, H^q ( N_i, \QZ))$.
 
 \begin{lemma}\label{non-abelian-tilde} For any $i$ from $1$ to $s$, we have \[ 
\sum_{ \substack{ 1\to N_i^{f_i} \to H \stackrel{\pi}{\to} G \to 1\\ \tau ( \ker \pi^*) =0 } }  \abs{ E_3^{0,2} } = \begin{cases} 
1 &  f_i=0 \\ 
0 & \tau \circ \delta_{N_i} \neq 0 \textrm{ and }f_i >0\\
(  \abs{H^2( N_i, \QZ)^{G} }) ^{f_i} 
& \tau \circ \delta_{N_i} = 0
 \end{cases} .\]   
 \end{lemma}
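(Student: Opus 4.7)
The plan is to reduce the sum to a single term, identify the differentials in the Lyndon--Hochschild--Serre spectral sequence, and then detect the hypothesis $\tau(\ker \pi^*) = 0$ via $\tau \circ \delta_{N_i}$. First I would dispose of the trivial case $f_i = 0$: here $H = G$, the spectral sequence degenerates, and $E_3^{0,2} = H^0(G, H^2(\{1\}, \QZ)) = 0$, so $|E_3^{0,2}| = 1$, giving the first line. For $f_i > 0$, since $N_i$ is a non-abelian simple $[G]$-group there is (up to isomorphism) exactly one material extension of $G$ by $N_i^{f_i}$ compatible with the given $[G]$-structure, namely the $f_i$-fold fiber product of $G \times_{\Out(N_i)} \Aut(N_i)$ over $G$. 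Thus the sum has a single term, and only the condition $\tau(\ker \pi^*) = 0$ needs to be analyzed.

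Next I would compute the relevant entries of the spectral sequence for this extension. Because $N_i$ is perfect, $H^1(N_i, \QZ) = \Hom(N_i, \QZ) = 0$, and the Künneth formula then gives $H^1(N_i^{f_i}, \QZ) = 0$ and $H^2(N_i^{f_i}, \QZ) \cong H^2(N_i, \QZ)^{f_i}$. Because the $[G]$-action on $N_i^{f_i}$ is diagonal, taking $G$-invariants yields $E_2^{0,2} = (H^2(N_i, \QZ)^G)^{f_i}$. The vanishing of $H^1(N_i^{f_i}, \QZ)$ forces $E_2^{2,1} = 0$ and $E_2^{1,1} = 0$, so $d_2^{0,2} = 0$ and $d_2^{1,1} = 0$. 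Hence $E_3^{0,2} = E_2^{0,2} = (H^2(N_i, \QZ)^G)^{f_i}$ and $E_3^{3,0} = H^3(G, \QZ)$. By Lemma~\ref{lem-tau}, the condition $\tau(\ker \pi^*) = 0$ reduces to $\tau \circ d_3^{0,2} = 0$, since $\tau \circ d_2^{1,1} = 0$ holds automatically.

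The key step, and the main obstacle, is the identification of $d_3^{0,2}$ in terms of $\delta_{N_i}$. I would apply Lemma~\ref{pullback-differential} to the $k$-th projection quotient map of extensions
\[
\begin{tikzcd}
1 \arrow[r] & N_i^{f_i} \arrow[r] \arrow[d,"\mathrm{pr}_k"] & H \arrow[r] \arrow[d] & G \arrow[r] \arrow[d,"\mathrm{id}"] & 1 \\
1 \arrow[r] & N_i \arrow[r] & G \times_{\Out(N_i)} \Aut(N_i) \arrow[r] & G \arrow[r] & 1.
\end{tikzcd}
\]
The pullback map on $E_2^{0,2}$ along $\mathrm{pr}_k$ is the inclusion of the $k$-th factor $H^2(N_i, \QZ)^G \hookrightarrow (H^2(N_i, \QZ)^G)^{f_i}$, so by naturality the composition of $d_3^{0,2}$ for $H$ with this inclusion agrees with $\delta_{N_i}$. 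Since this holds for every $k$ and $d_3^{0,2}$ is a homomorphism, $d_3^{0,2}(x_1, \dots, x_{f_i}) = \sum_{k=1}^{f_i} \delta_{N_i}(x_k)$.

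Finally I would conclude: if $\tau \circ \delta_{N_i} \neq 0$, choose some $x \in H^2(N_i, \QZ)^G$ with $\tau(\delta_{N_i}(x)) \neq 0$ and place it in any one slot to see that $\tau \circ d_3^{0,2} \neq 0$, so the condition $\tau(\ker \pi^*) = 0$ fails and the single term contributes $0$; this gives the second line. If $\tau \circ \delta_{N_i} = 0$, then $\tau \circ d_3^{0,2} = 0$, the condition is satisfied, and the term contributes $|E_3^{0,2}| = (|H^2(N_i, \QZ)^G|)^{f_i}$, giving the third line.
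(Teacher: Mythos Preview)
Your proposal is correct and follows essentially the same approach as the paper: both use that $N_i$ is perfect to kill $E_2^{p,1}$, apply K\"unneth to identify $E_2^{0,2}$ with $(H^2(N_i,\QZ)^G)^{f_i}$, and then invoke \cref{pullback-differential} along the coordinate projections to reduce $d_3^{0,2}$ to copies of $\delta_{N_i}$. Your write-up is in fact slightly more explicit than the paper's (you spell out the commutative diagram and the formula $d_3^{0,2}(x_1,\dots,x_{f_i})=\sum_k \delta_{N_i}(x_k)$), but there is no substantive difference in strategy.
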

 
  \begin{proof}  We have $H^1(N_i^{f_i}, \QZ)=0$ so $H^p(G, H^1(N_i^{f_i}, \QZ))=0$ for all $p$. Thus the differentials $d_2^{1,1}$ and $d_2^{0,2}$ vanish.
  Note we have $H^0 ( G, H^2 ( N_i^{f_i}, \QZ)) =  H^0 ( G, H^2 ( N_i, \QZ))^{f_i}$, since $H^1(N_i,\Z)=H^2(N_i,\Z)=0$ implies there are no middle or Tor terms in the K\"{u}nneth formula for $H^2 ( N_i^{f_i}, \Z)$.  Thus by  Lemma \ref{pullback-differential}, $d_3^{0,2}$ can be computed by taking products over the differential for the map when $f_i=1$.
We then have $\tau ( \ker \pi^* )=0$ if and only if $\tau \circ d_3^{0,2} =0$, which 
happens if and only if $\tau \circ \delta_{N_i}=0$ (at least for $f_i>0$).
  As another consequence, we have $  E_3^{0,2}
  =  H^0 ( G, H^2 ( N_i, \QZ))^{f_i} .$  The lemma follows.
%
 \end{proof}

\begin{lemma}\label{non-abelian-w} We have \[ \sum_{f_i=0}^{\infty} (-1)^{f_i}  \eta_i (f_i) = w_{N_i} \] with the sum absolutely convergent. \end{lemma}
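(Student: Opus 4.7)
The plan is to substitute the evaluation of the inner sum provided by \cref{non-abelian-tilde} directly into the definition of $\eta_i(f_i)$ and then recognize the resulting series as the Taylor expansion of an exponential, matching the two cases in the definition of $w_{N_i}$ from Table 1.

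First I would split into the two cases according to whether $\tau\circ \delta_{N_i}$ vanishes. If $\tau\circ \delta_{N_i} \neq 0$, then by \cref{non-abelian-tilde} the inner sum defining $\eta_i(f_i)$ is $0$ for every $f_i > 0$, and equals $1$ for $f_i = 0$ (where the denominator $|Z_{\Out(N_i)}(G)|^0\cdot 0!$ is also $1$). Hence $\sum_{f_i\geq 0}(-1)^{f_i}\eta_i(f_i)=1$, which matches the value of $w_{N_i}$ in this case.

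If instead $\tau\circ\delta_{N_i}=0$, then \cref{non-abelian-tilde} gives
\[ \eta_i(f_i)=\frac{|H^2(N_i,\QZ)^G|^{f_i}}{|Z_{\Out(N_i)}(G)|^{f_i}\,f_i!}, \]
uniformly in $f_i\geq 0$ (the case $f_i=0$ being consistent since both the numerator and denominator equal $1$). Setting $x=|H^2(N_i,\QZ)^G|/|Z_{\Out(N_i)}(G)|$, the sum becomes
\[ \sum_{f_i=0}^{\infty}(-1)^{f_i}\eta_i(f_i)=\sum_{f_i=0}^{\infty}\frac{(-x)^{f_i}}{f_i!}=e^{-x}=w_{N_i}. \]
The series $\sum |x|^{f_i}/f_i!$ converges to $e^{|x|}<\infty$, giving absolute convergence. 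There is essentially no obstacle here: all of the substantive work sits in \cref{non-abelian-tilde}, so once that is in hand the lemma reduces to recognizing the exponential series, which also explains where the particular exponential shape of $w_{N_i}$ in Table 1 comes from.
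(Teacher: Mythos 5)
Your proof is correct and follows essentially the same route as the paper: split into cases according to whether $\tau\circ\delta_{N_i}$ vanishes, plug in the evaluation from \cref{non-abelian-tilde}, and recognize the exponential series (noting $H^2(N_i,\QZ)^G = H^0(G,H^2(N_i,\QZ))$, so your constant matches the one in Table 1). No gaps.
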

\begin{proof} By \cref{non-abelian-tilde}, if $\tau \circ \delta_{N_i} \neq 0$ then $\sum_{f_i=0}^{\infty} (-1)^{f_i}  \eta_i (f_i) =1$, and we have defined $w_{N_i}=1$ in this case. 
If  $\tau \circ \delta_{N_i} \neq 0$ then 
\[ \sum_{f_i=0}^{\infty} (-1)^{f_i}  \eta_i (f_i)  = \sum_{f_i=0}^{\infty}  \frac{1}{(f_i)!}  \left (-\frac{ \abs{ H^0 ( G, H^2 ( N_i, \QZ))}}{ \abs {Z_{\Out(N_i)} (G)}} \right)^{f_i}   = e^{ - \frac{ \abs{ H^0 ( G, H^2 ( N_i, \QZ))}}{ \abs {Z_{\Out(N_i)} (G)}}} = w_{N_i} ,\] again by definition of $w_{N_i}$. 
\end{proof}

 
 
  \subsection{Those representations whose dual representations do not appear}
  
Recall $W_i^{\tau}$ is the set of  $\alpha \in W_i$ such that $\tau ( \alpha \cup \beta) =0 $ for all $\beta \in H^1 ( {G}, V_i^\vee)$.  
  We start with a general lemma on one term of the spectral sequence.

\begin{lemma}\label{lem-wedge}
Let $F$ be finite abelian.  Then $E_2^{0,2}=H^2(F,\QZ)^G=(\wedge^2 F^\vee)^G$ (the implicit tensor product is over $\Z)$.  
\end{lemma}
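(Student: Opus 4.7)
The plan is to establish the two equalities in turn. The first, $E_2^{0,2} = H^2(F,\QZ)^G$, is immediate from the definition of the second page of the Lyndon-Hochschild-Serre spectral sequence, $E_2^{p,q} = H^p(G, H^q(F,\QZ))$, together with the fact that $H^0$ of a $G$-module is simply its group of $G$-invariants.

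For the second equality, it suffices to give a natural (and therefore automatically $G$-equivariant) isomorphism $H^2(F,\QZ) \cong \wedge^2 F^\vee$ for any finite abelian group $F$. My plan is to chain together three identifications. First, since $\QZ$ is divisible and hence injective as a $\Z$-module, the $\Ext^1$ term in the universal coefficient theorem vanishes, yielding a natural isomorphism $H^2(F,\QZ) \cong \Hom(H_2(F,\Z), \QZ)$. Second, the classical computation of the Schur multiplier of an abelian group (the Hopf formula, applied to $F$ as a quotient of a free abelian group) gives a natural isomorphism $H_2(F,\Z) \cong \wedge^2 F$. Third, I need a natural isomorphism $\Hom(\wedge^2 F, \QZ) \cong \wedge^2 F^\vee$ for finite abelian $F$: writing $n$ for the exponent of $F$, both sides are $\Z/n$-modules and one can check that the map
\[
 \phi \wedge \psi \;\longmapsto\; \bigl((x,y) \mapsto \phi(x)\psi(y) - \phi(y)\psi(x)\bigr),
\]
with the multiplications interpreted in $\Z/n \subset \QZ$, is a well-defined, natural homomorphism in the desired direction.

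The first two identifications are standard and present no real difficulty. The main obstacle is verifying that the map in the third step is an isomorphism, given that $\Z/n$ is not a PID and so one cannot naively quote the corresponding statement for free modules. I would handle this by decomposing $F$ into its $p$-primary components and then each of those into cyclic summands, which reduces the claim to a direct calculation on $F = \Z/a \oplus \Z/b$, where both $\Hom(\wedge^2 F, \QZ)$ and $\wedge^2 F^\vee$ are readily seen to be cyclic of order $\gcd(a,b)$ with the displayed map identifying them. Since all constructions are functorial in $F$, the resulting isomorphism is equivariant with respect to any automorphism action of $G$ on $F$, giving the $G$-equivariant isomorphism needed.
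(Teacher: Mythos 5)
Your argument is correct, but it takes a genuinely different route from the paper's. You dualize everything: the universal coefficient theorem (with the $\operatorname{Ext}$ term vanishing because $\QZ$ is injective) gives $H^2(F,\QZ)\cong \Hom(H_2(F,\Z),\QZ)$, the classical natural isomorphism $H_2(F,\Z)\cong \wedge^2 F$ handles the Schur multiplier, and you finish with an explicit natural map $\wedge^2 F^\vee \to \Hom(\wedge^2 F,\QZ)$ checked to be an isomorphism on pairs of cyclic summands. The paper instead works at the cochain level: it sends a bilinear form $F\otimes F\to\QZ$ to the $2$-cocycle it defines, observes that the kernel of the resulting map $\Hom(F\otimes F,\QZ)\to H^2(F,\QZ)$ is exactly the symmetric forms, so that $\wedge^2 F^\vee$ injects into $H^2(F,\QZ)$, and then concludes by a cardinality count using only $\abs{H_2(F,\Z)}=\abs{\wedge^2 F}$. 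The trade-off: your route uses standard homological machinery and only classical naturality facts, avoiding the cochain-level verification that the kernel is precisely the symmetric forms; the paper's route is more hands-on and, importantly, produces the concrete description of classes in $H^2(F,\QZ)$ as coming from bilinear-form cochains, which is reused later (e.g., in the proof of \cref{dual-M}, where classes must be lifted to $\Z/p^2$ coefficients via such cochains), so for the paper this byproduct is not incidental. Two small points on your write-up: the multiplication $\phi(x)\psi(y)$ in $\QZ$ needs the convention you give via the exponent $n$, and with that convention naturality is with respect to automorphisms of the fixed group $F$, which is exactly what is needed for $G$-equivariance; and in step 3 one should use the standard natural map $\wedge^2 F\to H_2(F,\Z)$ (Pontryagin product) rather than an abstract isomorphism, so that the composite identification is honestly $G$-equivariant — as you indicate, this is the classical statement, so no gap results.
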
  
\begin{proof}
We have a natural map $\Hom(F\tensor F,\QZ) \ra H^2(F,\QZ)$ in which a bilinear form maps to the cochain that evaluates it.  One can check that the kernel of this map is the set of symmetric bilinear forms, and we conclude that we have a natural injection
 $ \wedge^2 (F^{\vee}) \ra H^2(F,\QZ)$.
Since $H^2(F,\QZ)=\Hom(H_2(F,\Z),\QZ)$ and it is well-known that $|H_2(F,\Z)|=|\wedge^2 F|$, we see the injection must be an isomorphism.  
%
\end{proof}

  Now we have our next evaluation of one of our local factors.
  \begin{lemma}\label{no-dual-tilde} Let $V_i$ be a representation such that $V_i^\vee$ is not isomorphic to $V_j$ for any $j$ from $1$ to $r$. Then
\begin{align*}
  {M}_i (e_i) = \frac{1}{ \abs {GL_{e_i} (\kappa_i)}}  \left( \frac{\abs{W_i^\tau}\abs{H^1(G, V_i^\vee)}}{\abs{H^1(G, V_i)}}\right)^{ e_i } .
  \end{align*}
  \end{lemma}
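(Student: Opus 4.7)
The plan is to show that under the hypothesis $V_i^\vee \not\cong V_j$ for all $j$, the page $E_3^{0,2}$ in the relevant Lyndon-Hochschild-Serre spectral sequence vanishes, so $|E_3^{0,2}| = 1$, and then to count the extensions satisfying $\tau(\ker \pi^*) = 0$ directly. Substituting into the definition of $M_i(e_i)$ will give the claimed formula.

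First I will compute $E_2^{0,2} = H^2(V_i^{e_i},\QZ)^G$. By \cref{lem-wedge} this is $(\wedge^2_{\F_{p_i}} (V_i^\vee)^{e_i})^G$, and the standard decomposition of the exterior square of a direct sum gives
\[ \wedge^2 (V_i^\vee)^{e_i} \cong (\wedge^2 V_i^\vee)^{e_i} \oplus (V_i^\vee \otimes V_i^\vee)^{\binom{e_i}{2}}. \]
Taking $G$-invariants, and using that $(V_i^\vee \otimes V_i^\vee)^G = \Hom_G(V_i, V_i^\vee) = 0$ by Schur's lemma (since $V_i$ is irreducible and the hypothesis $V_i^\vee \not\cong V_j$ applied with $j=i$ gives $V_i \not\cong V_i^\vee$), and that $(\wedge^2 V_i^\vee)^G \subseteq (V_i^\vee \otimes V_i^\vee)^G = 0$, I conclude $E_2^{0,2} = 0$. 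Hence $E_3^{0,2} = 0$ and $|E_3^{0,2}| = 1$, so the inner sum in the definition of $M_i(e_i)$ just counts material extensions $1 \to V_i^{e_i} \to H \to G \to 1$ satisfying $\tau(\ker \pi^*)=0$.

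Next I classify those extensions. Material extensions of $G$ by $V_i^{e_i}$ correspond (as extensions, up to $\Aut_{V_i^{e_i},G}$-equivalence) to classes $\alpha = (\alpha_1, \dots, \alpha_{e_i}) \in W_i^{e_i} \subseteq H^2(G, V_i)^{e_i} = H^2(G, V_i^{e_i})$, since a material extension by $V_i^{e_i}$ is exactly a fiber product of $e_i$ minimally material extensions by $V_i$. Because $E_3^{0,2}=0$, \cref{lem-tau} shows $\tau(\ker \pi^*)=0$ is equivalent to $\tau \circ d_2^{1,1} = 0$. Using \cref{lem-tau} again, and that $d_2^{1,1}$ on $\beta = (\beta_1, \ldots, \beta_{e_i}) \in H^1(G, V_i^\vee)^{e_i} = H^1(G, H^1(V_i^{e_i},\QZ))$ is the cup product with $\alpha$ (up to sign), the condition becomes $\sum_j \tau(\alpha_j \cup \beta_j) = 0$ for every $\beta$, i.e.\ $\alpha_j \in W_i^\tau$ for each $j$. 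Thus valid $\alpha$ form the set $(W_i^\tau)^{e_i}$, of size $|W_i^\tau|^{e_i}$.

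Combining, the inner sum equals $|W_i^\tau|^{e_i}$, so
\[ M_i(e_i) = \frac{1}{|GL_{e_i}(\kappa_i)|} \cdot \frac{|H^1(G,V_i^\vee)|^{e_i}}{|H^1(G,V_i)|^{e_i}} \cdot |W_i^\tau|^{e_i} = \frac{1}{|GL_{e_i}(\kappa_i)|}\left(\frac{|W_i^\tau| |H^1(G,V_i^\vee)|}{|H^1(G,V_i)|}\right)^{e_i}, \]
as desired. The only nontrivial step is the vanishing of $E_2^{0,2}$; everything else is a direct application of the earlier lemmas, and I do not anticipate any substantial obstacle.
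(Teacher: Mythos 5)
Your proposal is correct and follows essentially the same route as the paper: show $E_2^{0,2}=H^2(V_i^{e_i},\QZ)^G=0$ using \cref{lem-wedge} and the non-self-duality of $V_i$, conclude $E_3^{0,2}=0$ so $d_3^{0,2}$ is irrelevant, and then use \cref{lem-tau} to count the material extensions with $\tau(\ker\pi^*)=0$ as $\abs{W_i^\tau}^{e_i}$. The only (immaterial) difference is in the vanishing step: you decompose $\wedge^2$ of the direct sum and apply Schur's lemma, while the paper injects $\wedge^2(V_i^\vee)^{e_i}$ into the tensor square via $a\wedge b\mapsto a\otimes b-b\otimes a$ (which also justifies, even in characteristic $2$, your inclusion $(\wedge^2 V_i^\vee)^G\subseteq(V_i^\vee\otimes V_i^\vee)^G$, since $\wedge^2$ is defined as a quotient); both reduce to $(V_i^\vee\otimes V_i^\vee)^G=0$.
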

  
  \begin{proof} Let us first check in this case that, for any extension of $G$ by $V_i^{e_i}$, \begin{equation}\label{no-dual-no-invariants} H^0 (G, H^2( V_i^{e_i}, \QZ)) =0.\end{equation}
By Lemma~\ref{lem-wedge}, we have   $E_2^{0,2}=(\wedge^2 (V_i^\vee)^{e_i})^G$.
Even if $p=2$,  we have a natural injection $\wedge^2 (V_i^{\vee})^{e_i}\ra ((V_i^{\vee})^{e_i})^{\otimes 2}$ (given by $a\wedge b \mapsto a\tensor b-b
\tensor a$)
so if $ H^2(V_i^{e_i}, \QZ)$  admits a nonzero $G$-invariant vector then so must $ (V_i^\vee\otimes V_i^\vee)^{e_i^2}$ and hence also $V_i^\vee \otimes V_i^\vee$. This would give a nontrivial $G$-equivariant map $V_i^\vee\ra V_i$,
%
necessarily an isomorphism because $V_i$ is irreducible, making $V_i$ self-dual, contradicting our assumption. Thus \eqref{no-dual-no-invariants} holds.

Hence $E_3^{0,2} =0$ and therefore ${M}_i (e_i)$ is   $\frac{1}{ \abs {GL_{e_i} (\kappa_i)}}  \left( \frac{\abs{H^1(G, V_i^\vee)}}{\abs{H^1(G, V_i)}}\right)^{ e_i } $ times the number of
material extensions $\pi \colon H \to G$ by $V_i^{e_i}$, i.e. 
 $\alpha\in W_i^{e_i}$,  such that $\tau ( \ker \pi^* )=0$.  
Since $d_3^{0,2}$ vanishes since $E_2^{0,2}$ does by \eqref{no-dual-no-invariants}, the lemma follows from Lemma~\ref{lem-tau}.
      \end{proof}
      
     \begin{lemma}\label{no-dual-w} Let $V_i$ be a representation such that $V_i^\vee$ is not isomorphic to $V_j$ for any $j$ from $1$ to $r$. Then we have \[ \sum_{e_i=0}^{\infty} (-1)^{e_i}  M (e_i) Q_i (e_i) = w_{V_i} \] with the sum absolutely convergent. \end{lemma}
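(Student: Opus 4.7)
The plan is to substitute the explicit formula from \cref{no-dual-tilde} for $M_i(e_i)$, together with $Q_i(e_i)=q_i^{\binom{e_i}{2}}$ (which is the correct value because $V_i$, being non-self-dual, is not A-symplectic), and then recognize the resulting series as an instance of a standard $q$-exponential identity. The sum to evaluate becomes
\[
\sum_{e_i=0}^\infty (-1)^{e_i}\,\frac{q_i^{\binom{e_i}{2}}}{|GL_{e_i}(\kappa_i)|}\,x^{e_i},\qquad\text{where}\qquad x:=\frac{|W_i^\tau|\,|H^1(G,V_i^\vee)|}{|H^1(G,V_i)|}.
\]
Using $|GL_{e_i}(\kappa_i)|=q_i^{\binom{e_i}{2}}\prod_{j=1}^{e_i}(q_i^j-1)$, the factor $q_i^{\binom{e_i}{2}}$ cancels, so the task reduces to evaluating $\sum_{e\geq 0}(-x)^e/\prod_{j=1}^e(q_i^j-1)$.

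Next I would rewrite $q_i^j-1=q_i^j(1-q_i^{-j})$, which extracts an overall factor of $q_i^{-\binom{e_i+1}{2}}$; using $\binom{e+1}{2}=\binom{e}{2}+e$ and setting $q=q_i^{-1}$, $z=-x q_i^{-1}$, the sum takes the canonical form
\[
\sum_{e=0}^{\infty}\frac{q^{\binom{e}{2}}\,z^e}{(q;q)_e}.
\]
At this point I would invoke Euler's classical identity $\sum_{e\geq 0}q^{\binom{e}{2}}z^e/(q;q)_e=\prod_{n\geq 0}(1+zq^n)$, which gives $\prod_{n\geq 0}(1-xq_i^{-1}\cdot q_i^{-n})=\prod_{j\geq 1}(1-xq_i^{-j})$. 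Comparison with the table defining $w_{V_i}$ in the $V_i^\vee\not\cong V_j$ row shows this is exactly $w_{V_i}$, completing the identification.

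For absolute convergence (which also justifies the term-by-term manipulations above), I would observe that since $q_i\geq 2$, the denominator $\prod_{j=1}^{e}(q_i^j-1)$ grows like $q_i^{\binom{e+1}{2}}$, so the $e$th term is bounded by a constant times $x^e/q_i^{e(e-1)/2}$, giving superexponential decay regardless of the size of $x$. The only step requiring any care is ensuring the substitution $q=q_i^{-1}$, $z=-xq_i^{-1}$ is valid in Euler's identity, which it is because $|q|<1$ and the series converges for all $z$; this is routine and not the main obstacle. Indeed there is no serious obstacle here — once the preceding lemma has been established, this lemma is essentially a bookkeeping exercise in matching the $q$-exponential to the table entry.
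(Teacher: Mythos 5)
Your proposal is correct and follows essentially the same route as the paper: substitute the formula for $M_i(e_i)$ from the preceding lemma together with $Q_i(e_i)=q_i^{\binom{e_i}{2}}$ (valid since a non-self-dual $V_i$ cannot be A-symplectic), cancel the $q_i^{\binom{e_i}{2}}$ against $\abs{GL_{e_i}(\kappa_i)}$ to get $1/\prod_{j=1}^{e_i}(q_i^j-1)$, and apply the $q$-exponential (Euler) identity to obtain $\prod_{j\geq 1}\bigl(1-q_i^{-j}\,\abs{W_i^\tau}\abs{H^1(G,V_i^\vee)}/\abs{H^1(G,V_i)}\bigr)=w_{V_i}$, with absolute convergence from the superexponential growth of the denominators. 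The only cosmetic difference is that the paper quotes the identity directly in the form $\sum_e(-1)^e u^e/\prod_{j=1}^e(q^j-1)=\prod_{j\geq1}(1-q^{-j}u)$ rather than deriving it from Euler's $\sum_e q^{\binom{e}{2}}z^e/(q;q)_e=\prod_{n\geq0}(1+zq^n)$ by the change of variables you perform.
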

     
     \begin{proof} Since $V_i$ is not self-dual, it is certainly not A-symplectic, so by definition $Q_i(e_i) = q_i^{ \binom{e_i}{2}}$. Using \cref{no-dual-tilde}, we thus have
     \[ \sum_{e_i=0}^{\infty} (-1)^{e_i}  M (e_i) Q_i (e_i) = \sum_{e_i=0}^{\infty} (-1)^{e_i}   \frac{ q_i^{ \binom{e_i}{2}} }{ \abs {GL_{e_i} (\kappa_i)}}  \left( \frac{\abs{W_i^\tau}\abs{H^1(G, V_i^\vee)}}{\abs{H^1(G, V_i)}}\right)^{ e_i } .\]
        We have $ \abs {GL_{e_i} (\kappa_i)} = \prod_{j=1}^{e_i}  (q_i^{e_i} - q_i^{e_i -j } )$ so $\frac{ q_i^{ \binom{e_i}{2} }}{ \abs {GL_{e_i} (\kappa_i)}} = \frac{1}{ \prod_{j=1}^{e_i} (q_i^j-1)}$.
        We now apply the $q$-exponential identity
   \[ \sum_{e=0}^{\infty} (-1)^e \frac{  u^e}{ \prod_{j=1}^e  (q^j-1)} = \prod_{j=1}^{\infty} ( 1- q^{-j} u ) \] 
     where the left side is absolutely convergent for $q>1$ and any $u$ because the numerators grow exponentially and the denominators grow superexponentially, to obtain
     \[ \sum_{e_i=0}^{\infty} (-1)^{e_i}  M (e_i) Q_i (e_i) = \prod_{j=1}^{\infty} \left( 1- q_i ^{-j}  \frac{\abs{W_i^\tau}\abs{H^1(G, V_i^\vee)}}{\abs{H^1(G, V_i)}} \right) = w_{V_i} \]
      by definition of $w_{V_i}$.
  \end{proof} 
 
  

 \subsection{Representations whose duals appear}

If $V_i^\vee=V_j$ for some $j$, then because of Lemma~\ref{lem:to_alpha_zero}, we are interested in the case when $W_i^\tau=0$, which by Lemma~\ref{lem-tau}
means we are interested in the case of the trivial extension, where $H=F \rtimes G$.

\begin{lemma}\label{odd-d3}
If  $H=F \rtimes G$, then $d_3^{0,2}=0$.
\end{lemma}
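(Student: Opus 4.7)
The plan is to use the existence of the section $s \colon G \to H$ coming from the semidirect product decomposition. This section splits the edge map on $H^3$, and this in turn forces every differential landing in $E_r^{3,0}$ to vanish, which includes $d_3^{0,2}$.

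More concretely, here is how I would carry out the argument. First, I would recall that because of the index bounds on the spectral sequence (differentials decrease total degree by one and the second coordinate must be non-negative), the only potentially nontrivial differentials with target on the $q=0$ row in total degree $3$ are $d_2^{1,1} \colon E_2^{1,1} \to E_2^{3,0}$ and $d_3^{0,2} \colon E_3^{0,2} \to E_3^{3,0}$, and there are no nonzero outgoing differentials from $E_r^{3,0}$ for $r \geq 2$ (since the target would have negative $q$-coordinate). Consequently $E_\infty^{3,0}$ is simply the quotient of $E_2^{3,0} = H^3(G, \mathbb{Q}/\mathbb{Z})$ by the combined images of $d_2^{1,1}$ and $d_3^{0,2}$, and the edge map composition
\[ H^3(G, \mathbb{Q}/\mathbb{Z}) = E_2^{3,0} \twoheadrightarrow E_\infty^{3,0} \hookrightarrow H^3(H, \mathbb{Q}/\mathbb{Z}) \]
agrees with the pullback $\pi^*$.

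Next I would use the section $s \colon G \to H$ provided by $H = F \rtimes G$. Functoriality of cohomology gives $s^* \circ \pi^* = \mathrm{id}$ on $H^3(G,\mathbb{Q}/\mathbb{Z})$, so $\pi^*$ is injective. Combined with the factorization above, this forces the surjection $E_2^{3,0} \twoheadrightarrow E_\infty^{3,0}$ to be injective, hence an isomorphism. Therefore the images of both $d_2^{1,1}$ and $d_3^{0,2}$ in $E_2^{3,0}$ are zero, and in particular $d_3^{0,2} = 0$, as required.

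There is essentially no obstacle here — the only thing to be careful about is simply tracking that $E_\infty^{3,0}$ really is a quotient (not a subquotient with a nontrivial subobject) of $E_2^{3,0}$, which follows from the observation about the non-existence of nonzero incoming differentials from $r \geq 4$ and outgoing differentials of all orders on the bottom row. Once that bookkeeping is done, the semidirect product structure does all the work.
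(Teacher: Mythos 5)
Your proof is correct and is essentially the paper's argument: the paper also observes that the section of $F \rtimes G \to G$ makes the edge map $H^3(G,\mathbb{Q}/\mathbb{Z}) \to H^3(F\rtimes G,\mathbb{Q}/\mathbb{Z})$ injective, which forces $d_3^{0,2}=0$. You have merely spelled out the bookkeeping (that $E_\infty^{3,0}$ is a quotient of $E_2^{3,0}$ through which $\pi^*$ factors) that the paper leaves implicit.
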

\begin{proof}
Since $F\rtimes G \ra G$ has a section, the edge map $H^3(G,\QZ)\ra H^3(F\rtimes G,\QZ)$ of the spectral sequence is injective, and thus $d_3^{0,2}=0$.
\end{proof}

\begin{lemma}\label{dual-M}
Let $V_i$ be $\F_p$-self-dual.  Assume $W_i^\tau=0$. Then
$$
 M_i (e_i) = \frac{1}{ \abs{ GL_{e_i}(\kappa_i)} } q_i^{\frac{e_i(e_i-\epsilon_i)}{2}}.
$$
Let $V_i,V_{i'}$ non-isomorphic dual representations of $G$.  Assume $W_i^\tau = W_{i'}^\tau=0$. 
 Then
$$
M_{i,i'} (e_i, e_{i'} ) = \frac{1}{ \abs{ GL_{e_i}(\kappa_i) }\abs{GL_{e_{i'}}(\kappa_{i'})} }   \frac{ \abs{ H^1(G, V_{i'} )}^{e_i -e_{i'} }  }{ \abs{ H^1(G, V_i)}^{e_i-e_{i'}}} q_i^{e_ie_{i'}} .
$$
\end{lemma}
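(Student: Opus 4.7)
The plan is to reduce each sum defining $M_i(e_i)$ and $M_{i,i'}(e_i, e_{i'})$ to the single term corresponding to the split extension, and then to compute $|E_3^{0,2}|$ in that case. First I would apply \cref{lem-tau}: the condition $\tau(\ker \pi^*) = 0$ decomposes as $\tau\circ d_2^{1,1}=0$ together with $\tau\circ d_3^{0,2}=0$, and the first part translates to $\tau(\alpha\cup\beta) = 0$ for all $\beta \in H^1(G, F^\vee)$, where $\alpha \in H^2(G, F)$ is the extension class. For a material extension the components of $\alpha$ live in $W_i$ (and, in the dual case, $W_{i'}$); testing the pairing one component at a time shows each lies in $W_i^\tau$ (respectively $W_{i'}^\tau$), which vanishes by hypothesis. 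Hence $\alpha = 0$ and only the split extension $H = F\rtimes G$ contributes to the sum.

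Next I would show that for this split extension the restriction $H^2(F\rtimes G, \QZ) \to H^2(F, \QZ)^G$ is surjective, which forces $E_\infty^{0,2} = E_2^{0,2}$ and in particular $d_2^{0,2} = 0$; combined with \cref{odd-d3} giving $d_3^{0,2}=0$, this yields $E_3^{0,2} = E_2^{0,2}$. The surjectivity is witnessed by an explicit formula: given a $G$-invariant $2$-cocycle $\hat\omega \in Z^2(F, \QZ)$, the function $\tilde\omega$ on $(F\rtimes G)^2$ defined by $\tilde\omega((f_1,g_1),(f_2,g_2)) := \hat\omega(f_1,\, g_1\cdot f_2)$ is a $2$-cocycle on $H$ (its cocycle identity follows from that of $\hat\omega$ together with the $G$-invariance of $\hat\omega$), and visibly restricts to $\hat\omega$ on $F\times F$.

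It then remains to compute $|(\wedge^2 F^\vee)^G|$, which by \cref{lem-wedge} equals $|E_2^{0,2}|$. For $F = V_i^{e_i}$, decomposing $\wedge^2((V_i^\vee)^{e_i}) = (\wedge^2 V_i^\vee)^{e_i} \oplus (V_i^\vee \otimes V_i^\vee)^{\binom{e_i}{2}}$ and taking $G$-invariants, the off-diagonal summands each contribute $|(V_i^\vee\otimes V_i^\vee)^G| = |\Hom_G(V_i, V_i^\vee)| = q_i$ (using $V_i\cong V_i^\vee$ as $\F_p[G]$-modules), while the diagonal summands contribute $|(\wedge^2 V_i^\vee)^G|$, which is $1$, $q_i$, or $q_i^{1/2}$ in the symmetric, $\kappa_i$-symplectic, and unitary cases respectively (the unitary value is justified because in that case the Frobenius involution of $\kappa_i$ acts on $\Hom_G(V_i,V_i^\vee) \cong \kappa_i$ with fixed field of index $2$). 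Multiplying gives $q_i^{e_i(e_i-\epsilon_i)/2}$ in each case. For $F = V_i^{e_i}\times V_{i'}^{e_{i'}}$ with non-isomorphic duals, the diagonal $\wedge^2$ pieces vanish by Schur applied to non-self-dual irreducibles, and only the cross term $(V_{i'}\otimes V_i)^G \cong \Hom_G(V_i, V_i) = \kappa_i$ survives, contributing $q_i^{e_ie_{i'}}$. Substituting these into the definitions of $M_i$ and $M_{i,i'}$, using that $|H^1(G, V_i^\vee)| = |H^1(G, V_i)|$ when $V_i$ is $\F_p$-self-dual, produces the two claimed formulas.

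The main obstacle is the cocycle extension step in the second paragraph: once one guesses the formula for $\tilde\omega$ from the semidirect product structure, verification is a short direct computation, but without it the spectral sequence argument cannot close, since for a general split extension with abelian kernel the LHS differentials do not all vanish (for instance $d_2^{1,1}$ is often nonzero even when $d_2^{0,2}$ vanishes). In even characteristic an additional subtlety is that $V_i^\vee\otimes V_i^\vee$ does not cleanly decompose into symmetric and alternating parts, so the computation of $|(\wedge^2 V_i^\vee)^G|$ must instead be organized around the A-symplectic structure and the lift to $\ASp_{\kappa_i}(V_i)$; with the convention for $\epsilon_i$ in Table~2, however, the final exponent $e_i(e_i-\epsilon_i)/2$ is unchanged.
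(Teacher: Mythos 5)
Your overall skeleton is the paper's: use \cref{lem-tau} and $W_i^\tau=0$ to reduce the sum to the split extension $H=F\rtimes G$, then compute $\abs{E_3^{0,2}}$; and your count of $\abs{E_2^{0,2}}=\abs{(\wedge^2 F^\vee)^G}$ (diagonal versus cross terms, with the symmetric/unitary/$\kappa_i$-symplectic trichotomy) is essentially right, as is the dual-pair case. The gap is the claim that for the split extension the restriction $H^2(F\rtimes G,\QZ)\to H^2(F,\QZ)^G$ is surjective, equivalently that $d_2^{0,2}=0$. Your cocycle $\tilde\omega((f_1,g_1),(f_2,g_2))=\hat\omega(f_1,g_1\cdot f_2)$ does satisfy the cocycle identity and restrict correctly, but only when $\hat\omega$ is $G$-invariant \emph{at the cochain level}. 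An element of $H^2(F,\QZ)^G$ is only invariant as a class: by \cref{lem-wedge} it corresponds to an invariant alternating form $\omega$, but a representing cocycle is a bilinear $f$ with $f-f^t=\omega$ (the Notation section explicitly warns that using $\omega$ itself as a cochain gives the trivial class, since in characteristic $2$ alternating forms are symmetric). In odd characteristic one can take $f=\omega/2$, which is invariant, so there your argument is sound and agrees with the paper; in characteristic $2$ an invariant such $f$ need not exist, and the obstruction is precisely whether $G\to\Sp_{\F_2}(V_i)$ lifts to $\ASp_{\F_2}(V_i)$ --- this is the content of the second half of the paper's proof, carried out with $\Z/p^2$ coefficients and the Heisenberg extension.

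This is not a peripheral case: when $V_i$ is $\kappa_i$-symplectic in characteristic $2$ but the action does not lift to $\ASp_{\F_2}(V_i)$, Table 2 sets $\epsilon_i=1$, and the paper shows $d_2^{0,2}$ is then injective on each $H^2(V_i,\QZ)^G$ factor (via a $\kappa_i^\times$-equivariance argument), so $\abs{E_3^{0,2}}=q_i^{e_i(e_i-1)/2}$ rather than your $\abs{E_2^{0,2}}=q_i^{e_i(e_i+1)/2}$. Your final paragraph places the characteristic-$2$ subtlety in the computation of $(\wedge^2 V_i^\vee)^G$, but that count is not the problem; the problem is the possible nonvanishing of $d_2^{0,2}$, and with your argument the exponent would come out as $e_i(e_i+1)/2$ for every $\kappa_i$-symplectic $V_i$ in characteristic $2$, contradicting the statement of the lemma. (A minor side point: your parenthetical that $d_2^{1,1}$ is often nonzero for split extensions is backwards --- for a split extension the edge map $H^*(G,\QZ)\to H^*(H,\QZ)$ is injective, so every differential landing in the bottom row, including $d_2^{1,1}$ and $d_3^{0,2}$, vanishes; the only differential genuinely at stake is $d_2^{0,2}$.) To close the gap you need the paper's analysis: pull back along the coordinate inclusions using \cref{pullback-differential}, reduce to whether $d_2^{0,2}$ kills a single nonzero $\rho\in H^2(V_i,\QZ)^G$, and characterize that vanishing by lifting $\rho$ to $\Z/p^2$ coefficients, i.e.\ by whether the action lifts to the affine symplectic group.
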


\begin{proof}
Let $F=V_i^{e_i}$ or $F=V_i^{e_i}\times V_{i'}^{e_{i'}}$, depending on the case of the lemma.
By Lemma~\ref{lem-tau}, in the sum over $H$ in the definition of $M_i(e_i)$ (or $M_{i,i'}(e_i,e_{i'})$), we only need to consider the trivial extension $H=F\rtimes G$ (which does appear in the sum by Lemma \ref{odd-d3}).

We have a map $V_i\rtimes G \ra V\rtimes G$ for each of the $e_i$ coordinate inclusions $p_j:V_i \ra V_i^{e_i}$ (and similarly for $p'_j:V_{i' }\ra V_{i'}^{e_{i'}}$),   and thus we can apply Lemma~\ref{pullback-differential} and see that we have a commutative diagram
\[ \begin{tikzcd} H^2(F,\QZ)^G \arrow[r,"d_2^{0,2}"] \arrow[d,"p^*_j"] & H^2(G, F^\vee)  \arrow[d,"p^*_j"] \\ 
H^2(V_i,\QZ)^G \arrow[r,"d_2^{0,2}"]  & H^2(G, V_i^\vee)  \end{tikzcd}\]
(and similarly for the $p'_j$).  
Thus for $\phi \in H^2(F,\QZ)^G$ we have $d_2^{0,2}(\phi)=\sum_j d_2^{0,2}(p^*_j(\phi))+d_2^{0,2}((p')^*_j(\phi))$.
From Lemma~\ref{lem-wedge}, it then follows that in the case that  $\wedge^2(V_i)^G=0$, then $d_2^{0,2}(H^2(F,\QZ)^G)=0$. So if 
$F=V_i^{e_i}$ and $\wedge^2(V_i)^G=0$, then $\epsilon_i=1$ and $|E_3^{0,2}|=q_i^{\frac{e_i(e_i-1)}{2}}$, and the lemma holds in this case.  If $F=V_i^{e_i}\times V_{i'}^{e_{i'}}$, then $|E_3^{0,2}|=q_i^{e_ie_{i'}}$, and the lemma holds in this case.

Now we consider the case $\wedge^2(V_i)^G\ne 0$,.
There are three natural actions of $\kappa_i^*=\Aut_G(V_i)$ on $(V_i^\vee \tensor V_i^\vee)^G$,  via the left $V_i^\vee$, the right $V_i^\vee$, or both simultaneously (the \emph{double} action).  
From the fact that $V_i$ is irreducible,  we have that $(V_i^\vee \tensor V_i^\vee)^G$ is a one-dimensional $\kappa_i$ vector space through the action on the  left $V_i$, and
that the action of $\lambda\in \kappa_i^*$ through the right $V_i$ is the same as the 
action of $\sigma(\lambda)\in \kappa_i^*$ through the left $V_i$ for some $\sigma\in \Aut(\kappa_i)$ with $\sigma^2=1$ (because the $G$-invariants are preserved under swapping the $V_i^\vee$ factors).  
We have that $\sigma=1$ when $V_i$ is self-dual over $\kappa$, and $\sigma(\lambda)=\lambda^{p^{d/2}}$, where $d=[\kappa:\F_p]$, when $V_i$ is not self-dual over $\kappa_i$.  The functorial action of $\kappa_i^*$ on $H^2(V_i,\QZ)^G=(\wedge^2 V_i^\vee)^G$ agrees with the double action on 
$(V_i^\vee \tensor V_i^\vee)^G$ under the inclusion $(\wedge^2 V_i^\vee)^G\subset (V_i^\vee \tensor V_i^\vee)^G.$
Note when $\sigma=1$ the stabilizers of this action are $\{\pm 1\}$ and when $\sigma(\lambda)=\lambda^{p^{d/2}}$ the stabilizers are the $(p^{d/2}+1)$th roots of unity.

We choose a non-zero element $\rho\in H^2(V_i,\QZ)^G$.
If $V_i$ is not self-dual over $\kappa$,  then there are $p^{d/2}-1$ elements in the $\kappa$ orbit of $\rho$, all the non-zero elements of $H^2(V_i,\QZ)^G$,
as in this case $|(\wedge^2 V_i^\vee)^G|=p^{d/2}$.  If $V$ is self-dual  over $\kappa$ and $p=2$, there are $p^d-1$ elements in the $\kappa$ orbit of $\rho$, all the non-zero elements of $H^2(V_i,\QZ)^G$.   If $V$ is self-dual  over $\kappa$ and $p$ is odd, there are $(p^d-1)/2$ elements in the $\kappa$ orbit of $\rho$, 
and their linear $\F_p$-span must be all of $H^2(V_i,\QZ)^G$.
In every case,  we see that if $d_2^{0,2}(\rho)=0$, then $d_2^{0,2}(H^2(V_i,\QZ)^G)=0$ and hence $d_2^{0,2}(H^2(F,\QZ)^G)=0,$ which gives
$|E_3^{0,2}|=q_i^{\frac{e_i(e_i-\nu_i)}{2}}$, where $\nu_i$ is $-1$ if $(\wedge^2_{\kappa_i} V_i)^G\ne 0$, and is
$0$ if $(\wedge^2_{\kappa_i} V_i)^G= 0$.  

Now we consider the case when $d_2^{0,2}(\rho)\ne 0$.
Then $d_2^{0,2}$ is injective on $H^2(V_i,\QZ)^G$ (since $\rho$ above was an arbitrary non-zero element of $H^2(V_i,\QZ)^G$).
  For $\phi\in H^2(F,\QZ)^G$, we conclude that $d_2^{0,2}(\phi)=0$ if and only if, for all $j$, we have $p^*_j(\phi)=0$.  
  We note (using Lemma~\ref{lem-wedge}), that $\oplus_j p^*_j:  H^2(F,\QZ)^G \ra \bigoplus_j H^2(V_i,\QZ)^G$ is surjective.
Thus, we compute
$|E_3^{0,2}|=q_i^{\frac{e_i(e_i-1)}{2}}$.

Let $p$ be the characteristic of $V_i$. 
We can check that the map $H^2(V_i,\Z/p^2\Z) \ra H^2(V_i,\QZ)$ has a $G$-equivariant homomorphic section, since 
all classes in $H^2(V_i,\QZ)$ come from using bilinear forms $V_i\tensor V_i\ra\Z/p\Z$ as cochains (see the proof of Lemma~\ref{lem-wedge})
and symmetric forms (which are exactly the forms representing the trivial class in $H^2(V_i,\QZ)$)  can be checked to also give the trivial class
in $H^2(V_i,\Z/p^2\Z)$.  Thus 
$i \colon H^2(V_i,\Z/p^2\Z)^G\ra H^2(V_i,\QZ)^G$ is surjective, and we have $i(\sigma)=\rho$ for some   $\sigma\in H^2(V_i,\Z/p^2\Z)^G$ that can be represented using a bilinear form as a cochain. 
Using the functoriality of the spectral sequence in the coefficients, we see that $d_2^{0,2} \circ i={d'}_2^{0,2}$,
where ${d'}_2^{0,2}$ is the differential in the analogous spectral sequences with $\Z/p^2\Z$ coefficients. (Here we use the fact that the natural map $H^1(V_i, \Z/p^2\Z) \to H^1(V_i, \QZ)$  is an isomorphism to identify the targets of the two differentials.)
Thus $d_2^{0,2}(\rho)=0$ if and only if ${d'}_2^{0,2}(\sigma)=0$.

 From the properties of the edge map, ${d'}_2^{0,2}(\sigma)=0$ if and only if
$\sigma$ is in the image of $H^2(V\rtimes G,\Z/p^2\Z)$. 
We define $\mathcal{H}$ and $\ASp_{\F_p}(V_i)$ as in the introduction (with $4$ replaced by $p^2$, and 
we let the extension class $\sigma$ of $\mathcal{H}$  be the class associated to a fixed $G$-invariant symplectic form $\omega$ as in Section~\ref{S:Notation},
which can be represented using a bilinear form as a cochain). 
When $p$ is odd, we can use $\omega/2$ as the cochain for $\sigma$, and thus see that
$\Sp_{\F_p}(V_i)$ acts on $\mathcal{H}$ and so $\ASp_{\F_p}(V_i)\ra \Sp_{\F_p}(V_i)$ has a section.

Finally, we will show that $d_2^{0,2}(\rho)=0$ if and only if the action of $G$ on $V_i$ factors through $\ASp_{\F_p}(V_i)$. 
We have that $\sigma$ is in the image of $H^2(V\rtimes G,\Z/p^2\Z)$ if and only if there is a central extension $1\ra \Z/p^2\Z \ra H \stackrel{f}{\ra} V_i\rtimes G\ra 1$ such that $1\ra \Z/p^2\Z \ra f^{-1}(V_i) \stackrel{f}{\ra} V_i \ra 1$ has extension class $\sigma$.  
If there is such an extension, then $G$ acts on $f^{-1}(V_i)$ via lifting and conjugation, fixing $\Z/p^2\Z$ pointwise and respecting the action on $V_i$,
so the action on $G$ lifts to $\ASp_{\F_p}(V_i)$.
Conversely, if the action on $G$ lifts to $\ASp_{\F_p}(V_i)$, then $\mathcal{H}\rtimes G$ provides such a central extension.  
Note that an $\F_p$-symplectic action $G$ lifts to $\ASp_{\kappa_i}(V_i)$ if and only if it lifts to
$\ASp_{\F_p}(V_i)$ and it is $\kappa_i$-symplectic.

In particular, if $p$ is odd, then we always have $d_2^{0,2}(\rho)=0$, and we note the $\nu_i$ defined above is the same as $\epsilon_i$, and the lemma holds.  
If $p=2$, then in the $d_2^{0,2}(\rho)=0$ case, we have that $G$ acts through $\ASp_{\F_p}(V_i)$ and the $\nu_i$ defined above agrees with $\epsilon_i$, and in the $d_2^{0,2}(\rho)\ne 0$ case we have $\epsilon_i=1$, and in all cases the lemma holds.
\end{proof}

     \begin{lemma}\label{self-dual-w} Let $V_i$ be a self-dual representation. Assume $W_i^\tau=0$ and $\bG$ is an attainable $\bG$-extension.
      Then we have \[ \sum_{e_i=0}^{\infty} (-1)^{e_i}  M (e_i) Q_i (e_i) = w_{V_i} \] with the sum absolutely convergent.  \end{lemma}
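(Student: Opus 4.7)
The plan is to prove this by a direct calculation: substitute the closed form for $M_i(e_i)$ from \cref{dual-M}, simplify the ratio, and recognize the resulting series as an instance of Euler's $q$-exponential identity. First I would use \cref{dual-M} to write $M_i(e_i) = q_i^{e_i(e_i-\epsilon_i)/2}/|GL_{e_i}(\kappa_i)|$ and plug in the definition of $Q_i(e_i)$, which is $q_i^{\binom{e_i}{2}}$ unless $V_i$ is A-symplectic, in which case it is $q_i^{\binom{e_i}{2}-e_i}$. Setting $q := q_i^{-1}$ and using the factorization $|GL_{e_i}(\kappa_i)| = q_i^{e_i^2}\prod_{j=1}^{e_i}(1-q^j)$, the powers of $q_i$ collapse, and a short bookkeeping check shows that in all three sub-cases
\[
M_i(e_i)\, Q_i(e_i) = \frac{q^{\lambda_i e_i}}{\prod_{j=1}^{e_i}(1-q^j)},
\]
where $\lambda_i = 1$ when $\epsilon_i \in \{1,-1\}$ and $\lambda_i = 1/2$ when $\epsilon_i = 0$. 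The noteworthy point is that the $-e_i$ correction in $Q_i$ in the A-symplectic case (inherited from the $\beta_P$ factor introduced in \cref{L:ie}) precisely compensates for the shift from $\epsilon_i = 1$ to $\epsilon_i = -1$ in $M_i$, so that the symmetric and A-symplectic cases yield the same generating series.

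Next I would apply Euler's $q$-exponential identity $\sum_{n\geq 0} z^n/(q;q)_n = \prod_{j=0}^\infty (1-zq^j)^{-1}$, absolutely convergent since $q_i \geq 2$ ensures $|q| < 1$. Taking $z = -q^{\lambda_i}$ gives
\[
\sum_{e_i=0}^\infty (-1)^{e_i}\, M_i(e_i)\, Q_i(e_i) = \prod_{j=0}^\infty \bigl(1+q_i^{-j-\lambda_i}\bigr)^{-1} = \prod_{j=1}^\infty \bigl(1+q_i^{-j-(\lambda_i-1)}\bigr)^{-1}.
\]
Comparing with Table 2: for $\epsilon_i = 1$ this is $\prod_{j\geq 1}(1+q_i^{-j})^{-1}$; for $\epsilon_i = 0$ it is $\prod_{j\geq 1}(1+q_i^{-j+1/2})^{-1}$; and for $\epsilon_i = -1$ it is again $\prod_{j\geq 1}(1+q_i^{-j})^{-1}$. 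In every case this matches the recorded value of $w_{V_i}$.

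The hypothesis that $\bG$ is an attainable $\bG$-extension enters only in the A-symplectic case, where it forces $\dim_{\kappa_i} H^1(G,V_i) \equiv 2\tau(c_{V_i}) \pmod 2$; this is precisely the congruence selecting the non-zero row of Table 2 for $\epsilon_i = -1$, so that our computed product really does coincide with $w_{V_i}$ instead of contradicting the $w_{V_i} = 0$ alternative. Absolute convergence is immediate: the denominators $\prod_{j=1}^{e_i}(1-q^j)$ converge to the non-zero constant $(q;q)_\infty$, while $q^{\lambda_i e_i}$ decays geometrically in $e_i$. The main obstacle to watch out for is simply the exponent bookkeeping that extracts the uniform $\lambda_i$ across all three sub-cases; once this cancellation is checked, the rest of the argument is a routine invocation of the $q$-exponential identity.
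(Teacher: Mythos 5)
Your proposal is correct and follows essentially the same route as the paper: substitute the closed form from \cref{dual-M}, collapse the powers of $q_i$, apply the Euler $q$-exponential identity, and invoke attainability to rule out the $w_{V_i}=0$ row of Table 2 in the A-symplectic case. The only difference is cosmetic — you unify the symmetric/unitary and A-symplectic cases via a single exponent $\lambda_i$ (after the change of variable $q=q_i^{-1}$), whereas the paper treats them as two separate applications of the same identity.
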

     
     \begin{proof}We first consider the case when $V_i$ is not A-symplectic. Then by \cref{dual-M}  \[  M_i (e_i) =  \frac{1}{ \abs{ GL_{e_i}(\kappa_i)} } q_i^{\frac{e_i(e_i-\epsilon_i)}{2}} \]
     and $Q_i(e_i)= q_i^{ \binom{e_i}{2}}$ so,
      \[ \sum_{e_i=0}^{\infty} (-1)^{e_i}  M (e_i) Q_i (e_i) = \sum_{e_i=0}^{\infty} \frac{ q_i^{\frac{e_i(e_i-\epsilon_i)}{2} + \binom{e_i}{2}} }{ \abs{GL_{e_i} (\kappa_i)}} = \sum_{e_i=0}^{\infty} \frac{ q_i^{e_i\left(e_i- \frac{ 1+ \epsilon_i}{2}\right)   }}{ \abs{GL_{e_i} (\kappa_i)}}  .\]
     
We evaluate this first term using the $q$-exponential identity
\[ \sum_{e=0}^{\infty} (-1)^e \frac{ q^{ e^2 } u^e}{ \abs{GL_e(\mathbb F_q)}} = \prod_{j=1}^{\infty}  \frac{1}{ 1+ u q^{1-j } } \] applied with $u= q_i ^{ - \frac{1 +\epsilon_i}{2}}$. This series is absolutely convergent because $u<1$ (using that we are not in the $A$-symplectic case).
    Thus  \[ \sum_{e_i=0}^{\infty} (-1)^{e_i}  M (e_i) Q_i (e_i) = \prod_{j=1}^{\infty} \frac{1}{ 1+ q_i^{ - j - \frac{ \epsilon_i-1}{2}}} = w_{V_i},\] by definition of $w_{V_i}$.

We now consider the case when $V_i$ is A-symplectic. The same calculation of $M(e_i)$ applies, with $\epsilon_i=-1$, and we have $Q_i(e_i) = q_i ^{  \frac{e_i (e_i-3)}{2}}$ so
  \[ \sum_{e_i=0}^{\infty} (-1)^{e_i}  M (e_i) Q_i (e_i) =
   \sum_{e_i=0}^{\infty} \frac{ q_i^{\frac{e_i(e_i+1)}{2} +  \frac{e_i (e_i-3)}{2}} }{ \abs{GL_{e_i} (\kappa_i)}}  
   = \sum_{e_i=0}^{\infty} \frac{ q_i^{e_i(e_i-1)} }{ \abs{GL_{e_i} (\kappa_i)}} .\]  Using the same $q$-exponential identity as before, taking $u = q_i^{-1} $, we obtain
  \[ \sum_{e_i=0}^{\infty} (-1)^{e_i}  M (e_i) Q_i (e_i) =\prod_{j=1}^{\infty} \frac{1}{ 1+ q_i^{ - j }} = w_{V_i},\] again by definition, using that $\bG$ is attainable. 
 \end{proof}      
  \begin{lemma}\label{pair-w} Let $V_i,V_{i'}$ non-isomorphic dual representations of $G$. Assume $W_i^\tau = W_{i'}^\tau= 0$. Then 
  \begin{equation}\label{eq-oc-pair-w} \sum_{e_i=0}^{\infty} \sum_{e_{i'}=0}^\infty (-1)^{e_i + e_{i'}}  M_{i,i'} (e_i, e_{i'}) Q_i(e_i) Q_{i'} (e_{i'}) = w_{V_i} w_{V_i'} ,
  \end{equation}
and the sum is absolutely convergent.     
     \end{lemma}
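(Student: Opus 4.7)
The plan is to evaluate the double sum in closed form using Heine's $q$-binomial theorem, and then match against Table~2 by a case analysis on whether $\dim_{\kappa_i} H^1(G, V_i)$ equals $\dim_{\kappa_i} H^1(G, V_{i'})$. After substituting the formula for $M_{i,i'}$ from \cref{dual-M} and using $Q_i(e_i) = q^{\binom{e_i}{2}}$ with $q := q_i = q_{i'}$ (since $V_{i'} \cong V_i^\vee$ forces $\kappa_{i'} \cong \kappa_i$), the left-hand side of \eqref{eq-oc-pair-w} takes the form
\[ S(a) := \sum_{e, e' \geq 0} (-1)^{e+e'} \frac{a^{e-e'}\, q^{\binom{e}{2} + \binom{e'}{2} + ee'}}{|GL_e(\kappa_i)|\,|GL_{e'}(\kappa_i)|}, \]
where $a := |H^1(G, V_{i'})|/|H^1(G, V_i)|$; note that $a = q^\Delta$ for $\Delta := \dim_{\kappa_i} H^1(G,V_{i'}) - \dim_{\kappa_i} H^1(G,V_i) \in \Z$. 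Absolute convergence will follow from the elementary bound $|GL_e(\kappa_i)| \geq q^{e^2}\prod_{j \geq 1}(1 - q^{-j})$, which under the change of variables $d = e - e'$, $s = e + e'$ reduces matters to convergence of $\sum q^{\Delta d - d^2/2 - s/2}$.

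Next I will compute $S(a)$ in two steps. First, the inner sum over $e$ is evaluated via the $q$-exponential identity used in the proof of \cref{no-dual-w}, with $u = aq^{e'}$, giving $\prod_{j=1}^{\infty} (1 - aq^{e'-j})$. Splitting this product at $j = e'$ separates an $e'$-independent tail $\prod_{k=1}^\infty (1 - aq^{-k})$ from a finite head $(a; q)_{e'} := \prod_{l=0}^{e'-1}(1 - aq^l)$. Using the elementary identity $(-1)^{e'} q^{\binom{e'}{2}}/|GL_{e'}(\kappa_i)| = 1/(q; q)_{e'}$, where $(q;q)_{e'} := \prod_{j=1}^{e'}(1-q^j)$, this yields
\[ S(a) = \prod_{k=1}^\infty (1 - aq^{-k}) \cdot \sum_{e' \geq 0} \frac{(a;q)_{e'}\, a^{-e'}}{(q;q)_{e'}}. \]

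The main technical obstacle will be that the remaining inner sum does not directly fit into Heine's classical $q$-binomial theorem, which requires base $|q| < 1$, whereas ours satisfies $q > 1$. I plan to overcome this by the base change $p := q^{-1}$: the elementary rewrites $(a;q)_n = (-a)^n p^{-\binom{n}{2}}(a^{-1};p)_n$ and $(q;q)_n = (-1)^n p^{-\binom{n+1}{2}}(p;p)_n$ transform the summand into $p^{e'}(a^{-1};p)_{e'}/(p;p)_{e'}$. Heine's theorem (with $b = a^{-1}$, $z = p$, now in the convergent regime $|p| < 1$) then evaluates the sum to $(a^{-1}p;p)_\infty/(p;p)_\infty$. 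Combining everything yields the closed form
\[ S(a) = \frac{\prod_{m=1}^\infty (1 - a q^{-m})\prod_{m=1}^\infty (1 - a^{-1} q^{-m})}{\prod_{m=1}^\infty (1 - q^{-m})}. \]

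The proof will conclude by case analysis on $\Delta$ against Table~2. If $\Delta = 0$, then by Table~2 we have $w_{V_i} = w_{V_{i'}} = \prod_{k \geq 1}(1 - q^{-k})^{1/2}$, while the formula gives $S(1) = \prod_{k \geq 1}(1 - q^{-k})$, matching $w_{V_i} w_{V_{i'}}$. If $\Delta \neq 0$, then the $m = |\Delta|$ factor in the first (resp.\ second) product in the numerator of $S(a)$ vanishes when $\Delta > 0$ (resp.\ $\Delta < 0$), forcing $S(a) = 0$; correspondingly, either $w_{V_i}$ or $w_{V_{i'}}$ is zero by Table~2, so $w_{V_i} w_{V_{i'}} = 0$ as well. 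In either case the two sides of \eqref{eq-oc-pair-w} agree.
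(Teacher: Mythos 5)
Your proposal is correct, and its first half coincides with the paper's argument: you plug in \cref{dual-M}, verify absolute convergence with the same bound (exponent $-\tfrac{(e-e')^2}{2}-\tfrac{e+e'}{2}$ after factoring $|GL_e|\geq q^{e^2}\prod_{j\geq1}(1-q^{-j})$), and evaluate the inner sum over $e$ by the same $q$-exponential identity used in \cref{no-dual-w}, arriving at $\prod_{j\geq 1}(1-aq^{e'-j})$ with $a=q^\Delta$. Where you diverge is the remaining sum over $e'$. The paper treats it by direct inspection: when $a$ is a positive power of $q$ the factor $1-aq^{e'}q^{-j}$ vanishes for $j=e'+\Delta$ in every term, so the sum is $0$ (symmetrically for negative powers), and when $a=1$ all terms with $e'\geq 1$ vanish, leaving only $\prod_{j\geq1}(1-q^{-j})$; this uses nothing beyond the identities already present in the section. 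You instead split off the $e'$-independent tail $\prod_{k\geq1}(1-aq^{-k})$, recognize the residual series as a ${}_1\phi_0$, invert the base via $p=q^{-1}$ to land in the convergent regime, and apply Heine's $q$-binomial theorem to obtain the uniform closed form
\[ S(a)=\frac{\prod_{m\geq1}(1-aq^{-m})\,\prod_{m\geq1}(1-a^{-1}q^{-m})}{\prod_{m\geq1}(1-q^{-m})}, \]
from which all cases of Table 2 are read off at once (and whose symmetry in $a\leftrightarrow a^{-1}$ is a nice sanity check). Your base-change identities and the $O(p^{e'})$ bound justifying the factorization and the use of Heine are correct, so the argument is complete; what it buys is a single explicit formula valid for every value of $\Delta$ (indeed for any $a$), at the cost of importing Heine's theorem, whereas the paper's route is more elementary and stays entirely within the finite $q$-binomial identities it has already set up.
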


\begin{proof}  We have $q_i =q_{i'}$. For simplicity, let $q=q_i =q_{i'}$ and $\kappa= \kappa_i = \kappa_{i'}$. Let $v= \frac{ \abs{ H^1(G, V_{i'} )}  }{ \abs{ H^1(G, V_i)}}$.
In this case, $V_i$ and $V_{i'}$ are non self-dual and thus not (A-)symplectic, so $Q_i (e_i) =q^{ \binom{e_i }{ 2}}  $ and $Q_{i'} (e_{i'}) = q^{ \binom{e_{i'}}{2}}$
Plugging in \cref{dual-M}, we obtain
\[  \sum_{e_i=0}^{\infty} \sum_{e_{i'}=0}^\infty (-1)^{e_i + e_{i'}}  M_{i,i'} (e_i, e_{i'}) Q_i(e_i) Q_{i'} (e_{i'})  = \sum_{e_i=0}^{\infty} \sum_{e_{i'}=0}^\infty   (-1)^{e_i + e_{i'}}   \frac{1}{ \abs{ GL_{e_i}(\kappa_i) }\abs{GL_{e_{i'}}(\kappa_{i'})} }   v^{e_i -e_{i'}  } q^{ \binom{e_i}{2} + e_i e_{i'} + \binom{e_i'}{2} } .\]
First we check that this sum is absolutely convergent. Each term in the sum is \[O \left( v^{e_i -e_{i'}} \frac{ q^{ \binom{e_i}{2} + e_i e_{i'}  +\binom{e_{i'}}{2}} }{ q^{ e_{i}^2 + e_{i'}^2}}\right) =  O \left( v^{ e_i-e_{i'} } q^{  - \frac{ (e_i-e_{i'})^2 + e_i + e_{i'} }{2}} \right) = O \left( q^{ - \frac{e_i +e_i'} {2} } \right)\]  because $v^ { e_i-e_{i'}} q^{ - \frac{ (e_i-e_{i'})^2}{2}}$ is bounded for any $v$, so the sum is absolutely convergent.

Next we observe that, by the definition of $v$, it is necessarily a power of $q$. If $v$ is a positive integer power of $q$, then we can arrange the sum as
\begin{equation}\label{odd-w-pair-rearranged} \sum_{e_{i'}=0}^{\infty} \frac{ (-1)^{e_{i'}} v^{-e_{i'}} q^{ \binom{e_{i'}}{2} } }{ \abs{GL_{e_{i'}(\kappa)}}}  \sum_{e_i=0}^{\infty} \frac{ (-1)^{e_i} v^{e_i}  q^{ e_i e_{i'} +\binom{e_i}{2} }  }{ \abs{GL_{e_i}(\kappa)}} =  \sum_{e_{i'}=0}^{\infty} \frac{ (-1)^{e_{i'}} v^{-e_{i'}} q^{ \binom{e_{i'}}{2} } }{ \abs{GL_{e_{i'}(\kappa)}}}  \prod_{j=1}^{\infty} (1 - v  q^{e_{i'}} q^{-j}) = 0\end{equation} because $v q^{e_{i'}}$ is always equal to $q^j$ for some $j$.
Symmetrically, if $v$ is a negative integer power of $q$, the sum vanishes. 
So the sum is nonvanishing only if $v=1$, i.e. if $ \dim H^1(G, V_{i'} ) = \dim  H^1(G, V_i)$. By definition, $w_{V_i}$ and $w_{V_i'}$ vanish when $v \neq 1$ and the identity \eqref{eq-oc-pair-w} is automatically satisfied.


We are thus reduced to the case $v=1$. In this case, examining \eqref{odd-w-pair-rearranged}, we see that only the $e_{i'}=0$ term is nonvanishing, giving a value of $\prod_{j=1}^{\infty} (1-q^{-j})$. Correspondingly, in this case $w_{V_i} =w_{V_{i'}} = \prod_{j=1}^{\infty} (1-q^{-j})^{1/2}$, so \eqref{eq-oc-pair-w} is again satisfied.
\end{proof}

\subsection{Proof of Proposition~\ref{P:evalsum}}

\begin{proof}
By Lemma~\ref{L:Mefform} ,
\begin{align*}
& \sum_{ \underline{e}, \underline{f}}    M ( \underline{e},\underline{f})\frac{1}{ \abs{\Aut(\bG)}} (-1)^{ \sum_i e_i + \sum_i f_i} \prod_i Q_i (e_i)    
\\=& 
\frac{ \abs{G}  \abs{ H^2(G, \QZ) } }{  \abs{ H^1(G, \QZ)}  \abs{  H^3(G , \QZ )}\abs{\Aut(\bG)}  } \sum_{ \underline{e}, \underline{f}}    (-1)^{ \sum_i e_i + \sum_i f_i} \prod_i Q_i (e_i) \\
&\times  \prod_{ \substack{ i \in \{1,\dots ,r \} \\ V_i^\vee \not\cong V_j \textrm{ for any } j\neq i}}  {M}_i ( e_i) \prod_{\substack{ \{i, i'\} \subseteq \{1,\dots, r\}  \\  i \neq i' \\ V_i \cong V_{i'}^\vee  }} {M}_{i,i'} (e_i, e_{i'})  \prod_{i=1}^s {\eta}_i (f_i ).
\end{align*}

This sum now splits as a product over individual $V_i$'s, $N_i$'s and dual pairs, and we may apply one of \cref{non-abelian-w}, \cref{no-dual-w}, \cref{self-dual-w}, and \cref{pair-w} to evaluate each term, obtaining
\begin{equation}\label{E1}
 \sum_{ \underline{e}, \underline{f}}    M ( \underline{e},\underline{f})\frac{1}{ \abs{\Aut(\bG)}} (-1)^{ \sum_i e_i + \sum_i f_i} \prod_i Q_i (e_i)   = \frac{ \abs{G}  \abs{ H^2(G, \QZ) } }{  \abs{ H^1(G, \QZ)}  \abs{  H^3(G , \QZ )}\abs{\Aut(\bG)}  } \prod_{i=1}^r w_{V_i} \prod_{i=1}^s w_{N_i} \end{equation}
and, in particular, obtaining that the individual terms in the product are absolutely convergent and thus the entire sum is absolutely convergent.

Now note that we have the chain of identities (assuming all sums are absolutely convergent)
\begin{align}\label{E2}
 &\sum_{\bH \in I} \frac{ T_{\bH} }{ \abs{ \Aut(\bH)} } \frac{  \abs{H}  \abs{ H_2 (H, \mathbb Z ) } }{  \abs{ H_1(H, \mathbb Z) } \abs{ H_3(H, \mathbb Z) }}  
 =\sum_{\bH \in I} \sum_{ \pi \colon \bH\to \bG}  \frac{ T_{\pi} }{ \abs{ \Aut(\bH)} } \frac{  \abs{H}  \abs{ H_2 (H, \mathbb Z ) } }{  \abs{ H_1(H, \mathbb Z) } \abs{ H_3(H, \mathbb Z) }}\\\notag
=& \sum_{ \underline{e}, \underline{f}}  \sum_{\bH \in I} \sum_{ \substack{ \pi \colon \bH\to \bG\\ \textrm{type }\underline{e},\underline{f}} }  \frac{1}{ \abs{\Aut(\bG)}} (-1)^{ \sum_i e_i + \sum_i f_i} \prod_i Q_i (e_i)  \frac{ 1 }{ \abs{ \Aut(\bH)} } \frac{  \abs{H}  \abs{ H_2 (H, \mathbb Z ) } }{  \abs{ H_1(H, \mathbb Z) } \abs{ H_3(H, \mathbb Z) }}\\\notag
 =& \sum_{ \underline{e}, \underline{f}}    M ( \underline{e},\underline{f})\frac{1}{ \abs{\Aut(\bG)}} (-1)^{ \sum_i e_i + \sum_i f_i} \prod_i Q_i (e_i).
\end{align} 
(In the second equality, we spread the $T_{id}=1$ term out into $|\Aut(\bG)|$ terms, one for each isomorphism $\pi:\bG \ra \bG$.)
Next observe that, since the last sum is absolutely convergent, then the next-to-last sum is as well, because it is obtained by expanding out the sum defining $M( \underline{e}, \underline{f})$ which is a sum of nonnegative terms and thus preserves absolute convergence. The third-to-last and fourth-to-last sums are obtained from this by rearranging and grouping terms, respectively, and these operations preserve absolute convergence as well. 

Combining \eqref{E2} and \eqref{E1}, we deduce the proposition.
\end{proof} 

\section{Existential Theory}\label{s-existence}
In this section, we see some consequences of our results for the existence or non-existence of $3$-manifold groups with certain finite quotients but not others.
In Section~\ref{SS:genexist}, we give general necessary and sufficient conditions for when there exists a (closed, oriented) 3-manifolds with fundamental group with a surjection to $\bG$ that does not lift in certain ways,  determine what groups can be the level-$\C$ completion of a $3$-manifold group, and prove \cref{closure-characterization} characterizing the closure of the set of (profinitely completed) $3$-manifold groups in the space of all profinite groups.
In Section~\ref{SS:concrete}, we give examples to see how these results play out in certain cases.
In Section \ref{SS:obstructed}, we find all finite groups that are in the closure of the set of (profinitely completed) $3$-manifold groups, i.e. all finite groups that can be arbitrarily approximated by $3$-manifold groups.

\subsection{General necessary and sufficient conditions for existence of $3$-manifold groups}\label{SS:genexist}

\begin{definition}\label{def-spatial} Let $\bG$ be a finite oriented group. We say a pair consisting of an irreducible representation $V$ of $G$
(over a finite field) with field of endomorphisms $\kappa$ and a $\kappa$-subspace $W$ of $H^2(G,V)$ is \emph{spatial} if
\begin{enumerate}[(a)] 

\item We have $\dim H^1 (G, V) \geq \dim H^1(G, V^\vee) + \dim W^{ \tau}$ where \[W^\tau =\{ \alpha \in W \mid \tau(\alpha \cup \beta) = 0 \textrm{ for all }\beta \in H^1(G, V^\vee) \}.\]

\item If $V$ has odd characteristic and is 
$\kappa$-symplectic,
 then $\dim_{\kappa} H^1(G, V)$ is even.

\item If $V$ has even characteristic, is 
$\kappa$-symplectic,
and the map $G \to \Sp_\kappa(V)$ lifts to the affine symplectic group  $\ASp_\kappa(V)$, then $\dim_{\kappa} H^1(G, V) \equiv 2\tau ( c_{V} ) \mod 2$.

\end{enumerate}
\end{definition}
The term ``spatial" is used because these are the representations that will occur for 3-manifolds, as we will see in the following results.

\begin{remark}\label{R:easyspatial}
If the characteristic  of $V$ does not divide $G$, then $V,W$ is always spatial.  Also, if $V$ is a self-dual representation that is not $\kappa$-symplectic (e.g. a trivial representation), then $V,W$ is  spatial if $W=0$.
\end{remark}

Recall from Section \ref{ss-lc-notation} that $V_1,\dots, V_n$ is a finite list of irreducible representations of $G$ over prime fields, $W_i$ is a $\kappa_i$-subspace of $V_i$ for each $i$ (where $\kappa_i=\operatorname{End}_G(V_i)$), and $N_1,\dots N_m$ is a finite list of non-abelian finite simple $[G]$-groups. 
\cref{main-existence} is our main theorem on the existence of $3$-manifolds and gives a simple criterion that determines when there exists a $3$-manifold group with a surjection to $\bG$ not lifting to specified spaces of minimal extensions.

\begin{theorem}\label{main-existence} Let $\bG$ be a finite \Mdcorated/ group.
There exists a closed, oriented $3$-manifold $M$ and an \Mdcorated/ surjection $f\colon {\bf \pi_1(M) }\to \bG$ such that 
\begin{itemize}

\item  For each $i$ from $1$ to $n$, for each extension $1 \to V_i \to H \to G \to 1$ whose extension class lies in $W_i$, the map $f$ does not lift to a surjection from $\pi_1(M)$ to $H$.

\item For each $i$ from $1$ to $m$, the map $f$ does not lift from to a surjection from $\pi$ to $\Aut(N_i) \times_{ \Out(N_i)} G$.

\end{itemize}
if and only if, for each $i$ from $1$ to $n$, $(V_i,W_i)$ is spatial.

%
%
%
%
Furthermore, in the ``if" direction, we can take $M$ to be a hyperbolic 3-manifold.
\end{theorem}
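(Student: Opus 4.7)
The plan is to deduce the ``only if'' direction from Theorem \ref{pi1-properties} and the ``if'' direction, including the hyperbolicity refinement, from the probabilistic existence provided by \cref{localized-counting} together with Maher's asymptotic hyperbolicity result \cite{Maher2010}.

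For the ``only if'' direction, assume $M$ and $f \colon \mathbf{\pi_1(M)} \to \mathbf G$ satisfy the non-lifting hypotheses, and pull back each $V_i$ via $f$ to a representation of $\pi_1(M)$. The key observation is that $0 \in W_i$ is the class of the split extension $V_i \rtimes G$, so the non-lifting hypothesis, together with \cref{lem-even_ei}, forces $\dim H^1(\pi_1(M), V_i) = \dim H^1(G, V_i)$. Conditions (b) and (c) then follow by applying Theorem \ref{pi1-properties}(3) and (4) to $\pi_1(M)$ and transferring the parity through this equality (for (c), using that $f$ is oriented, so $\int_M c_{V_i} = \tau(c_{V_i})$ after the natural identification $\tfrac{1}{2}\Z/\Z \cong \F_2$). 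For (a), combine Theorem \ref{pi1-properties}(1), giving $\dim H^1(\pi_1(M), V_i) = \dim H^1(\pi_1(M), V_i^\vee)$, with the following pairing argument: non-lifting and \cref{lem-even_ei} imply that the inflation map $W_i \hookrightarrow H^2(\pi_1(M), V_i)$ is injective, and Theorem \ref{pi1-properties}(2) provides a cup-product pairing that, by the very definition of $W_i^\tau$, vanishes on inflated classes in $H^1(G, V_i^\vee)$. The induced pairing
\[
W_i^\tau \times \bigl(H^1(\pi_1(M), V_i^\vee) / H^1(G, V_i^\vee)\bigr) \to \QZ
\]
is left-nondegenerate, forcing $\dim W_i^\tau \leq \dim H^1(\pi_1(M), V_i^\vee) - \dim H^1(G, V_i^\vee)$, which combined with the previous identities yields (a).

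For the ``if'' direction, apply \cref{localized-counting} with the given $\mathbf G, \underline V, \underline W, \underline N$. The limiting expected value of $L_{\mathbf G, \underline V, \underline W, \underline N}(\mathbf{\pi_1(M_{g,L})})$ is
\[
\frac{|G| \, |H_2(G,\Z)|}{|H_1(G,\Z)| \, |H_3(G,\Z)|} \prod_i w_{V_i}(\tau) \prod_j w_{N_j}(\tau),
\]
so existence of $M$ satisfying the non-lifting conditions reduces to showing this product is strictly positive. The factors $w_{N_j}$ are always positive. A case check against Table 2 matches positivity of $w_{V_i}$ with the spatial conditions: in the self-dual case, $w_{V_i} > 0$ is equivalent to $W_i^\tau = 0$ (the specialization of (a)) together with the parity condition in the A-symplectic subcase (condition (b) or (c)); in the paired case $V_i^\vee \cong V_j$ with $i \neq j$, joint positivity of $w_{V_i}$ and $w_{V_j}$ requires $W_i^\tau = W_j^\tau = 0$ and $\dim H^1(G, V_i) = \dim H^1(G, V_j)$, which follows from (a) applied to both indices; in the remaining non-self-dual case, positivity is equivalent to $\dim H^1(G, V_i) \geq \dim H^1(G, V_i^\vee) + \dim W_i^\tau$, which is (a) directly.

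For the hyperbolicity refinement, combine the above with Maher's theorem \cite{Maher2010}, which says that $M_{g,L}$ is hyperbolic with asymptotic probability $1$. Since the limiting proportion of $M_{g,L}$ admitting a surjection with the required non-lifting is positive (as follows from a distributional strengthening of \cref{localized-counting} in the spirit of \cref{intro-prob}), for sufficiently large $g$ and $L$ the joint event that $M_{g,L}$ is hyperbolic and admits the desired surjection has positive probability, yielding a hyperbolic $M$ with the required surjection. The principal obstacle in this plan is the ``if'' direction: although \cref{localized-counting} is the main input, one still must carry out the case-by-case comparison of $w_{V_i}$ with the spatial conditions, with the A-symplectic case in even characteristic and the paired-dual case being the most delicate bookkeeping.
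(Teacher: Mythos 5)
Your proposal is correct and follows essentially the same route as the paper: the ``only if'' direction is exactly the paper's Lemma~\ref{L:12implya} (the non-lifting plus \cref{lem-even_ei} identity $\dim H^1(\pi_1(M),V_i)=\dim H^1(G,V_i)$, parity transfer via Theorem~\ref{pi1-properties}(3)--(4), and the cup-product/duality pairing bounding $\dim W_i^\tau$), while the ``if'' direction and the hyperbolic refinement come, as in the paper, from positivity of the weights in \cref{localized-counting} combined with Maher's asymptotic hyperbolicity.
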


We give a group theory lemma first to clarify the argument.

\begin{lemma}\label{L:12implya}
Let $\bH$ be a profinite oriented group, $\bG$ a finite oriented group, and $f:\bH \ra \bG$ an oriented surjection.  If $V$ is an irreducible representation of 
$G$ over some $\F_p$, and $W\subset H^2(G,V)$ a $\operatorname{End}_G(V)$-subspace of extensions that $f$ cannot be lifted to, then  conditions (1), (2), (3), (4) from Theorem~\ref{pi1-properties} for $H,V$ (over the endomorphism field of $V$) imply conditions (a), (b), (c) in Definition~\ref{def-spatial} for $G,V$. 
\end{lemma}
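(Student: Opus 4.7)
The plan is to apply Lemma~\ref{lem-even_ei} to $V$ and $V^\vee$ to translate the hypotheses on $H$ into conclusions on $G$, crucially using that $W$ is a subspace (so $0\in W$) and that $f$ is an oriented morphism (so $\tau_H=\tau\circ f^*$). First I would establish two preliminary consequences of the nonliftability hypothesis. (i) The pullback $f^*\colon W\to H^2(H,V)$ is injective: by Lemma~\ref{lem-even_ei}, the kernel of $H^2(G,V)\to H^2(H,V)$ consists of classes of extensions realized as quotients of $H$ compatibly with $f$, so a nonzero class in $W\cap \ker f^*$ would contradict the hypothesis that $f$ does not lift to any extension with class in $W$. (ii) In the exact sequence $0\to H^1(G,V)\to H^1(H,V)\to S\to 0$ of Lemma~\ref{lem-even_ei}, one has $S=0$: since $0\in W$, there is no surjective lift of $f$ to $V\rtimes G$; and by irreducibility of $V$, any lift $H\to V\rtimes G$ of $f$ has image of the form $U\rtimes G$ with $U$ a $G$-submodule of $V$, hence $U=0$ or $U=V$, so a nonzero element of $S$ would produce a surjective lift. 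Consequently $\dim_\kappa H^1(H,V)=\dim_\kappa H^1(G,V)$.

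Conditions (b) and (c) of Definition~\ref{def-spatial} for $(G,V)$ then follow immediately from conditions (3) and (4) of Theorem~\ref{pi1-properties} for $(H,V)$: the $\kappa$-dimensions agree by (ii), and for (c) the evaluation $\tau_H(c_V)=\tau(c_V)$ because $c_V$ is pulled back from $\ASp_\kappa(V)$ along the composition $H\to G\to \ASp_\kappa(V)$, and $f$ being oriented means $\tau_H=\tau\circ f^*$.

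For condition (a), I would apply Lemma~\ref{lem-even_ei} also to $V^\vee$ (where we have no lifting hypothesis) to get an exact sequence $0\to H^1(G,V^\vee)\to H^1(H,V^\vee)\to S^\vee\to 0$. Combining condition (1) for $H$ with (ii) then yields
\[\dim S^\vee=\dim H^1(H,V^\vee)-\dim H^1(G,V^\vee)=\dim H^1(G,V)-\dim H^1(G,V^\vee).\]
It therefore suffices to exhibit an $\F_p$-linear injection $W^\tau\hookrightarrow \Hom_{\F_p}(S^\vee,\F_p)$. Define $\Phi\colon W^\tau\to \Hom_{\F_p}(S^\vee,\F_p)$ by $\Phi(\alpha)(\bar\beta)=\tau_H(f^*\alpha\cup\beta)$, for any lift $\beta\in H^1(H,V^\vee)$ of $\bar\beta$. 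Well-definedness on the quotient uses the defining property of $W^\tau$: if $\beta=f^*\beta'$ with $\beta'\in H^1(G,V^\vee)$, then $\tau_H(f^*\alpha\cup f^*\beta')=\tau(\alpha\cup\beta')=0$ since $\alpha\in W^\tau$. Injectivity of $\Phi$ combines (i) with condition (2) for $H$: for $\alpha\in W^\tau\setminus\{0\}$, (i) gives $f^*\alpha\ne 0$, and condition (2) then produces $\beta\in H^1(H,V^\vee)$ with $\tau_H(f^*\alpha\cup\beta)\ne 0$.

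The main conceptual step is the pairing argument for (a), where both the well-definedness on the quotient $S^\vee$ and the injectivity of $\Phi$ must be verified, and each uses a distinct piece of the hypothesis (the definition of $W^\tau$ together with orientedness of $f$ for well-definedness, versus (i) and condition (2) for injectivity). The rest is bookkeeping with the two exact sequences from Lemma~\ref{lem-even_ei}, using condition (1) for $H$ to link the $V$-side computation with the $V^\vee$-side.
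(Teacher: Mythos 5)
Your proposal is correct and follows essentially the same route as the paper: it uses Lemma~\ref{lem-even_ei} together with the non-lifting hypothesis to get $\dim H^1(H,V)=\dim H^1(G,V)$ and injectivity of $W\to H^2(H,V)$, deduces (b) and (c) from conditions (3)--(4), and proves (a) by the same cup-product counting argument via conditions (1)--(2). Your injection $W^\tau\hookrightarrow\Hom(S^\vee,\F_p)$ is just the dual formulation of the paper's surjection $H^1(H,V^\vee)\to (W^\tau)^\vee$ with kernel containing $H^1(G,V^\vee)$, and your extra explicit checks (irreducibility for $S=0$, orientation compatibility for $c_V$) are details the paper leaves implicit.
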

\begin{proof}
From Lemma~\ref{lem-even_ei} and the condition on $f$ not lifting, we obtain
\begin{equation}\label{pi1-observation} \dim H^1(G, V) = \dim H^1( H, V). \end{equation}
and that $ W \to H^2(G, V) \to H^2 (H ,V) $
is injective.  As usual let $W^\tau\subset W$ be the elements that, via cup product and $\tau_G$, pair to $0$ with every element of $H^1(G,V^\vee)$.
By condition (2) for $H$, for each $\alpha$ in $W^\tau$ there must exist $\beta \in H^1(H, V^\vee)$ with $ \tau_{H}(\alpha \cup \beta)\neq 0$. This defines a surjection $H^1( H, V^\vee) \to (W^\tau)^\vee$. By the definition of $W^\tau$, we have that $H^1(G, V^\vee)$ must be in the kernel of this surjection, so \[ \dim W^\tau + \dim H^1(G, V^\vee) = \dim (W^\tau )^\vee+ \dim H^1(G, V^\vee) \leq \dim H^1(H, V^\vee) \] \[ = \dim H^1( H, V) = \dim H^1( G, V) \] by condition (1) and \eqref{pi1-observation}, giving (a).  Using \eqref{pi1-observation},  conditions (3) and (4) imply (b) and (c).
\end{proof}

\begin{proof}[Proof of \cref{main-existence}] We use separate arguments for the ``if" and ``only if" directions.
The ``only if" direction is implied by \cref{pi1-properties} and Lemma~\ref{L:12implya}.

For ``if", we use \cref{localized-counting}, which gives a formula for the limit of the expected number of oriented surjections $f\colon {\bf \pi_1(M)} \to \bG$  as above for a random 3-manifold. 
Under the conditions of the proposition, we can check from the chart of the $w_{V_i}$ and $w_{N_i}$
in Section~\ref{ss-lc-notation} that the limiting expectation is positive, and thus
 a 3-manifold with such a surjection must exist. Maher has shown that a random Heegaard splitting of a fixed genus is hyperbolic with probability $\ra 1$ as $L\ra\infty$ \cite[Theorem 1.1]{Maher2010}. Thus, the limiting expectation of the number of such surjections from hyperbolic 3-manifolds is positive, and we have a hyperbolic $M$ with a surjection as desired.
%
\end{proof}

Using \cref{main-existence} and Lemma~\ref{L:Linnicecase}, 
we can describe the level-$\mathcal C$ completions of $\pi_1(M)$ for any $\mathcal C$.

\begin{definition} We say an irreducible representation $V$ of $G$ over $\mathbb F_p$ is \emph{level-$\mathcal C$} if $V \rtimes G$ is level-$\mathcal C$. (Note that this is not necessarily equivalent to $V$ being level-$\mathcal C$ as an abstract group.)
For such a $V$, let $H^2(G, V)^{\mathcal C}$ consist of extension classes such that the corresponding extension of $G$ by $V$ is level-$\mathcal C$. 
\end{definition}

\begin{prop}\label{level-C-existence} Let $\mathcal C$ be a finite set of finite groups and 
$\bG$ a finite level-$\mathcal C$ oriented group. 
There exists a closed, oriented $3$-manifold $M$ such that ${\bf \pi_1(M)^{\mathcal C}} \cong {\bG}$ if and only if 
for each level-$\C$ irreducible representation $V$ of $\bG$ over any $\F_p$, the pair $(V, H^2(G, V)^{\mathcal C})$ is spatial. 
\end{prop}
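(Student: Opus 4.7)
The plan is to reduce both directions to Theorem~\ref{main-existence} by choosing $\underline{V}, \underline{W}, \underline{N}$ to enumerate exactly those minimally material extensions that happen to be level-$\mathcal C$. Concretely, let $\{V_i\}_{i=1}^n$ be the irreducible representations of $G$ (over various $\F_p$) such that $V_i \rtimes G$ is level-$\mathcal C$, set $W_i := H^2(G, V_i)^{\mathcal C}$ (which is a $\kappa_i$-subspace, since level-$\mathcal C$ groups are closed under fiber products and quotients as in Lemma~\ref{L:Linnicecase}), and let $\{N_j\}_{j=1}^m$ be the non-abelian finite simple $[G]$-groups for which $\Aut(N_j)\times_{\Out(N_j)}G$ is level-$\mathcal C$. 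By Lemma~\ref{L:Linnicecase} these are finite collections, and the minimally material extensions of $\bG$ determined by $(\bG,\underline V,\underline W,\underline N)$ are precisely the minimal nontrivial level-$\mathcal C$ extensions of $G$.

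For the ``if'' direction, assume each pair $(V_i, W_i) = (V_i, H^2(G,V_i)^{\mathcal C})$ is spatial. Theorem~\ref{main-existence} then produces a closed oriented $3$-manifold $M$ with an oriented surjection $f\colon \mathbf{\pi_1(M)} \to \bG$ that does not lift to any of the extensions listed in its bullets, i.e., $L_{\bG,\underline V,\underline W,\underline N}(\mathbf{\pi_1(M)}) > 0$. By Lemma~\ref{L:Linnicecase}, this forces $\mathbf{\pi_1(M)^{\mathcal C}}\cong \bG$, giving the desired manifold.

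For the ``only if'' direction, suppose $M$ is a closed oriented $3$-manifold with $\mathbf{\pi_1(M)^{\mathcal C}}\cong \bG$; then the quotient map furnishes an oriented surjection $f\colon \mathbf{\pi_1(M)} \to \bG$. If $f$ lifted to a surjection onto some minimally material extension $H\to G$ with $H$ level-$\mathcal C$, then $H$ would be a level-$\mathcal C$ quotient of $\pi_1(M)$ strictly between $\pi_1(M)$ and $G$, contradicting $\pi_1(M)^{\mathcal C}\cong G$ together with the fact that $\pi_1(M)^{\mathcal C}$ is the maximal level-$\mathcal C$ quotient. Hence $f$ avoids every extension in our list. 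The ``only if'' half of Theorem~\ref{main-existence}—which, as in Lemma~\ref{L:12implya}, translates conditions (1)--(4) of Theorem~\ref{pi1-properties} into conditions (a)--(c) of Definition~\ref{def-spatial}—then yields that each $(V_i, W_i)$ is spatial. Since by Remark~\ref{R:easyspatial} any irreducible representation not appearing among the $V_i$ (e.g.\ those whose characteristic does not divide $|G|$, or whose trivial $W$ makes spatiality automatic) poses no extra condition, this verifies spatiality for every level-$\mathcal C$ irreducible representation of $\bG$.

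I do not anticipate a genuinely hard step here: the real work is already contained in Theorem~\ref{main-existence} (which depends on \cref{localized-counting}) and in Lemma~\ref{L:Linnicecase}. The only subtlety is bookkeeping—matching the collection of minimally material extensions with the level-$\mathcal C$ minimal extensions of $\bG$, and checking that $H^2(G,V_i)^{\mathcal C}$ is in fact a $\kappa_i$-subspace so that it is a legitimate choice of $W_i$ in the hypotheses of Theorem~\ref{main-existence}.
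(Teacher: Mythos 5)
Your proposal is correct and matches the paper's proof, which likewise just combines Theorem~\ref{main-existence} with Lemma~\ref{L:Linnicecase} after setting $W_i = H^2(G,V_i)^{\mathcal C}$ for all level-$\mathcal C$ irreducible $V_i$ (and taking the level-$\mathcal C$ non-abelian simple $[G]$-groups $N_j$). The only superfluous bit is your appeal to Remark~\ref{R:easyspatial} at the end: since the level-$\mathcal C$ irreducible representations are by definition exactly your $V_i$, the ``only if'' half of Theorem~\ref{main-existence} already covers every representation the proposition asks about.
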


\begin{proof}[Proof of \cref{level-C-existence} ]  This follows from combining \cref{main-existence} and Lemma~\ref{L:Linnicecase}, setting $W_i = H^2(G,V_i)^\mathcal C$ for all level-$\C$ $V_i$.
\end{proof}

Finally,  to consider all levels at once, our next goal is to prove Theorem~\ref{closure-characterization}, which gives, in the space of all relevant profinite groups, the closure of the set of $3$-manifold groups.  First we have a lemma to help clarify that we have the correct space of profinite groups.

\begin{lemma}\label{maximal-quotient-finite} For $X \in \Prof$, and $\mathcal C$ a finite set of finite groups, the level-$\mathcal C$ completion $X^{\mathcal C}$ of $X$ is a finite group. \end{lemma}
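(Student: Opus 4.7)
The plan is to exhibit a single open normal subgroup $N\trianglelefteq X$ of finite index such that $X/N$ is level-$\mathcal C$ and every level-$\mathcal C$ quotient of $X$ factors through $X\twoheadrightarrow X/N$; by the definition of $X^{\mathcal C}$ as an inverse limit of level-$\mathcal C$ quotients, this identifies $X^{\mathcal C}$ with the finite group $X/N$.

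Let $\mathcal D$ denote the (finite) set of isomorphism classes of quotients of groups in $\mathcal C$.  Since $X$ has finitely many open subgroups of each index and the orders of the groups in $\mathcal D$ are bounded, there are only finitely many continuous surjections $\phi\colon X\twoheadrightarrow D$ with $D\in\mathcal D$.  I would take $N$ to be the intersection of the kernels of all such $\phi$: a finite intersection of open normal subgroups, hence open and normal.  The combined map $X/N\hookrightarrow\prod_\phi D_\phi$ realizes $X/N$ as a subdirect product of groups in $\mathcal D$, which are themselves level-$\mathcal C$, and since level-$\mathcal C$ groups are closed under iterated fiber products, $X/N$ is itself finite and level-$\mathcal C$.

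It remains to check that every continuous surjection $\psi\colon X\twoheadrightarrow Q$ with $Q$ level-$\mathcal C$ satisfies $\ker\psi\supseteq N$.  Here I would invoke the classical fact from formation theory that the class of level-$\mathcal C$ groups coincides with the class of subdirect products of quotients of groups in $\mathcal C$; granted this, $Q$ embeds as a subdirect product in some $\prod_j D_j$ with each $D_j\in\mathcal D$, and for any $x\in N$ with $\psi(x)\ne 1$ one could pick $j$ so that $\rho_j\psi(x)\ne 1$, where $\rho_j\colon Q\twoheadrightarrow D_j$ is the projection---but $\rho_j\psi$ is then a surjection $X\twoheadrightarrow D_j$ with $D_j\in\mathcal D$, so by construction of $N$ it kills $N$, a contradiction.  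The main obstacle is the nonobvious part of this formation-theoretic input: the closure of ``subdirect products of quotients of $\mathcal C$'' under taking quotients.  This is not immediate from naive projection-kernel bookkeeping (for $G\hookrightarrow\prod_iD_i$ subdirect with $N\trianglelefteq G$, one may have $\bigcap_i(NK_i)\supsetneq N$, where $K_i=\ker(G\to D_i)$), but it can be established by induction on $|G|$ using a chief-factor analysis, or extracted from standard references on formation theory.
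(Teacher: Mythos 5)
Your key input is false, and the construction collapses with it. It is not true that the level-$\C$ groups are exactly the subdirect products of quotients of groups in $\C$: the correct formation-theoretic identity runs the other way (the formation generated by $\C$ is the class of \emph{quotients of subdirect products} of groups in $\C$, and the class of subdirect products of quotients is in general strictly smaller and is not closed under quotients). Concretely, take $\C=\{Q_8\}$ and let $G=Q_8\circ Q_8=(Q_8\times Q_8)/\langle(-1,-1)\rangle$ be the central product. Then $G$ is level-$\C$, being a quotient of $Q_8\times Q_8$. But the quotients of $Q_8$ are $Q_8$, $(\mathbb Z/2)^2$, $\mathbb Z/2$ and $1$, so a subdirect product of quotients of $Q_8$ is a subgroup of some $Q_8^a\times(\mathbb Z/2)^b$; in such a product every involution is central (its $Q_8$-coordinates lie in $\{\pm1\}$), whereas the image of $(i,j)$ in $G$ is a non-central involution. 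Hence $G$ embeds in no product of quotients of $Q_8$, and no chief-factor induction or reference can rescue the claimed coincidence.

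The same example breaks your construction directly: take $X=G=Q_8\circ Q_8$ and $\C=\{Q_8\}$. Every nontrivial normal subgroup of this $2$-group contains the order-$2$ center $Z$, so every proper nontrivial quotient of $X$ is elementary abelian and there is no surjection $X\twoheadrightarrow Q_8$; your $N$ is therefore the intersection of the kernels of the maps onto $\mathbb Z/2$ and $(\mathbb Z/2)^2$, i.e.\ the Frattini subgroup $Z$, giving $X/N\cong(\mathbb Z/2)^4$. Yet $X^{\C}=X$, since the identity is itself a level-$\C$ quotient, and it does not factor through $X/N$. So no single quotient built from surjections onto members of $\mathcal D$ dominates all level-$\C$ quotients, and one layer of ``quotients of $\C$'' cannot suffice. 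The paper instead argues by induction on the order bound: with $\C_i$ the quotients of groups in $\C$ of order at most $i$, the map $X^{\C_i}\to X^{\C_{i-1}}$ is semisimple (\cref{one-lower-semisimple}), so $X^{\C_i}$ is an inverse limit of fiber products of minimal level-$\C$ extensions of the finite group $X^{\C_{i-1}}$; there are only finitely many such extensions up to isomorphism (\cref{L:Linnicecase}), and only finitely many quotients of $X$ isomorphic to each by the definition of $\Prof$, which is where the hypothesis on $X$ genuinely enters.
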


\begin{proof} 
Let $\C_i$ be the set of all quotients of groups in $\C$ of order at most $i$.
We will show by induction that $X^{\C_i}$ is finite for all $i$ and conclude that $X^{\C}$ is finite.  Because $\C_1$ consists only of the trivial group, $X^{\C_1}$ is trivial and thus finite.

Thus we assume $X^{\C_{i-1}}$ is finite. For $Q$ a level-$\C_i$ quotient of $X$, we must have $Q^{ \C_{i-1}}$ a quotient of $X^{\C_{i-1}}$ so there are finitely many possibilities for $Q^{\C_{i-1}}$. By \cref{one-lower-semisimple}, we have $Q\ra  Q^{\C_{i-1}}$ semisimple, and thus $Q$ is a fiber product of finitely many minimal extensions of $Q^{C_{i-1}}$, all level-$\C$ quotients of $X$.  By Lemma~\ref{L:Linnicecase}, there are only finitely many level-$\C$ minimal extensions of $Q^{\C_{i-1}}$, and by the definition of $\Prof$, there are only finitely many quotients of $X$ isomorphic to one of these finitely many extensions, so there are finitely many possibilities for $Q$, and thus $X^{\C_i}$, the inverse limit of all level-$\C_i$ quotients $Q$ of $X$, is finite, completing the induction.
\end{proof}

\begin{proof}[Proof of \cref{closure-characterization}] 

For ``if", assume that there exists $\tau \colon H^3(G, \mathbb Q/\mathbb Z) \to \mathbb Q/\mathbb Z$  satisfying the conditions (1),(2),(3),(4) of \cref{closure-characterization} for every $V$. Let $\mathcal C$ be a finite set of finite groups. We have an induced map $\tau_{G_\mathcal C} \colon H^3(G^{\mathcal C}, \QZ) \to H^3( G, \QZ) \to \QZ$.
Let $V$ be an irreducible level-$\C$ representation of $G^{\mathcal C}$ defined over $\F_p$. 
By Lemma~\ref{L:12implya}, $(V,H^2(G^\C,V)^\C)$ is spatial, and by \cref{level-C-existence}, this implies there exists a (closed, oriented) $3$-manifold  with ${\bf \pi_1(M)} \cong \bG^\C$.  So $G$ is in the closure as desired.

For ``only if" direction, for each finite set $\C$ of finite groups,  we have a $3$-manifold $M$ with $\pi_1(M)^\C\cong G^\C$, such that the orientation on $M$ gives some  $\tau_{G^\C} \colon H^3(G^{\mathcal C}, \QZ) \to \QZ$.   We consider all such $\tau_{G^\C}$ coming from manifolds at each level-$\C$, and we have an inverse system of non-empty finite sets, which is non-empty, and thus there is a $\tau \colon H^3(G, \QZ) \to \QZ$ such that for every $\C$ the induced map $H^3(G^{\mathcal C}, \QZ) \to \QZ$ comes from a manifold (with an isomorphism $\pi_1(M)^\C\cong G^\C$).
 Thus by Proposition~\ref{level-C-existence}, for any level-$\C$ irreducible representation $V$ of $G^\C$ over any $\F_p$, we have $(V,H^2(G^\C,V)^\C)$ is spatial, using the orientation induced from $\tau$.  

Now let $V$ be an irreducible representation of $G$ over some $\F_p$ with endomorphism field $\kappa$. We will show conditions (1)-(4) of 
\cref{closure-characterization} for $V$ over $\kappa$.
For (1), let $A$ be the image of $G$ inside $\Aut(V)$ and take  $\mathcal C$ to consist of $ V \rtimes A$ and $V^\vee \rtimes A$. Then $A$ is a quotient of $G^{\mathcal C}$ and thus $V$ descends to a representation of $G^{\mathcal C}$. Note $V \rtimes G^{\mathcal C} = (V \rtimes A) \times_A (G^{\mathcal C})$ is level-$\mathcal C$, 
 and similarly for $V^\vee \rtimes G^{\mathcal C}$.  
Thus  \[ \dim H^1(G^\mathcal C, V) \geq \dim H^1(G^{\mathcal C}, V^\vee) + \dim H^2 (G^{\mathcal C}, V)^{\mathcal C,\tau} \geq H^1(G^{\mathcal C}, V^\vee) \] \[\geq \dim H^1(G^{\mathcal C}, V) + \dim H^2 (G^{\mathcal C}, V^\vee)^{\mathcal C,\tau}  \geq \dim H^1(G^{\mathcal C}, V) \] so all inequalities appearing are equalities and hence
\begin{align}\label{E:spatialcon} \dim H^1(G^{\mathcal C}, V) = \dim H^1(G^{\mathcal C}, V^\vee)  
\quad \textrm{and}\quad
H^2 (G^{\mathcal C}, V)^{\mathcal C,\tau} = H^2 (G^{\mathcal C}, V^\vee)^{\mathcal C,\tau} = 0
\end{align}
and Lemma~\ref{lem-even_ei} implies $\dim H^1(G, V) =\dim H^1(G, V^\vee).$

For (2), fix $\alpha \in H^2(G, V) = \lim_{ U \subseteq G} H^2(G/U, V^U)$, with the limit taken over open normal subgroups, with $\alpha\neq 0$.  Take $B=G/U$ to be a finite quotient of $G$ from which the class $\alpha$ arises.  Thus there is an extension $H$ of $B$ by $V$ with class $\overline{\alpha}$, such that the pullback to $G$ of $\overline{\alpha}$ is $\alpha$. Take $\mathcal C$ to consist of $ V \rtimes A$, $V^\vee \rtimes A$, and $H$. Then $B$ is level-$\mathcal C$ and thus is a quotient of $G^{\mathcal C}$, giving a class $\overline{\alpha} \in H^2(G^{\mathcal C}, V)$ that pulls back to $\alpha$. This class is nontrivial as it pulls back to the nontrivial class $\alpha$, and it lies in $H^2(G^{\mathcal C}, V)^{\mathcal C}$ because the extension group is given by $H \times_B G^{\mathcal C}$. 
From \eqref{E:spatialcon}, $H^2(G^{\mathcal C},V)^{\mathcal C, \tau}=0$, so by definition there is $\overline{\beta} \in H^1(G^{\mathcal C}, V^\vee)$ with $\tau(\overline{\alpha} \cup \overline{\beta})\neq 0$. The pullback of $\overline{\beta}$ to $G$ then gives the desired $\beta$.

For (3), take $\mathcal C$ to consist of $V \rtimes A$, and for (4), to consist of $V \rtimes A$ together with the image of $G$ in the affine symplectic group of $V$. Then $H^1(G, V) = H^1(G^{\mathcal C}, V)$ by Lemma~\ref{lem-even_ei} and so the parity property for $G^{\mathcal C}$ implies the desired parity property for $G$.

 Note if (1), (2),(3),(4) are satisfied for a representation $V$ over a field $\kappa$, then they are satisfied for $V \otimes_\kappa \kappa'$ for any field extension $\kappa'$.  Since every absolutely irreducible representation $V'$ over a finite field $\kappa'$ is $V \tensor_{\kappa} \kappa'$
 for some irreducible representation $V$ over $\F_p$ with endomorphism field $\kappa$, we have (1), (2), (3), and (4) for every absolutely irreducible $V$.

Also, if (1),(2),(3),(4) are satisfied for a representation $V$ over a field $\kappa$, then they are satisfied for $V$ viewed as a representation over a subfield $\kappa'$.   Every irreducible representation $V$ over $\kappa'$ is an absolutely irreducible representation over its endomorphism field $\kappa'$.  
Thus we have (1), (2), (3), and (4) for any  irreducible $V$ over a finite field.
\end{proof}

\subsection{Concrete corollaries on non-existence and existence of $3$-manifolds}\label{SS:concrete}
We now give some simple concrete consequences of \cref{main-existence}, our main existence result.
We begin with some negative results, showing that 3-manifolds which have a certain group as a quotient but don't have certain other groups as a quotient do not exist.

\begin{prop}\label{negative-20} Let $M$ be a  closed, oriented  3-manifold. Suppose that $G=\mathbb Z/5 \rtimes \mathbb Z/4$ is a quotient of $\pi_1(M)$, where the generator of $\mathbb Z/4$ acts on $\mathbb Z/5$ by multiplication by $2$. Then $(\mathbb Z/5)^2 \rtimes \mathbb Z/4$ is a quotient of $\pi_1(M)$, where the generator of $\mathbb Z/4$ acts on $\mathbb Z/5$ with eigenvalues $2$ and $3$. \end{prop}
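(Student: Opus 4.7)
The plan is to identify both groups as semidirect products with a common base and to exploit the duality of \cref{pi1-properties}(1). Let $V$ be the one-dimensional $\F_5$-representation of $G := \mathbb{Z}/5 \rtimes \mathbb{Z}/4$ on which the normal $\mathbb{Z}/5$ acts trivially and the generator of $\mathbb{Z}/4$ acts by $2$; the dual representation $V^\vee$ has the $\mathbb{Z}/4$-generator acting by $2^{-1} = 3$. The normal subgroup $\mathbb{Z}/5 \subset G$ is canonically $G$-isomorphic to $V$ (via conjugation), so $G \cong V \rtimes \mathbb{Z}/4$, and the target group of the proposition is canonically the trivial extension $V^\vee \rtimes G$, with the $V$-factor of $G$ acting trivially on $V^\vee$. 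It therefore suffices to show that a given surjection $\pi_1(M) \to G$ lifts to a surjection $\pi_1(M) \to V^\vee \rtimes G$.

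By \cref{lem-even_ei} applied to $\pi_1(M) \to G$ with coefficients in $V^\vee$, such lifts are parametrized by the subset $S' \subseteq \Hom_G((\ker)^{ab}, V^\vee)$ fitting into an exact sequence $0 \to H^1(G, V^\vee) \to H^1(\pi_1(M), V^\vee) \to S' \to 0$, and the lift is automatically a surjection once the corresponding element of $S'$ is nonzero (since $V^\vee$ is irreducible). I would first compute $H^1(G, V^\vee)$ via the Lyndon--Hochschild--Serre spectral sequence for $1 \to V \to G \to \mathbb{Z}/4 \to 1$: because $V^\vee$ is $5$-torsion and $|\mathbb{Z}/4|$ is coprime to $5$, the higher rows collapse and $H^1(G, V^\vee) = \Hom(V, V^\vee)^{\mathbb{Z}/4}$; on this one-dimensional space the generator of $\mathbb{Z}/4$ acts by $3 \cdot 3 = -1 \in \F_5^\times$, so the invariants vanish. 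The identical computation gives $H^1(G, V) = 0$, so it is enough to exhibit a single nonzero class in $H^1(\pi_1(M), V^\vee)$.

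By \cref{pi1-properties}(1), $\dim H^1(\pi_1(M), V^\vee) = \dim H^1(\pi_1(M), V)$, and it remains to show the right-hand side is nonzero. For this I would apply \cref{lem-even_ei} to the composite surjection $\pi_1(M) \to G \to \mathbb{Z}/4$ with module $V$: the hypothesis that $\pi_1(M) \to G = V \rtimes \mathbb{Z}/4$ is already surjective exhibits this surjection as a lift to the trivial extension of $\mathbb{Z}/4$ by $V$, hence produces a nonzero element in the associated $S$; combined with $H^1(\mathbb{Z}/4, V) = 0$ (again by coprimality), the exact sequence forces $H^1(\pi_1(M), V) \neq 0$. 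There is no serious obstacle -- the content of the argument is \cref{pi1-properties}(1), which is essentially a Poincar\'e-duality statement but has real implications for non-self-dual irreducible representations, here upgrading the visible $V$ into the dual $V^\vee$ that a priori need not appear as a quotient.
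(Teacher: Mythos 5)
Your argument is correct and is, in substance, the same mechanism the paper uses: the paper's proof notes that the pair $(V,W=0)$ (for the character on which the generator acts by $3$, i.e.\ your $V^\vee$) fails condition (a) of being spatial and then quotes \cref{main-existence}, whose ``only if'' direction (\cref{L:12implya}) is exactly your combination of \cref{lem-even_ei} with the duality \cref{pi1-properties}(1). What you do differently is unwind that general lemma into a direct construction of the lift for this particular $G$, which is valid and more self-contained; the paper's version buys brevity by citing the general theorem.

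One assertion in your write-up is false, though harmlessly so: it is not true that $H^1(G,V)=0$. The computation is not ``identical'' to the one for $V^\vee$: on $\Hom(V,V)\cong V^\vee\otimes V$ the generator of $\Z/4$ acts by $3\cdot 2=1$, not by $-1$, so the invariants are all of $\Hom(V,V)$ and $H^1(G,V)\cong\F_5$ (this is exactly the paper's statement ``$H^1(G,V^\vee)=\Z/5$'' in its labeling, where its $V$ is your $V^\vee$). Your proof never uses this claim: all you need is $H^1(G,V^\vee)=0$, which you compute correctly, together with $H^1(\pi_1(M),V)\neq 0$, which you establish directly from the given surjection via \cref{lem-even_ei} applied to $\pi_1(M)\to\Z/4$. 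In fact the correct value would have shortened that last step, since inflation $H^1(G,V)\to H^1(\pi_1(M),V)$ is injective, so $H^1(G,V)\cong\F_5$ already forces $H^1(\pi_1(M),V)\neq 0$.
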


\begin{proof}
Let $G$ act on $V=\Z/5$ with a generator of $\Z/4$ acting by multiplication by $3$.  
Then $H^1(G,V)=0$ but $H^1(G,V^\vee)=\Z/5$ (e.g. by  \cref{lem-even_ei}), so  $V,W=0$ is not spatial, failing condition (a). By \cref{main-existence},  any surjection $\pi_1(M)\ra G$ lifts to  a surjection to
$V\rtimes G$.
\end{proof}

\begin{prop}\label{negative-S3} Let $M$ be a  closed, oriented 3-manifold. Suppose that $S_3$ is a quotient of $\pi_1(M)$. Then one of $S_4, S_3 \times S_2$, or $\mathbb Z/3 \rtimes \mathbb Z/4$ (the semidirect product taken with respect to the unique nontrivial action) is a quotient of $\pi_1(M)$. \end{prop}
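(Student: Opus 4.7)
The plan is to identify $S_4$, $S_3\times \mathbb Z/2$, and $\mathbb Z/3 \rtimes \mathbb Z/4$ as precisely the minimal extensions of $S_3$ by irreducible $\mathbb F_2$-representations, and then derive a contradiction from the cohomological constraints in \cref{pi1-properties} under the hypothesis that some $f\colon \pi_1(M)\twoheadrightarrow S_3$ lifts to none of them. The key cohomological inputs are $H^*(S_3, \mathbb F_2) = \mathbb F_2[x]$ with $\deg x = 1$, and $H^*(S_3, V_2) = 0$, where $V_2 = \mathbb F_2^2$ is the $2$-dimensional standard representation; both follow from Shapiro's lemma applied to the permutation representation $\mathbb F_2[\{1,2,3\}]$ together with the fact that $[S_3 : S_2]=3$ is odd. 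Since $H^2(S_3, \mathbb F_2) = \mathbb F_2$, the two central extensions of $S_3$ by the trivial module $\mathbb F_2$ are $S_3\times \mathbb Z/2$ (trivial class) and the dicyclic group $\mathbb Z/3 \rtimes \mathbb Z/4$ (nonzero class), while $H^2(S_3, V_2)=0$ forces the only extension of $S_3$ by $V_2$ to be the split one, $V_2\rtimes S_3 = S_4$.

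Supposing $f$ lifts to none of the three, I would use \cref{lem-even_ei} to convert this into three cohomological statements: (i) $H^1(\pi_1(M), V_2) = 0$, from non-lifting to $S_4$ combined with $H^1(S_3, V_2) = 0$; (ii) $H^1(\pi_1(M), \mathbb F_2) = H^1(S_3, \mathbb F_2) = \mathbb F_2\cdot f^* x$, from non-lifting to $S_3\times \mathbb Z/2$; and (iii) $f^*(x^2) \neq 0$ in $H^2(\pi_1(M),\mathbb F_2)$, from non-lifting to $\mathbb Z/3\rtimes \mathbb Z/4$.

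The contradiction comes from two applications of \cref{pi1-properties}. Since $V_2$ is $\mathbb F_2$-symplectic and the natural isomorphism $\Sp_{\mathbb F_2}(V_2) = S_3$ lifts to $\ASp_{\mathbb F_2}(V_2) = S_4$ via the section $S_3 \hookrightarrow S_4$, part (4) applies, and combined with (i) gives $\int_M f^* c_{V_2} \equiv 0 \pmod 2$ in $\mathbb F_2$. By \cref{cvnonzero} together with \cref{R:fromS3}, the class $c_{V_2}$ pulls back along this section to the unique nonzero element $x^3 \in H^3(S_3, \mathbb F_2)$, so $\int_M f^* x^3 = 0$. On the other hand, applying part (2) to the nonzero class $f^*(x^2)$ from (iii) produces $\beta \in H^1(\pi_1(M), \mathbb F_2)$ with $\int_M f^*(x^2)\cup \beta \neq 0$; by (ii), $\beta$ must be a scalar multiple of $f^* x$, forcing $\int_M f^* x^3 \neq 0$, a contradiction. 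The step I expect to demand the most care is the identification of the pullback of $c_{V_2}$ along $S_3 \hookrightarrow S_4$ with the nonzero class $x^3 \in H^3(S_3, \mathbb F_2)$; this follows from \cref{R:fromS3}, which shows that $c_{V_2}$ is already pulled back from $H^3(\Sp_{\mathbb F_2}(V_2), \mathbb F_2) = H^3(S_3, \mathbb F_2)$, together with the nonvanishing of \cref{cvnonzero}.
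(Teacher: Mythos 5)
Your proof is correct and takes essentially the same route as the paper: its two cases according to whether the orientation functional vanishes on the $2$-torsion class of $H^3(S_3,\QZ)$ are exactly your dichotomy between $\int_M f^*(x^3)=0$ (forced by \cref{pi1-properties}(4) via $\ASp_{\F_2}(V_2)\cong S_4$, \cref{cvnonzero} and \cref{R:fromS3} when $f$ does not lift to $S_4$) and $\int_M f^*(x^3)\neq 0$ (forced by \cref{pi1-properties}(2) when $f$ does not lift to either central extension). The only difference is presentational: you apply \cref{pi1-properties} and \cref{lem-even_ei} directly, whereas the paper routes through the non-existence direction of \cref{main-existence}, which is proved from precisely these ingredients.
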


\begin{proof}
Since $H^2(S_3,\F_2)$ is cyclic, it only has a single nontrivial 2-torsion class $c$,  and we divide into cases based on whether $\tau(c)=0$.
If $\tau(c)=0$, then the trivial representation $V=\F_2$, with $W=H^2(S_3,\F_2)$, is not spatial for $(S_3,\tau)$, since $V=(V)^\vee$ and 
$W^\tau=H^2(S_3,\F_2)=\F_2$, and so condition (a) fails.  In this case,  by \cref{main-existence}, any surjection $\pi_1(M)\ra S_3 $ lifts to a surjection to one of the extensions of $S_3$ by $\F_2$, which are $S_3\times S_2$ and
$\Z/3 \rtimes \Z/4$.  

If $\tau(c)\neq 0$,  we let $V=\F_2^2$ and with action of $S_3$ through the identification $S_3=\Sp_{\F_2}(V)$.
Recall from the proof of Proposition~\ref{cvnonzero}, that we have a splitting of $\ASp_{\F_2}(V_1)\ra \Sp_{\F_2}(V)$ and that $H^1(S_3,V)=0$.  
By Remark~\ref{R:fromS3}, we have that $c_{V}=c$.  Since $\tau(c_{V})\ne 0$ and $H^1(S_3,V)=0$,  
we have that $V,W=0$ is not spatial,  as  condition (c) fails.
By \cref{main-existence}, 
any surjection $\pi_1(M)\ra S_3 $ lifts to a surjection to $V\rtimes S_3\cong S_4$.
\end{proof}

 The following result may be the simplest example that (1) involves only two groups and (2) follows from the parity results and thus not purely from Poincar\'e duality and Euler characteristic arguments. It does involve slightly  larger groups than the previous two examples, although the group $G$ occurs in multiple contexts as the Mathieu group $M_9$ and as $PSU_3(\mathbb F_2)$.
 
 \begin{prop}\label{negative-72}  
 Let $Q_8$ be the $8$-element quaternion group.
Let $V$ be the two-dimensional irreducible representation of $Q_8$ over $\mathbb F_3$. 
 Let $G= V \rtimes Q_8$, and $H =  V^2 \rtimes Q_8$.
 Any closed, oriented $3$-manifold such that $G$ is a quotient of $\pi_1(M)$ also has $H$ as a quotient of $\pi_1(M)$. \end{prop}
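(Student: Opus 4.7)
The plan is to derive the conclusion from Theorem~\ref{pi1-properties}(3) (the parity of $\dim H^1$ for odd-characteristic symplectic representations) together with Lemma~\ref{lem-even_ei}. The key observation is that $H = V^2 \rtimes Q_8$ coincides as a group with the split extension $V \rtimes G$, where $V$ is viewed as a $G$-module with the normal subgroup $V \leq G$ acting trivially (since $V$ is abelian) and $Q_8$ acting in the standard way. Thus a surjection $\pi_1(M) \to H$ lifting a given $f \colon \pi_1(M) \to G$ is exactly a non-zero element of the ``trivial extension'' subspace $S \subseteq H^1(\pi_1(M), V)$ described in Lemma~\ref{lem-even_ei}, which sits in the exact sequence $0 \to H^1(G,V) \to H^1(\pi_1(M), V) \to S \to 0$.

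Next I would compute $\dim_{\F_3} H^1(G, V) = 1$ via the Lyndon--Hochschild--Serre spectral sequence for $1 \to V \to G \to Q_8 \to 1$. Because $|Q_8| = 8$ and $\chr V = 3$ are coprime, the spectral sequence collapses to its $p=0$ column, giving $H^1(G, V) = H^0(Q_8, \Hom(V,V)) = \operatorname{End}_{Q_8}(V)$, and $\operatorname{End}_{Q_8}(V) = \F_3$ since $V$ is absolutely irreducible (its character is rational-valued, and the only complex irreducible representation of $Q_8$ of dimension greater than one is two-dimensional).

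Finally, $V$ is an $\F_3$-symplectic representation of $\pi_1(M)$: the $Q_8$-action factors through the inclusion $Q_8 \hookrightarrow \Sp_2(\F_3)$, so it preserves a non-degenerate alternating form, and the normal subgroup $V \leq G$ acts trivially, making this form $G$-invariant and hence $\pi_1(M)$-invariant via $f$. Moreover $\operatorname{End}_{\pi_1(M)}(V) = \operatorname{End}_G(V) = \F_3$ since $f$ is surjective, so the coefficient field $\kappa$ in Theorem~\ref{pi1-properties}(3) is $\F_3$, and the theorem yields $\dim_{\F_3} H^1(\pi_1(M), V) \equiv 0 \pmod{2}$. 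Combined with the computation above, $\dim H^1(\pi_1(M), V) \geq 2 > 1 = \dim H^1(G, V)$, forcing $S \neq 0$, so $f$ lifts to a surjection onto $V \rtimes G = H$. No serious obstacle arises — all the heavy lifting is in Theorem~\ref{pi1-properties}(3); the content here is just recognizing that the unique nontrivial class in $H^1(G,V)$ cannot alone account for the even-dimensional $H^1(\pi_1(M), V)$.
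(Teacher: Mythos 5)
Your proof is correct and is essentially the paper's argument: the paper's two-line proof computes $\dim_{\F_3} H^1(G,V)=1$, notes that $V$ is $\F_3$-symplectic so condition (b) of the spatial criterion fails, and invokes \cref{main-existence}, whose relevant ``only if'' direction is exactly the combination of \cref{pi1-properties}(3) and \cref{lem-even_ei} that you apply directly (indeed the paper sketches this very argument as the motivating example at the start of \cref{s-main-lc}). Your identification $H\cong V\rtimes G$ and your Lyndon--Hochschild--Serre collapse giving $H^1(G,V)=\operatorname{End}_{Q_8}(V)=\F_3$ reproduce the paper's computation via \cref{lem-even_ei}, so this is the same approach, merely unwound from the general machinery.
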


\begin{proof} 
 We have $H^1(G,V)=\F_3$ (by  \cref{lem-even_ei} since $V$ is absolutely irreducible),  but $V$ is $\F_3$-symplectic,  so $V$,$W= 0$ is not spatial, 
 failing condition (b). Then apply \cref{main-existence}.
\end{proof} 

The next example is similar, but involves parity of a non-projective representation and thus doesn't follow directly from semicharacteristic theory.

\begin{prop}\label{negative-non-projective} 
Let $V$ be the standard representation of $\SL_2(\mathbb F_3)$ and 
 $G= V \rtimes \SL_2(\mathbb F_3) $, and  $H= V^2 \rtimes \SL_2(\mathbb F_3) $.
Any closed, oriented $3$-manifold such that $G$ is a quotient of $\pi_1(M)$ also has $H$ as a quotient of $\pi_1(M)$. \end{prop}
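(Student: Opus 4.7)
The plan is to apply \cref{main-existence} to the irreducible $G$-representation $V$ (viewed as a $G$-representation through the projection $G \to \SL_2(\F_3)$) with trivial subspace $W = 0$ of $H^2(G, V)$, showing that $(V, 0)$ fails condition (b) of \cref{def-spatial}. The argument mirrors that of \cref{negative-72}, with $\SL_2(\F_3)$ in place of $Q_8$; the novelty (as advertised in the preceding remark) is that $V$ is not a projective $\F_3[G]$-module, so this parity obstruction cannot be obtained from classical semicharacteristic theory.

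First I would check the structural properties of $V$ as a $G$-representation: since $V \triangleleft G$ acts trivially and $\SL_2(\F_3) = \Sp_2(\F_3)$ preserves the standard symplectic form on $\F_3^2$, the representation $V$ is $\F_3$-symplectic. It is also absolutely irreducible, so $\kappa := \operatorname{End}_G(V) = \F_3$.

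Next I would compute $\dim_{\F_3} H^1(G, V) = 1$ by applying \cref{lem-even_ei} to the split surjection $\pi \colon G \to \SL_2(\F_3)$ with kernel $V$. Since $\pi$ is split, the extension class vanishes in $H^2(\SL_2(\F_3), V)$, so the set $S$ in \cref{lem-even_ei} equals all of $\Hom_{\SL_2(\F_3)}(V, V) = \F_3$. For the remaining piece $H^1(\SL_2(\F_3), V)$, I would use the inflation-restriction sequence associated to the normal Sylow 2-subgroup $Q_8 \triangleleft \SL_2(\F_3)$ with quotient $\Z/3$: $H^1(Q_8, V) = 0$ because $|Q_8|$ and $|V| = 9$ are coprime, and $V^{Q_8} = 0$ because $Q_8$ already acts irreducibly on $V$ (any order-$4$ element of $Q_8$ has no nonzero fixed vectors over $\F_3$). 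Hence $H^1(\SL_2(\F_3), V) = 0$ and $\dim_{\F_3} H^1(G, V) = 1$.

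Since $V$ is $\F_3$-symplectic in odd characteristic and $\dim_{\F_3} H^1(G, V) = 1$ is odd, condition (b) of \cref{def-spatial} fails for every orientation of $G$, so $(V, 0)$ is not spatial. The ``only if'' direction of \cref{main-existence} (with $n = 1$, $(V_1, W_1) = (V, 0)$, and $m = 0$) then rules out the existence of a closed, oriented 3-manifold $M$ with an oriented surjection $\pi_1(M) \to (G, \tau)$ failing to lift to the trivial extension $V \rtimes G = H$. Any surjection $f \colon \pi_1(M) \to G$ is automatically oriented with respect to the pushforward orientation $\tau_G := \tau_{\pi_1(M)} \circ f^*$, so every such surjection lifts to $H$, as claimed. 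The only computation of substance is $H^1(\SL_2(\F_3), V) = 0$, which is routine via inflation-restriction.
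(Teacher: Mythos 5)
Your proposal is correct and follows essentially the same route as the paper: both vanish $H^1(\SL_2(\F_3),V)$ via the normal subgroup $Q_8$ (you use inflation–restriction where the paper runs the full spectral sequence), then get $\dim_{\F_3}H^1(G,V)=1$ from \cref{lem-even_ei} applied to the split extension, and conclude via failure of condition (b) in \cref{def-spatial} together with \cref{main-existence}. Your extra remarks on orientations and on identifying the split extension $V\rtimes G$ with $H$ are fine and match what the paper leaves implicit.
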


\begin{proof}  
As a  representation of the subgroup $Q_8$ of $\SL_2(\mathbb F_3)$, $V$ is absolutely irreducible,  and nontrivial and thus $H^0(Q_8, V) =0$, and since $Q_8$ has order prime to $3$, $H^i(Q_8, V)=0$ for all $i>0$. Since $Q_8$ is a normal subgroup of $\SL_2(\mathbb F_3)$ (with quotient $\mathbb Z/3$), there is a spectral sequence computing $H^{p+q} ( SL_2(\mathbb F_3), V)$ from $H^p ( \mathbb Z/3, H^q( Q_8, V))=H^p(\mathbb Z/3, 0) =0$,  and so $H^i (\SL_2(\mathbb F_3), V)=0$ for all $i$.   Note $V$ is a symplectic representation of $G$.
We then have $H^1(G, V)=\F_3$,  by  \cref{lem-even_ei}.  So $V,W=0$, is not spatial, failing condition (b), and the result is implied by  \cref{main-existence}.
\end{proof} 

Now we turn to specific existence results that follow from \cref{main-existence}.  
We obtain many existence results from representations that are automatically spatial (see Remark~\ref{R:easyspatial}).  
For a set of primes $S$, an \emph{$S$-group} is a finite group whose order is a product of primes in $S$.

\begin{cor}\label{C:goodcharexist}
Let $S$ be a finite set of primes, $G$ a finite group whose order is not divisible by primes in $S$.
There exists a closed, oriented $3$-manifold $M$ such that $\pi_1(M)$ has a surjection $\pi_1(M)\ra G$ that does not lift to any surjection $\pi_1(M) \ra H\rtimes G$
where $H$ is an $S$-group.
\end{cor}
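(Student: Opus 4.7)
The plan is to apply Theorem~\ref{main-existence} with carefully chosen data. Let $\underline V=(V_1,\dots,V_n)$ enumerate, up to isomorphism, the irreducible $G$-representations over $\F_p$ for all $p\in S$; since $S$ is finite and the group algebra $\F_p[G]$ has only finitely many simple modules for each $p$, this is a finite list. Let $\underline N=(N_1,\dots,N_m)$ enumerate, up to isomorphism, the non-abelian finite simple $[G]$-groups whose order is a product of primes in $S$; this too is finite, because by the classification of finite simple groups only finitely many non-abelian finite simple groups $N_0$ have order supported on $S$, and a non-abelian simple $[G]$-group has the form $N_0^k$ with $G$ acting transitively on the factors, forcing $k\leq |G|$ and yielding only finitely many $[G]$-structures. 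Set $W_i=0$ for all $i$. Because each $V_i$ has characteristic $p\in S$, coprime to $|G|$, Remark~\ref{R:easyspatial} gives that each $(V_i,W_i)$ is spatial. Fix any orientation $\tau$ on $G$; Theorem~\ref{main-existence} then supplies a closed oriented $3$-manifold $M$ and an oriented surjection $f\colon \mathbf{\pi_1(M)}\to \bG$ that does not lift to any $V_i\rtimes G$ (the only extension with class in $W_i=\{0\}$) nor to any $\Aut(N_j)\times_{\Out(N_j)}G$.

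The remaining point is to upgrade this to: $f$ has no surjective lift $\tilde f\colon\pi_1(M)\to H\rtimes G$ for any nontrivial $S$-group $H$. Given such a $\tilde f$, choose a $G$-invariant normal subgroup $L\subsetneq H$ maximal among the proper $G$-invariant normal subgroups (here $G$ acts on $H$ via the semidirect product structure). Then $K:=H/L$ is a simple $G$-group of $S$-order, and composing $\tilde f$ with $H\rtimes G\twoheadrightarrow K\rtimes G$ produces a surjection $\pi_1(M)\to K\rtimes G$ lifting $f$. If $K$ is abelian, then Sylow decomposition plus simplicity forces $K$ to be an irreducible $\F_p[G]$-module for some $p\in S$, hence $K\cong V_i$ for some $i$, contradicting the non-lifting conclusion. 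If $K$ is non-abelian, then (since inner automorphisms preserve normal subgroups, $G$-invariant and $[G]$-invariant normal subgroups of $K$ coincide) $K$ is a non-abelian simple $[G]$-group of $S$-order, so $K\cong N_j$ for some $j$; the map $(k,g)\mapsto(\mathrm{inn}(k)\alpha_g,g)$ (where $\alpha_g$ denotes the $G$-action on $K$) is a well-defined isomorphism $K\rtimes G\xrightarrow{\sim}\Aut(K)\times_{\Out(K)}G$, because $K$ has trivial center, which again contradicts the non-lifting conclusion.

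The core mathematical content is packaged entirely in Theorem~\ref{main-existence}; the only points one must verify are the finiteness of the two lists $\underline V$, $\underline N$ (the nontrivial input being the classification of finite simple groups to bound the simple non-abelian $S$-groups), and the elementary identification $K\rtimes G\cong\Aut(K)\times_{\Out(K)}G$ for a non-abelian simple $[G]$-group $K$. Neither step presents a substantial obstacle, so the corollary follows with only bookkeeping on top of Theorem~\ref{main-existence}.
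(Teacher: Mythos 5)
Your proof is correct and follows the route the paper intends: the paper states this corollary without proof as an immediate consequence of Theorem~\ref{main-existence} together with Remark~\ref{R:easyspatial}, exactly as you do. The details you supply — finiteness of the lists $\underline V,\underline N$, and the reduction of an arbitrary surjective lift to $H\rtimes G$ to a lift onto a minimally material extension (split abelian extension or $\Aut(K)\times_{\Out(K)}G\cong K\rtimes G$ for centerless simple $K$) — are precisely the bookkeeping the paper leaves implicit, and you verify them correctly.
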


The manifold $M$ with surjection $\pi_1(M) \to G$ produced by \cref{C:goodcharexist} determines a covering space $\tilde{M} \to M$. By construction, $\tilde{M}$ has a free action of $G$ and $\pi_1(\tilde{M})$ has no surjection to any $S$-group. In particular, its mod $p$ homology vanishes for any $p \notin S$, which unless $S =\emptyset$ forces it to be a rational homology 3-sphere. This strengthens the main result of \cite{CooperLong}, which states that $G$ has a free action on a rational homology 3-sphere. 

Pardon \cite{Pardon1980} proved a similar result in higher dimensions, showing that a finite group $G$ of order prime to $p$ has a free action on a simply-connected mod $p$ homology $n$-sphere for $n>3$ odd. As pointed out in \cite[Proposition 5]{ADEM2019}, the same methods could be used to produce free actions of $G$ on  $3$-manifolds that are 
mod $p$ homology 3-spheres (but not necessarily simply-connected). \cref{C:goodcharexist} provides a stronger existence result since we can take $|S|>1$, and we also restrict non-abelian quotients of $\pi_1$.

Corollary~\ref{C:goodcharexist} appears interesting even for groups as small as $G=(\Z/2)^3$.

\begin{cor}\label{C:self-dual non symplectic}
Let $G$ be a finite group and $V$ irreducible self-dual representation of $G$ over a prime field, not symplectic over its endomorphism field (e.g. $V$ a trivial representation $\F_p$). There exists a closed, oriented $3$-manifold $M$ such that $\pi_1(M)$ has a surjection $\pi_1(M)\ra G$ that does not lift to any surjection $\pi_1(M) \ra V \rtimes G$.
\end{cor}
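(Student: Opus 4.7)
The plan is to apply Theorem~\ref{main-existence} directly, taking $n=1$ with the single representation $V_1 = V$ and subspace $W_1 = 0 \subseteq H^2(G, V)$, no $N_i$'s, and any orientation on $G$ (for instance, the trivial class $0 \in H_3(G,\mathbb Z)$, so that $\tau = 0$). The conclusion of Theorem~\ref{main-existence} will then produce a closed oriented $3$-manifold $M$ and an oriented surjection $\pi_1(M) \to G$ that lifts to no extension of $G$ by $V$ with extension class in $W_1 = 0$, and the only such extension is precisely the semidirect product $V \rtimes G$.

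The one thing to check is that the pair $(V, 0)$ is spatial in the sense of Definition~\ref{def-spatial}. For condition (a), with $W = 0$ we have $W^\tau = 0$, and since $V$ is self-dual we have $\dim H^1(G, V) = \dim H^1(G, V^\vee)$, so the required inequality $\dim H^1(G,V) \geq \dim H^1(G, V^\vee) + \dim W^\tau$ holds with equality. Conditions (b) and (c) only impose anything when $V$ is $\kappa$-symplectic over its endomorphism field $\kappa$, which is excluded by hypothesis (and in particular, for $V = \mathbb F_p$ trivial, the only nondegenerate bilinear form $(x,y)\mapsto xy$ satisfies $f(1,1) = 1 \ne 0$, so it is symmetric but not alternating, confirming that the trivial representation is not $\kappa$-symplectic). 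Hence $(V,0)$ is spatial and Theorem~\ref{main-existence} applies; we may in fact take $M$ to be hyperbolic.

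There is really no substantive obstacle here: the corollary is simply the special case of Theorem~\ref{main-existence} where the obstructions (b), (c) trivially vanish because of the non-symplecticity hypothesis, and (a) is automatic from self-duality and the choice $W = 0$. The content of the corollary lies entirely in the probabilistic existence machinery proving Theorem~\ref{main-existence}; once that theorem is in hand, Corollary~\ref{C:self-dual non symplectic} is a one-line verification of the spatiality conditions.
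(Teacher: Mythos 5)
Your proposal is correct and matches the paper's own route: the paper derives this corollary from \cref{main-existence} via \cref{R:easyspatial}, which records exactly your observation that a self-dual, non-$\kappa$-symplectic $V$ with $W=0$ is spatial (condition (a) holding because $V\cong V^\vee$ forces $\dim H^1(G,V)=\dim H^1(G,V^\vee)$, and (b),(c) being vacuous). Your verification, including the choice of an arbitrary orientation and the note that $M$ may be taken hyperbolic, is the same one-line spatiality check the paper intends.
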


We can also give existence results for 3-manifolds whose fundamental groups display some unusual properties.

\begin{prop}\label{spherical-approximation-exists} Let $G$ be the (finite) fundamental group of a spherical $3$-manifold and let $n$ be a natural number. There exists a closed, oriented hyperbolic 3-manifold $M$ such that $G$ is a quotient of $\pi_1(M)$ and every finite group of order $\leq n$ that is a quotient of $\pi_1(M)$ is a quotient of $G$. \end{prop}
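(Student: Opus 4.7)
The plan is to apply \cref{main-existence} with $\bG = (G, s)$, where $s \in H_3(G, \Z)$ is the pushforward of the fundamental class of a spherical $3$-manifold $S^3/G$ with fundamental group $G$, and with an appropriate finite list of representations and non-abelian simple $[G]$-groups to exclude. Let $\tau \colon H^3(G, \QZ) \to \QZ$ be the map corresponding to $s$. Since $S^3/G$ is closed and oriented, \cref{pi1-properties} applies to $(G,s)$, and the key observation is that for every irreducible representation $V$ of $G$ over a finite field and every $\operatorname{End}_G(V)$-subspace $W \subseteq H^2(G, V)$, the pair $(V, W)$ is spatial in the sense of \cref{def-spatial}. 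Indeed, conditions (b) and (c) are immediate from \cref{pi1-properties}(3),(4); for (a), part (1) gives $\dim H^1(G, V) = \dim H^1(G, V^\vee)$, while part (2) forces $W^\tau = 0$ since any nonzero $\alpha \in H^2(G, V)$ pairs nontrivially under $\tau$ with some $\beta \in H^1(G, V^\vee)$, so condition (a) holds with equality.

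I would next enumerate the finitely many minimal material extensions to exclude: let $\{V_i\}_{i=1}^{r}$ be all isomorphism classes of irreducible $G$-representations $V$ over $\F_p$ (for any prime $p$) with $|V| \leq n$ --- a finite list since $p^{\dim V} \leq n$ bounds both $p$ and $\dim V$ --- and set $W_i = H^2(G, V_i)$; let $\{N_j\}_{j=1}^{m}$ be the finite set of isomorphism classes of non-abelian finite simple $[G]$-groups of order at most $n$. Applying \cref{main-existence} to this data then produces a closed, oriented, hyperbolic $3$-manifold $M$ together with an oriented surjection $f \colon \pi_1(M) \to G$ that does not lift to any extension of $G$ with kernel some $V_i$, nor to $\Aut(N_j) \times_{\Out(N_j)} G$ for any $j$.

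To verify that $M$ has the claimed property, I suppose for contradiction that $g \colon \pi_1(M) \to H$ is a surjection with $|H| \leq n$ and $H$ not a quotient of $G$. Let $K \subseteq G \times H$ denote the image of $(f, g)$, so $K$ surjects onto both factors. Since $H$ is not a quotient of $G$, the kernel $N$ of the projection $K \to G$ is nontrivial, and since $N$ injects into $H$ we have $|N| \leq n$. Choosing a $K$-invariant normal subgroup $N' \subsetneq N$ that is maximal among proper $K$-normal subgroups of $N$, the quotient $N/N'$ is a simple $[G]$-group of order at most $n$, and $K/N'$ is a minimal nontrivial extension of $G$ to which $f$ lifts via $\pi_1(M) \twoheadrightarrow K \twoheadrightarrow K/N'$. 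Depending on whether $N/N'$ is abelian or not, this extension is either of the form $V_i \to K/N' \to G$ for some $V_i$ in our list (with extension class in $W_i = H^2(G, V_i)$), or is isomorphic to $\Aut(N_j) \times_{\Out(N_j)} G$ for some $N_j$ in our list, contradicting the choice of $M$. The main subtlety in this argument is the uniform verification of spatiality for all $(V,W)$, which rests entirely on the observation that \cref{pi1-properties}(2) forces $W^\tau = 0$ for every $W$ whenever $G$ is already realized as a $3$-manifold group.
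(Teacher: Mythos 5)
Your proposal is correct and follows essentially the same route as the paper: the paper's own proof simply cites \cref{level-C-existence} (with $\mathcal C$ the groups of order $\leq n$), using its ``only if'' direction applied to the spherical manifold to verify the cohomological hypotheses, which is exactly your verification of spatiality via \cref{pi1-properties} applied to $S^3/G$. Your version just inlines the ingredients of that corollary — applying \cref{main-existence} with an explicit finite list of minimal extensions of order $\leq n$ and redoing by hand the chief-factor argument that the paper packages in \cref{L:Linnicecase} and the spatiality transfer packaged in \cref{L:12implya} — so the content is the same, merely unpacked.
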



\begin{proof} Let $\mathcal C$ be the set of all groups of order $\leq n$. We apply \cref{level-C-existence}, showing there exists a 3-manifold $M$ such that $\pi_1(M)^{\mathcal C} \cong G$ if and only if $G$ satisfies certain conditions. Because we can take $M$ to be hyperbolic in \cref{main-existence}, we can take $M$ to by hyperbolic in the ``if" direction of \cref{level-C-existence}, so it remains to check the conditions. 

To check the conditions, we use the fact that there exists a manifold whose fundamental group is $G$, that being the spherical one, so the conditions are necessarily satisfied by the ``only if" direction of \cref{level-C-existence}. \end{proof}

While \cref{spherical-approximation-exists} follows quickly from our results, such an $M$ could be constructed via suitable Dehn surgery on a hyperbolic knot in $S^3/G$  (following \cite[Remark 2.4]{DunfieldGaroufalidis}). However, we can also produce 3-manifolds whose fundamental groups ``approximate" arbitrarily well certain specific finite groups which are not themselves the fundamental groups of 3-manifolds. For these it is not at all clear how such $3$-manifolds could be constructed by without our probabilistic theorem.  
For a set of primes $S$,  the \emph{pro-$S$ completion} of a group $G$ is the inverse limit of all finite quotients whose order is a product of primes in $S$.
 
\begin{prop}\label{fake-3-manifolds}

Let $a,b,c,d$ be relatively prime odd integers. Let $Q(8a,b,c)$ be the semidirect product $(\mathbb Z/a \times \mathbb Z/b \times \mathbb Z/c) \rtimes Q_8$, where $Q_8$ is the $8$-element quaternion group,  we let $\chi_i$ be the three non-trivial homomorphisms $Q_8\ra \{\pm 1\}$,  and $Q_8$ acts
on $\mathbb Z/a \times \mathbb Z/b \times \mathbb Z/c$ by $g(x,y,z)=(x^{\chi_1(g)},y^{\chi_2(g)},z^{\chi_3(g)})$.

Then for all finite sets $S$ of primes including $2$ and all primes dividing $a,b,c,d$,  there exists a closed, oriented 3-manifold $M$ such that the $G:=Q(8a,b,c)\times \Z/d$ is the pro-$S$ completion of $\pi_1(M)$.

However, unless two of $a,$ $b$, and $c$, are equal to $1$, $G$ is not itself the fundamental group of a closed, oriented 3-manifold. \end{prop}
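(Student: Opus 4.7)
The plan is to prove the two halves of the proposition separately. The non-existence claim follows from the geometrization theorem combined with Milnor's classification of finite groups acting freely on $S^3$; the existence claim is established by applying our probabilistic distribution theorem (\cref{intro-prob}) after producing an orientation class $\tau$ for which all the weights $w_V(\tau)$ and $w_N(\tau)$ in the product formula are positive.

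For the non-existence half, suppose $\pi_1(M) \cong G$ for a closed oriented 3-manifold $M$. Since $G$ is finite, the universal cover of $M$ is a closed simply-connected 3-manifold, hence homeomorphic to $S^3$ by the Poincar\'e conjecture, so $G$ acts freely and orientation-preservingly on $S^3$. The finite groups with this property were enumerated by Milnor, with sufficiency completed by Lee, Thomas, and Madsen; in this classification, the groups of the form $Q(8a,b,c) \times \Z/d$ with the given coprimality assumptions appear only when at most one of $a, b, c$ exceeds $1$ (collapsing to the generalized quaternion family $Q(8a,1,1) \times \Z/d$). Hence $G$ is not a 3-manifold group when at least two of $a, b, c$ exceed $1$.

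For the existence half, let $\mathcal C_n$ denote the finite set of $S$-groups of order at most $n$. For $n \geq |G|$ one has $\{X \in \Prof : X^S \cong G\} = \bigcap_{n \geq |G|} U_{\mathcal C_n, G}$, a decreasing intersection of basic open sets, so by monotone convergence combined with \eqref{mu-U-formula},
\[
\mu\bigl(\{X : X^S \cong G\}\bigr) = \frac{|G|\,|H_2(G,\Z)|}{|H_1(G,\Z)|\,|H_3(G,\Z)|\,|\Aut(G)|} \sum_\tau \prod_V w_V(\tau) \prod_N w_N(\tau),
\]
the products being indexed by irreducible continuous $\F_p$-representations $V$ of $G$ for $p \in S$ and by non-abelian finite simple $[G]$-groups $N$ of $S$-order. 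Because $G$ and $S$ are finite, the classification of finite simple groups bounds the number of $N$ and the modular representation theory of $G$ bounds the number of $V$, so the product is effectively finite; by its definition $w_N(\tau) > 0$ always, and by \cref{R:easyspatial} the factors $w_V(\tau)$ for $p \nmid |G|$ are automatically positive. It therefore suffices to produce a single $\tau$ for which $w_V(\tau) > 0$ for every irreducible $\F_p$-representation $V$ of $G$ with $p$ a prime divisor of $|G| = 8abcd$; once this is achieved, \cref{intro-prob} will supply a random Dunfield--Thurston manifold $M$ with $\pi_1(M)^S \cong G$ with positive probability.

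To produce such $\tau$, we lift to $G$ the orientation class of a spherical 3-manifold $M_0$ with $\pi_1(M_0) = G_0 := Q(8a,1,1) \times \Z/d$ (whose existence follows from the first half), along the surjection $G \twoheadrightarrow G_0$ obtained by killing $\Z/b \times \Z/c$. Conditions (1)--(4) of \cref{closure-characterization} for each remaining irreducible $V$ split into cases according to the action of $\Z/b \times \Z/c$ on $V$: if this kernel acts trivially then $V$ is pulled back from $G_0$, and the conditions reduce---via the Lyndon-Hochschild-Serre inflation-restriction sequence using that $\ker(G \to G_0)$ has order coprime to $p$ in this case---to the conditions for the actual 3-manifold $M_0$, which hold by \cref{pi1-properties}; if instead one of the factors $\Z/b$ or $\Z/c$ acts nontrivially on $V$ while $p$ is coprime to its order, the corresponding LHS spectral sequence degenerates and gives $H^1(G, V) = H^2(G, V) = 0$, trivializing (1)--(4). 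The main obstacles are the residual case where the only nontrivially-acting factor has order divisible by $p$, which requires a more delicate spectral sequence computation (unwinding the tower $\Z/p^{v_p(bc)} \lhd \Z/b \times \Z/c \lhd G$), together with the verification of the symplectic parity conditions (3)--(4) for $V$ pulled back from $G_0$, which reduces to the corresponding parity for $M_0$ via the stability properties of the class $c_V$ established after \cref{cvnonzero}.
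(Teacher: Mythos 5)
Your overall skeleton matches the paper's: the non-existence half (elliptization plus Milnor's classification) is fine, and reducing existence to producing one orientation $\tau$ making every weight $w_V(\tau)$ positive is equivalent to the paper's route through \cref{main-existence} (checking that every irreducible $\F_p$-representation with $W=H^2(G,V)$ is spatial). But the verification of spatiality, which is the actual content, has gaps and one genuine error. First, your choice of $\tau$ is underdetermined in exactly the place where it matters: lifting the orientation class of $M_0$ along the split surjection $G\to G_0=Q(8a,1,1)\times\Z/d$ only constrains $\tau$ modulo $8ad$, and a lift that kills the $b$- or $c$-torsion of $H^3(G,\QZ)\cong\Z/8abcd$ really does fail condition (2): for $p\mid b$, the $p$-Sylow of $\Z/b$ is normal, irreducible $\F_p$-representations factor through its quotient, and the class of the extension enlarging $\Z/p^{v}$ to $\Z/p^{v+1}$ gives a nonzero $\alpha\in H^2(G,V)$ whose cup products land in the $p$-torsion of $H^3(G,\QZ)$; if $\tau$ vanishes there then $W^\tau\neq 0$ while $\dim H^1(G,V)=\dim H^1(G,V^\vee)$, so condition (a) fails and $w_V(\tau)=0$. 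This is why the paper takes $\tau$ to be an \emph{injection} $H^3(G,\QZ)\cong\Z/8abcd\to\QZ$ and observes that injectivity makes the spatial conditions independent of the particular choice. Relatedly, your reduction to $M_0$ when $\Z/b\times\Z/c$ acts trivially assumes the kernel has order coprime to $p$, which is not implied by trivial action (take $V$ trivial with $p\mid bc$); the paper avoids this by organizing the cases by whether $p\nmid bc$ (resp.\ $ac$, $ab$) and exploiting the $a,b,c$-permutation symmetry of $G$, using all three spherical quotients $Q_{8a}\times\Z/d$, $Q_{8b}\times\Z/d$, $Q_{8c}\times\Z/d$ rather than one fixed $G_0$. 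Your single-quotient setup leaves the odd $p\mid bc$ cases to an unexecuted "more delicate spectral sequence computation."

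The more serious error is in characteristic $2$. You claim that when an odd factor acts nontrivially and $p$ is coprime to its order, the vanishing of $H^1(G,V)$ and $H^2(G,V)$ "trivializes (1)--(4)." For $p=2$ this is false: condition (4) with $\dim H^1(G,V)=0$ forces $\tau(c_V)=0$ whenever $V$ is symplectic and the action lifts to $\ASp_\kappa(V)$, and since $\tau$ must be nonvanishing on the $2$-torsion of $H^3(G,\QZ)$ (by the condition-(2) argument above applied at $p=2$), this is a genuine constraint, not vacuous. Showing $\tau(c_V)=0$ for these representations is the heart of the paper's proof: one shows any such $V$ is induced from the index-$4$ subgroup $N=\Z/a\times\Z/b\times\Z/c\times\Z/2\times\Z/d$, that the $2$-torsion of $H^3(G,\QZ)$ is detected on $Q_8$, and then computes $\dim H^0+\dim H^1=2$ for the corresponding local system on $S^3/Q_8$ to conclude the restriction of $c_V$ integrates to zero. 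It is precisely this computation, special to $Q_8$ (it fails for $Q_{2^n}$, $n>3$, which is why those groups are obstructed in \cref{obs-class}), that makes $Q(8a,b,c)\times\Z/d$ unobstructed; your proposal omits it entirely, and the case you do flag for parity (representations pulled back from $G_0$) is the easy one.
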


We recall that a group $G$ has \emph{periodic cohomology} of period  $d\ne 0$ if for all $n>0$ we have $H^n(G,M)\cong H^{n+d}(G,M)$ for all $G$-modules $M$.
By \cite[Ch. VI, Thm.  9.1]{Brown1982},  $G$ has periodic cohomology of period $4$ if and only if $H^4(G, \mathbb Z) \cong \mathbb Z/\abs{G}$.
By the universal coefficient theorem such a group has  $H^3(G,\QZ)\cong \mathbb Z/\abs{G},$ and it is this latter condition that we will mostly use.

\begin{proof}There are finitely many isomorphism classes of finite simple groups with order divisible by only primes in $S$,  so there are finitely many isomorphism classes of finite simple $[Q(8a,b,c)]$-groups with order divisible by only primes in $S$. The group $Q(8a,b,c)$ is the pro-$S$ completion of $\pi_1(M)$ if and only if $\pi_1(M)$ surjects onto $Q(8a,b,c)$ and this surjection doesn't lift to an extension of $Q(8a,b,c)$ by any of these groups. So it suffices to check that there is a $\tau$ satisfying the conditions of \cref{main-existence} when the $V_i$ are the set of irreducible representations of $Q(8a,b,c)$ over $\F_p$, where $p$ varies over the primes of $S$  and $W_i=H^2(Q(8a,b,c),V_i)$.

Milnor \cite[p. 628]{Milnor1957} describes $Q(8a,b,c)$ with a presentation, and in his presentation,  our $\Z/a\Z$ is generated by $y^4$ and the quaternion group is generated by $x,y^a$. 
   From \cite[Theorem 3]{Milnor1957},  we see that $G$ has periodic cohomology of period $4$,  and hence
$H^3 ( G, \QZ) \cong \mathbb Z/(8abcd)$.

We fix $\tau\colon H^3 ( G, \QZ) \cong \mathbb Z/(8abcd)\to \QZ$ an injection. Let us check that $(G, \tau)$ satisfies the conditions of \cref{main-existence} for each irreducible representation $V$ over $\F_p$ of $G$, i.e. that $V$ is spatial taking $W= H^2(G,V)$. 

Let $\kappa$ be the field of endomorphisms of $V$ and $p$ the characteristic of $\kappa$. Let us first restrict attention to the special case $p \nmid bc$. We split our argument further depending on whether the subgroup $\mathbb Z/b \times \mathbb Z/c$ acts trivially on $V$.  This is a normal subgroup of $Q(8a,b,c)$, with quotient $Q(8a,1,1) \times \Z/d=Q_{8a} \times \Z/d$.

If $\mathbb Z/b \times \mathbb Z/c$ acts trivially on $V$  then the map $H^i ( Q_{8a} \times \Z/d, V) \to H^i ( G, V) $ is an isomorphism,  because the kernel of the quotient  $G\to Q_{8a} \times \Z/d$ has order $bc$ prime to the characteristic of $V$. 
The same is true for the dual representation and, in characteristic $2$, for the $2$-torsion part of the cohomology with coefficients in $\QZ$.  
The existence of the $3$-manifold $S^3/( Q_{8a} \times \Z/d)$ \cite[Theorem 2]{Milnor1957} implies that
  $V$ is spatial for $Q_{8a}\times \Z/d$, and hence
 $V$  is also spatial for $G$. (Note that both the $\tau :H^3(Q_{8a} \times \Z/d,\QZ)\ra \QZ$ inherited from our choice above and the $\tau$ coming from the fundamental class of $S^3/ ( Q_{8a} \times \Z/d)$ are injections.  So in either case, 
a  class is nontrivial if and only if its image under $\tau$ is nontrivial.  With this guaranteed, the spatial condition does not depend on the choice of $\tau$, so it is not necessary to check the $\tau$'s are the same.)

On the other hand, if $\mathbb Z/b \times \mathbb Z/c$ acts nontrivially on $V$ then \[H^i (G, V) = H^i (Q_{8a}\times \Z/d, V^{ \mathbb Z/b \times \mathbb Z/c} ) = H^i(Q_{8a}\times \Z/d, 0)=0\] for all $i$ since the subgroup $\mathbb Z/b \times \mathbb Z/c$ is normal so it does not fix any vectors when it acts nontrivially on an irreducible representation. The same is true for $V^\vee$. Examining the conditions that make a representation spatial, we see that $H^i(G, V)= H^i(G, V^\vee)=0$ for all $i$ ensures that $V$ is spatial unless $p=2$, $V$ is affine symplectic, and $\tau(c_V)\neq 0$.

Combining these two paragraphs, we see that if $p\nmid bc$, then $V$ is spatial unless $p=2$, $\mathbb Z/b\times \mathbb Z/c$ acts nontrivially on $V$, $V$ is affine symplectic, and $\tau(c_V)\neq 0$. 

The isomorphism class of the group $G$ is invariant under permutations of $a,b,c$, since $Q_8$ has automorphisms that permute the three characters $\chi_1,\chi_2,\chi_3$. Thus, by symmetry, it follows that if $p \nmid ac$, then $V$ is spatial unless $p=2$, $\mathbb Z/a \times \mathbb Z/c$ acts nontrivially on $V$, $V$ is affine symplectic, and $\tau(c_V) \neq 0$, and a similar statement follows for $ab$.

We are now ready to prove that any $V$ is spatial. First assume $p$ odd. Then since $a,b,c$ are relatively prime, $p$ divides at most one of $a,b,c$, and thus doesn't divide the product of the other two. Since $p \neq 2$, it follows that $V$ is spatial.

Next assume $p=2$. Then $p\nmid a,b,c$, so if $V$ is not spatial it follows that $V$ is affine symplectic,  $\tau(c_V)\neq 0$, and $\mathbb Z/a \times \mathbb Z/b$, $\mathbb Z/a \times \mathbb Z/c$, and $\mathbb Z/b \times \mathbb Z/c$ all act nontrivially on $V$.  We will show that this is impossible.
Let $\kappa'$ be an extension of $\kappa$ containing all the $abcd$'th roots of unity. Then $V \otimes_{\kappa} \kappa'$ remains irreducible and satisfies all the other conditions from this paragraph.

We will now describe the representation $V$ as an induced representation.  The central subgroup $\mathbb Z/2$ of $ Q_8$ acts trivially on $\mathbb Z/a \times \mathbb Z/b \times \mathbb Z/c$, so $G$ contains a normal subgroup $N = \mathbb Z/a \times \mathbb Z/b \times \mathbb Z/c \times \mathbb Z/2 \times\Z/d$ with quotient $(\mathbb Z/2)^2$.
By construction of $\kappa'$, the restriction of $V$ to $N$ contains a one-dimensional irreducible representation $U$ of $N$ (with character 
$\chi_U: N \ra (\kappa')^*$) 
so $V$ admits a map from the induced representation $\operatorname{Ind}_N^{G} U$.  Either this map is an isomorphism,  or the orbit of the character 
$\chi_U$ under the conjugation action of $\mathbb Z/2 \times \mathbb Z/2$ has less than four elements. The second case happens only when $\chi_U$ factors through $\mathbb Z/a \times \mathbb Z/2 \times\Z/d$,   $\mathbb Z/b \times \mathbb Z/2 \times\Z/d$, or $\mathbb Z/ c \times \mathbb Z/2 \times\Z/d$. If $\chi_U$ factors through $\mathbb Z/a \times \mathbb Z/2 \times\Z/d$,  then $\mathbb Z/b \times \mathbb Z/c$ acts trivially on $\operatorname{Ind}_N^{G} U$ and thus on $V$, contradicting our assumption, and similarly for the other possible factorizations, so the second case cannot occur, and $V \cong   \operatorname{Ind}_N^{G} U$.

Using this, we will check that $\tau(c_V)=0$, contradicting another assumption. 
Since the kernel $G\ra Q_8$ has odd order, 
 $H^3( Q_8, \QZ) \to H^3( G , \QZ)$ is an isomorphism on the $2$-torsion part, so the pullback map $H^3( G, \QZ) \to H^3(Q_8, \QZ)$ using the semidirect product structure must be an isomorphism on the $2$-torsion part, and it suffices to show that the pullback of $c_V$ to $H^3(Q_8, \QZ)$  is trivial. 

We do this by applying \cref{pi1-properties} to $S^3/ Q_8$. The representation $V$, restricted to $Q_8$, is the induced representation of the trivial representation from the central $\mathbb Z/2$, so $H^i ( S^3/Q_8, V) \cong H^i ( S^3/ (\mathbb Z/2), {\kappa})$. This cohomology group has dimension $1$ for both $i=0$ and $i=1$, so the sum of the dimensions is $2$, which is even, so the integral of $c_V$ over $S^3/Q_8$ vanishes.  Using the spectral sequence computing the cohomology of $S^3/Q_8$
from $H^p(Q_8,H^q(S_3,\QZ))$,
 the integration map $H^3( Q_8, \QZ) \to \QZ$ is injective, so the pullback of $c_V$ is zero, as desired.

Since the conditions are satisfied in every case, such an $M$ exists for every $S$.

On the other hand, if $G$ were itself the fundamental group of a 3-manifold, then by Perelman's Elliptization Theorem, $G$ would be a subgroup of $SO(4)$ with no fixed points on $S^3$. 
However,  $G$ 
is not such a subgroup, unless two of $a,b,c$ are $1$ 
 \cite[Theorem 3]{Milnor1957}.
\end{proof}

\subsection{Classification of unobstructed groups}\label{SS:obstructed}

We can generalize the results of the last subsection.  We call a finite group \emph{unobstructed} if it lies in the closure of the set
\[ \{ \widehat{ \pi_1(M)} \mid M \textrm{ a closed, oriented 3-manifold},\}\]
and \emph{obstructed} otherwise.  
\cref{closure-characterization} gives a necessary and sufficient condition for groups to be unobstructed in terms of their cohomology.
If a finite group $G$ is obstructed, 
by examining its cohomology groups to find for which representations $V$ 
which condition of \cref{closure-characterization} fails for which values of $\tau$, 
one can produce an explicit list of representations $V_i$ and subspaces $W_i$ such that the condition of \cref{main-existence} fails, hence produce an explicit list of extensions of $G_i$ by $V_i$ such that every surjection $\pi_1(M) \to G$ lifts to at least one of these extensions.
The previous results give explicit examples of obstructed and unobstructed groups. In this subsection, we will give a complete classification of unobstructed groups,
using the group cohomological conditions of \cref{closure-characterization}.

Certainly every $G$ that is itself the fundamental group of a 3-manifold is unobstructed. These are classified as a consequence of the geometrization theorem. There is one further family of unobstructed groups constructed in \cref{fake-3-manifolds}. In this subsection we will show  that these are the only examples.

\begin{lemma}\label{unobstructed-periodic} Let $G$ be an unobstructed finite group. Then $G$ has periodic cohomology of period 
$4$. \end{lemma}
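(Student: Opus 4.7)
The plan is to combine the classical criterion of Swan and Cartan--Eilenberg, which says that a finite group $G$ has periodic cohomology if and only if $G$ contains no subgroup isomorphic to $(\Z/p)^2$ for any prime $p$, with a separate argument that the period is at most $4$. Accordingly, there are two main steps, each deriving the desired conclusion from Theorem~\ref{closure-characterization}.

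First I would show that no $(\Z/p)^2$ embeds in $G$. Assume for contradiction that $A \cong (\Z/p)^2 \leq G$. The key computation is Shapiro's lemma: for $U := \operatorname{Ind}_A^G \F_p$ we have $H^i(G,U) \cong H^i(A,\F_p)$, and a direct calculation for $A = (\Z/p)^2$ gives $\dim_{\F_p} H^1(A,\F_p) = 2$ while $\dim_{\F_p} H^2(A,\F_p) = 3$ (this holds both for $p$ odd and $p=2$). On the other hand, $U$ is self-dual as a permutation module, and condition~(2) of Theorem~\ref{closure-characterization} supplies a left-nondegenerate pairing $H^2(G,V) \times H^1(G,V^\vee) \to \QZ$ via cup product followed by $\tau$, for each irreducible $V$. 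The task is to upgrade this nondegeneracy from irreducible coefficients to the reducible coefficient module $U$; I would do this by filtering $U$ by a composition series, using the long exact sequences in cohomology associated to each short exact sequence of constituents, and diagram-chasing the cup product pairings through those long exact sequences (condition~(1) ensures the relevant dimensions of $H^1$ match up consistently). The resulting inequality $\dim H^2(G,U) \le \dim H^1(G, U^\vee) = 2$ contradicts the value $3$ from Shapiro, so no such $A$ exists and $G$ has periodic cohomology.

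Second I would show that the period divides $4$. With periodic cohomology in hand, $H^4(G,\Z) \cong H^3(G,\QZ)$ is a finite cyclic group (more precisely, a direct sum of cyclic $p$-primary components), and the period divides $4$ iff this group contains an element of order $|G|$, equivalently iff each Sylow $p$-subgroup has its restricted top class of full order. Since $G$ has periodic cohomology, the odd Sylow subgroups are already cyclic and their periods are $2 \mid 4$ automatically. The real work is at the prime $2$: I would use condition~(4) (the parity condition for $\F_2$-symplectic representations that lift to the affine symplectic group), applied to the standard $2$-dimensional symplectic representation of any $2$-Sylow that were neither cyclic nor generalised quaternion, together with the non-triviality of $c_V \in H^3(\ASp_{\F_2}(V),\QZ)$ established in \cref{cvnonzero} and \cref{R:fromS3}, to rule out all such $2$-Sylow subgroups. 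This leaves exactly the $2$-Sylow subgroups whose groups have period dividing $4$, completing the argument.

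The main obstacle is the extension in the first step of condition~(2) from irreducibles to the reducible representation $U$; handling this cleanly requires some care with the long exact sequences, since nondegeneracy of a pairing is not automatically preserved under extensions. A convenient workaround, if diagram chasing on $U$ itself proves awkward, is to replace $U$ by one of its irreducible constituents $V$ on which the Shapiro estimate still produces a mismatch between $\dim H^2(G,V)$ and $\dim H^1(G,V^\vee)$; the existence of such a constituent follows from the Frobenius reciprocity identification of irreducible constituents of $U$ with irreducible $G$-representations having nonzero $A$-invariants, combined with the cohomological excess in degree $2$ over degree $1$ for $A \cong (\Z/p)^2$.
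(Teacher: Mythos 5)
Your step (2) contains a genuine gap. Once periodic cohomology is known, ``period dividing $4$'' is not a property of the Sylow subgroups alone: it requires a class in $H^4(G,\Z)$ of order $\abs{G}$, i.e.\ a class of $G$ restricting to a maximal generator on each Sylow subgroup, and whether such a class exists is governed by fusion. By Swan's theorem, for an odd prime $p$ with cyclic Sylow subgroup $P$ the $p$-period of $G$ is $2\,[N_G(P):C_G(P)]$, which can exceed $4$ even though $P$ itself has period $2$. For instance $G=\Z/7\rtimes \Z/3$ has all Sylow subgroups cyclic, hence periodic cohomology, but its period is $6$; your argument, which declares the odd primes ``automatic,'' would not exclude it, yet it must be excluded --- and indeed it fails condition (1) of \cref{closure-characterization} for a suitable character over $\F_7$, exactly as in \cref{negative-20}. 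So the real work at odd primes (showing $[N_G(P):C_G(P)]\le 2$, or otherwise producing an element of order $\abs{G}$ in $H^4(G,\Z)$ from the unobstructedness hypothesis) is missing. Conversely, the part of your step (2) devoted to the prime $2$ is aimed at a case already eliminated by step (1): once the Sylow $2$-subgroup is cyclic or generalized quaternion, the $2$-period equals the period of that subgroup (Swan) and so divides $4$ automatically, with no need for condition (4) or $c_V$. For comparison, the paper sidesteps all of this: it uses \cref{main-existence} to produce a $3$-manifold $M$ and a surjection $\pi_1(M)\to G$ admitting no lift to any extension by an irreducible representation of characteristic dividing $\abs{G}$, notes that the corresponding $G$-cover then has finite first homology of order prime to $\abs{G}$, and reads off $H^4(G,\Z)\cong \Z/\abs{G}$ directly from the spectral sequence of the cover, obtaining periodicity and the period $4$ in one stroke.

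Step (1) is essentially salvageable, but not by the fallback you propose. The clean statement is that injectivity of the map $H^2(G,W)\to H^1(G,W^\vee)^\vee$ given by cup product and $\tau$ is preserved under extensions of finite modules: for $0\to W_1\to W_2\to W_3\to 0$ this follows from a four-lemma chase whose hypotheses are condition (2) for $W_1$, for $W_3$, and for $W_3^\vee$ (the last giving surjectivity of the outer vertical map), all available by induction since duals of irreducibles are irreducible; condition (1) is not what makes the comparison work. With that, $\dim H^2(G,U)\le \dim H^1(G,U^\vee)=2$ for $U=\operatorname{Ind}_A^G\F_p$, contradicting Shapiro's value $3$, as you intend. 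Your proposed workaround, however --- finding a single irreducible constituent $V$ of $U$ with $\dim H^2(G,V)>\dim H^1(G,V^\vee)$ --- does not follow: cohomology is only subadditive along composition series in each degree, so the excess $\dim H^2(G,U)=3>2=\dim H^1(G,U)$ can a priori occur while every constituent satisfies the irreducible inequality. Keep the four-lemma version and drop the workaround.
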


\begin{proof} 
Let $V_1,\dots, V_n$ be all the irreducible representations of $G$ of characteristic dividing $\abs{G}$. Let $W_i = H^2(G, V_i)$. By \cref{main-existence}, since $G$ is unobstructed, there exists a manifold $M$ and a surjection $\pi_1(M) \to G$ which does not lift to any extension of $G$ by an irreducible representation of $G$ of characteristic dividing $\abs{G}$. Fix one such $M$, and note that $\operatorname{ker} ( \pi_1(M) \to G)^{ab} $ is a finitely-generated abelian group.
Thus $\operatorname{ker} ( \pi_1(M) \to G)^{ab} $ is a finite abelian group of order prime to $\abs{G}$,  or else it would have a non-trivial elementary abelian $p$-group quotient for some $p\mid \abs{G}$ and $\pi_1(M) \to G$ would lift to an extension by one of the $V_i$.

Let $\tilde{M}$ be the $G$-covering of $M$ induced by this surjection. Then $\tilde{M}$ is a closed 3-manifold with fundamental group this kernel. Thus $H_1( \tilde{M})$ is finite of order prime to $\abs{G}$. Thus $H^0(\tilde{M},\mathbb Z) = H^3(\tilde{M}, \mathbb Z) = \mathbb Z$, $H^1(\tilde{M},\mathbb Z) = 0$, and $H^2(\tilde{M}, \mathbb Z)$ is finite of order prime to $\abs{G}$.

There is a spectral sequence whose second page is $E^{p,q}_2 = H^p(G, H^q(\tilde{M}, \mathbb Z))$ converging to $H^{p+q}(M,\mathbb Z)$. By the above calculations, we see that $E^{p,q}$ vanishes for $q\neq 0,2,3$ and for $q=2$ vanishes for $p>0$.  Because of this vanishing, the only possibly nontrivial differentials (i.e. differentials whose source and target both may be nonzero) are  
\[ d^{p,3}_4\colon E^{p,3}_4 \to E^{p+4,0}_4\
\quad \textrm{and} \quad
 d^{0,2}_3 \colon E^{0,2}_3 \to E^{3,0}_3 \] although $d^{0,2}_3$ must vanish because the source is finite of order prime to $\abs{G}$ and the target is $\abs{G}$-torsion. 

Since $d^{p,3}_4$ are the only nonvanishing differentials, we have $E^{p,q}_4 = H^p (G, H^q ( \tilde{M}, \mathbb Z))$, and $E^{p,3}_\infty = \ker d^{p,3}_4$, and $E^{p+4,0}_{\infty}= \coker d^{p,3}_4$. This gives a long exact sequence
\[ 
H^3(M, \mathbb Z) \to H^0(G, H^3(\tilde{M}, \mathbb Z) ) \to H^4 ( G, H^0(\tilde{M}, \mathbb Z))
 \to H^4 (M, \mathbb Z) 
\]
Using $H^4(M, \mathbb Z) =0$ and the fact that $G$ acts trivially on $H^3(\tilde{M}, \mathbb Z)$, we see that $H^4(G, \mathbb Z) = H^4(G, H^0(\tilde{M}, \mathbb Z))$ is the cokernel of the map $H^3(M,\mathbb Z) \to H^3( \tilde{M}, \mathbb Z)$, which is equal to the pullback map on cohomology. 

Since $M$ and $\tilde{M}$ are closed oriented 3-manifolds and $\tilde{M} \to M$ is a degree $\abs{G}$ covering, this map is the multiplication-by-$\abs{G}$ map $\mathbb Z \to \mathbb Z$, so its cokernel is $\mathbb Z/\abs{G}$, as desired.
\end{proof}

The following lemma, which gives an alternate way of seeing that $c_V$ is non-trivial, will help us show that many finite groups with periodic cohomology are obstructed.

\begin{lemma}\label{dihedral-parity-facts} Let $D_n$ be the dihedral group with $2n$ elements for $n$ odd. 
Let $V$ be a representation of $D_n$ that is an induced representation of any faithful character of $\mathbb Z/n$ over a characteristic $2$ finite field. 
Then $V$ is $\kappa$-symplectic and the induced map $D_n \to \Sp_\kappa(V)$ lifts to $\ASp_\kappa(V)$.
Further,  
$c_V$ is  non-trivial in $H^3 ( D_n, \QZ)$ and pulls back from 
$H^3 ( \Z/2\Z, \QZ)$.
\end{lemma}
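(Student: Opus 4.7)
The plan is to leverage the split extension $1 \to \Z/n \to D_n \to \Z/2 \to 1$ throughout. First I would decompose $V|_{\Z/n} = \chi \oplus \chi^{-1}$ as $\kappa[\Z/n]$-modules; this is a genuine direct sum of lines because $\chi \neq \chi^{-1}$ (as $\chi$ is faithful and $n$ is odd). For the $\kappa$-symplectic claim, an invariant nondegenerate $\kappa$-bilinear form cannot pair $\chi$ with itself (that would force $\chi^2 = 1$), so the unique (up to scaling) $\Z/n$-invariant nondegenerate form is the hyperbolic pairing of the two eigenlines, which is automatically alternating; the generator of $\Z/2$ swaps these lines and therefore preserves this form (up to a sign, trivial in characteristic $2$). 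For the lifting claim, the obstruction to lifting $D_n \to \Sp_\kappa(V)$ to $\ASp_\kappa(V)$ lies in $H^2(D_n, \Hom(V, \F_2))$. Since $\Hom(V, \F_2)$ is $2$-torsion and $|\Z/n|$ is odd, a transfer argument gives $H^q(\Z/n, \Hom(V, \F_2)) = 0$ for $q > 0$, while $\Hom(V, \F_2)^{\Z/n}$ vanishes because $\chi^{\pm 1}$ are nontrivial; the Lyndon-Hochschild-Serre spectral sequence for $1 \to \Z/n \to D_n \to \Z/2 \to 1$ then collapses to zero, so the lift exists (and is unique).

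For the nontriviality claim, the same LHSSS with $\QZ$-coefficients gives $H^q(\Z/n, \QZ) = 0$ for $q > 0$ even and $H^q(\Z/n, \QZ) = \Z/n$ (with $\Z/2$ acting by inversion) for $q$ odd. Since $n$ is odd, $H^p(\Z/2, \Z/n) = 0$ for all $p \geq 0$ in this setup, so only the $q = 0$ row survives and the inflation $q^* \colon H^*(\Z/2, \QZ) \to H^*(D_n, \QZ)$ along the quotient $q \colon D_n \to \Z/2$ is an isomorphism. In particular $H^3(D_n, \QZ) \cong \Z/2$ and every nonzero class is pulled back from $H^3(\Z/2, \QZ)$; it remains to verify that the pullback of $c_V$ along the lift $D_n \to \ASp_\kappa(V)$ is the nonzero class.

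To do this I would restrict along the section $i \colon \Z/2 \hookrightarrow D_n$ (noting $q \circ i = \mathrm{id}$ forces $i^*$ to be an isomorphism on $H^3$). Consider $M = \RP^3 = S^3/\Z/2$ with $\pi_1(M) = \Z/2 \hookrightarrow D_n \to \ASp_\kappa(V)$. Under the section the swap permutes the two eigenlines, so $V|_{\Z/2}$ is the regular representation $\kappa[\Z/2] = \mathrm{Ind}_1^{\Z/2} \kappa$; Shapiro's lemma then gives $H^0(\Z/2, V) = \kappa$ and $H^i(\Z/2, V) = 0$ for $i > 0$. The spectral sequence $H^p(\Z/2, H^q(S^3, V)) \Rightarrow H^{p+q}(\RP^3, V)$ (the antipodal action on $S^3$ being orientation preserving) yields $\dim_\kappa H^0(\RP^3, V) = 1$ and $\dim_\kappa H^1(\RP^3, V) = 0$. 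Their sum being odd, \cref{universal-class-exists} forces $\int_{\RP^3} c_V$ to be the nontrivial $2$-torsion class of $\QZ$, so $i^* c_V$ generates $H^3(\Z/2, \QZ)$, and hence $c_V$ is the nontrivial class in $H^3(D_n, \QZ)$, pulled back from $H^3(\Z/2, \QZ)$ as required. The main obstacle is keeping careful track of the $\Z/2$-action on the various coefficient groups (particularly the inversion action on $H^\bullet(\Z/n, \QZ)$) and of the characteristic-$2$ identification of $V|_{\Z/2}$ with an induced representation, but once the eigenline decomposition of $V|_{\Z/n}$ is in hand, each step is a direct application of standard spectral sequence machinery combined with Shapiro's lemma and the universal parity class from \cref{universal-class-exists}.
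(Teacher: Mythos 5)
Your overall route is the paper's: prove symplecticity and the lift using the odd-order normal subgroup $\Z/n$, then detect $c_V$ by restricting to a complement $\Z/2\subset D_n$, where $V$ becomes the regular representation $\kappa[\Z/2]$, and integrating over $\mathbb R\mathbb P^3$ via \cref{universal-class-exists}. But one step is wrong as stated. Your claim that in the $\QZ$-coefficient Lyndon--Hochschild--Serre spectral sequence only the $q=0$ row survives, so that $H^3(D_n,\QZ)\cong\Z/2$ and every nonzero class is inflated from $\Z/2$, is false: inversion acts on $H^{2k}(\Z/n,\Z)\cong\Z/n$ by $(-1)^k$, hence acts \emph{trivially} on $H^4(\Z/n,\Z)\cong H^3(\Z/n,\QZ)$, so $E_2^{0,3}=H^0\bigl(\Z/2,H^3(\Z/n,\QZ)\bigr)\cong\Z/n$ survives and $H^3(D_n,\QZ)\cong\Z/{2n}$ (consistent with $D_n$ being a period-$4$ group, as used in \cref{obs-class}); your "action by inversion" is correct only in degree $1$. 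Consequently your justification of "$c_V$ pulls back from $H^3(\Z/2,\QZ)$" collapses. The repair is exactly the paper's argument: $c_V$ is $2$-torsion by \cref{universal-class-exists}, and since $\ker(D_n\to\Z/2)$ has odd order, inflation $H^3(\Z/2,\QZ)\to H^3(D_n,\QZ)$ is an isomorphism onto the $2$-power torsion (split by restriction to the $\Z/2$, with corestriction killing the odd part), so your $\mathbb R\mathbb P^3$ computation, which shows the restriction of $c_V$ to $\Z/2$ is nonzero, then gives both nontriviality and the pullback statement.

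A second, smaller gap: the lemma allows $n=1$ (and the paper invokes it with $n=1$ in the proof of \cref{obs-class}), but there the faithful character is trivial, so $\chi\neq\chi^{-1}$ fails, the eigenline decomposition degenerates, and $\Hom(V,\F_2)^{\Z/n}\neq 0$, so your vanishing argument for the lifting obstruction does not apply. That case needs a separate (easy) treatment: $V\cong\kappa[\Z/2]$ is hyperbolic, hence symplectic, and $H^2(\Z/2,\Hom(V,\F_2))=0$ by Shapiro's lemma because the coefficient module is induced; alternatively factor $D_1\to D_3\to \Sp_\kappa(V)$ as the paper does. With these two repairs, your argument coincides in substance with the paper's proof (the paper gets symplecticity slightly more quickly from triviality of the determinant in characteristic $2$, but that difference is cosmetic).
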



\begin{proof}
 Because $V$ is two-dimensional, it is symplectic if and only if its determinant is the trivial representation.  
 The determinant of $V$  has trivial action of the subgroup $\mathbb Z/n \subset D_n$,  and thus the action of $D_n$ factors through $\Z/2\Z$.  
 Since the determinant of $V$ is one-dimensional over a characteristic $2$ field,  the action of $D_n$ on the determinant must be trivial, and $V$ is $\kappa$-symplectic.

 To check that $V$ lifts to the affine symplectic group, it suffices to check that the pullback to $H^2(D_n, V^\vee)$ of the extension class in $H^2(\Sp_\kappa(V), V^\vee)$
of the affine symplectic group vanishes. In fact, we will check that $H^i(D_n, V^\vee)$ vanishes for all $i$ when $n\geq 3$. To see this, by the spectral sequence, it suffices to check that $H^i ( \mathbb Z/n, V^\vee)$ vanishes for all $i$. Since $\mathbb Z/n$ has order prime to $2$, its cohomology with coefficients in a characteristic $2$ representation vanishes in degrees above $0$, and in degree $0$ is equal to the $\mathbb Z/n$-invariants, which vanish when 
$n\geq 3$.  For $n=1$, we can factor the map $D_1\ra D_3\ra \Sp_\kappa(V)$ and thus see it lifts to the affine symplectic group.
 
The natural map $D_n \to \mathbb Z/2$ has kernel a group of order prime to $2$ and thus the induced map $H^i(\mathbb Z/2 , \QZ) \to H^i (D_n, \QZ)$ is an isomorphism on $2$-power torsion for all $i$. 
 We will check $c_V$ is nontrivial by restricting to a subgroup $S\cong\mathbb Z/2$ of $D_n$, and checking it remains nontrivial there, where $V$ becomes isomorphic to the regular representation of $\mathbb Z/2$ over $\kappa$. To do this, it suffices to find a 3-manifold with a homomorphism $\pi_1(M) \to \mathbb Z/2$ such that $\dim H^0( M, V) + \dim H^1(M, V)$ is odd. An example is provided by $M = \mathbb R \mathbb P^3$, taking the homomorphism to be the unique isomorphism $\pi_1(\mathbb R \mathbb P^3) \cong \mathbb Z/2$. Since $V$ is the regular representation over $\kappa$, we have $H^0(M, V) = H^0(S^3, \kappa) = \kappa$ and $H^1(M,V) = H^1(S^3, \kappa) =0$, so the sum of their dimensions is indeed odd. 
 Thus by \cref{universal-class-exists} $c_V$ is non-trivial in $H^3(S,\QZ)$,  where it is pulled back from $c_V\in   H^3(D_n,\QZ)$, and hence the latter must be non-trivial.
  \end{proof}

Finally we have the classification of unobstructed groups.

\begin{prop}\label{obs-class} 
The unobstructed finite groups are exactly the finite subgroups of $SO(4)$ acting freely on $S^3$ 
and the groups of the form $Q(8a,b,c) \times \mathbb Z/d$  where $a,b,c,d$ are odd and pairwise relatively prime. \end{prop}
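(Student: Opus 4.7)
The plan is to reduce to period $4$ cohomology, invoke the classical Wolf-Milnor classification, and then rule out every family outside the two listed.

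First I would note that the two families claimed are unobstructed: the finite subgroups of $SO(4)$ acting freely on $S^3$ are by definition fundamental groups of spherical $3$-manifolds, hence trivially unobstructed; and the groups $Q(8a,b,c) \times \mathbb Z/d$ are shown unobstructed in \cref{fake-3-manifolds} (indeed, the proof produces, for any allowed set of primes $S$, a manifold whose pro-$S$ completion is the group). So only the ``necessary'' direction requires real work.

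Next I would apply \cref{unobstructed-periodic} to conclude any unobstructed finite group $G$ has periodic cohomology of period $4$. This reduces the problem to analyzing the Wolf-Milnor-Suzuki-Zassenhaus classification of finite groups satisfying $H^4(G,\mathbb Z) \cong \mathbb Z/|G|$: every such $G$ falls into one of finitely many families (cyclic, metacyclic $\mathbb Z/m \rtimes \mathbb Z/2^k$, binary tetrahedral/octahedral/icosahedral extensions, and the $Q(8a,b,c)$-type family, each possibly crossed with a cyclic group of coprime order). By Hopf's classification of free actions on $S^3$, all these families except the $Q(8a,b,c)\times\mathbb Z/d$ with no two of $a,b,c$ equal to $1$ are already fundamental groups of spherical $3$-manifolds. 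Thus what must be ruled out is the remaining ``non-spherical, non-$Q(8a,b,c)\times\mathbb Z/d$'' possibilities, which are precisely the period $4$ groups violating Milnor's $2p$-condition (they contain a dihedral subgroup $D_q$ of order $2q$ for some odd prime $q$ as a quotient of a subgroup, with the $D_q$ appearing as a genuine quotient of some subgroup that detects $c_V$).

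For the obstruction step I would, for such a group $G$, choose an odd prime $q$ and a surjection $G \twoheadrightarrow D_q$ (the existence of such a quotient in the non-$Q(8a,b,c)\times\mathbb Z/d$ cases is what the Wolf-Milnor classification delivers). Let $V$ be the two-dimensional $\mathbb F_2$-representation of $D_q$ induced from a faithful character of $\mathbb Z/q$, pulled back to $G$. By \cref{dihedral-parity-facts}, $V$ is $\kappa$-symplectic, lifts to $\ASp_\kappa(V)$, and $c_V \in H^3(D_q,\mathbb Q/\mathbb Z)$ is the pullback of the nontrivial $2$-torsion class in $H^3(\mathbb Z/2, \mathbb Q/\mathbb Z)$. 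Since $G$ has period $4$ cohomology with $H^3(G,\mathbb Q/\mathbb Z) \cong \mathbb Z/|G|$, any $\tau \colon H^3(G,\mathbb Q/\mathbb Z)\to \mathbb Q/\mathbb Z$ is determined by its value on the generator, and one computes via $H^1(G,V) = H^1(D_q,V) = 0$ (using that the kernel $\mathbb Z/q$ has order prime to $2$) that condition~(4) of \cref{closure-characterization} becomes $0 \equiv 2\tau(c_V) \pmod 2$. The key point is to show that for \emph{every} choice of $\tau$, either this parity condition fails for $V$, or another condition of \cref{closure-characterization} fails for a different representation, so that no $\tau$ witnesses $G$ as unobstructed.

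The main obstacle will be executing this last step uniformly across the remaining Wolf-Milnor families: one must check that the ``bad'' dihedral quotient $D_q$ always exists in a way that detects $c_V$, and that the corresponding parity obstruction genuinely cannot be cancelled by an alternate choice of $\tau$ (since period $4$ forces $\tau$ to have very little flexibility). A clean way to organize this is to first verify the statement for the classical Milnor examples (like $Q(8,3,5)$) using spherical space form rigidity, and then bootstrap from these to the general non-$Q(8a,b,c)\times\mathbb Z/d$ case by showing that every such group contains one of these as a quotient. Once this is done, combining the necessary direction with the two explicit constructions above yields the full classification.
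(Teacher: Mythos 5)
Your overall skeleton matches the paper's: both arguments reduce to period-$4$ cohomology via \cref{unobstructed-periodic}, run through the classification of period-$4$ groups, and obstruct the leftover families by pulling back the symplectic $2$-dimensional characteristic-$2$ representation $V$ of a dihedral quotient and invoking \cref{dihedral-parity-facts} together with condition (4) of \cref{closure-characterization}. However, there are genuine gaps in the obstruction step. First, the parity condition alone does not kill every $\tau$: with $H^1(G,V)=0$, condition (4) merely forces $\tau(c_V)=0$, and since $H^3(G,\QZ)\cong\Z/\abs{G}$ has plenty of non-injective homomorphisms to $\QZ$, there certainly exist $\tau$ vanishing on the $2$-torsion. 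The paper's proof needs a second, separate mechanism, which you only gesture at (``another condition fails for a different representation'') without identifying: once $\tau$ kills the $2$-torsion of $H^3(G,\QZ)$, condition (2) of \cref{closure-characterization} fails for the \emph{trivial} representation $\F_2$, because $H^2(G,\F_2)\neq 0$ in each remaining family while every cup product $\alpha\cup\beta$ with $\alpha\in H^2(G,\F_2)$, $\beta\in H^1(G,\F_2)$ lands in the $2$-torsion on which $\tau$ vanishes. Without naming this (or some equivalent) step, no choice of $\tau$ has actually been excluded.

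Second, you assert that the pulled-back class $c_V$ ``detects'' the obstruction, but its nontriviality in $H^3(G,\QZ)$ is exactly the delicate point, and it is where the dichotomy of the proposition lives: $Q(8a,b,c)\times\Z/d$ \emph{also} surjects onto a dihedral group $D_b$ with $H^1(G,V_b)=0$, yet it is unobstructed, precisely because for $Q_8$ the relevant class pulls back to zero (this is the computation with $S^3/Q_8$ in \cref{fake-3-manifolds}); for $Q(2^na,b,c)$ with $n>3$ and for the binary-octahedral family one needs the spectral-sequence/induced-representation computations the paper does to show $c_V\neq 0$. As written, your uniform argument would prove too much and contradict the other half of the statement. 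Two smaller issues: the families to be excluded are not ``precisely those violating Milnor's $2p$-condition'' ($Q(2^na,b,c)$ and the $P''_{48n}$-type groups have a unique central involution, so they satisfy it, yet must still be ruled out — having a dihedral \emph{quotient} is not having a dihedral subgroup); and the proposed bootstrap from $Q(8,3,5)$ is backwards, since $Q(8,3,5)$ is unobstructed and cannot serve as a source of obstructions, nor does ``spherical space form rigidity'' bear on the profinite-closure notion of obstruction.
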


Some works toward the classification of finite groups that appear as the fundamental groups of 3-manifolds highlighted $Q(8a,b,c)\times \mathbb Z/d$ as examples of groups that could not be ruled out by their methods \cite{Milnor1957, Lee}.  
Indeed, these were the only groups that could not be ruled out as fundamental groups of 3-manifolds before Perelman's Geometrization Theorem.
 \cref{obs-class} gives a heuristic explanation for this,  as it shows these groups are arbitrarily close to $3$-manifold groups in our topology, and they are the only such finite groups that aren't $3$-manifold groups themselves.

\begin{remark}\label{R:obsS}
If $S$ is a finite set of primes,  and $G$ a finite group that is the maximal $S$-group quotient of a closed,  oriented $3$-manifold, then 
by Remark~\ref{R:easyspatial} and \cref{main-existence}, $G$ must be unobstructed.
\end{remark}

\begin{proof} 
First assume $G$ is unobstructed.
 By \cref{unobstructed-periodic}, $G$ has periodic cohomology with period 4. Such groups are classified.  See, for example \cite[between Proposition 6.9 and Theorem 7.10]{Nicholson2021} for a convenient list whose notation we will use. 
Of these classes given in \cite{Nicholson2021}, those listed as (i)', (iii)', (iv'), and (v') are known to be finite subgroups of $SO(4)$ acting freely on $S^3$ (see, e.g., \cite[Theorem 2]{Milnor1957}). It suffices to show that $G$ cannot be any of the remaining ones, except $Q(8a,b,c) \times \mathbb Z/d$.

Generalizing our previous definition of $Q(8a,b,c)$, for odd coprime positive integers $a,b,c$, and $n\geq 3$, we define
$Q( 2^n a, b,c) = (\mathbb Z/a \times \mathbb Z/b \times \mathbb Z/c) \rtimes Q_{2^n}$ where $Q_{2^n}$ is the 
generalized
quaternion group of order $2^n$ of presentation $\langle x,y| x^{2^{n-1}}=y^4=1, x^{2^{n-2}}=y^2, yxy^{-1}=x^{-1}\rangle$,
and $Q_{2^n}$ acts on $\mathbb Z/a \times \mathbb Z/b \times \mathbb Z/c$ as follows.  
Let $\chi_i$ be the three non-trivial homomorphisms $Q_{2^n}\ra \{\pm 1\}$,  with $\chi_1(x)=1$, and the action is given by
  $g(x,y,z)=(x^{\chi_1(g)},y^{\chi_2(g)},z^{\chi_3(g)})$.

The remaining groups with periodic cohomology of period 4  from the list in \cite{Nicholson2021} are as follows (in each case,  we  also take the product with a cyclic group $C$ of coprime order):
\begin{enumerate}
\item[(ii)'] $D_n$,  the dihedral group of order $2n$, for $n\geq 3$ odd (which Nicholson and Milnor call $D_{2n}$).

\item[(vi)'] $P''_{48n}$,   the extension of the binary octahedral group $\tilde{O}$ (SmallGroup(48,28)) by the cyclic group $C_n$ such that the extension has cyclic Sylow $3$-subgroup and the action of $\tilde{O}$ on $C_n$ is through the order $2$ quotient of $\tilde{O}$ and sending $x\in C_n$ to $x^{-1}$,
 for $n\geq 3$ and odd.

\item[(vii)'] $Q( 2^n a, b,c)$,  for 
 $n\geq 3$ and $a,b,c$ odd coprime integers with $b>c$. 
\end{enumerate}

We will show that each group $G$ on this list except those in the last case with $n=3$ are obstructed.
Let $V_m$ be a representation of $D_m$ that is an induced representation of any faithful character of $\mathbb Z/m$ over a characteristic $2$ finite field. 
For $m>1$, we can check that $V_m$ is irreducible, and  by \cref{dihedral-parity-facts},
 $V_m$ is affine symplectic.
We will find maps $G\ra D_m$ with $H^1(G, V_m)=0$.
Since 
 $H^3(G,\QZ)\cong \mathbb Z/\abs{G},$ we have that   $H^3(G,\QZ)$ has a unique non-trivial $2$-torsion element (which we will identify as $c_{V_m}$).
Using \cref{dihedral-parity-facts} and \cref{closure-characterization}, we will prove that $G$ is obstructed.


In case (ii)',  $G= D_n\times C$ and we take the projection $G\ra D_n$ and let $V=V_n$. 
By \cref{dihedral-parity-facts},  
 $H^3 (G, \QZ)$ has Sylow $2$-subgroup of order $2$,  and $c_{V}$ is  non-trivial in $H^3 ( G, \QZ)$.
We can see that $H^1 (D_n\times C, V)=0$ because, restricting to the normal subgroup $\mathbb Z/n$, we already have $H^*(\mathbb Z/n, V)=0$ because $V$ has characteristic coprime to $n$ and no $\mathbb Z/n$-invariants (as $n>1$).
By Theorem~\ref{closure-characterization} (4),  if $G$ was unobstructed for a particular $\tau$,  then $\tau$ would vanish on the 
$2$-torsion of $H^3 (G, \QZ)$.  However, then with $V=\F_2$,  since $H^2(G,\F_2)=\F_2$, we see that Theorem~\ref{closure-characterization}(2) cannot be satisfied.
Thus we conclude that $G$ is obstructed.


In case (vi), $G=P''_{48n}\times C$.  We note that both $D_{3n}$ and $\tilde{O}$ have unique normal subgroups with quotient $S_3$,  and we can check the fiber product 
$D_{3n} \times_{S_3} \tilde{O}$ satisfies the definition of $P''_{48n}$.
We take the projection $G\ra D_{3n}$ and let $V=V_{3n}$.  
We have a normal subgroup $\Z/n\Z$ of $D_{3n}$ and $G$.
As in the previous case, we have $H^*( \mathbb Z/n, V)=0$ for all $i$ and thus  $H^1 (G, V)=0$. 

Next, we will show that the pullback of the generator of $H^3( \mathbb Z/2, \QZ)$ to $H^3(G, \QZ)$ is nonzero. The map $G \to \tilde{O}$ has kernel prime to $2$ and thus the map $H^3(\tilde{O}, \QZ) \ra H^3(G, \QZ)$ is an isomorphism on $2$-power torsion. 
The kernel of the map $\tilde{O}\ra \Z/2\Z$ is $\tilde{T}$, the binary tetrahedral group.  In the spectral sequence computing 
$H^n ( \tilde{O}, \QZ)$ from $H^p(\Z/2,H^q(\tilde{T},\QZ))$,  we have that $E_2^{1,1}=H^1(\Z/2,\Z/3\Z)=0$ and
$E_2^{0,2}=H^0(\Z/2,0)=0$.  Thus $E^{3,0}=H^3( \mathbb Z/2, \QZ)$ receives no non-trivial differentials and 
$H^3( \mathbb Z/2, \QZ) \ra H^3 ( \tilde{O}, \QZ)$ is an injection.  
So by \cref{dihedral-parity-facts}, $c_V$ is non-zero in $H^3(G, \QZ)$.  

By Theorem~\ref{closure-characterization} (4),  if $G$ was unobstructed for a particular $\tau$,  then since $H^1 (G, V)=0$, we would have $\tau(c_V)=0.$
Further, since $c_V$ is the unique non-trivial $2$-torsion element of  $H^3(G, \QZ)$,  then $\tau$ vanishes on the 
$2$-torsion of $H^3 (G, \QZ)$.  However, then with $V=\F_2$,  since $H^2(G,\F_2)=H^2(\tilde{O},\F_2)=\F_2$, we see that Theorem~\ref{closure-characterization}(2) cannot be satisfied.
Thus we conclude that $G$ is obstructed.

In case (vii)',  $G=Q( 2^n a, b,c)\times C$.  
We have $D_b$ as a quotient of $Q( 2^n a, b,c)$  using a map $ (\mathbb Z/a \times \mathbb Z/b \times \mathbb Z/c) \rtimes Q_{2^n} \to \mathbb Z/b \rtimes \mathbb Z/2$ which sends $Q_{2^n}$ to $\mathbb Z/2$ under the quadratic character $\chi_2$ by which $Q_{2^n}$ acts on $\mathbb Z/b$. 
Let $V=V_b$.
We have a normal subgroup
 $\mathbb Z/b$ of  $G$. As before, $H^*(\mathbb Z/b, V)=0$ and thus $H^1 (G, V) =0$. (Here we use $b> c \geq 1$ to ensure $V$ has no $\mathbb Z/b$-invariants, and later it will ensure that $V$ is irreducible.)

Next we will show that the pullback of the generator of $H^3 ( \mathbb Z/2, \QZ)$ under the above map $G \to \mathbb Z/2$ is nontrivial. Since the projection $G\to Q_{2^n}$ has kernel prime to $2$, it suffices to check that the pullback of the generator of $H^3(\mathbb Z/2, \QZ)$ to $H^3( Q_{2^n}, \mathbb Z/2)$ via $\chi_2$ is nontrivial.
One can do this directly,  but we give a different argument.  Let $K=\ker \chi_2,$ and note when $n>3$, we have $K\cong Q^{2^{n-1}}$.
Let $W=\operatorname{Ind}_{K}^{Q_{2^n}} \F_2$.  The action of $Q_{2^n}$ on $W$ factors through $\chi_2$.  
By \cref{dihedral-parity-facts} (for $n=1$), we have that $W$ is affine symplectic.  We then have $c_W\in H^3(S^3/  Q_{2^n},\QZ)$. 
We can compute $H^0(S^3/  Q_{2^n},W)=H^0(S^3/K,\F_2)=\F_2$ and $H^1(S^3/  Q_{2^n},W)=H^1(S^3/K,\F_2)=H^1(K,\F_2)=\F_2^2$ (using $n> 3$).
By \cref{universal-class-exists}, we then note that the integral of $c_W$ over $S^3/  Q_{2^n}$ is non-trivial so $c_W\in  H^3(Q_{2^n},\QZ)$ must be non-trivial and it factors through $H^3(\mathbb Z/2, \QZ)$ via $\chi_2^*$.  We then conclude that the map $G \to \mathbb Z/2$ above induces an injection
$H^3 ( \mathbb Z/2, \QZ)\ra H^3 ( G, \QZ)$.  It follows that for $n>3$, we have that $c_V$ is non-trivial in $H^3 ( G, \QZ)$. 


By Theorem~\ref{closure-characterization} (4),  if $G$ was unobstructed for a particular $\tau$,  then since $H^1 (G, V)=0$, we would have $\tau(c_V)=0.$
Further, since $c_V$ is the unique non-trivial $2$-torsion element of  $H^3(G, \QZ)$,  then $\tau$ vanishes on the 
$2$-torsion of $H^3 (G, \QZ)$.  However, then with $V=\F_2$,  since $H^2(G,\F_2)=H^2(Q_{2^n},\F_2)
$,  and the latter has a non-trivial $2$-torsion subgroup (from the universal coefficient theorem),
we see that Theorem~\ref{closure-characterization}(2) cannot be satisfied.
Thus we conclude that $G$ is obstructed.

Having eliminated all cases but $Q(8a,b,c)\times C$, we see that an unobstructed group must be of the claimed types of the proposition.  In \cref{fake-3-manifolds},
we showed $Q(8a,b,c)\times C$ is unobstructed, and the finite subgroups of $SO(4)$ acting freely on $S^3$ are $3$-manifold groups themselves, which proves the proposition.
\end{proof}

\section{Probabilistic Theory}

\cref{localized-counting} gives a complete description of the limiting distribution of $\widehat{\pi_1(M_{g,L})}$ and hence has many probabilistic consequences, some of which we give in this section.  In particular we can use it to give the distribution of the maximal $p$-group or nilpotent class $s$ quotient of $\pi_1(M_{g,L})$ (see Section~\ref{SS:pgroup}) or the distribution of $H_1(M,\Z_p)$ with its torsion linking pairing (see Section~\ref{SS:torsionlinking}).  In Section~\ref{SS:prove12} we prove \cref{intro-prob}
on the existence of a limiting distribution, and in Section~\ref{S::VPFBN} we show the limiting probability of a $G$-cover with positive first Betti number is $0$.

\subsection{Maximal $p$-group and nilpotent class $c$ quotients}\label{SS:pgroup}
If one is interested in, for example, the distribution of the maximal $p$-group quotient of $\pi_1(M_{g,L})$, then one can apply \cref{localized-counting} 
and obtain formulas that simplify substantially from the general case.

\begin{prop}\label{p-group-probability}

Let $p$ be a prime and let $s$ be a natural number or $\infty$. 
Let $P$ be a finite $p$-group of nilpotency class $\leq s$. The limiting probability that $P$ is is the maximal  quotient of $\pi_1(M_{g,L})$ that is a $p$-group of nilpotency class $\leq s$ (in the limit as $L$ goes to $\infty$ and then $g$ goes to $\infty)$, is equal to
\[ \frac{  \abs{H_2 (P, \mathbb Z) } \abs{P} }{ \abs{H_1(P,\mathbb Z)} \abs{ \Aut(P) }} \frac{N_s(P) }{ \abs{H_3(G, \mathbb Z)}} \prod_{j=1}^{\infty} (1+p^{-j} ) ^{-1}\]
  where $N_s(P)$ is the number of $\tau\colon H^3( P, \QZ) \to \QZ $ such that for all nonzero $\alpha \in H^2(P,\mathbb Z/p)$ whose induced extension $1 \to \mathbb Z/p \to \tilde{P} \to P \to 1$ has nilpotency class $\leq s$, there exists $\beta \in H^1(P, \mathbb Z/p)$ such that $ \tau(\alpha \cup \beta) \neq 0$.
  
  \end{prop}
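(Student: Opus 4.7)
The plan is to derive this from \cref{intro-prob} (equivalently, the second statement of \cref{localized-counting}) applied to a carefully chosen finite set of finite groups $\mathcal{C}$.  Set $k = \log_p |P|$ and let $\mathcal{C}$ be the finite set of all $p$-groups of nilpotency class $\leq s$ and order at most $p^{k+1}$ (when $s = \infty$ drop the nilpotency constraint).  I would first verify the basic fact that the event ``$\pi_1(M)^{\mathcal{C}} \cong P$'' coincides with the event ``$P$ is the maximal quotient of $\pi_1(M)$ that is a $p$-group of nilpotency class $\leq s$.''  Any strictly larger such quotient $Q$ would have a subquotient of order $p^{k+1}$ and nilpotency class $\leq s$ strictly extending $P$, hence a proper quotient of $\pi_1(M)$ strictly extending $P$ and lying in $\mathcal{C}$, contradicting $\pi_1(M)^{\mathcal{C}} \cong P$.

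Next I would identify the data $\underline{V}, \underline{W}, \underline{N}$ entering the formula of \cref{intro-prob} in the situation $G = P$.  Any non-abelian finite simple $[P]$-group $N$ with $\Aut(N) \times_{\Out(N)} P$ level-$\mathcal{C}$ would yield a $p$-group containing $N$ as a subgroup, which is impossible, so no $N_i$ appear.  For the $V_i$, one needs $V \rtimes P$ level-$\mathcal{C}$; in particular $V$ must be an irreducible $\mathbb{F}_p$-representation of the $p$-group $P$, which forces $V \cong \mathbb{F}_p$ with trivial action.  Hence there is a unique $V_1 = \mathbb{F}_p$ with $\kappa_1 = \mathbb{F}_p$, $q_1 = p$, and the corresponding $W_1 \subseteq H^2(P, \mathbb{F}_p)$ consists exactly of those extension classes $\alpha$ for which the induced extension $1 \to \mathbb{F}_p \to \tilde{P} \to P \to 1$ has nilpotency class $\leq s$.

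Now I would read off the weight from Table 2.  Since $V_1$ is the trivial representation it is $\mathbb{F}_p$-self-dual with Frobenius-Schur indicator $\epsilon_1 = 1$, so
\[ w_{V_1}(\tau) = \begin{cases} 0 & W_1^\tau \neq 0 \\ \prod_{j=1}^\infty (1+p^{-j})^{-1} & W_1^\tau = 0. \end{cases} \]
Unwinding the definition of $W_1^\tau$, the condition $W_1^\tau = 0$ asserts that for every nonzero $\alpha \in H^2(P, \mathbb{F}_p)$ whose induced extension has nilpotency class $\leq s$, there exists $\beta \in H^1(P, \mathbb{F}_p)$ with $\tau(\alpha \cup \beta) \neq 0$.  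By construction this is precisely the condition counted by $N_s(P)$.

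Finally, substituting into the formula of \cref{intro-prob},
\[ \mu(U_{\mathcal{C}, P}) = \frac{|H_2(P, \mathbb{Z})||P|}{|H_1(P, \mathbb{Z})||H_3(P, \mathbb{Z})||\Aut(P)|} \sum_{\tau} w_{V_1}(\tau), \]
and pulling out the common value $\prod_j (1+p^{-j})^{-1}$ for the $N_s(P)$ contributing $\tau$'s yields exactly the stated formula.  The argument is essentially a direct specialization of \cref{intro-prob}; the only point requiring care is the first-step identification of the event with $\pi_1(M)^{\mathcal{C}} \cong P$ and the determination that no non-abelian $N_i$ can arise, both of which follow easily from $P$ being a $p$-group.
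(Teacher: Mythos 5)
Your proposal is correct, and its core is the same computation as the paper's: the only relevant minimal extensions are by the trivial representation $V_1=\F_p$ with $W_1$ the subspace of classes whose extension has nilpotency class $\leq s$, no non-abelian $N_i$ occur, and the weight from Table 2 (trivial representation, $\epsilon_1=1$) gives $0$ or $\prod_j(1+p^{-j})^{-1}$ according to whether $W_1^\tau\neq 0$ or $W_1^\tau=0$, which is exactly the condition counted by $N_s(P)$. The packaging differs: the paper applies the first (flexible) statement of \cref{localized-counting} directly, using the one-line observation that a surjection $\pi_1(M)\to P$ lifting to no class-$\leq s$ extension by $\Z/p$ is precisely an isomorphism onto the maximal such quotient, whereas you route through \cref{intro-prob} (equivalently the second statement of \cref{localized-counting} via \cref{L:Linnicecase}) with $\mathcal C$ the finite set of $p$-groups of class $\leq s$ and order at most $p\abs{P}$. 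Your route is valid but requires the extra verifications you sketch, and one of them is stated loosely: if $P$ fails to be the maximal quotient, the offending $\mathcal F$-quotient $Q$ need not dominate $P$, so you should first pass to the image of $\pi_1(M)$ in $Q\times P$ (a fiber product, still a $p$-group of class $\leq s$) to get a quotient strictly extending $P$, and then take an intermediate \emph{quotient} of it of order $p\abs{P}$ (a ``subquotient'' of $Q$ is not automatically a quotient of $\pi_1(M)$); with that fix the equivalence of the event with $\pi_1(M)^{\mathcal C}\cong P$ is complete. The paper's direct use of the first statement avoids choosing a $\mathcal C$ and this bookkeeping, while your version illustrates that the level-$\mathcal C$ formulation already suffices.
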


We note that $N_\infty(P)=0$ unless $P$ is cyclic or generalized quaternion $Q_{2^k}$ for $k\geq 3$ by Remark~\ref{R:obsS}.  

The same argument works to show the generalization of \cref{p-group-probability} for the maximum $S$-group of nilpotence class $s$ quotient for $S$ a finite set of primes, where $S$-groups are defined to be the groups whose order is divisible only by the primes lying in $S$,
and the resulting probabilities are as above but with a $\prod_{p\in S}$ before the product over $j$.

\begin{proof} We will apply \cref{localized-counting} with $V_1=\mathbb Z/p$, and $W_1$ the subspace of $H^2(P, \mathbb Z/p)$ corresponding to extensions $1 \to \mathbb Z/p \to \tilde{P} \to P \to 1$ of nilpotency class $\leq s$, and no other $V_i$s or $G_i$s. Then a surjection $\pi_1(M) \to P$ is an isomorphism beween the maximal $p$-group quotient of $\pi_1(M)$ of nilpotency class $\leq s$ if and only if it does not lift to any extension $\tilde{P}$ of $P$ by $\mathbb Z/p$ of nilpotency class $\leq s$, since any nontrivial extension of $p$-groups factors through an extension of $\mathbb Z/p$.

We sum \cref{localized-counting} and sum over each orientation of $P$,   equivalently, each $\tau \in H^3(P,\QZ)^\vee$. 
Because $V_1$ is symmetrically self-dual, we have $w_{V_1}=0$ unless $W_1^\tau=0$ and $w_{V_1} =\prod_{j=1}^{\infty} (1+p^{-j} ) ^{-1}$ if $W_1^\tau=0$.
Furthermore, by definition, $W_1^\tau=0$ if and only if, for all nonzero $\alpha \in H^2(P,\mathbb Z/p)$ whose induced extension $\tilde{P}$ of $P$ has nilpotency class $\leq s$, there exists $\beta \in H^1(P, \mathbb Z/p)$ such that $ \tau(\alpha \cup \beta) \neq 0$.
Thus the sum over $\tau$ of $w_{V_1}(\tau)$ is equal to $\prod_{j=1}^{\infty} (1+p^{-j} ) ^{-1}$ times the number of $\tau$ satisfying that condition, as desired.
\end{proof}

\subsection{The distribution of the torsion linking pairing}\label{SS:torsionlinking}

We can give an even simpler formula in the $s=1$ case, i.e. with the $p$-part of the abelianization of $\pi_1(M)$. It turns out that $\tau$, in this setting, carries the information of the torsion linking pairing, and the equidistribution result will be particularly convenient to state in terms of this pairing. We begin with a lemma:

\begin{lemma}\label{linking-injective} Let $G$ be a finite abelian group. Consider the pairing $H^1(G , \mathbb Q/\Z) \tensor H^1(G , \mathbb Q/\mathbb Z) \to H^3(G, \QZ)$ that is defined by taking the Bockstein map $H^1(G, \mathbb Q/\mathbb Z) \to H^2(G, \mathbb Z)$
(associated to $1\ra \Z\ra \mathbb{Q}\ra\QZ\ra 1$)
 in the second variable and then taking the cup product.

This map is symmetric, and the induced map $\Sym^2 ( H^1(G, \mathbb Q/\mathbb Z)) \to H^3(G, \mathbb Q/\mathbb Z)$ is injective. \end{lemma}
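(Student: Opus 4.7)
My plan is to use the Bockstein isomorphism to reformulate the pairing as an integer cup product, and then prove injectivity by inducting on the number of cyclic factors via the K\"unneth formula.

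Since $G$ is finite, $H^i(G, \mathbb{Q}) = 0$ for $i \geq 1$, so from the long exact sequence of $0 \to \mathbb{Z} \to \mathbb{Q} \to \mathbb{Q}/\mathbb{Z} \to 0$ the Bockstein maps $\beta \colon H^i(G, \mathbb{Q}/\mathbb{Z}) \to H^{i+1}(G, \mathbb{Z})$ are isomorphisms for $i \geq 1$. A cochain-level computation (lifting $a$ to a $\mathbb{Q}$-cochain $\tilde a$, so that $\tilde a \cup \beta b$ represents $a \cup \beta b \in H^3(G, \mathbb{Q}/\mathbb{Z})$, and noting that $d(\tilde a \cup \beta b) = d\tilde a \cup \beta b \in C^4(G,\mathbb{Z})$ represents $\beta(a \cup \beta b)$) yields the identity $\beta(a \cup \beta b) = \beta a \cup \beta b$ in $H^4(G, \mathbb{Z})$. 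Hence, under the identifications $H^1(G, \mathbb{Q}/\mathbb{Z}) \cong H^2(G, \mathbb{Z})$ and $H^3(G, \mathbb{Q}/\mathbb{Z}) \cong H^4(G, \mathbb{Z})$ via $\beta$, the pairing becomes the integral cup product $H^2(G, \mathbb{Z}) \otimes H^2(G, \mathbb{Z}) \to H^4(G, \mathbb{Z})$. Symmetry and factoring through $\Sym^2$ are then immediate from graded commutativity of cup product on even-degree classes.

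For injectivity, write $G = \prod_{i=1}^r \mathbb{Z}/n_i$ and induct on $r$. The base case $r = 1$ follows from $H^*(\mathbb{Z}/n, \mathbb{Z}) = \mathbb{Z}[x]/(nx)$ with $x$ of degree $2$: the cup product map sends $x \cdot x \mapsto x^2$, an isomorphism $\mathbb{Z}/n \to \mathbb{Z}/n$. For the inductive step, set $G = G_1 \times G_2$ with $G_1 = \mathbb{Z}/n_1$ cyclic. Since $H^1(G_i, \mathbb{Z}) = 0$, the K\"unneth formula gives $H^2(G, \mathbb{Z}) = H^2(G_1) \oplus H^2(G_2)$ with no Tor contribution, so $\Sym^2 H^2(G) = \Sym^2 H^2(G_1) \oplus \bigl(H^2(G_1) \otimes H^2(G_2)\bigr) \oplus \Sym^2 H^2(G_2)$. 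The cup product acts as the induction-hypothesis injections $\Sym^2 H^2(G_i) \to H^4(G_i)$ on the diagonal blocks, and as the identity on the off-diagonal block $H^2(G_1) \otimes H^2(G_2)$. These three images land in linearly independent direct summands of the (non-canonically split) K\"unneth decomposition of $H^4(G, \mathbb{Z})$, so the total map is injective.

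The main technical care lies in controlling the K\"unneth decomposition in degree $4$: although $G_2$ can have nonzero odd-degree integral cohomology (for instance $H^3((\mathbb{Z}/p)^2, \mathbb{Z}) \neq 0$), which produces additional Tor summands in $H^4(G, \mathbb{Z})$, the image of the cup product pairing lies entirely in the tensor summand of the K\"unneth short exact sequence, since cup products of classes pulled back from $H^2(G_1)$ and $H^2(G_2)$ are realized as cross products and land in the tensor part of K\"unneth.
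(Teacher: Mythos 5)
Your proof is correct, but it takes a genuinely different route from the paper's. The paper never passes to integral cohomology or to a K\"unneth decomposition: it works directly with $\Sym^2(H^1(G,\QZ))$, writes $G=\prod_i \Z/n_i$ with $n_1\mid n_2\mid\cdots$ so that a class is a matrix of coefficients $a_{ij}\in\Z/\gcd(n_i,n_j)$, and detects a nonzero class by restricting to a cyclic subgroup — the key trick being the diagonal embedding $x\mapsto(x,(n_j/n_i)x)$ of $\Z/n_i$, which picks up an off-diagonal coefficient $a_{ij}$ when all diagonal ones vanish; the cyclic case is then handled by periodicity of cyclic group cohomology. You instead use the Bockstein isomorphisms (valid since $H^{\geq 1}(G,\mathbb Q)=0$) together with $\beta(a\cup\beta b)=\beta a\cup\beta b$ to identify the pairing with the integral cup product $H^2(G,\Z)\otimes H^2(G,\Z)\to H^4(G,\Z)$, and then induct on the number of cyclic factors via K\"unneth. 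The two points where your argument needed care — no Tor contribution to $H^2$ of a product (because $H^1(-,\Z)=0$), and the fact that all the relevant cup products are cross products and hence land in the canonical tensor subgroup of $H^4$, within which the three blocks $\Sym^2 H^2(G_1)$, $H^2(G_1)\otimes H^2(G_2)$, $\Sym^2 H^2(G_2)$ map into three distinct canonical summands — are both addressed, so the block-diagonal structure does give injectivity. Comparatively, the paper's subgroup-restriction argument is shorter and needs only the cyclic computation, at the price of the slightly clever diagonal subgroup; your K\"unneth argument is more structural and identifies exactly where the image sits inside $H^4(G,\Z)$, but requires the extra bookkeeping about Tor terms and the (non-canonical) splitting, which you correctly sidestep by observing the image lies in the canonically embedded tensor part.
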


\begin{proof} The symmetry follows from the standard argument that the Bockstein map satisfies the Leibniz rule.

For injectivity, consider a nonzero class  $\alpha\in \Sym^2 ( H^1(G, \mathbb Q/\mathbb Z)) $. If we write $G$ as 
$\prod_i \Z/n_i$ for $n_1\mid n_2\mid \cdots$,
then $\alpha$ consists of, for each pair $i,j$, an element $a_{ij} \in \mathbb Z/{\gcd(n_i,n_j)}$. Let us check that $G$ has a cyclic subgroup, restricted to which, this class remains nontrivial. This is clearly the case if $a_{ii}\neq 0$ for any $i$, so we may assume $a_{ii}=0$ for all $i$ and thus that $a_{ij} \neq 0$ for some distinct $i,j$. Without loss of generality $n_i \mid n_j$, and then pulling $\alpha$ back to the subgroup $\mathbb Z/n_i$ embedded diagonally by $x \mapsto (x, (n_j/n_i) x)$ we obtain a non-trivial element of $\Sym^2 ( H^1(\mathbb Z/n_i, \mathbb Q/\mathbb Z)).$

By pulling back to this cyclic subgroup, we may reduce to the case when $G$ is a cyclic group. Since $H^3(G,\QZ) \cong G$, it suffices to show a generator of $\Sym^2( H^1(G, \QZ))$ is sent to a generator of $H^3(G,\QZ)$ by this map. Since a generator of $H^1(G, \QZ)$ is sent by Bockstein to a generator of $H^2(G, \Z)$, and cupping with a generator of  $H^2(G, \Z)$ gives the periodicity isomorphism $H^i(G, \QZ) \to H^{i+2}(G, \QZ)$, this follows.  \end{proof}

For $M$ a $3$-manifold, the torsion linking pairing of classes $a,b \in H^1(M, \QZ)$ is defined by $ (a,b) \mapsto \int_M (a \cup B b)$ where $B \colon H^1(M, \QZ) \to H^2(M,\mathbb Z)$ is the Bockstein map.   
 If $H^1(M, \QZ)$ is finite, this is a nondegenerate symmetric pairing, 
 and regardless it becomes nondegenerate after quotienting by the divisible subgroup $D_M$ of $H^1(M, \QZ)$.
If $T_M$ is the torsion subgroup of $H_1(M, \Z),$ then note $T_M^\vee$ is naturally isomorphic to $H^1(M, \QZ)/D_M$, so we have a nondegenerate pairing
$T_M^\vee \tensor T_M^\vee \ra \QZ,$ which is equivalent to an isomorphism $T_M^\vee \cong T_M$, and, by taking the inverse of that isomorphism, is equivalent to a nondegenerate pairing $T_M \tensor T_M \ra \QZ.$  This latter pairing is what is usually called the torsion linking pairing.

\cref{localized-counting} in fact gives the complete distribution on the homology groups $H_1(\pi_1(M_{g,L}),\Z_p)$ 
(where $\Z_p$ is the $p$-adic integers), along with the torsion linking pairings on these homology groups.  In addition it gives these distributions simultaneously for any finite set of primes $p$, as we see below.

\begin{prop}\label{abelian-p-group-prob} 
Let $S$ be a finite set of primes and $\Z_S=\prod_{p\in S}\Z_S$.
Let $G$ be a finite abelian $S$-group and let $ \ell \colon G^\vee \times G^\vee \to \QZ$ be a nondegenerate symmetric pairing.
Then
\[
\lim_{g\ra\infty} \lim_{L\ra\infty} \Prob\left[ H_1(M_{g,L},\Z_S)\cong G, \textrm{torsion linking going to $\ell$}\right]= \frac{1}{ \abs{G} \abs{ \Aut(G, \ell)}} \prod_{p\in S}\prod_{j=1}^{\infty} \frac{1}{1+p^{-j} }.\] \end{prop}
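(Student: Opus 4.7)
The plan is to derive the result from \cref{localized-counting}. Choose a finite set $\C = \{\Z/p^n : p\in S,\ 1\le n\le N_p\}$ with $N_p$ strictly exceeding the $p$-adic exponent of $G$, so that level-$\C$ groups are precisely the finite abelian $S$-groups with bounded $p$-exponent and hence $\pi_1(M)^\C\cong G$ coincides with $H_1(M,\Z_S)\cong G$. Apply \cref{localized-counting} with $V_i=\F_{p_i}$ (trivial representation) for each $p_i\in S$, with $W_i=\Ext^1(G,\F_{p_i})\subseteq H^2(G,\F_{p_i})$ classifying the level-$\C$ (equivalently, abelian) extensions of $G$ by $\F_{p_i}$, and with no non-abelian simple factors. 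For an oriented isomorphism $f\colon(\pi_1(M),[M])^\C\to(G,\tau)$, naturality of cup product and Bockstein together with \cref{linking-injective} show that $f$ carries $\ell_M$ to $\ell$ on $G^\vee$ if and only if $\tau$ restricted to the image of $\Sym^2 G^\vee\hookrightarrow H^3(G,\QZ)$ equals $\ell$. Hence
\[
|\Aut(G,\ell)|\cdot\Prob[H_1(M,\Z_S)\cong G,\ \ell_M\mapsto\ell]\;=\;\sum_{\tau:\ \tau|_{\Sym^2 G^\vee}=\ell}\E\bigl[L_{\bG_\tau,\underline V,\underline W,\emptyset}(\mathbf{\pi_1(M)})\bigr].
\]

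The main obstacle is verifying $W_{V_p}^\tau=0$ for every $\tau$ extending $\ell$, so that $w_{V_p}(\tau)=\prod_{j\ge 1}(1+p^{-j})^{-1}$ from Table~2 (using that the trivial representation is symmetric with $\epsilon_{V_p}=1$). Given $\alpha\in\Ext^1(G,\F_p)$, lift to $\tilde\alpha\in H^2(G,\Z)$ and use the Bockstein isomorphism $B\colon H^1(G,\QZ)\xrightarrow{\sim} H^2(G,\Z)$ (valid because $G$ is finite, so $H^*(G,\mathbb Q)$ vanishes in positive degree) to write $\tilde\alpha=B\alpha''$ with $\alpha''\in G^\vee$. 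Then graded commutativity gives $\tau(\alpha\cup\beta)=\tau(\beta\cup B\alpha'')=\ell(\beta,\alpha'')$ for $\beta\in H^1(G,\F_p)=G^\vee[p]$. The subtle point is that $\alpha''$ represents a well-defined class in $G^\vee/pG^\vee$ rather than in $G^\vee[p]$, because the kernel of $H^2(G,\Z)\to H^2(G,\F_p)$ is $pH^2(G,\Z)$; in particular $\alpha''$ need not itself be $p$-torsion. Thus $W_{V_p}^\tau$ is identified with $(G^\vee[p])^\perp/pG^\vee$, which vanishes by the standard fact that for any nondegenerate pairing on a finite abelian group one has $(G^\vee[p])^\perp=pG^\vee$ (this follows from the observation $\ell(py,z)=\ell(y,pz)=0$ for $z\in G^\vee[p]$, together with a size count).

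To conclude, for each $\tau$ extending $\ell$ the first formula of \cref{localized-counting} gives
\[
\lim_g\lim_L\E\bigl[L_{\bG_\tau,\underline V,\underline W,\emptyset}\bigr]\;=\;\frac{|G|\,|H_2(G,\Z)|}{|H_1(G,\Z)|\,|H_3(G,\Z)|}\prod_{p\in S}\prod_{j\ge 1}(1+p^{-j})^{-1},
\]
independently of $\tau$. The number of $\tau\colon H^3(G,\QZ)\to\QZ$ extending $\ell$ equals $|H^3(G,\QZ)|/|\Sym^2 G^\vee|=|H_3(G,\Z)|/|\Sym^2 G^\vee|$. Using $|H_1(G,\Z)|=|G|$ together with the elementary identity $|\Sym^2 G^\vee|=|G|\cdot|H_2(G,\Z)|$ for finite abelian $G$ (which reduces via primary decomposition to $|A\otimes A|=|A|\cdot|\wedge^2 A|^2$ on cyclic summands), summing the expectations and dividing by $|\Aut(G,\ell)|$ collapses to the claimed $\frac{1}{|G|\,|\Aut(G,\ell)|}\prod_{p\in S}\prod_{j\ge 1}(1+p^{-j})^{-1}$.
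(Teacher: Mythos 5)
Your proposal is correct and takes essentially the same approach as the paper's proof: apply \cref{localized-counting} with the trivial representations $\F_p$ for $p\in S$ and $W_p$ the abelian extension classes, translate the condition that the linking pairing goes to $\ell$ into $\tau(a\cup Bb)=\ell(a,b)$, show $W_p^\tau=0$ from nondegeneracy of $\ell$, count the $\tau$ extending $\ell$ via \cref{linking-injective}, and finish with $\abs{\Sym^2(G^\vee)}=\abs{G}\,\abs{H_2(G,\Z)}$. The only differences (packaging the maximal abelian $S$-quotient as a level-$\mathcal{C}$ completion, and running the $W^\tau=0$ argument through the integral Bockstein $H^1(G,\QZ)\cong H^2(G,\Z)$ together with $(G^\vee[p])^\perp=pG^\vee$ instead of the $\QZ\to\Z/p$ Bockstein) are cosmetic repackagings of the same computations.
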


Dunfield and Thurston \cite[\S8]{DunfieldThurston} found the limiting distribution of $H_1(M_{g,L},\Z/p)$, and \cref{abelian-p-group-prob} enriches this by extending to $\Z_p$ coefficients and tracking the torsion linking pairing.  
Dunfield and Thurston discussed the fact that the distribution on elementary abelian $p$-groups that they found from $H^1(M_{g,L},\Z/p)$ does not match the limiting distribution if one takes a quotient of a free group on $g$ generators by $g$ relations.  This latter model of a random group,  a ``random balanced presentation,''  was studied in \cite[\S3]{DunfieldThurston}, as well as by Friedman and Washington \cite{Friedman1989} and the second author \cite{Wood2015a} in connection to the Cohen-Lenstra heuristics,  and in the Cohen-Lenstra  philosophy \cite{Cohen1984} is the natural distribution on finite abelian groups (that have no additional structure).
We see from \cref{abelian-p-group-prob} that the groups $H_1(M_{g,L},\Z_p)$ are distributed as the pushforward of a natural distribution on abelian $p$-groups with symmetric pairings, where  a group with pairing appears with probability inversely proportional to $\abs{G} \abs{\Aut(G,\ell)}.$
This distribution was first introduced by Clancy, Leake, and Payne \cite[\S4]{Clancy2015a} in their study of Jacobians of random graphs, which are also groups with a natural symmetric pairing,  following the Cohen-Lenstra  philosophy that random groups should be considered with all of their additional structure.

\begin{proof}
The group $H_1(M_{g,L},\Z_S)$ is the maximal abelian $S$-group quotient of $\pi_1(M_{g,L})$.
Any surjection $\pi_1(M_{g,L}) \to G$ not  lifting to any abelian extension $1 \to \mathbb Z/p \to H \to G \to 1$ with $p\in S$
must be an isomorphism between $H_1(M_{g,L},\Z_S)$ and $P$. 
This isomorphism sends the linking pairing to $\ell$ if and only, if, for $a,b \in H^1( G, \QZ)$, we have $\ell(a,b) = \tau ( a \cup B b)$.
So we may apply \cref{localized-counting}, 
  summing over possible values of $\tau$,
with $V_i = \mathbb Z/p_i$, for $p_i\in S$, and $W_i$ the set of classes in $H^2(G,\mathbb Z/p_i)$ corresponding to abelian extensions, and no other $V_i$'s or $G_i$'s. Because $V_i$ is symmetric,  we have $w_{V_i}=0$ unless $W_i^\tau=0$ and $w_{V_i} =\prod_{j=1}^{\infty} (1+p_i^{-j} ) ^{-1}$ if $W_i^\tau=0$.

Let $N_\ell(G)$ be the number of $\tau\colon H^3( G, \QZ) \to \QZ $ such that $\ell(a,b) = \tau ( a \cup B b)$ for all $a,b,$ and, for all $i$ and all nonzero $\alpha  \in W_i$, there exists $\beta \in H^1(G, \mathbb Z/p_i)$ such that $ \tau(\alpha \cup \beta) \neq 0$. Then the limiting probability that $H_1(M_{g,L},\Z_S)$ is isomorphic to $G$ by an isomorphism sending $\ell$ to the torsion linking pairing is
\[ \frac{  \abs{H_2 (G, \mathbb Z) } }{ \abs{ \Aut(G, \ell) }} \frac{N_\ell (G) }{ \abs{H_3(G, \mathbb Z)}} \prod_{p\in S} \prod_{j=1}^{\infty} (1+p^{-j} ) ^{-1}.\]

Next we evaluate $N_\ell(G)$. First we note that $W_i$ is the image of $H^1(G, \QZ)$  under the Bockstein map $H^1(G,\QZ) \to H^2(G, \mathbb Z/p_i)$, as every abelian extension of $G$ by $\mathbb Z/p_i$ adds a $p_i$th root to a character in the dual group of $G$, and taking the image of that character under Bockstein gives the extension class.

We will show the second condition in $N_\ell(G)$ is superfluous, i.e. if $\tau (a \cup B b) = \ell(a,b)$ for all $a,b\in H^1(G,\QZ)$, then, for all nonzero $\alpha \in W_i$, there exists $\beta \in H^1(G, \mathbb Z/p_i)$ such that $ \tau(\alpha \cup \beta) \neq 0$. For $\alpha \in W_i\subset H^2(V_i,\Z/p_i)$,  we have $\alpha = B \gamma$ for some $\gamma \in H^1(G, \QZ)$,  by the previous paragraph.  
Since $\alpha$ is nonzero, $\gamma$ is not divisible by $p$.  For any $\beta \in H^1(G, \mathbb F_p) \subseteq H^1(G, \QZ)$ we have 
$\tau ( \alpha \cup \beta) = \tau ( B \gamma \cup \beta) = \tau( \beta \cup B\gamma) = \ell(\beta,\gamma)$. 
Because the linking pairing is nondegenerate and $\gamma$ is not divisible by $p$, we can choose a $p$-torsion $\beta$ making $\ell(\beta,\gamma)$ nonzero.

So $N_\ell(G)$ is simply the number of linear forms $\tau$ that restrict to $\ell$ on classes of the form $ a\cup B b$. By \cref{linking-injective}, the classes of the form $a\cup B b$ generate a submodule of $H^3(G, \QZ)$ isomorphic to $\Sym^2(G^\vee)$, and so $\ell$ extends to exactly $\frac{ \abs{ H^3(G, \QZ)}}{ \abs{\Sym^2(G^\vee)}} $ forms $\tau$. This gives the formula for the probability
\[ \frac{  \abs{H_2 (G, \mathbb Z) }}{  \abs{ \Aut(G, \ell) } \abs{ \Sym^2(G^\vee)} }\prod_{p\in S} \prod_{j=1}^{\infty} (1+p^{-j} ) ^{-1}.\]
It is well-known that $\abs{H_2 (G, \mathbb Z)}=\abs{\wedge^2(G^\vee)}$, and we can easily compute 
$\abs{ \Sym^2(G^\vee)} =\abs{\wedge^2(G^\vee)} \abs{G}$,  proving the proposition.

\end{proof}

\subsection{Proof of  \cref{intro-prob}}\label{SS:prove12}

We now prove  \cref{intro-prob}.  The main thing remaining to show is that there is no escape of mass in the limit of distributions, and the essential ingredient for that is \cref{P:robustness-exchange}.
Note that the support of the probability distribution of \cref{intro-prob} is equal to the closure of the set of profinite completions of fundamental groups of oriented $3$-manifolds. So there are no open subsets of $\Prof$ with zero measure that contain 3-manifold groups.

\begin{proof}[Proof of \cref{intro-prob}]

We first construct a probability measure $\mu$ on the space $\operatorname{OrProf}$ of (isomorphism classes of) oriented profinite groups in $\operatorname{Prof}$.  We use the Borel $\sigma$-algebra for the topology generated by the basic opens $U_{\C,\bG}=\{ \bK | \bK^\C\cong \bG\}$ indexed by $\C$ a finite set of finite groups and $\bG$ an oriented finite group.   We can define a pre-measure $\mu$ on the algebra $\mathcal{A}$ of sets generated by the basic opens  $U_{\C,\bG}$
by
$$
\mu(A):=\lim_{g\to \infty} \lim_{L \to \infty} \mu_{g, L} (A ).
$$
Note when $A=U_{\C,\bG}$, \cref{localized-counting} gives the limiting value, and $\mu$ is additive because finite sums commute with the limits. 
If we take $C_\ell$ to be the set of all groups of order at most $\ell$,  by \cref{P:robustness-exchange} in the special case $\bH=1$, we have that
\begin{equation}\label{E:levelelltoone}
\sum_{G \in I^{C_\ell}} \mu (U_{C_\ell,G} )= \lim_{g\to \infty} \lim_{L \to \infty}  \sum_{G \in I^{C_\ell}} \mu_{g,L} (U_{C_\ell,G} ) =1.
\end{equation}
Now, \cite[Proof of Theorem 9.1]{Liu2020}, using \eqref{E:levelelltoone} in place of \cite[Theorem 9.2]{Liu2020}, shows that $\mu$ is countably additive on $\mathcal{A}$.
Then, Carath\'eodory's extension theorem implies $\mu$ extends uniquely to a measure on the Borel $\sigma$-algebra.
Since any open set in our topology is a disjoint union of basic opens,  for any open $U$ by Fatou's lemma we have
$$
\mu(U) \leq \liminf_{g\to \infty} \liminf_{L \to \infty} \mu_{g, L} (U),
$$
which proves the weak convergence of $\mu_{g,L}$ to $\mu$.  We obtain the theorem by pushing forward the distribution to $\operatorname{Prof}$ and summing over $\tau$.
\end{proof}

\begin{cor}\label{C:dis-num-surj}
For every finite group $G$ and natural number $k$, the limit
$$
P_{G,k}:=\lim_{g\ra \infty} \lim_{L\ra \infty} \Prob[\pi_1(M_{g,L}) \textrm{ has exactly $k$ surjections to $G$}]
$$
exists, and for each $G$,  the $P_{G,k}$ give a probability distribution on the natural numbers $k$.
\end{cor}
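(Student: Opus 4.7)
The plan is to deduce this corollary directly from \cref{intro-prob}, which provides a limiting probability measure $\mu$ on $\operatorname{Prof}$ to which the distribution of $\widehat{\pi_1(M_{g,L})}$ weakly converges. First, I would observe that any homomorphism from a group to the finite group $G$ factors through its profinite completion, so $|\Surj(\pi_1(M_{g,L}), G)| = |\Surj(\widehat{\pi_1(M_{g,L})}, G)|$. Thus the event in question corresponds to the set
\[ A_k := \{ X \in \operatorname{Prof} \mid |\Surj(X, G)| = k \}, \]
and we would define $P_{G,k} := \mu(A_k)$.

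The key observation is that $A_k$ is clopen in $\operatorname{Prof}$. Indeed, since $G$ is level-$\{G\}$, every homomorphism $X \to G$ factors through the level-$\{G\}$ completion $X^{\{G\}}$, so $|\Surj(X,G)| = |\Surj(X^{\{G\}}, G)|$. By \cref{maximal-quotient-finite}, $X^{\{G\}}$ is a finite group whenever $X \in \operatorname{Prof}$, and hence
\[ A_k = \bigcup_{\substack{H \in J_{\{G\}} \\ |\Surj(H,G)| = k}} U_{\{G\}, H}, \]
a union of basic opens. The same argument applied to the complementary condition $|\Surj(X,G)| \neq k$ shows $\operatorname{Prof} \setminus A_k$ is also open, so $A_k$ is clopen.

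Now I would invoke the portmanteau-type consequence of weak convergence: since $\mu_{g,L}$ weakly converges to $\mu$ (in the iterated sense as $L \to \infty$ then $g \to \infty$), applying the open-set inequality of \cref{intro-prob} to both $A_k$ and its complement, combined with $\mu(A_k) + \mu(A_k^c) = 1 = \mu_{g,L}(A_k) + \mu_{g,L}(A_k^c)$, forces $\lim_{g \to \infty} \lim_{L \to \infty} \mu_{g,L}(A_k) = \mu(A_k)$, establishing the existence of $P_{G,k}$. Finally, for each $X \in \operatorname{Prof}$ the quantity $|\Surj(X,G)|$ is finite (again because $X^{\{G\}}$ is finite), so $\operatorname{Prof} = \bigsqcup_{k \geq 0} A_k$ is a disjoint union, giving $\sum_k P_{G,k} = \mu(\operatorname{Prof}) = 1$ since $\mu$ is a probability measure by \cref{intro-prob}. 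I do not anticipate a serious obstacle here; the entire argument is a clean unpacking of the clopen structure of $A_k$ together with the weak convergence already established.
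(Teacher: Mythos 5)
Your proposal is correct and follows essentially the same route as the paper: reduce to the level-$\{G\}$ completion, observe that the event $\{|\Surj(X,G)|=k\}$ is a clopen union of basic opens $U_{\{G\},H}$, and conclude from the weak convergence in \cref{intro-prob}. Your explicit portmanteau argument for clopen sets and the disjoint-union normalization $\sum_k P_{G,k}=1$ are just spelled-out versions of what the paper leaves as "the corollary follows."
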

\begin{proof}
Let $\C=\{ G\}$.  The number of surjections $\pi_1(M_{g,L})\ra G$ is the same as the number of surjections  $\pi_1(M_{g,L})^\C\ra G.$
Let $V_{\C,k}$ be the set of groups $K$ in $\Prof$ with $K^\C$ having exactly $k$ surjections to $G$.  Then $V_{\C,k}$ is the  union of $U_{\C,G_i}$ for some $G_i$, and the complement of the  union of $U_{\C,G'_i}$ for some other $G'_i$, and thus is open and closed.  Thus it follows from Theorem~\ref{intro-prob} that 
$$
\lim_{g\ra \infty} \lim_{L\ra \infty} \Prob[\pi_1(M_{g,L})\in V_{\C,k}] =\mu(V_{\C,k}),
$$
and the corollary follows.
\end{proof}

\subsection{The limiting probability of a $G$-cover with positive first Betti number is $0$}\label{S::VPFBN}

Dunfield and Thurston introduced their model of random Heegaard splittings in order to shed light on the Virtual Haken Conjecture, and the stronger
Virtual Positive Betti Number Conjecture (prior to Agol's Theorem
\cite{Agol2013}).
Dunfield and Thurston showed that for a fixed abelian group $Q$, the limit in $L$ of the probability that $M_{2,L}$ has a $Q$-cover with positive first Betti number is $0$,  and Rivin \cite[Theorem 11.5]{Rivin} generalizes this to solvable groups and fixed $g>1$.  
In the limit as $g\ra\infty$, the following result addresses this question for $G$-covers for any finite group $G$.

\begin{prop}\label{virtually-fibered} For all $n$, 
$$
\lim_{g\ra\infty}\lim_{L\ra\infty} \Prob[ M_{g,L} \textrm{ has a degree $\leq n$ cover with positive first Betti number} ]=0.
$$
\end{prop}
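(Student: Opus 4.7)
The plan is to reduce to a moment bound using \cref{P:decmom} and exploit that permutation representations are orthogonal, giving a bound that vanishes as the characteristic grows.

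First I would reduce: a cover of $M$ of degree $d \leq n$ corresponds (up to conjugation in $S_d$) to a surjection $\phi \colon \pi_1(M) \twoheadrightarrow Q$ onto a transitive subgroup $Q \leq S_d$ together with a stabilizer $Q_1 \leq Q$ of index $d$. Only finitely many such pairs $(Q, Q_1)$ arise, so by a union bound it suffices to show, for each fixed pair, that $\lim_{g \to \infty} \lim_{L \to \infty} \Prob[\exists \phi \colon b_1(\tilde M_\phi) \geq 1] = 0$. Then for each prime $p$ coprime to $|Q|$, Shapiro's lemma gives $\dim H^1(\tilde M_\phi, \mathbb{F}_p) = \dim H^1(M, \mathbb{F}_p) + \dim H^1(\pi_1(M), V_p)$, where $V_p = \mathbb{F}_p[Q/Q_1]/\mathbb{F}_p$ is the augmentation. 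Since $b_1(\tilde M_\phi) \geq 1$ forces $\dim H^1(\tilde M_\phi, \mathbb{F}_p) \geq 1$ for every $p$, and since \cref{abelian-p-group-prob} shows the limiting probability of $p \mid |H_1(M, \mathbb{Z})|$ is $1 - \prod_j (1 + p^{-j})^{-1} = O(1/p)$ while $\lim \Prob[b_1(M) \geq 1] = 0$, it suffices to show that for each $p$ coprime to $|Q|$,
\[ \lim_{g \to \infty}\lim_{L \to \infty} \Prob[\exists \phi \colon \dim H^1(\pi_1(M), V_p) \geq 1] \to 0 \text{ as } p \to \infty. \]

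The core is a moment bound on each irreducible constituent of $V_p$. Decompose $V_p$ into irreducible $Q$-subrepresentations $W$ (at most $d - 1$ of them). For each such $W$, with endomorphism field $\kappa_W$, I would use \cref{lem-even_ei} to biject nonzero classes in $H^1(\pi_1(M), W)$ (modulo $W$-conjugation) with surjective lifts of $\phi$ to $W \rtimes Q$; since $W$ is irreducible and has no $Q$-invariants, this yields the identity $\#\Surj(\pi_1(M), W \rtimes Q) = \sum_\phi |W| \cdot (|H^1(\pi_1(M), W)| - 1)$, whence by Markov
\[ \mathbb{E}[\#\{\phi \colon \dim H^1(\pi_1(M), W) \geq 1\}] \leq \frac{\mathbb{E}[\#\Surj(\pi_1(M), W \rtimes Q)]}{|W| \cdot (|\kappa_W| - 1)}. \]
For generic $p$, the constituent $W$ inherits a nondegenerate orthogonal structure from the standard inner product on the permutation module $\mathbb{F}_p[Q/Q_1]$, so $(\wedge^2 W)^Q = 0$; the Lyndon--Hochschild--Serre spectral sequence for $W \to W \rtimes Q \to Q$ then gives $|H_2(W \rtimes Q, \mathbb{Z})| = |H_2(Q, \mathbb{Z})|$, a constant in $p$. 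Meanwhile, the Tor-K\"unneth summand $(W \otimes W)^Q \cong \End_Q(W) = \kappa_W$ sits inside $(H_3(W, \mathbb{Z}))^Q$, forcing $|H_3(W \rtimes Q, \mathbb{Z})| \gtrsim_Q |\kappa_W|$. Combining with \cref{P:decmom} for the limiting moment, the right-hand side of the Markov bound becomes $O_Q(1/(|\kappa_W|(|\kappa_W| - 1))) = O_Q(1/p^2)$, which, after summing over the $\leq d-1$ constituents and sending $p \to \infty$, yields the desired conclusion.

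The main obstacle will be the detailed spectral-sequence computation bounding $|H_3(W \rtimes Q, \mathbb{Z})|$ from below: specifically, one must rule out that the transgression differential $d^4 \colon E^4_{4, 0} \to E^4_{0, 3}$ kills the $\kappa_W$ contribution in $E^2_{0,3}$. This is handled by noting that the image of $d^4$ is bounded by $|H_4(Q, \mathbb{Z})|$, a constant independent of $p$, so the lower bound on $|H_3|$ survives up to a constant factor. Secondary concerns are the finitely many ``bad'' primes dividing $|Q|$, or where the symmetric form on $V_p$ degenerates so that some irreducible constituent fails to be orthogonal (each possibility restricts to a finite set of $p$ not affecting the $p \to \infty$ limit).
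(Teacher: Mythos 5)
Your reduction to the finitely many pairs $(Q,Q_1)$, the Shapiro-lemma step, and the identity counting surjective lifts of $\phi$ to $W\rtimes Q$ via \cref{lem-even_ei} are all fine, and the skeleton (reduce to showing some mod-$p$ twisted $H^1$ vanishes for large $p$) is parallel in spirit to the paper's proof, which instead counts surjections to a finite quotient $G$ whose kernel surjects onto $\Z/p$. The proof breaks at the moment estimate, in two places. First, \cref{P:decmom} computes the expected number of \emph{oriented} surjections to a fixed oriented group $(H,s)$; what your Markov bound needs is the expected number of \emph{all} surjections to $H=W\rtimes Q$ (every surjection witnessing $H^1\neq 0$ counts, whatever class the fundamental class of $M$ pushes to), and that is obtained by summing over the $|H_3(H,\Z)|$ orientations, giving the Dunfield--Thurston value $|H|\,|H_2(H,\Z)|/|H_1(H,\Z)|$ with no $H_3$ in the denominator. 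So the factor $|H_3(W\rtimes Q,\Z)|\gtrsim |\kappa_W|$ cannot buy you a power of $p$, and the $d^4$-transgression discussion is moot. Second, the claim that for all but finitely many $p$ each constituent $W$ of $\F_p[Q/Q_1]/\F_p$ has $(\wedge^2 W)^Q=0$ is false: symplectic irreducibles do occur in permutation modules (with even multiplicity), e.g.\ for $Q=Q_8$ acting regularly the two-dimensional constituent is $\kappa_W$-symplectic for every odd $p$; the badness depends on $Q$, not on a finite set of primes. For such $W$ one has $|(\wedge^2 W)_Q|=|\kappa_W|$, the transgression into it from $H_3(Q,\Z)$ is bounded independently of $p$, so $|H_2(W\rtimes Q,\Z)|$ genuinely grows like $|\kappa_W|$, the unoriented moment is of order $|W|\,|\kappa_W|$, and your Markov bound yields only $O_Q(1)$, not $o(1)$.

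This is precisely the phenomenon the paper's machinery exists for: the naive first moment is too large for symplectic constituents, and the paper's proof of \cref{virtually-fibered} instead applies \cref{localized-counting}, whose weights $w_{V_i}(\tau)$ encode the parity constraints of \cref{pi1-properties} and all tend to $1$ as $p\to\infty$, so the expected number of ``bad'' surjections tends to $0$. If you want to keep your first-moment skeleton, the gap can be repaired by invoking \cref{pi1-properties}(3) directly: for an odd-characteristic $\kappa_W$-symplectic constituent, $\dim_{\kappa_W}H^1(\pi_1(M),W)$ is even, so when nonzero it is at least $2$ and $|H^1(\pi_1(M),W)|-1\geq |\kappa_W|^2-1$, which beats the extra factor $|\kappa_W|$ in $H_2(W\rtimes Q,\Z)$ and gives $O_Q(1/|\kappa_W|)$; a similar half-power saving handles the unitary-type self-dual constituents, and the orthogonal ones work as you wrote. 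Without that parity input (or the paper's weighted counting theorem), the argument as written does not close.
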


\begin{proof} We will first prove, for each finite group $G$, 
$$
\lim_{\substack{p\ra\infty\\ p \textrm{ prime}} }
\lim_{g\ra\infty}\lim_{L\ra\infty} \Prob[ \pi(M_{g,L}) \textrm{ has surjection to $G$ with kernel that has quotient $\Z/p$} ]=0.
$$
To do this, we bound the probability that $\pi_1(M_{g,L})$ has a surjection by the expected number of such surjections. Furthermore, we represent the expected number of such surjections as the expected number of surjections to $G$ minus the expected number of surjections to $G$ whose kernel does not have a surjection to $\mathbb Z/p$.

By \cite[Theorem 6.21]{DunfieldThurston}, the triple limit of the expected number of surjections to $G$ is $\frac{ \abs{H_2(G, \mathbb Z)} \abs{G} } { \abs{H_1(G, \mathbb Z)}}$.
The number of surjections from $\pi_1(M_{g,L}) $ to $G$ whose kernel does not have a surjection to $\mathbb Z/p$ is the sum over \Mdcorated/ groups $\mathbf G $ with underlying group $G$ of $L_{\mathbf G, \underline{V}, \underline{W}, \underline{N}}$ where $\underline{V}$ consists of all irreducible representations of $G$ mod $p$, $\underline{W}$ of all extension classes of these representations, and $\underline{N}$ is empty. Indeed, the kernel has a surjection to $\mathbb Z/p$ if and only if it has a $G$-equivariant surjection to some irreducible mod $p$ representation, which happens if and only if $\pi_1$ surjects onto some extension of $G$ by that representation.

Thus, by \cref{localized-counting}, the expected number of surjections to $G$ whose kernel does not have a surjection to $\mathbb Z/p$ is \[ \frac{ \abs{H_2(G, \mathbb Z)} \abs{G} } { \abs{H_1(G, \mathbb Z)}\abs{H_3(G,\mathbb Z)}} \sum_{ \tau : H^3(G, \QZ) \to \QZ } \prod_i w_{V_i}(\tau).\]
It suffices to prove $\prod_i w_{V_i}(\tau)$ converges to $1$ as $p$ goes to $\infty$, 
as then this sum will converge to  $\frac{ \abs{H_2(G, \mathbb Z)} \abs{G} } { \abs{H_1(G, \mathbb Z)}}$ and so the difference will converge to $0$, as desired. 

Because we are taking a limit as $p$ goes to $\infty$, we restrict attention to the case that $p$ does not divide $2|G|$. We then have $\dim H^1 ( G, V_i)=\dim H^2(G, V_i)=0$ for all representations $V_i$ of characteristic $p$. Hence the condition for $w_{V_i}(\tau)$ to be nonzero is automatically satisfied, so $w_{V_i}(\tau) = \prod_{k=1}^{\infty} (1- q_i^{-k})^{-1/2}$ if $V_i$ is not self-dual, $\prod_{j=1}^{\infty} ( 1+ q_i^{-j})^{-1}$ if $V_i$ is self-dual with $\epsilon_i=\pm 1$, or $\prod_{j=1}^{\infty} (1 + q_i^{ - j - \frac{1}{2} })^{-1}$ if $\epsilon_i$ is $0$.

All these factors converge to $1$ as $q_i$ goes to $\infty$. Since $q_i$ is at least the characteristic $p$ of $V_i$, they converge to $1$ as $p$ goes to $\infty$. The number of factors in $\prod_i w_{V_i}(\tau)$ is the number of isomorphism classes of irreducible representations of $G$ over $\mathbb F_p$, which is bounded by the number of isomorphism classes of irreducible representations of $G$ over $\mathbb C$ and thus bounded independently of $p$, so the product goes to $1$, as desired, and thus the  limiting probability of a surjection to $G$ whose kernel has a surjection to $\mathbb Z/p$ is $0$.

Since a group with a surjection to $\Z$ has a surjection to $\mathbb Z/p$ for all $p$, it follows that
$$
\lim_{g\ra\infty}\lim_{L\ra\infty} \Prob[ \pi(M_{g,L}) \textrm{ has surjection to $G$ with kernel that has quotient $\Z$}]=0.
$$
%
Summing over all $G$ of order $\leq n!$, 
$$
\lim_{g\ra\infty}\lim_{L\ra\infty} \Prob[ \pi(M_{g,L}) \textrm{ has a normal subgroup of index $\leq n!$ that has quotient $\Z$}]=0.
$$
Because every subgroup $H$ of index $\leq n$ contains a normal subgroup $N$ of index $\leq n!$,  and if $H$ has a surjection to $\Z$ then so does $N$, we have
$$
\lim_{g\ra\infty}\lim_{L\ra\infty} \Prob[ \pi(M_{g,L}) \textrm{ has a subgroup of index $\leq n$ that has quotient $\Z$}]=0.
$$
Finally, $M_{g,L}$ has a covering of degree $\leq n$ with positive first Betti number if and only if  $\pi_1(M_{g,L})$ has a subgroup of index $n$ whose kernel has a surjection to $\mathbb Z$.
\end{proof}

\section{Some Further Directions}

\subsection{Algorithms}
It may be possible to obtain from our results an algorithm which,  given finite groups $G_1,\dots, G_n, $ $H_1,\dots, H_m$, returns whether there exists a 3-manifold whose fundamental group admits $G_1,\dots, G_n$ as a quotient but not $H_1,\dots, H_m$. This happens if and only if there exists a finite level-$\C$ group, with $G_1,\dots, G_n$ as a quotient but not $H_1,\dots, H_m$, that satisfies the criteria of \cref{level-C-existence}, for $\mathcal C =\{G_1,\dots, G_n, H_1,\dots, H_m\}$. These criteria are straightforwardly computable for a given group. Thus the main difficulty is that there are infinitely many level-$\C$ group.

\subsection{Other random groups}
On the probabilistic side, it would be interesting to generalize this work to other models of random 3-manifolds (e.g. see \cite[Section 7.4]{Aschenbrenner2015},
\cite{Petri2020}). 
Do they produce the same probability measure?  If not, can our methods, or other new methods, be applied to find the new distribution? There are some models, such as the mapping torus of a random element of the mapping class group, that certainly give different distributions, as the fundamental groups of mapping tori always surject onto $\mathbb Z$ and thus onto $\mathbb Z/n$ for all $n$.

Are there other topological, geometric, or algebraic constructions of random groups that give the distribution found in this paper?
Liu \cite[Appendix A]{Liu2022} constructs a random pro-$\ell$ group by taking a quotient of the pro-$\ell$ completion of a surface group defined using a random automorphism of the group.  She proves these groups have the same limiting (non-oriented) moments as pro-$\ell$ completions of random Heegaard splittings, 
and we expect our methods will show this implies they have the same limiting distribution.

In particular, we wonder what are other ways to give a natural random oriented group, i.e. a group $G$ with a specified element of $H_3(G,\Z)$?
And what distributions arise from such groups?  In the abelian case, there is a universality theorem 
 \cite{Wood2017} that says many different constructions of random abelian groups with symmetric pairings have the same limiting distribution (which the the same as the limiting distribution of $H_1(M_{g,L})$ in this paper).  Is there a non-abelian version of this universality?

\subsection{Questions on the limiting measure $\mu$}
In addition to questions that have direct relevance to 3-manifolds, we can ask about properties of the limiting measure on $\operatorname{Prof}$ obtained from the fundamental groups of random 3-manifolds, or its support, that are not necessarily logically related to the same question for 3-manifolds.

A starting point is Dunfield and Thurston's question \cite[Section 9]{DunfieldThurston}, for a fixed finite group $G$, about the probability that a random Heegaard splitting has a $G$-cover with positive first Betti number.  \cref{virtually-fibered} shows these limiting probabilities are $0$, and so by countable additivity
a random group according to $\mu$ has an open subgroup with a surjection to $\hat{\mathbb Z}$ with probability zero.
Because this condition is neither a closed nor an open condition, 
Theorem~\ref{intro-prob} gives no logical implication between this question of $\mu$ and the limit of the analogous question for the distribution of $\pi_1(M_{g,L})$.
Indeed,  Agol's Theorem \cite{Agol2013} shows that most 3-manifolds have a subgroup of finite index with a surjection to $\mathbb Z$.

Thus, while random groups according to $\mu$ behave like 3-manifold groups in various ways, the analogy is not perfect. Some analogous questions have different answers on the two sides. It would be interesting to investigate how often this happens. 
In other words, to take properties of groups known or suspected to hold for all or almost all 3-manifold groups, detectable by the profinite completion, and ask with what probability they hold for $\mu$-random profinite groups. Owing to the great recent progress in the theory of 3-manifold groups, we will find more interesting questions of this form taking known properties rather than suspected ones.

{\bf Poincar\'e duality - } One question along these lines has to do with Poincar\'e duality. For $V$ a representation of $\pi_1(M)$, we have by Poincar\'e duality the cup product and fundamental class give a perfect pairing $H^i (M, V) \times H^{3-i}(M, V^\vee) \to \QZ$. If $M$ is irreducible and has infinite fundamental group, then $M$ is aspherical, so $H^i (M, V) = H^i(\pi_1(M),V)$ and thus we have a perfect pairing $H^i (\pi_1(M), V) \times H^{3-i}(\pi_1(M), V^\vee) \to \QZ$.   Because $\pi_1(M)$ is automatically a good group in the sense of Serre, we have $H^i(\pi_1(M),V)= H^i ( \hat{\pi}_1(M), V)$, so we have a perfect pairing $H^i (\hat{\pi}_1(M), V) \times H^{3-i}(\hat{\pi}_1(M), V^\vee) \to \QZ$.

Does something similar hold for $G$ a random group according to the measure $\mu$? In other words, for every representation $V$ of $G$, is $\tau(\alpha \cup \beta) \colon H^i(G, V) \times H^{3-i}(G,V^\vee) \to \QZ$ a perfect pairing? For $i>3$, this is just the statement that $H^i(G,V)$ should vanish.

{\bf Surface subgroups -} Most 3-manifolds are hyperbolic, and hyperbolic 3-manifolds are known to contain plentiful subgroups isomorphic to the fundamental group of a hyperbolic surface $\Sigma_g$. Because these hyperbolic 3-manifold groups are LERF, this produces an injective map on profinite completions $\hat{\pi}_1(\Sigma_g) \to \hat{\pi}_1 (M)$.

For $G$ a $\mu$-random group, with what probability do we have an injection  $\hat{\pi}_1(\Sigma_g) \to G$?

Something weaker, an injection $\pi_1(\Sigma_g) \to G$, exists with probability $1$: By \cite[Theorem 1.1]{BGSS}, it suffices to check that $G$ admits an injection from a free group with probability $1$. In fact, with probability $1$, two random elements of $G$ generate a free group, which one can check using the fact that $G$ has infinitely many distinct finite simple quotients with probability $1$.

{\bf Topological finite generation - } The fundamental groups of 3-manifolds are topologically generated, so their profinite completions are topologically finitely generated. Does the same hold for random groups according to the measure $\mu$? Since 3-manifolds of Heegard genus $g$ can require up to $g$ generators, and $\mu$ is obtained by a large $g$ limit, it is not clear what to expect. (This question was suggested by Mark Shusterman and Jordan Ellenberg.)

{\bf Groups with special structure - } In Section~\ref{SS:obstructed}, we found all finite groups in the support of the measure $\mu$. One could seek a stronger version of this result by replacing the condition ``finite" with a weaker condition such as virtually abelian, virtually nilpotent, or virtually solvable. All virtually solvable fundamental groups of 3-manifolds are known  \cite[Theorem 1.11.1]{Aschenbrenner2015}, but as in the finite case, there may be new examples that are limits of 3-manifold groups but not themselves 3-manifold groups. One could also ask similar questions for other restricted classes of groups.

{\bf Linear representations - } Hyperbolic 3-manifold groups are subgroups of $SL_2(\mathbb C)$, and thus have two-dimensional linear representations with image dense in $SL_2$. These can be defined over a number field $K$, so we obtain representations of the profinite fundamental group into $SL_2( K_v)$ for the completion $K_v$ of $K$ at each place $v$. We can ask whether a $\mu$-random group has such representations, and more generally what representations over $p$-adic fields it has.

\appendix

\section{Semicharacteristics}\label{ss-semicharacteristics}

Let $M$ be a manifold of dimension $2n+1$ and $\kappa$ a field. The semicharacteristic of $M$ with coefficients in $\kappa$ is $\sum_{i=0}^n (-1)^i \dim H_i(M, \kappa)$. 

Generalizing this, for $M$ a manifold with a surjection $\pi_1(M) \to G$ and associated covering $\tilde{M}\to M$ (equivalently, for $\tilde{M}$ a manifold with a free action of $G$), Lee \cite[Definition 2.3]{Lee} defined a semicharacteristic class $\sum_{i=0}^n (-1)^i [ H_i(\tilde{M},\kappa)]$ taken in the Grothendieck group of representations of $G$ over $\kappa$ modulo a certain subspace depending on the parity of $n$ and the characteristic of $\kappa$. The main result of \cite{Lee} is that, modulo this subspace, the semicharacteristic is a bordism invariant \cite[Theorem 2.7 and Theorem 3.8]{Lee}.

This result bears an obvious similarity to \cref{parity-is-bordism}. The difference is that \cite{Lee} considers the cohomology of the cover in $K$-theory, while we consider cohomology twisted by a representation. Cohomology twisted by a representation is the more powerful invariant: Recall that representations of a group are equivalent to modules of the group algebra and a module $P$ is projective if the functor $\Hom(P, -)$ is exact. We will check that the dimension of the $i$th cohomology twisted by each indecomposable projective module for the group algebra determines the class of the $i$th cohomology of the cover in $K$-theory. However, the $K$-theory does not determine the cohomology of non-projective modules.

Using this, we will show that our result implies the main result of \cite{Lee} in the cases where they both apply. It would be interesting to find a suitable generalization of our result (equivalently, strengthening of Lee's) to the higher-dimensional even characteristic case.

We begin by formally defining the subspace we quotient the Grothendieck group by.
For $\kappa$ of characteristic $\neq 2$, and $n$ odd, the semicharacteristic is valued in the quotient of the Grothendieck group by the subspace generated by all symmetrically self-dual
representations together with the regular representation, while for $\kappa$ of characteristic $\neq 2$, and $n$ even, the semicharacteristic is valued in the quotient of the Grothendieck group by the subspace generated by all symplectic representations together with the regular representation \cite[Definitions 2.1 and 2.3]{Lee}. For $\kappa$ of characteristic $2$, the semicharacteristic is valued in the quotient of the Grothendieck group by the subspace generated by all \emph{even representations} together with the regular representation \cite[Theorem 3.8]{Lee}, where the even representations  $V$ are those admitting a nondegenerate symmetric bilinear $G$-invariant form $\phi$ such that $\phi(x,tx)=0$ for all $x\in V$ and all $t\in G$ of order exactly $2$ \cite[p. 190]{Lee}.

We next recall that for $V$ an irreducible representation of a finite group $G$ over a field $\kappa$, there is a unique indecomposable projective module $\mathcal P(V)$ for $\kappa[G]$ that admits $V$ as a subrepresentation. This is also the unique indecomposable projective module for $\kappa[G]$ that admits $V$ as a quotient. 

\begin{lemma}\label{multiplicity-vs-projective} For $V$ a absolutely 
 irreducible representation of $G$ over $\kappa$, the number of times $V$ appears in the Jordan-H\"older decomposition of $H_i (\tilde{M}, \kappa)$ is equal to $\dim H^i (M, \mathcal P(V))$. \end{lemma}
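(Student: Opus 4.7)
The strategy is to interpret both sides through module-theoretic properties of the group algebra $\kappa[G]$ and use that $\kappa[G]$ is a symmetric (in particular self-injective) algebra. First I would rewrite $H^i(M,\mathcal P(V))$ using the chain complex of $\tilde M$. Because $\mathcal P(V)$ is a representation of $\pi_1(M)$ factoring through $G$, and $C_*(\tilde M,\kappa)$ is a complex of $\kappa[G]$-modules, the twisted cohomology is computed by the complex $\Hom_{\kappa[G]}( C_*(\tilde M,\kappa), \mathcal P(V))$.

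Next, I would use that $\mathcal P(V)$ is not only projective but also injective as a $\kappa[G]$-module, since $\kappa[G]$ is self-injective. Consequently the functor $\Hom_{\kappa[G]}(-, \mathcal P(V))$ is exact on complexes of $\kappa[G]$-modules and commutes with taking homology, giving
\[ H^i(M, \mathcal P(V)) \;\cong\; \Hom_{\kappa[G]}\bigl( H_i(\tilde M,\kappa), \mathcal P(V) \bigr).\]
So the problem is reduced to showing that, for any finite-dimensional $\kappa[G]$-module $W$, the $\kappa$-dimension of $\Hom_{\kappa[G]}(W, \mathcal P(V))$ equals the Jordan-H\"older multiplicity of $V$ in $W$.

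For the latter I would argue by d\'evissage on $W$, using again that $\Hom_{\kappa[G]}(-, \mathcal P(V))$ is exact to reduce to the case $W = S$ simple. For $S$ simple, any nonzero map $S \to \mathcal P(V)$ must land in the socle of $\mathcal P(V)$. Here is where the key input enters: because $\kappa[G]$ is a symmetric algebra, the socle of the indecomposable projective cover of $V$ is again isomorphic to $V$ (the Nakayama permutation is trivial). Hence $\Hom_{\kappa[G]}(S, \mathcal P(V)) = \Hom_{\kappa[G]}(S, V)$, which by Schur's lemma and the absolute irreducibility of $V$ is $\kappa$ if $S \cong V$ and $0$ otherwise. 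This gives exactly the Jordan-H\"older multiplicity of $V$.

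The main subtlety to watch is the symmetric-algebra input: without absolute irreducibility or without the fact that socle equals top of $\mathcal P(V)$, one would pick up a factor of $\dim_\kappa \End_{\kappa[G]}(V)$ or conflate $V$ with another simple module. Both issues are taken care of by our hypotheses, so combining the two reductions above yields
\[ \dim_\kappa H^i(M, \mathcal P(V)) \;=\; \dim_\kappa \Hom_{\kappa[G]}(H_i(\tilde M,\kappa), \mathcal P(V)) \;=\; [H_i(\tilde M,\kappa) : V], \]
as desired.
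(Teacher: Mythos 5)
Your proposal is correct and follows essentially the same route as the paper: use injectivity of $\mathcal P(V)$ (as $\kappa[G]$ is self-injective) to identify $H^i(M,\mathcal P(V))$ with $\Hom_{\kappa[G]}(H_i(\tilde M,\kappa),\mathcal P(V))$, then reduce by exactness to $W$ simple and evaluate $\Hom_{\kappa[G]}(W,\mathcal P(V))$. The only difference is that you spell out the final step (socle of $\mathcal P(V)$ equals $V$ via the symmetric-algebra property, plus Schur and absolute irreducibility), which the paper dismisses as standard.
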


\begin{proof} Since projective modules are stable under duality, projective modules are also injective. Because $\mathcal P(V)$ is injective, $H^i(M, \mathcal P(V))= \Hom_G ( H_i(\tilde{M},\kappa), \mathcal P(V))$. 

So it suffices to prove that for a representation $W$, the number of times $V$ appears in the Jordan-H\"older decomposition of $W$ is equal to $\dim \Hom_G( W, \mathcal P(V))$.

Again because $\mathcal P(V)$ is injective, both sides are additive in exact sequences, so we may reduce to the case when $W$ is irreducible, and the statement is that for $W,V$ irreducible, $\Hom_G(W, \mathcal P(V)) \cong \kappa$ if $W \cong V$ and $0$ if $W \not\cong V$, which is standard. 
\end{proof}

\begin{lemma}\label{projective-envelope-properties} For $V$ a absolutely  irreducible representation of $G$ over a field $\kappa$ of characteristic not two, $V$ is symmetrically self-dual
 if and only if $\mathcal P(V)$ is,  and $V$ is symplectic if and only if $\mathcal P(V)$ is.

For $V$ a absolutely irreducible representation of $G$ over a field $\kappa$ of characteristic two, $\mathcal P(V)$ is symplectic
 if and only if $V$ is self-dual but not an even representation. \end{lemma}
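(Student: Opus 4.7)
The plan is to exploit that $\kappa[G]$ is a symmetric Frobenius algebra, so projective modules are injective and $\mathcal P(V)^\vee \cong \mathcal P(V^\vee)$: the right-hand side is the indecomposable projective cover of $V^\vee$, while $\mathcal P(V)^\vee$ is indecomposable injective with top $V^\vee$ (dual of the socle of $\mathcal P(V)$, which is $V$ by the triviality of the Nakayama permutation for symmetric algebras), so they agree. Consequently $\mathcal P(V)$ is self-dual iff $V$ is, and both directions of both statements reduce to the self-dual case. My strategy is then to convert bilinear forms on $V$ into bilinear forms on $\mathcal P(V)$ of the same type, and vice versa.

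For the characteristic not two case, I would start with a $G$-invariant nondegenerate bilinear form $b$ on $V$ of a given symmetry type. The composition $V \xrightarrow{b} V^\vee \hookrightarrow \mathcal P(V)^\vee$, using the socle embedding, is an injection, and using injectivity of $\mathcal P(V)^\vee$ together with the socle inclusion $V \hookrightarrow \mathcal P(V)$, it extends to a $G$-map $\phi \colon \mathcal P(V) \to \mathcal P(V)^\vee$. Because $V$ is the unique minimal nonzero submodule of $\mathcal P(V)$ and $\phi$ is injective on $V$, the map $\phi$ is injective, hence an isomorphism by dimension count. Then $(\phi + \phi^T)/2$ is symmetric and $(\phi - \phi^T)/2$ is alternating; when $b$ has the corresponding symmetry, the restriction to the socle recovers a nonzero multiple of $b$, so the resulting form is nondegenerate on the socle and thus on all of $\mathcal P(V)$. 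For the converse, a symmetric or alternating isomorphism $\phi \colon \mathcal P(V) \to \mathcal P(V)^\vee$ sends the socle $V$ into the socle $V^\vee$ of $\mathcal P(V)^\vee$ (since $G$-maps preserve socles), producing a nonzero map $V \to V^\vee$ which by absolute irreducibility and Schur's lemma is an isomorphism inheriting the symmetry type of $\phi$.

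For the characteristic two case, the same construction produces a symmetric bilinear form on $\mathcal P(V)$ whenever $V$ is self-dual, so the question becomes when one can arrange it to be alternating, i.e., to satisfy $\phi(x,x) = 0$ for all $x \in \mathcal P(V)$. I would study the obstruction $q(x) := \phi(x,x)$, which in characteristic two is additive and so defines a Frobenius-semilinear $G$-equivariant map $\mathcal P(V) \to \kappa$, and track how changing $\phi$ by another symmetric form alters $q$. The claim is then that this obstruction vanishes precisely when the restriction of $\phi$ to $V$ fails to witness the evenness of $V$, i.e., when $\phi|_V(x, tx) \neq 0$ for some $x \in V$ and some $t \in G$ of order two. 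The hardest step will be establishing this equivalence: one must relate the Frobenius-semilinear invariant $q$ on $\mathcal P(V)$ to the order-two elements of $G$ acting on $V$, likely by restricting to a $2$-Sylow subgroup or to cyclic subgroups of order two, where $\mathcal P(V)$ has a tractable structure and $q$ can be computed explicitly in terms of the $G$-action, then patching back via the transfer.
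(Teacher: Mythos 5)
Your preliminary reductions are fine ($\kappa[G]$ is a symmetric algebra, so $\mathcal P(V)^\vee \cong \mathcal P(V^\vee)$, the socle and top of $\mathcal P(V)$ are both $V$, and a $G$-map $\mathcal P(V) \to \mathcal P(V)^\vee$ is an isomorphism iff it is nonzero on the socle). The gap is at the step that carries all the content. In characteristic $\neq 2$ you assert that the (anti)symmetrization of the extension $\phi$ "restricted to the socle recovers a nonzero multiple of $b$," but this cannot be literally what happens: whenever $V$ is not projective the socle lies in the radical of $\mathcal P(V)$, and $\phi(x)$ for $x$ in the socle is a functional killing the radical, so the form restricted to $\operatorname{soc}\times\operatorname{soc}$ is identically zero. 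What is nonzero is the induced pairing between the socle and the top; $\phi$ contributes $b$ to it, and $\phi^T$ contributes $\mu b$ for some scalar $\mu$ depending a priori on the chosen extension (the space $\Hom_G(\mathcal P(V),\mathcal P(V)^\vee)$ has dimension the Cartan number, which is usually $>1$). You need $\mu\neq -1$ to keep the symmetrization nondegenerate and $\mu\neq 1$ for the antisymmetrization, and nothing in your argument controls $\mu$. Indeed it cannot be that both survive for every self-dual $V$: that would make $\mathcal P(V)$ simultaneously symmetric and symplectic, contradicting the very statement being proved. Showing that the transpose interacts with the socle--top pairing with the sign given by the type of $V$ (so that exactly the matching symmetrization is nondegenerate, and the other always dies) is essentially equivalent to the lemma, and it is exactly what your proposal leaves unproved; the same identification issue undermines the converse step, where the "inherited symmetry type" of the induced map $V\to V^\vee$ is not automatic.

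The characteristic $2$ half has the same defect plus an additional one. Your proposed criterion "$\phi|_V(x,tx)\neq 0$ for some $x\in V$ and $t$ of order two" can never hold for non-projective $V$, again because the form vanishes on $\operatorname{soc}\times\operatorname{soc}$; evenness is a condition on forms on the simple module $V$ itself, not on the restriction of a form on $\mathcal P(V)$. Moreover the crucial equivalence (the alternating obstruction on $\mathcal P(V)$ vanishes for some nondegenerate form iff $V$ is not even) is explicitly deferred in your write-up, and the suggested route via restriction to a Sylow $2$-subgroup plus transfer is doubtful: evenness quantifies over order-two elements of all of $G$, and $\mathcal P(V)$ restricted to a $2$-subgroup breaks into many projectives, losing the structure you need. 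For comparison, the paper avoids all of this by embedding $\mathcal P(V)$ into $\kappa[G]$ by a split injection, observing that every invariant form on $\mathcal P(V)$ of a given type is pulled back from one on $\kappa[G]$, parametrizing the latter by coefficients $(d_g)_{g\in G}$, and reducing nondegeneracy of the pullback to the nonvanishing of $\sum_g d_g\langle v, g\cdot v\rangle_V$ for a suitable $v\in V$; the $g\leftrightarrow g^{-1}$ symmetry of that sum is what produces the sign matching the type of $V$ in odd characteristic and isolates the order-two elements (hence the evenness condition) in characteristic $2$. Some such explicit comparison is what your argument is missing.
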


\begin{proof}Fix $V$ an absolutely irreducible representation of $G$.

Let $f$ be a homomorphism $\mathcal P(V) \to \kappa[G]$ of left $\kappa[G]$-modules. Then $f$ defines an embedding $V \to \mathcal P(V) \to \kappa[G]$. Any such embedding must have the form $x \in V \mapsto \sum_{g\in G}  a_f( g^{-1}\cdot  x) [g] $ for some linear form $a_f \in V^\vee$.  Furthermore, composition with $f$ defines a linear map $V \cong   \Hom (\kappa[G], V) \to \Hom( \mathcal P(V), V) \cong \Hom(V, V) = \kappa$, and thus a linear form $b_f$ on $V$.

We claim that $b_f = \lambda a_f$ for some $\lambda \in \kappa^\times$. To see this, note that both $f \mapsto a_f$ and $f \mapsto b_f$ are nontrivial homomorphisms $\Hom ( \mathcal P(V),\kappa[G]) \to V^\vee$ that are equivariant for the right $G$ action of $\kappa[G]$. As a right $G$-module, $\Hom ( \mathcal P(V),\kappa[G]) \cong \mathcal P(V)^\vee \cong \mathcal P(V^\vee)$ has a unique quotient isomorphic to $V^\vee$, so any two such homomorphisms differ by a $G$-invariant automorphism of $V^\vee$, i.e. by scalar multiplication.

Now fix one such $f$ that is a split injection. The pullback of any bilinear form on $\kappa[G]$ along $f$ gives a bilinear form on $\mathcal P(V)$. (All bilinear forms we consider will be $G$-invariant.) The pullback of a symmetric bilinear form is symmetric, and the pullback of a symplectic bilinear form is symplectic. Furthermore, because $f$ is split, every symmetric bilinear form on $\mathcal P(V)$ arises by pullback along $f$ from a symmetric bilinear form on $\kappa[G]$, and similarly with symplectic forms.  Thus, to test when $V$ is symmetrically self-dual,  we will calculate all symmetric bilinear forms on $\kappa[G]$ and check when the pullback of one along $f$ is symmetric, and similarly in the symplectic case.

We first describe the bilinear forms on $\kappa[G]$. These are parameterized by tuples $\mathbf d= (d_g)_{g\in G}$ of coefficients in $\kappa$ associated to $g\in G$, and are given by the formula
\[ \langle \sum_{g\in G} a_g[g], \sum_{g\in G} b_{g} [g] \rangle_{\mathbf d} = \sum_{g \in G} \sum_{h\in G} a_g b_h  d_{ h^{-1} g } .\]

Then  \[ \langle \sum_{g\in G} a_g[g], \sum_{g\in G} b_{g} [g] \rangle_{\mathbf d} =  \langle \sum_{g\in G} b_{g} [g] , \sum_{g\in G} a_g[g]\rangle_{\overline{\mathbf d}}\] where \[ \overline{d}_g = d_{g^{-1}} \] so $\langle, \rangle_{\mathbf d}$ is symmetric if $d_g =d_{g^{-1}}$ for all $g$ and symplectic if $d_g =- d_{g^{-1}}$ for all $g$ with, in characteristic $2$, the additional condition $d_{e} =0$ where $e$ is the identity.

Now a bilinear form $\mathcal P(V) \times \mathcal P(V) \to \kappa$ is nondegenerate if and only if the induced map $\mathcal P(V) \to \mathcal P(V)^\vee$ is injective, i.e. if its kernel is zero, which  happens if and only if the induced map $V \to \mathcal P(V) \to \mathcal P(V)^\vee$ is injective. Thus, the pullback of $\langle, \rangle_{\mathbf d}$ is nondegenerate if and only if there exists $x \in V$ and $y\in \mathcal P(V)$ such that $\langle f(x), f(y) \rangle_{\mathbf d} \neq 0$.

Now the map $L_f \colon \kappa[G] \to V^\vee $ that sends $\alpha \in \kappa[G]$ to the linear form $x \mapsto \langle f(x), \alpha \rangle_{\mathbf d}$ is $G$-equivariant since $f$ and $\langle,\rangle_{\mathbf d}$ are. Thus it defines an element of $V^\vee$. We calculate this element by evaluating at $\alpha=[e] \in \kappa[G]$.

For $x\in V$, $ f(x) = \sum_{g\in G}  a_f( g^{-1} x) [g] $ by definition.  Thus 
\[  L_f([e]) (x) = \langle f(x), [e] \rangle_{\mathbf d}  = \sum_{g\in G}  a_f (g^{-1}\cdot x)   d_{ g }= a_f \Bigl( \sum_g d_g g^{-1} \cdot x \Bigr) .\]

The pullback of the bilinear form $\langle,\rangle_{\mathbf d}$ is nondegenerate if and only if the composition of $L_f$ with $f \colon \mathcal P(V) \to \kappa[G]$ is nonzero.

If $V$ is not self-dual, then the homomorphism $\mathcal P(V) \to \kappa[G] \to V^\vee$ automatically vanishes and thus there is no such nondegenerate bilinear form. So suppose that $V$ is self-dual, so in particular there is a map $\gamma : V^{\vee} \to V$.  The isomorphism $\Hom (\kappa[G], V) \cong V$ sends a linear map $L$ to $L([e])$ so it sends $\gamma \circ  L_f$ to 
\[ \gamma (L_f([e]) ) = \gamma ( x \mapsto a_f ( \sum_g d_g g^{-1}\cdot x ) )= \sum_{g\in G} d_g g\cdot  \gamma(a_f) \in V.\]

So by the definition of $b_f$, the composition  $\gamma \circ L_f \circ f \colon \mathcal P(V) \to \kappa[G] \to V^\vee\to V$ is nonzero if and only if 
\[ b_f \Bigl( \sum_{g \in G} d_g  g\cdot \gamma(a_f) \Bigr) \neq 0 \]
and a nondegenerate symmetric (or symplectic) bilinear form exists if and only if $b_f ( \sum_{g \in G} d_g  g\cdot \gamma(a_f) ) \neq 0$ for some $\mathbf d$ satisfying the conditions to be symmetric (or symplectic).
To simplify this, note that $b_f (  x) = \langle \gamma(b_f), x \rangle_V$ for $\langle, \rangle_V$ the bilinear form on $V$, and recall $b_f = \lambda a_f$ so we can express the nonvanishing condition more simply as
\[  \sum_{g\in G  }d_g \langle \gamma(a_f), g\cdot \gamma(a_f) \rangle_V \neq 0 .\]

We now specialize to particular cases. In characteristic not two, 
\[ \sum_{g\in G  }d_g \langle \gamma(a_f), g\cdot \gamma(a_f) \rangle_V = \sum_{g\in G  }d_g \langle g^{-1} \cdot \gamma(a_f),  \gamma(a_f) \rangle_V =   \sum_{g\in G  }d_g \langle  \gamma(a_f),  g^{-1} \cdot \gamma(a_f) \rangle_V   \cdot \begin{cases} 1 & V \textrm{ symmetric} \\ -1 & V \textrm{ symplectic} \end{cases} \] \[= \sum_{g\in G  }d_{g}  \langle  \gamma(a_f),  g \cdot \gamma(a_f) \rangle_V   \cdot \begin{cases} 1 & V \textrm{ symmetric} \\ -1 & V \textrm{ symplectic} \end{cases}  \cdot   \begin{cases} 1 & \langle, \rangle_{\mathbf d} \textrm{ symmetric} \\ -1 & \langle, \rangle_{\mathbf d}  \textrm{ symplectic} \end{cases}.\] 

If $V$ is symmetric and $\langle, \rangle_{\mathbf d}$ is symplectic, or vice versa, then the signs don't match and so $\sum_{g\in G  }d_g \langle \gamma(a_f), g\cdot \gamma(a_f) \rangle_V $ is equal to its own negation and thus vanishes.  Since a self-dual absolutely irreducible representation is either symmetric or symplectic, we see there is no nondegenerate symmetric form on $\mathcal P(V)$ unless $V$ is symmetric and no nondegenerate symplectic form on $\mathcal P(V)$ unless $V$ is symplectic. Conversely, for any nonzero $\gamma(a_f)$, there is always some $h$ such that $ \langle \gamma(a_f), h\cdot \gamma(a_f) \rangle_V \neq 0$ by irreducibility of $V$. In this case, we can take $d_h = 1$, and $d_{h^{-1}}=1$ if $V$ is symmetric or $d_{h^{-1}}=-1$ if $V$ is symplectic, and $d_g=0$ for $g \neq h, h^{-1}$, and this ensures $\sum_{g\in G  }d_g \langle \gamma(a_f), g\cdot \gamma(a_f) \rangle_V\neq 0 $ .

In characteristic $2$, the unique bilinear form on $V$ is necessarily symmetric. Thus if $\langle, \rangle_{\mathbf d}$ is symplectic, the contributions of $g$ and $g^{-1}$ to the sum  $\sum_{g\in G  }d_g \langle \gamma(a_f), g\cdot \gamma(a_f) \rangle_V$ are equal. Thus these contributions cancel each other unless $g = g^{-1}$, i.e. if $g$ has order dividing $2$. Since $d_{e}=0$, we need only consider the contribution from $g$ of exact order $2$. If $V$ is even, then the contribution vanishes by definition and so there are no nondegenerate symplectic forms. Conversely, if $V$ is not even then for some $g$ and $x$ we have $\langle x, g\cdot x \rangle_V$ is nonzero. Since $g$ has order $2$, $\langle x, g\cdot x \rangle_V$ defines a Frobenius-semilinear form, so it vanishes for all $x$ outside a proper subspace. Choose $h$ such that $h\cdot \gamma(a_f)$ is not in that subspace, and observe that $\langle \gamma(a_f), h^{-1}gh \cdot \gamma(a_f)\rangle_V = \langle h \gamma(a_f), gh\cdot \gamma(a_f)\rangle_V \neq 0$, so choosing $\mathbf d$ supported on $h^{-1} gh$ we construct a nondegenerate symplectic form.
\end{proof}

We are now ready to describe how the semicharacteristic studied by Lee is determined by the cohomology groups controlled in \cref{parity-is-bordism}, and thus to deduce Lee's theorem (except in the even characteristic $n>1$ case) from \cref{parity-is-bordism}.

\begin{lemma}\label{semicharacteristic-vs-cohomology} Let $G$ be a finite group and $\kappa$ a finite splitting field for $G$. 
Let $n$ be a natural number. We will always take $M$ to be a $2n+1$-dimensional oriented manifold with a homomorphism $\pi_1(M)\to G$.

\begin{enumerate}

\item If $n$ is odd and $\kappa$ has characteristic $\neq 2$, the class of $\sum_{i=0}^n (-1)^i [ H_i(M,\kappa)]$ in the Grothendieck group of representations of $G$ over $\kappa$, modulo the classes of symmetrically self-dual representations, is determined by $\sum_{i=0}^n (-1)^i \dim H^i(M, V) \mod 2$ for even-dimensional symplectic representations $V$ of $G$ over $\kappa$ that are projective. In particular, it is an invariant of the class of $M$ in the oriented bordism group of $BG$.

\item  If $n$ is even and $\kappa$ has characteristic $\neq 2$, the class of $\sum_{i=0}^n (-1)^i [ H_i(M,\kappa)]$ in the Grothendieck group of representations of $G$ over $\kappa$, modulo the classes of symplectic representations and the regular representation, is determined by $\sum_{i=0}^n (-1)^i \dim H^i(M, V) \mod 2$ for even-dimensional symmetrically self-dual representations $V$ of $G$ over $\kappa$ that are projective. In particular, it is an invariant of the class of $M$ in the oriented bordism group of $BG$.

\item If $n$ is odd and $\kappa$ has characteristic $\neq 2$, the class of $\sum_{i=0}^n (-1)^i [ H_i(M,\kappa)]$ in the Grothendieck group of representations of $G$ over $\kappa$, modulo the classes of even representations, is determined by $\sum_{i=0}^n (-1)^i \dim H^i(M, V) \mod 2$ for even-dimensional symplectic  representations $V$ of $G$ over $\kappa$ that are projective and lift to $\ASp_\kappa(V)$.  In particular, if $n=1$ then it is an invariant of the class of $M$ in the oriented bordism group of $BG$.

\end{enumerate}

\end{lemma}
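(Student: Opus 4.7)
The plan is to combine \cref{multiplicity-vs-projective}, \cref{projective-envelope-properties}, and \cref{parity-is-bordism} by translating the semicharacteristic into a $\mathbb Z$-linear combination of twisted-cohomology parities with projective coefficients, then collapsing the non-self-dual and ``wrong-type'' self-dual contributions in each quotient.

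First, because $\kappa$ is a splitting field for $G$, the Grothendieck group of $\kappa[G]$-modules has a $\mathbb Z$-basis of absolutely irreducible representations, so I may write
\[\sum_{i=0}^n (-1)^i[H_i(\tilde M,\kappa)] = \sum_V c_V[V], \qquad c_V = \sum_{i=0}^n (-1)^i \dim H^i(M,\mathcal P(V)),\]
using \cref{multiplicity-vs-projective}. Since group algebras are symmetric Frobenius, $\mathcal P(V)^\vee \cong \mathcal P(V^\vee)$, so Poincar\'e duality on the closed oriented $(2n{+}1)$-manifold $M$ gives $\dim H^i(M,\mathcal P(V)) = \dim H^{2n+1-i}(M,\mathcal P(V^\vee))$. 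Combined with the identity $\sum_{i=0}^{2n+1}(-1)^i \dim H^i(M,\mathcal P(V))=0$ (valid for any local system on a closed odd-dimensional manifold, because the CW Euler characteristic is $\chi(M)\cdot\operatorname{rank}$ and $\chi(M)=0$), a reindexing of the upper half of the sum yields $c_V=c_{V^\vee}$.

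Next I identify the image of $\sum_V c_V[V]$ in each quotient. In case (1), the standard symmetric pairing on $V\oplus V^\vee$ and, for symplectic irreducible $V$ with alternating form $\omega$, the symmetric form $((v_1,v_2),(w_1,w_2))\mapsto \omega(v_1,w_2)-\omega(v_2,w_1)$ on $V\oplus V$ show $[V]+[V^\vee]=0$ and $2[V]=0$, while symmetric irreducibles give $[V]=0$; combined with $c_V=c_{V^\vee}$ this leaves only the parities $c_V\bmod 2$ for symplectic irreducible $V$. By \cref{projective-envelope-properties} each such $\mathcal P(V)$ is symplectic, hence even-dimensional (in characteristic $\neq 2$ a nondegenerate alternating form forces even dimension), so \cref{parity-is-bordism} applied with $n$ odd and $W=\mathcal P(V)$ yields bordism invariance. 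Case (2) is parallel: the same two constructions with the roles swapped (using the symplectic form $\phi(v_1,w_2)-\phi(v_2,w_1)$ on $V\oplus V$ built from a symmetric $\phi$) leave only parities for symmetric irreducibles, and bordism invariance follows from \cref{parity-is-bordism} applied to even-dimensional symmetric projective modules. Case (3), which I read as the characteristic-$2$ statement (the ``even representation'' and affine-symplectic lifting conditions being characteristic-$2$ notions), uses the refined characterization from \cref{projective-envelope-properties} that $\mathcal P(V)$ is symplectic exactly when $V$ is self-dual but not even; the affine-symplectic lifting hypothesis is inherited from \cref{parity-is-bordism}, and the bordism invariance assertion is restricted to $n=1$ precisely because that is the only characteristic-$2$ regime covered by \cref{parity-is-bordism}.

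The main technical obstacle is dimensional parity in case (2): the projective envelope of a symmetric absolutely irreducible $V$ can be odd-dimensional, so \cref{parity-is-bordism} does not apply to $\mathcal P(V)$ directly. I will overcome this by working with even-dimensional symmetric projective modules of the form $\mathcal P(V)\oplus\mathcal P(V')$ (with $V,V'$ symmetric irreducibles of odd projective-envelope dimension), extracting the pairwise sums $c_V+c_{V'}\bmod 2$ as bordism invariants. Because the Cartan matrix of $G$ has determinant a power of the characteristic and is thus invertible modulo $2$ in case (2), these pairwise parities determine all $c_V\bmod 2$ for symmetric irreducible $V$ up to exactly one linear relation, which is furnished by the regular-representation relation already built into the quotient; this is the only place the regular-representation relation is used.
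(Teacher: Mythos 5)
You follow the paper's route—convert multiplicities into $\dim H^i(M,\mathcal P(V))$ via \cref{multiplicity-vs-projective}, analyze the Grothendieck-group quotient so only parities of the relevant self-dual irreducibles survive, and feed projective envelopes into \cref{parity-is-bordism}—and your direct Poincar\'e-duality/Euler-characteristic proof that $c_V=c_{V^\vee}$ is a fine substitute for the paper's citation of Lee; case (1) is complete. But case (2) as written does not work. The pairwise sums $c_V+c_{V'}\bmod 2$, taken only over symmetric irreducibles whose projective envelopes are odd-dimensional, carry no information whatsoever about $c_V\bmod 2$ for symmetric irreducibles with even-dimensional envelopes, so they cannot ``determine all $c_V\bmod 2$ for symmetric irreducible $V$ up to exactly one linear relation''; and invertibility of the Cartan matrix mod $2$ is irrelevant here, since no decomposition- or Cartan-matrix inversion is needed ($c_V$ is by definition the parity attached to $\mathcal P(V)$). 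The argument you need is: for even-envelope symmetric $V$ apply \cref{parity-is-bordism} to $\mathcal P(V)$ directly; for odd-envelope symmetric $V,V'$ use $\mathcal P(V)\oplus\mathcal P(V')$, which determines those parities up to a simultaneous flip; and then identify that residual flip with adding one copy of $\kappa[G]$. This last identification requires the fact that $\operatorname{mult}_V\kappa[G]=\dim\mathcal P(V)$, so that modulo symplectic representations the class of the regular representation is exactly the sum of the odd-envelope symmetric irreducibles; you assert the match with ``the regular-representation relation'' without this computation, and the reason you give for the ``one relation'' count is wrong.

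Case (3) has a genuine gap: the determining data in the lemma consists only of symplectic projective representations that lift to the affine symplectic group, and \cref{projective-envelope-properties} only tells you that $\mathcal P(V)$ is symplectic. You still must show that the $G$-action on $\mathcal P(V)$ lifts to $\ASp_\kappa(\mathcal P(V))$; saying the lifting hypothesis is ``inherited from \cref{parity-is-bordism}'' explains why the hypothesis appears in the statement, not why the modules you need satisfy it. The paper closes this by observing that the obstruction to lifting lies in $H^2(G,\mathcal P(V))$, which vanishes because $\mathcal P(V)$ is projective (hence cohomologically trivial); without that step, neither the reduction of the class to the allowed data nor the application of \cref{parity-is-bordism} goes through. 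You also skip the characteristic-$2$ quotient analysis: to kill the dual pairs and the doubled self-dual representations modulo even representations you need that $V\oplus V^\vee$ (hence $V\oplus V$ for self-dual $V$) is an even representation, which in characteristic $2$ is the short explicit verification with elements $g$ of order $2$ that the paper carries out.
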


In the first and third cases, it is not necessary to mod out by the regular representation as the regular representation is symmetrically self-dual and, in characteristic 2, even.

\begin{proof} We handle part (1) first, then describe how the arguments in the remaining cases differ.

A class in the Grothendieck group can be represented as $\sum_V m_V[V]$, the sum taken over irreducible representations $V$ of $G$ over $\kappa$, for some integers $m_V$. 

Note that $V \oplus V^\vee$ is always symmetrically self-dual. Thus, two classes arising from two tuples of integers $m_V, m_V'$ are equivalent modulo the symmetrically self-dual representations if $m_V - m_{V^\vee} = m_{V}' - m_{V^\vee}' $ for all irreducible representations $V$ and $m_V - m_{V'}$ is even for all $V$ self-dual but not symmetrically self-dual. Indeed, in this case, the difference between the classes is a sum of irreducible symmetrically self-dual representations, sums of a non-self-dual representation and its dual, and even multiples of a symplectic representation, which are sums of a representation and its dual (itself).

When representing the class of $\sum_{i=0}^n (-1)^i [ H_i(M,\kappa)]$ in the Grothendieck group this way, $m_V(M) =  \sum_{i=0}^n (-1)^i \operatorname{mult}_V H_i(M,\kappa) $ . So to show this class, modulo symmetrically self-dual representations is determined by $\sum_{i=0}^n (-1)^i \dim H^i(M, V) \mod 2$ for even-dimensional symplectic representations of $G$ over $\kappa$ that are projective, it suffices to show that $m_V(M) - m_{V^\vee}(M)$ is determined, as is $m_V$ mod $2$ for irreducible representations that are self-dual but not symmetrically self-dual.

For the first part, the fact that $m_V (M)= m_{V^\vee} (M)$ was already proven by Lee, using Euler characteristic and Poincar\'e duality arguments. For the second part, if $V$ is irreducible and self-dual but not symmetric, then it must be symplectic so by \cref{projective-envelope-properties}, $\mathcal P(V)$ is symplectic Because $\mathcal P(V)$ is symplectic, it is even-dimensional. Thus by \cref{multiplicity-vs-projective}, \[m_V(M) = \sum_{i=0}^n (-1)^i \operatorname{mult}_V H_i(M,\kappa) = \sum_{i=0}^n (-1)^i H^i(M ,\mathcal P(V)) \mod 2 \] so $m_V(M)\mod 2$ is determined by $\sum_{i=0}^n (-1)^i H^i(M ,\mathcal P(V) )\mod 2$, and $\mathcal P(V)$ satisfies all of the assumed properties.

Finally, by \cref{parity-is-bordism}, $\sum_{i=0}^n (-1)^i H^i(M ,\mathcal P(V))  \mod 2$ is determined by the bordism class of $M$.

For part (2), the argument is similar, except for the following: First, we use the fact that $V \oplus V^\vee$ is always symplectic. Second, we prove that $\mathcal P(V)$ is symmetrically self-dual, and so we can no longer use the fact that $\mathcal P(V)$ is symplectic to guarantee it is even-dimensional. Instead we use the fact that we need only determine $\sum_{i=0}^n (-1)^i [ H_i(M,\kappa)]$ in the Grothendieck group modulo both the symmetrically self-dual representations and the regular representation.

Adding a copy of the regular representation does not affect $m_V - m_{V^\vee}$, but it swaps the parity of $m_V$ if $V$ is self-dual of odd multiplicity in $\kappa[G]$. Since the multiplicity of an irreducible representation $V$ in $\kappa[G]$ is equal to $\dim \mathcal P(V)$, adding a copy of the regular representation swaps the parity of $m_V$ for all irreducible representations $V$ with $\dim \mathcal P(V)$ odd. Thus, to determine the class modulo symplectic representations and the regular representation, it suffices to know $m_V -m_{V^\vee}$ for all irreducible representations $V$, $m_V\mod 2$ for all symmetrically self-dual irreducible representations $V$ with $\dim \mathcal P(V)$ even, and $m_V +m_W \mod 2$ for all pairs $V,W$ of symmetrically self-dual irreducible representations with $\dim \mathcal P(V), \dim \mathcal P(W)$ odd.

Thus, in the second part, it suffices to know $\sum_{i=0}^n H^i (M, \mathcal P(V) )\mod 2$ where $\mathcal P(V)$ is projective, symmetrically self-dual, and even-dimensional, and in the third part, it suffices to know $\sum_{i=0}^n H^i(M, \mathcal P(V) \oplus \mathcal P(W))\mod 2$ where $\mathcal P(V) \oplus \mathcal P(W)$ is projective, symmetrically self-dual, and even-dimensional. So we still need consider only representations that satisfy all the assumed properties. Then we use \cref{parity-is-bordism} the same way.

For part (3) it is again similar to part (1). We now use the fact that $V \oplus V^\vee$ is even, which may be less familiar -- the form $\langle (x_1,y_1) , (x_2,y_2) \rangle = x_1 \cdot y_2 + x_2 \cdot y_1$ is symmetric, and for $g$ of order $2$,\[  \langle (x,y) , g\cdot  (x,y) \rangle = \langle (x,y) , (g x,g y) \rangle = x \cdot g y + g x \cdot y = x \cdot g y + x \cdot g^{-1} y = x \cdot gy + x \cdot gy = 0 \] where we use $g= g^{-1}$ and the fact that the characteristic is two, so this form is even. 

We can again use the argument that symplectic representations must be even-dimensional, but we now face the difficulty that \cref{projective-envelope-properties} ensures that $\mathcal P(V)$ is symplectic  but we want the action of $G$ to lift to $\ASp_\kappa(V)$.  However, the obstruction to such a lift is contained in $H^2(G, \mathcal P(V))$ which vanishes since $\mathcal P(V)$ is projective, so a lift always exists. Finally, here \cref{parity-is-bordism} is restricted to the $n=1$ case only.
\end{proof}

By combining \cref{odd-characteristic-parity} and \cref{semicharacteristic-vs-cohomology}, we can check that the semicharacteristic vanishes in the odd characteristic $n=1$ case. Again, this requires only the projective case of \cref{odd-characteristic-parity}, and the general case may be significantly stronger.

\bibliographystyle{alpha}

\bibliography{../references.bib,MyLibrary3man.bib}

\end{document}